\declaretheorem[numbered=no,
name= Theorem \ref{thmmain} (low energy part)]{euclid}
\declaretheorem[numbered=no,
name= Theorem \ref{thmmain} (high energy part)]{euclid1}
\def\C {{\mathcal C}}
\def\R {\mathbb{R}}
\def\d{{\rm d}}
\def\i{{\rm i}}
\def \and {{\qquad\text{and}\qquad}}
\DeclareMathOperator{\sgn}{\text{sgn}}
\newtheorem{theorem}{Theorem}[section]
\newtheorem{proposition}[theorem]{Proposition}
\newtheorem{lemma}[theorem]{Lemma}
\newtheorem{definition}[theorem]{Definition}
\newtheorem{remark}[theorem]{Remark}
\numberwithin{equation}{section}
\theoremstyle{definition}
\title[]
\uppercase\expandafter{\romannumeral2}: high  dimensional case}
\author{Han Cheng,\, Shanlin Huang,\,Tianxiao Huang,\,  Quan Zheng}
\address{Han Cheng, Institute of Applied Physics and Computational Mathematics, Beijing 100088, China.}
\email{chmathh@163.com}
\address{Shanlin Huang, School of Mathematics and Statistics, Hubei Key Laboratory of Engineering Modeling and Scientific Computing, Huazhong University of Science and Technology, Wuhan 430074, Hubei, PR China}
\email{shanlin\_huang@hust.edu.cn}
\address{Tianxiao Huang, School of Mathematics (Zhuhai), Sun Yat-sen University, Zhuhai 519082, Guangdong, China }
\email{htx5@mail.sysu.edu.cn}
\address{Quan Zheng, School of Mathematics and Statistics, Huazhong University of Science and Technology, Wuhan 430074, Hubei, China }
\email{qzheng@hust.edu.cn}
\subjclass[2010]{35A08, 81Q10}
\keywords{Fundamental solution, higher order Schr\"{o}dinger equation.}
\begin{document}

\begin{abstract}
In this paper, for any odd $n$ and any integer $m\geq1$ with $n>4m$, we  study  the fundamental solution of the  higher order Schr\"{o}dinger equation
\begin{equation*}
\i\partial_tu(x, t)=((-\Delta)^m+V(x))u(x, t), \quad t\in \mathbb{R},\,\,x\in \mathbb{R}^n,
\end{equation*}
where $V$ is a real-valued $C^{\frac{n+1}{2}-2m}$ potential with certain  decay.
Let $P_{ac}(H)$ denote the projection onto the absolutely continuous spectrum space of $H=(-\Delta)^m+V$, and assume that $H$ has no positive embedded eigenvalue.
Our main result says that
$e^{-\i tH}P_{ac}(H)$ has integral kernel $K(t,x,y)$ satisfying
%It is proved that satisfies the following pointwise estimates for all $n<4m$.
\begin{equation*}
|K(t, x,y)|\le C(1+|t|)^{-(\frac{n}{2m}-\sigma)}(1+|t|^{-\frac{n}{2 m}})\left(1+|t|^{-\frac{1}{2 m}}|x-y|\right)^{-\frac{n(m-1)}{2 m-1}},\quad t\neq0,\,x,y\in\mathbb{R}^n,
\end{equation*}
where $\sigma=2$ if $0$ is an eigenvalue of $H$, and $\sigma=0$ otherwise. A similar result for smoothing operators $H^\frac{\alpha}{2m}e^{-\i tH}P_{ac}(H)$ is also given. The regularity condition $V\in C^{\frac{n+1}{2}-2m}$ is optimal in the second order case, and it also seems optimal when $m>1$.
\end{abstract}

\maketitle
\begingroup
%%%%%%%%%%%%%%%%%%%%%%%%%%%%%%%%%%%%%%%%%%%%%%%%%
%{
% \hypersetup{linkcolor=blue}
\tableofcontents
%}
%%%%%%%%%%%%%%%%%%%%%%%%%%%%%%%%%%%%%%%%%%%%%%%%%

\section{Introduction}\label{section1}

\subsection{Motivation and main result}\

We continue the study in \cite{CHHZ} for the pointwise estimate for the fundamental solution of higher order Schr\"{o}dinger equation
\begin{equation}\label{equ1.1.1}
	\i\partial_tu(x, t)=((-\Delta)^m+V(x))u(x, t), \quad t\in \mathbb{R},\,\,x\in \mathbb{R}^n,
\end{equation}
where $m$ is any positive integer, and $V$ is a real-valued decaying potential in $\mathbb{R}^n$.

In the first paper \cite{CHHZ} of this series, it has been proved when $n<4m$ and $n$ is odd that, if $V$ has sufficient decay at the infinity, denoted by $P_{ac}(H)$ the projection onto the absolutely continuous spectrum space of $H=(-\Delta)^m+V$, then $e^{-\i tH}P_{ac}(H)$ has integral kernel $K(t,x,y)$ satisfying
\begin{equation}\label{lowdimresult}
	|K(t, x,y)|\le C(1+|t|)^{-h}(1+|t|^{-\frac{n}{2 m}})\left(1+|t|^{-\frac{1}{2 m}}|x-y|\right)^{-\frac{n(m-1)}{2 m-1}},\quad t\neq0,\,x,y\in\mathbb{R}^n,
\end{equation}
where $h>0$ can be specified by $m,n$ and the zero energy resonances of $H$. The motivation of considering such problem was closely related to the  dispersive estimates of Schr\"{o}dinger equations, for instance, we refer to the  classical  works \cite{JSS,Ya,RS} and the survey papers \cite{Sch07,Sch21}. Moreover, it was also inspired by many recent studies on  higher order Schr\"{o}dinger equations with potentials (see e.g. in \cite{FSY,FSWY,EG22,EGG231,EG23,EGT,EGL,EGG23,GY,GT19,GG21,MWY}). The overall strategy to show \eqref{lowdimresult} was to first decompose $e^{-\i tH}P_{ac}(H)$ into low and high energy parts by the spectral theorem
\begin{equation}\label{split}
	\begin{split}
		e^{-\i tH}P_{ac}(H)&=\int_0^{+\infty}e^{-\i t\lambda}\chi(\lambda)\d E_\lambda+\int_0^{+\infty}e^{-\i t\lambda}\tilde{\chi}(\lambda)\d E_\lambda\\
		&:=e^{-\i tH}\chi(H)P_{ac}(H)+e^{-\i tH}\tilde{\chi}(H)P_{ac}(H),
	\end{split}
\end{equation}
where $\d E_\lambda$ is the spectral measure of $H$, $\chi(\lambda)+\tilde{\chi}(\lambda)\equiv1$, and $\chi\in C_0^\infty((-\lambda_0^\frac{1}{2m},\lambda_0^\frac{1}{2m}))$ for sufficiently small $\lambda_0>0$. The difficulty of proving \eqref{lowdimresult} mostly lies in the kernel estimate of the low energy part $e^{-\i tH}\chi(H)P_{ac}(H)$, where the asymptotic expansion of $\d E_\lambda$ as $\lambda\rightarrow0$ was obtained in a detailed form. It turns out that in low odd dimensions $n<4m$, the asymptotic expansion of $\d E_\lambda$ depends on the resonance type of $H$ at zero energy, which finally leads to the dependence $h$ in \eqref{lowdimresult}. We refer to \cite{CHHZ} for the definition of zero energy resonance.

The current paper deals with such problem in high odd dimensions $n>4m$. In contrary to the low dimensional case, we shall see that when $n>4m$, the low energy part $e^{-\i tH}\chi(H)P_{ac}(H)$ is somehow easier to deal with for there is no nontrivial zero energy resonance (i.e. $0$ is either regular or an eigenvalue of $H$). However, the high energy part becomes quite complicated where the regularity of $V$ will be involved. Our main result is the following.

\begin{theorem}\label{thmmain}
	Let $n$ be odd, $m\geq1$ be an integer with $n>4m$, $V\in C^{\frac{n+1}{2}-2m}(\mathbb{R}^n)$ be real-valued satisfying $|V(x)|\lesssim\langle x\rangle^{-(n+2)-}$ and
	\begin{equation}\label{derest}
		|\partial^\alpha V(x)|\lesssim\langle x\rangle^{-(\frac{3n+1}{2}-2m)-},\quad0\leq|\alpha|\leq\mbox{$\frac{n+1}{2}-2m$}.
	\end{equation}
	We also assume that $H=(-\Delta)^m+V$ has no positive embedded eigenvalue. Then $e^{-\i tH}P_{ac}(H)$ has integral kernel $K(t,x,y)$ with
	
	\noindent(\romannumeral1) If the zero energy of $H$ is regular, i.e. $0$ is not an eigenvalue of $H$, then
	\begin{equation}\label{regular}
		|K(t, x,y)|\le C|t|^{-\frac{n}{2 m}}\left(1+|t|^{-\frac{1}{2 m}}|x-y|\right)^{-\frac{n(m-1)}{2 m-1}},\quad t\neq0,\,x,y\in\mathbb{R}^n.
	\end{equation}
\noindent(\romannumeral2) If $0$ is an eigenvalue of $H$, then
\begin{equation}\label{eigen}
	|K(t, x,y)|\le C(1+|t|)^{-(\frac{n}{2m}-2)}(1+|t|^{-\frac{n}{2 m}})\left(1+|t|^{-\frac{1}{2 m}}|x-y|\right)^{-\frac{n(m-1)}{2 m-1}},\quad t\neq0,\,x,y\in\mathbb{R}^n.
\end{equation}
\end{theorem}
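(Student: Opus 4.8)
The plan is to follow the low/high energy splitting \eqref{split} and estimate each piece separately, exploiting the fact that in the high-dimensional regime $n>4m$ there are no nontrivial zero energy resonances. For the \emph{low energy part} $e^{-\i tH}\chi(H)P_{ac}(H)$, I would start from the symmetric resolvent identity and the limiting absorption principle to express the spectral measure $\d E_\lambda$ near $\lambda=0$ in terms of the free resolvent $R_0((-\Delta)^m;\lambda)=(( -\Delta)^m-\lambda)^{-1}$ and the operator $M(\lambda)=U+vR_0(\lambda)v$ (with $V=v\cdot U\cdot v$, $v=|V|^{1/2}$). Because $n>4m$, the free resolvent kernel and its $\lambda$-derivatives are bounded near zero after extracting the factor $\lambda^{(n-2m)/2m}$ coming from the odd-dimensional branch, and $M(0)$ is invertible in the regular case (while in the eigenvalue case one inverts on the complementary subspace via the Feshbach/Jensen--Kato argument, leaving a $\lambda^{-?}$-type singularity controlled by the eigenprojection, which produces the $(1+|t|)^{-(n/2m-2)}$ loss and the factor $\sigma=2$). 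The upshot is that $\d E_\lambda\chi(\lambda)$ has an asymptotic expansion in $\lambda$ with coefficients that are integral operators with kernels decaying in $|x-y|$; plugging this into $\int_0^\infty e^{-\i t\lambda}\chi(\lambda)\,\d E_\lambda$ and doing stationary/non-stationary phase in $\lambda$ (or rather $\lambda^{1/2m}=s$) yields the stated bound with the oscillatory factor $(1+|t|^{-1/2m}|x-y|)^{-n(m-1)/(2m-1)}$, which is exactly the free dispersive profile for $e^{-\i t(-\Delta)^m}$.

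For the \emph{high energy part} $e^{-\i tH}\tilde\chi(H)P_{ac}(H)$, the strategy is to iterate the resolvent identity
\[
R_V(\lambda)=R_0(\lambda)-R_0(\lambda)VR_0(\lambda)+\cdots+(-1)^{N}R_0(\lambda)(VR_0(\lambda))^{N-1}+(-1)^NR_0(\lambda)(VR_0(\lambda))^{N-1}VR_V(\lambda),
\]
so that $\d E_\lambda=\frac{1}{2\pi i}(R_V(\lambda+i0)-R_V(\lambda-i0))\,\d\lambda$ splits into finitely many explicit Born terms plus a remainder. Each Born term $R_0(VR_0)^k$ contributes an oscillatory integral $\int e^{-\i t\lambda}\tilde\chi(\lambda)\partial_\lambda^{?}[\text{kernel}]\,\d\lambda$; integrating by parts in $\lambda$ repeatedly trades decay in $t$ for $\lambda$-derivatives hitting the free resolvent kernels and $V$. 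This is where the regularity hypothesis $V\in C^{(n+1)/2-2m}$ enters: each $\lambda$-derivative that lands on a product $V R_0(\lambda)$ and gets transferred (via integration by parts in the spatial convolution variables) onto $V$ costs one derivative of $V$, and one needs roughly $(n+1)/2-2m$ such operations to gain enough $t$-decay and spatial localization, matching the free estimate. For the remainder term I would use the high-energy limiting absorption principle (resolvent bounds in weighted $L^2$ with gain $\lambda^{-1/2m}$ per resolvent, uniform down to the essential spectrum thanks to the no-embedded-eigenvalue assumption) together with a $TT^*$/almost-orthogonality argument in $\lambda$ to sum the $\lambda$-localized pieces.

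The main obstacle is the high energy analysis, specifically bookkeeping the interplay between the number of available $\lambda$-integrations by parts, the number of spatial integrations by parts needed to produce the sharp spatial profile $(1+|t|^{-1/2m}|x-y|)^{-n(m-1)/(2m-1)}$ (which is genuinely worse than Gaussian because $(-\Delta)^m$ for $m>1$ has degenerate phase $|\xi|^{2m}$), and the limited smoothness of $V$. The free kernel of $e^{-\i t(-\Delta)^m}$ already requires a careful stationary phase analysis with the phase $x\cdot\xi - t|\xi|^{2m}$ whose Hessian degenerates on the light cone $|x|\sim|t|^{(2m-1)/2m}$, giving the $n(m-1)/(2m-1)$ exponent via van der Corput-type estimates; one must show this survives the perturbation, i.e. that the Born series terms and the remainder inherit the same profile. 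Carefully choosing $N$ (the number of Born iterations) and distributing derivatives so that the decay hypotheses \eqref{derest} and $|V|\lesssim\langle x\rangle^{-(n+2)-}$ exactly close the weighted estimates is the delicate combinatorial heart of the argument; I expect the decay exponent $\frac{3n+1}{2}-2m$ in \eqref{derest} to be dictated precisely by needing $\langle x\rangle^{-(n/2)-}$-type weights on each of two resolvent factors plus the $(n/2+1)+$ weight to absorb the spatial-localization loss.
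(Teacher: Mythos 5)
Your high-level plan (low/high energy splitting via \eqref{split}, symmetric resolvent identity and Jensen--Kato type expansion of $(M^\pm(\lambda))^{-1}$ near $\lambda=0$ for the low energy part, iterated Born expansion for the high energy part, with remainder controlled by the limiting absorption principle for large $N$) does coincide with the paper's architecture. The low-energy discussion is essentially correct in outline, and the identification of the eigenvalue case as contributing a $\lambda^{-2m}$ singularity (leading to the $(1+|t|)^{-(n/2m-2)}$ loss) matches Theorem~\ref{thm3.4}.

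However, there is a genuine gap in the high-energy part, and it is exactly at the place you yourself flag as ``the delicate combinatorial heart.'' Your mechanism --- integrating by parts in $\lambda$ and then somehow transferring $\lambda$-derivatives onto $V$ via spatial integration by parts, with the closing-up a matter of weight bookkeeping --- does not reflect what actually has to be done or what actually goes wrong. The paper integrates by parts directly in the spatial variables $x_1,\dots,x_k$ against the oscillatory factor $e^{\pm\i\lambda X}$ (with $X=|r_0|+\dots+|r_k|$), since $\nabla_{x_i}e^{\i\lambda X}=-\i\lambda\,e^{\i\lambda X}E_{x_{i-1}x_ix_ix_{i+1}}$ produces a factor $\lambda^{-1}$ per integration by parts. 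This is the correct way to kill the growth $\lambda^{k(\frac{n+1}{2}-2m)}$ coming from the $k$ free resolvent kernels, and it is indeed where $V\in C^{\frac{n+1}{2}-2m}$ enters. But the crucial point you miss is that each such integration by parts introduces a \emph{line singularity} $|E_{x_{i-1}x_ix_ix_{i+1}}|^{-1}$ (singular when $x_i$ lies on the segment $[x_{i-1},x_{i+1}]$), and that repeated integration by parts produces complicated mixtures of point singularities $|r_i|^{-a_i}$ and line singularities $|E_{\eta_j,j}|^{-q_j}$. The entire technical content of Section~\ref{section5} --- the integration-by-parts scheme of Proposition~\ref{stepmuprop}, the reduction of the clusters $\|F_i\|^{-p_i}$ to products $\prod_j|E_{\iota_j,j}|^{-q_j}$ in Proposition~\ref{lmlinereduction}, the ``admissibility'' structure of the index set $\{(\iota_j,j)\}$ in Definition~\ref{defadmissible} and Proposition~\ref{lmadmissible}, and the integral estimates with two-type line singularities Proposition~\ref{twoline} and Lemmas~\ref{propIkest1}--\ref{propIkest3} --- exists precisely to show that these singular integrals converge. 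Without identifying the line singularities and exploiting the admissibility structure, the scheme cannot be closed; there is no obvious weight assignment that works, and in dimension $n$ with $k$ resolvents there are \emph{many} interacting line singularities, not merely two. This is exactly the obstruction noted in \cite{EG10} for $m=1$, $n\ge9$, and resolving it is the paper's main contribution.

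A secondary, smaller point: your sketch for the high-energy remainder invokes a $TT^*$/almost-orthogonality summation; the paper instead estimates the remainder $\Omega_{K,r}^{\pm,high}$ by direct kernel bounds (for $K$ large, following \cite{CHHZ}), which is simpler given the weighted resolvent control available. Also, the paper further decomposes each $\Omega_k^{high}$ into a near-cone piece $\Omega_k^{high,1}$ (on $\{X\le\delta T\}$, where no integration by parts is needed and $V\in L^\infty$ suffices) and a far-cone piece $\Omega_k^{high,2}$ (on $\{X\ge\delta T\}$, where the integration-by-parts scheme is invoked). Your sketch does not draw this distinction, which matters because the line singularities are only integrable against the far-cone weight $X^{\frac{n-1}{2}}$ or $X^{n-L_k-2m}$ present in $\Omega_k^{high,2}$.
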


We remark that the distinction by the zero energy of $H$ is completely due to the estimate of the low energy part $e^{-\i tH}\chi(H)P_{ac}(H)$ where \eqref{derest} will not be used actually. When the zero energy of $H$ is regular, the estimate \eqref{regular} coincides with the free case $V\equiv0$ (see \cite{Mi}), and it immediately implies the dispersive bound
\begin{equation}\label{highdispersive}
	\|e^{-\i tH}P_{ac}(H)\|_{L^1-L^\infty}\lesssim|t|^{-\frac{n}{2m}},
\end{equation}
but the converse is not true if $m>1$. We also note that when $0$ is an eigenvalue of $H$, the estimate \eqref{eigen} implies the dispersive bound
\begin{equation*}
	\|e^{-\i tH}P_{ac}(H)\|_{L^1-L^\infty}\lesssim(1+|t|)^{-(\frac{n}{2m}-2)}(1+|t|^{-\frac{n}{2 m}}),
\end{equation*}
and this coincides with the result in Goldberg-Green \cite{GG15} for the second order case $m=1$ where better long time decay was also discussed if the zero eigenspace has more vanishing structure.

The main focus of this paper is to show how the regularity of $V$ plays a role in the estimate of the high energy part $e^{-\i tH}\tilde{\chi}(H)P_{ac}(H)$, which leads to the assumptions $V\in C^{\frac{n+1}{2}-2m}(\mathbb{R}^n)$ and \eqref{derest} in Theorem \ref{thmmain}. The fact that in dimensions $n>4m$, the regularity of $V$ may be needed was first found in the second order case $m=1$, i.e. the dispersive estimate of Schr\"{o}dinger equations:
\begin{equation}\label{seconddispersiveest}
	\|e^{-\i tH}P_{ac}(H)\|_{L^1-L^\infty}\lesssim|t|^{-\frac n2},\quad H=-\Delta+V.
\end{equation}
In Journ\'{e}-Soffer-Sogge  \cite{JSS},  the authors  established for the first time the dispersive estimate  \eqref{seconddispersiveest} for $n\geq3$ under the regularity requirement $\hat{V}\in L^1$. Here $\hat{V}$ represents the Fourier transform of $V$. In fact, the regularity of $V$ is necessary when $n>3$, because Goldberg-Visan \cite{GV} has shown that there exists compactly supported real-valued $V\in C^{\frac{n-3}{2}-}(\mathbb{R}^n)$ such that estimate \eqref{seconddispersiveest} fails, while Erdo\v{g}an-Green \cite{EG10} later proved that \eqref{seconddispersiveest} holds with $C^{\frac{n-3}{2}}$ decaying real-valued potentials when $n=5,7$ (assuming zero energy of $H$ is regular). It seems that when $n\geq9$ is odd, the dispersive estimate \eqref{seconddispersiveest} with $C^{\frac{n-3}{2}}$ potential is still not known, but Erdo\v{g}an-Green \cite{EG10} gives a brief technical discussion for the possibility and difficulty of proving such result.

In the higher order case $m>1$, when $n>4m$ and $n$ is odd, to the authors' best knowledge, the only existing works Erdo\v{g}an-Green \cite{EG22,EG23} that are able to obtain the dispersive bound \eqref{highdispersive} have to use the $L^1$ and $L^\infty$ boundedness of the wave operators, where the regularity condition on $V$ were put in the form
\begin{equation}
	\|\mathscr{F}(\langle\cdot\rangle^\sigma V(\cdot))\|_{L^\frac{n-1-\delta}{n-2m-\delta}}\lesssim1,\quad \sigma>\mbox{$\frac{2n-4m}{n-1-\delta}+\delta$},\,0<\delta<<1,
\end{equation}
which roughly means that $V$ is required to have more than $\frac{n}{n-1}(\frac{n+1}{2}-2m)$ reasonable derivatives. We mention that the study of $L^p$ boundedness of the wave operators is technically quite similar to the study of dispersive estimate, see \cite{EGL,GG21,MWY,GY} for example. 

It seems not clear now  what regularity condition should be the sharp one to imply dispersive estimate \eqref{highdispersive}, however when $n>4m$, Erdo\v{g}an-Goldberg-Green \cite{EGG23} has constructed compactly supported $V\in C^\alpha(\mathbb{R}^n)$ for every $\alpha\in[0,\frac{n+1}{2}-2m)$ such that the wave operators for $H=(-\Delta)^m+V$ are not bounded on $L^\infty(\mathbb{R}^n)$, and this may be seen as a circumstantial evidence that indicates the sharpness of our assumption $V\in C^{\frac{n+1}{2}-2m}(\mathbb{R}^n)$ in the even stronger result Theorem \ref{thmmain} (in high odd dimensions) than the dispersive estimate. 

As discussed in \cite{CHHZ}, the immediate applications of Theorem \ref{thmmain} and its proof are the $L^p-L^q$ type estimate and smoothing estimate for $e^{-\i tH}P_{ac}(H)$, and we only phrase them without repeating the proofs.

\begin{proposition}
Under the assumption of Theorem \ref{thmmain}, we have the following.

\noindent\emph{(\romannumeral1)} It follows that
\begin{equation*}
	\|e^{-\i tH}P_{ac}(H)\|_{L^p_*-L^q_*}\lesssim\begin{cases}
		|t|^{-\frac{n}{2m}(\frac{1}{p}-\frac{1}{q})},\quad&\text{if $0$ is not an eigenvalue of $H$},\\
		(1+|t|)^{2(\frac 2p-1)}|t|^{-\frac{n}{2m}(\frac{1}{p}-\frac{1}{q})},\quad&\text{if $0$ is an eigenvalue of $H$},
	\end{cases}
\end{equation*}
where
\begin{equation*}
	L^p_*-L^q_*=
	\begin{cases}
		L^1-L^{\tau_m,\infty}\, \text{or}\, H^1-L^{\tau_m},&\text{if}\,\, (p,q)=(1, \tau_m),\\[4pt]
		L^{\tau'_m,1}-L^\infty\,  \text{or}\, L^{\tau_m'}-\text{BMO},&\text{if}\,\,(p,q)=(\tau'_m,\infty),\\[4pt]
		L^p-L^q,&\text{otherwise},
	\end{cases}
\end{equation*}
$\tau_m=\frac{2m-1}{m-1}$, $\tau_m'=\frac{2m-1}{m}$, $H^1$ is the Hardy space on $\mathbb{R}^n$, BMO is the space of functions with bounded mean oscillation on $\mathbb{R}^n$, and $(\frac 1q,\frac 1q)$ lies in the closed quadrilateral ABCD explained in Figure \ref{fig1}.

\noindent\emph{(\romannumeral2)} If $\alpha\in[0,n(m-1)]$, denoted by $K_{\alpha}(t,x,y)$ the kernel of $H^\frac{\alpha}{2m} e^{-\i tH}P_{ac}(H)$, then
\begin{equation*}
	\begin{split}
		&|K_\alpha(t,x,y)|\\
		\lesssim&\begin{cases}
			|t|^{-\frac{n+\alpha}{2m}}\left(1+|t|^{-\frac{1}{2m}}|x-y|\right)^{-\frac{n(m-1)-\alpha}{2m-1}},\quad&\text{if $0$ is not an eigenvalue of $H$,}\\
			(1+|t|)^{-\frac{1+\alpha}{2m}}(1+|t|^{-\frac{n+\alpha}{2m}})\left(1+|t|^{-\frac{1}{2m}}|x-y|\right)^{-\frac{n(m-1)-\alpha}{2m-1}},\quad&\text{if $0$ is an eigenvalue of $H$.}
		\end{cases}
	\end{split}
\end{equation*}

\begin{figure}[h]
	\centering
	\begin{tikzpicture}[scale=5.0]
		% Draw axes
		\draw [<->,] (0,1.14) node (yaxis) [right] {$\frac{1}{q}$}
		|- (1.24,0) node (xaxis) [right] {$\frac{1}{p}$};

		%\draw[color=orange] plot(\r,\r) node[above]{$\frac{1}{s}=\frac{n-1}{n+1}\frac{1}{r}$}
		% Draw two intersecting lines -- (0.56,0.444) coordinate (a_2)node[below=1.0mm]{\color{blue}\Large{A}} node[above]{}

		\draw (0.80,0) coordinate (a_1)node[below]{\color{blue}{\small{$D=(\frac{1}{\tau_m'},0)$}}};
		\draw[dashed](1,0) coordinate (a_3)node[below=3.5mm] [right=0.1mm] {\small{\color{blue}{C=(1,0)}}} -- (1,1) coordinate (a_4);
		\draw[dashed](1,0) coordinate (a_3) -- (0,1) coordinate (a_5)node[left=0.9mm] {$1$};
		\draw[dashed](1,1) coordinate (a_3) -- (0,1) coordinate (a_5);
		\draw (1,0.2) coordinate (a_5)node[right]{\color{blue}{$B=(1,\frac{1}{\tau_m})$}} -- (0.56,0.444) coordinate (a_2);
		
		\fill[blue]  (0.5,0.5)coordinate (a_6) circle (0.2pt) node [above=3.3mm] [right=0.1mm] {$(\frac12, \frac12)$} node[above=0.3mm]{{A}} ;
		\fill[blue]  (0.80,0) circle (0.2pt);
		\fill[blue] (1,0) circle (0.2pt);
		\fill[blue]  (0.56,0.444) circle (0.2pt);
		\fill[blue]  (1,0.2) circle (0.2pt);
		%    \fill[blue]  (0.4,0.4) circle (0.2pt);
		%    % Draw lines indicating intersection with y and x axis. Here we use
		%    % the perpendicular coordinate system
		%   \draw[dashed] (yaxis |- c) node[above=0.1mm, left] {${\frac{(n-1)^2}{2n(n+1)}}$}
		%        -| (xaxis -| c) node[below] {$\Large{\frac{n+1}{2n}}$};
		% \draw[dashed] (yaxis |- a_6);
		% node[left=0.9mm]{${\frac{1}{2}}$}
		%  -|(xaxis -| a_2)
		% node[below]{$\frac{1}{2}$};
		
		%\draw[dashed] (yaxis |- a_2)node[left=0.9mm] {${\frac{1}{p_0'}}$}
		%   -|(xaxis -| a_2) node[below]  {$\frac{1}{p_0}$};
		\draw[fill=gray!20]  (0.5,0.5)--(0.80,0) -- (1,0) -- (1,0.2) -- cycle;
		%\fill[grey](coordinate a_1 coordinate a_2|)
		\draw [densely dotted] (0.5,0.5) -- (1,0.2);
		\draw [densely dotted] (0.5,0.5) -- (0.8,0);
		%\draw[dashed] ( a_6 -| a_5);
		%   -|(xaxis -| d) ;
		%    % Draw a dot to indicate intersection point
		%    \fill[red] (c) circle (0.2pt) node[above=0.05mm]{\color{blue}\Large{A}};
		%\caption{\Large{$S =(\frac{1}{2},\frac{n-1}{2(n+1)}), \quad A=(\frac{n+1}{2n},\frac{(n-1)^2}{2n(n+1)})$}\\
		\end{tikzpicture}
		\caption{The  $L^p-L^q$ estimates}\label{fig1}
	\end{figure}
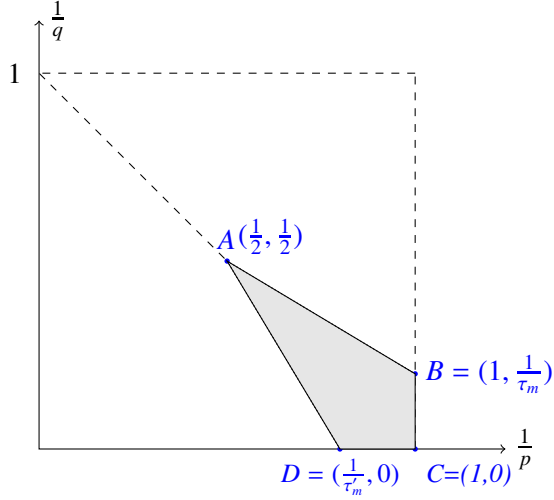

\end{proposition}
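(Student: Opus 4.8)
The plan is to derive both statements from Theorem~\ref{thmmain} and the estimates obtained in its proof, with no new ingredient: part~(\romannumeral1) is a purely functional--analytic consequence of the pointwise kernel bound (the case $\alpha=0$ of part~(\romannumeral2)), while part~(\romannumeral2) follows by rerunning the proof of Theorem~\ref{thmmain} after inserting one extra spectral factor. For part~(\romannumeral1), the bound of Theorem~\ref{thmmain} is convolution--dominated, $|K(t,x,y)|\lesssim w(t)\,\rho(|t|^{-1/2m}|x-y|)$, with $\rho(r)=(1+r)^{-\beta}$, $\beta=\frac{n(m-1)}{2m-1}=\frac n{\tau_m}$, and with $w(t)=|t|^{-n/2m}$ in the regular case and $w(t)=(1+|t|)^{-(\frac n{2m}-2)}(1+|t|^{-\frac n{2m}})$ in the eigenvalue case. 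Young's inequality then gives $\|e^{-\i tH}P_{ac}(H)\|_{L^p\to L^q}\lesssim w(t)\,\|\rho(|t|^{-1/2m}|\cdot|)\|_{L^r}$ with $\frac1r=1-(\frac1p-\frac1q)$, and since $\|\rho(|t|^{-1/2m}|\cdot|)\|_{L^r}\simeq|t|^{n/(2mr)}$ whenever $r>\tau_m$, a direct computation recovers the asserted bound at every $(\frac1p,\frac1q)$ with $\frac1p-\frac1q>\frac1{\tau_m'}$, that is, on the closed triangle $BCD$ with the segment $BD$ removed. On $BD$, where $r=\tau_m$ and only $\rho\in L^{\tau_m,\infty}$ holds, the weak-type Young inequalities $L^1*L^{\tau_m,\infty}\hookrightarrow L^{\tau_m,\infty}$ and $L^{\tau_m',1}*L^{\tau_m,\infty}\hookrightarrow L^\infty$ supply the Lorentz endpoints at $B$ and $D$, while the Hardy/BMO versions come from splitting the dominating kernel into a bounded, essentially compactly supported part on $\{|x-y|\lesssim|t|^{1/2m}\}$ (which maps $L^1\to L^{\tau_m}$ by ordinary Young) and a tail $\sim|t|^{(\beta-n)/2m}|x-y|^{-n/\tau_m}$, a multiple of the Riesz potential of order $n/\tau_m'$, which maps $H^1\to L^{\tau_m}$ by the endpoint fractional integration estimate and, dually, $L^{\tau_m'}\to\mathrm{BMO}$. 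Finally $\|e^{-\i tH}P_{ac}(H)\|_{L^2\to L^2}\le1$ gives the point $A=(\tfrac12,\tfrac12)$, and interpolating $A$ against the bounds just established on the segment $BD$ fills the triangle $ABD$, hence all of $ABCD$; since the logarithm of the target bound is an affine function of $(\frac1p,\frac1q)$ matching the corner values, (Riesz--Thorin and real) interpolation returns exactly the stated exponents of $|t|$ and, in the eigenvalue case, of $1+|t|$.

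For part~(\romannumeral2), insert $H^{\alpha/2m}$ into the spectral decomposition~\eqref{split}: this multiplies the spectral integrand by $\lambda^{\alpha/2m}$, equivalently by $\mu^\alpha$ in the momentum variable $\mu$ (with $\lambda=\mu^{2m}$) used in the proof of Theorem~\ref{thmmain}. For the low-energy part $H^{\alpha/2m}\chi(H)P_{ac}(H)$ this extra amplitude is bounded, indeed vanishing, near $\lambda=0$, so the zero-energy asymptotics of $\d E_\lambda$ are unaffected and the low-energy analysis of Theorem~\ref{thmmain} carries over unchanged; in the eigenvalue case it merely converts the $1+|t|$ weight into $(1+|t|)^{-(1+\alpha)/2m}$. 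For the high-energy part $H^{\alpha/2m}\tilde\chi(H)P_{ac}(H)$ one reruns the resolvent expansion and oscillatory-integral estimates of Theorem~\ref{thmmain} with the factor $\mu^\alpha$ present, using the regularity hypothesis~\eqref{derest} on $V$ exactly as before; the result is the free-type bound $|t|^{-(n+\alpha)/2m}(1+|t|^{-1/2m}|x-y|)^{-\frac{n(m-1)-\alpha}{2m-1}}$, whose two exponents are precisely those produced by rescaling $\xi\mapsto|t|^{-1/2m}\xi$ in the free model integral $\int_{\R^n}|\xi|^\alpha e^{\i(x\cdot\xi-t|\xi|^{2m})}\,\d\xi$ (cf.\ \cite{Mi}). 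This is also what forces $\alpha\in[0,n(m-1)]$: for larger $\alpha$ the exponent $\frac{n(m-1)-\alpha}{2m-1}$ turns negative and the spatial factor ceases to decay. Adding the low- and high-energy contributions gives the stated bound on $K_\alpha$.

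The one point that is not entirely mechanical is the high-energy estimate in part~(\romannumeral2): one has to check that, once the growing amplitude $\mu^\alpha$ is inserted, the finite-regularity integration-by-parts scheme used for $\tilde\chi(H)e^{-\i tH}P_{ac}(H)$ in Theorem~\ref{thmmain} still closes and still delivers the time decay $|t|^{-(n+\alpha)/2m}$ uniformly for $\alpha\in[0,n(m-1)]$ --- in other words, that the $\frac{n+1}2-2m$ derivatives of $V$ furnished by~\eqref{derest}, which were exactly enough when $\alpha=0$, remain enough after multiplication by $\mu^\alpha$ (the extra decay coming, as in the free model above, from the stationary point $|\xi|\sim(|x-y|/|t|)^{1/(2m-1)}$, not from additional smoothness). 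Everything else --- Young's and the weak-Young inequalities, the Riesz-potential endpoint, the interpolation in part~(\romannumeral1), and the low-energy bookkeeping in part~(\romannumeral2) --- is classical, which is why we only state the proposition and omit the proof.
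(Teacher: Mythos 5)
The paper does not actually prove this proposition; it cites the companion paper \cite{CHHZ} (``we only phrase them without repeating the proofs''), so there is no proof in this paper against which to compare. Judged on its own, your outline for part~(\romannumeral1) follows the natural route (Young / weak Young from the pointwise bound, the trivial $L^2$ bound at $A$, Riesz--Thorin to fill in), and in the regular case it does close, since $w(t)=|t|^{-n/2m}$ gives $w(t)|t|^{n/(2mr)}=|t|^{-\frac{n}{2m}(\frac1p-\frac1q)}$ everywhere and the endpoint/Lorentz bookkeeping is as you describe.

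In the eigenvalue case, however, there is a concrete gap at the corner $D=(\frac1{\tau_m'},0)$. Your own estimate from the kernel gives, for $|t|\gtrsim1$, the weak-Young bound $w(t)\|\rho(|t|^{-1/2m}\cdot)\|_{L^{\tau_m,\infty}}\sim|t|^{2-n/(2m\tau_m')}$ at \emph{both} $B$ and $D$; by duality ($e^{-\i tH}$ is symmetric, and the bound is even in $t$) the $L^{\tau_m',1}\to L^\infty$ norm equals the $L^1\to L^{\tau_m,\infty}$ norm, so no improvement is available from duality either. But the stated bound at $D$ is $(1+|t|)^{2(\frac2{\tau_m'}-1)}|t|^{-n/(2m\tau_m')}=(1+|t|)^{\frac{2}{2m-1}}|t|^{-n/(2m\tau_m')}$, which for $m>1$ is strictly smaller (better) than $|t|^{2-n/(2m\tau_m')}$. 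Interpolating $A$ against $B$ and $D$ cannot help, because by Riesz--Thorin the interpolated exponent at a point of $ABD$ is $(\theta_B+\theta_D)\cdot2=2\tau_m'(\frac1p-\frac1q)$, which at $D$ is again $2$, not $\frac{2}{2m-1}$. Thus your assertion that ``the logarithm of the target bound is an affine function of $(\frac1p,\frac1q)$ matching the corner values'' fails at $D$: the target formula is affine, but your corner bound at $D$ does not match it. The stated exponent at $D$ cannot be extracted from Theorem~\ref{thmmain}'s pointwise kernel bound together with the $L^2$ bound alone; this is precisely why the paper attributes the proposition to ``Theorem~\ref{thmmain} \emph{and its proof}'' --- the proof supplies the separate low-/high-energy decomposition, and the low-energy operator (spectrally localized near $\lambda=0$ and where the $(1+|t|)^{2-n/2m}$ loss lives) admits sharper $L^p\to L^q$ mapping than the global kernel convolution estimate, while the high-energy piece has the regular $|t|^{-n/2m}$ bound. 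Your argument as written does not engage with this and hence does not reach the claimed quadrilateral.

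For part~(\romannumeral2), your strategy (reinserting $\mu^\alpha$ and rerunning the low- and high-energy analyses) is the intended one, but the statement ``merely converts the $1+|t|$ weight into $(1+|t|)^{-(1+\alpha)/2m}$'' glosses over the actual bookkeeping in Lemma~\ref{lemmaA.2}/Lemma~\ref{lemmaA.222}: the parameter $b$ in those lemmas is what shifts by $\alpha$, and one has to check that $b+\alpha$ stays in the admissible range $(-1,2Nm-1)$ uniformly for $\alpha\in[0,n(m-1)]$ and that the spatial exponent $\mu_b$ then produces $\frac{n(m-1)-\alpha}{2m-1}$. You flag the high-energy integration-by-parts as the nontrivial step but offer no verification that Proposition~\ref{stepmuprop} and Proposition~\ref{intmainest} go through unchanged when the symbol class carries the extra power of $\lambda$. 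As a proof proposal this is an outline of the right shape, not a complete argument.
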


\subsection{Technical difficulties in  dimensions $n>4m$}\label{section1.2}\

In Erdo\v{g}an-Green \cite{EG10} for the dispersive estimate with $C^{\frac{n-3}{2}}$ potentials in the case $m=1$ and $n=5,7$, the authors pioneered a delicate integration by parts scheme, where sophisticated singularities are involved. For a comparison, we shortly discuss the counterpart in this paper which develops the ideas in \cite{EG10} for $m\geq1$ and all odd $n>4m$.

%For a comparison to the techniques established in Erdo\v{g}an-Green \cite{EG10} for the dispersive estimate with $C^{\frac{n-3}{2}}$ potentials in the case $m=1$ and $n=5,7$, we shortly discuss the counterpart in this paper which develops the ideas in \cite{EG10} for $m\geq1$ and all odd $n>4m$.  Diversion on a technical point

We will encounter an integration by parts scheme for some specific oscillatory integrals, where a special case loosely speaking appears in the following form
\begin{equation}\label{toycase}
		\lambda^{k(\frac{n+1}{2}-2m)}\int_{\mathbb{R}^{kn}}e^{\mathrm{i}\lambda (|x_0-x_1|+\cdots+|x_k-x_{k+1}|)}\mbox{$\prod\limits_{i=0}^k|x_i-x_{i+1}|^{-\frac{n-1}{2}}$}\mbox{$\prod\limits_{i=1}^kV(x_i)$}\d x_1\cdots \d x_k,
\end{equation}
where $k$ is any fixed positive integer, $x_0,x_{k+1}\in\mathbb{R}^n$ are fixed parameters, $V$ has certain regularity and decay, and $\lambda^{k(\frac{n+1}{2}-2m)}$ comes from the resolvent expansion where $k$ many free resolvents $R^\pm(\lambda^{2m})=\lim_{\epsilon\downarrow0}((-\Delta)^m-\lambda^{2m}\mp \i0)$ are involved. In the kernel of each resolvent (see \eqref{eq2.8}), there is a term with the highest power in $\lambda$ presented as a finite sum of
\begin{equation*}
	\lambda^{\frac{n+1}{2}-2m}\frac{e^{\i e^{\i\frac{\pi k}{m}}\lambda|x-y|}}{|x-y|^{\frac{n-1}{2}}}
\end{equation*}
which provides the growth $\lambda^{\frac{n+1}{2}-2m}$. For our final purpose of fundamental solution estimate, we need to eliminate the total growth $\lambda^{k(\frac{n+1}{2}-2m)}$, and a natural choice is to integrate by parts $k(\frac{n+1}{2}-2m)$ times in \eqref{toycase}. Averagely, we have to perform integration by parts $\frac{n+1}{2}-2m$ times in each $x_i$ variable, so $V\in C^{\frac{n+1}{2}-2m}$ seems to be a proper condition. 
% with the minimal consumption of the regularity of $V$

However, integration by parts may introduce complicated singularities. Denoted by $E_{x_{i-1}x_{i}x_ix_{i+1}}=\frac{x_{i-1}-x_i}{|x_{i-1}-x_i|}-\frac{x_i-x_{i+1}}{|x_i-x_{i+1}|}$, note that
\begin{equation}
	\nabla_{x_i}e^{\mathrm{i}\lambda (|x_0-x_1|+\cdots+|x_k-x_{k+1}|)}=-\i\lambda e^{\mathrm{i}\lambda (|x_0-x_1|+\cdots+|x_k-x_{k+1}|)}E_{x_{i-1}x_{i}x_ix_{i+1}},
\end{equation}
then integration by parts in $x_i$ naturally introduces a type of singularity at the zeros of $E_{x_{i-1}x_{i}x_ix_{i+1}}$, that is when $x_i$ lies in the line segment $[x_{i-1},x_{i+1}]$, and we will call $|E_{x_{i-1}x_{i}x_ix_{i+1}}|^{-1}$ a line singularity. It is not hard to show 
\begin{equation}
	\begin{split}
		\nabla_{x_i}|E_{x_{i-1}x_{i}x_ix_{i+1}}|^{-1}=|E_{x_{i-1}x_{i}x_ix_{i+1}}|^{-2}O\left(|x_{i-1}-x_i|^{-1}+|x_i-x_{i+1}|^{-1}\right),\\
		\nabla_{x_i}|x_{i-1}-x_i|^{-1}=O\left(|x_{i-1}-x_i|^{-2}\right),\quad \nabla_{x_i}|x_{i}-x_{i+1}|^{-1}=O\left(|x_{i}-x_{i+1}|^{-2}\right),
	\end{split}
\end{equation}
then we see integration by parts in $x_i$ each time introduces terms with at most two more line singularities $|E_{x_{i-1}x_{i}x_ix_{i+1}}|^{-2}$, and one more point singularity $|x_{i-1}-x_i|^{-1}$ or $|x_{i}-x_{i+1}|^{-1}$. The more complicated issue is that, if we further integrate by parts in $x_{i+1}$, not only the new line singularity $|E_{x_{i}x_{i+1}x_{i+1}x_{i+2}}|^{-1}$ will be introduced, but differentiation in $x_{i+1}$ will also drop on and increase the previous line singularity $|E_{x_{i-1}x_{i}x_ix_{i+1}}|^{-1}$ and point singularity $|x_{i}-x_{i+1}|^{-1}$. 

Therefore, the way we perform integration by parts leads to estimates of integrals with some mixtures of line and point singularities, and finally, it is important to consider what types of such mixtures are integrable. In Erdo\v{g}an-Green \cite{EG10}, for the case of $m=1$, $n=5$, the problem was reduced to consider integrals with two different types of line singularities in the form
\begin{equation*}
	\int_{\mathbb{R}^n}\frac{\langle z\rangle^{-3-}\d z}{|x-z|^k|x-y|^l|E_{xzzw}|^{n-3}|E_{zwwy}|^{n-3}},
\end{equation*}
and in the case of $m=1$, $n=7$, they have to finally consider integrals with three different types of line singularities in the form
\begin{equation*}
	\int_{\mathbb{R}^n}\int_{\mathbb{R}^n}\frac{\langle z\rangle^{-3-}\langle w\rangle^{-3-}\d z\d w}{|x-z|^k|z-w|^l|w-y|^p|E_{xzzw}|^{n-3}|E_{zwwy}|^{n-3}|E_{zwyu}|^{n-3}}.
\end{equation*}
It was also expected in \cite{EG10} that in the general case of $m=1$, odd $n\geq9$, the similar consideration will lead to estimates for integrals which involve many different types of line singularities, but the analysis of such integrals should be quite different and more complicated.

In this paper for the much more general case $m\geq1$ and odd $n>4m$, what meets the expectation in \cite{EG10} is that, we do have to consider integrals with many different types of line singularities in the form of
\begin{equation}\label{manyline}
	\int_{\mathbb{R}^{kn}}\frac{\langle x_1\rangle^{-\beta_1-}\cdots\langle x_k\rangle^{-\beta_k-}}{|x_0-x_1|^{a_0}\cdots|x_k-x_{k+1}|^{a_k}}|E_{\eta_1,1}|^{-q_1}\cdots|E_{\eta_k,k}|^{-q_k}\d x_1\cdots \d x_k,
\end{equation}
where $E_{\eta_i,i}=E_{x_{\eta_i}x_{\eta_i-1}x_ix_{i+1}}$. Somewhat unexpected in \cite{EG10} however, we will reduce the estimate of \eqref{manyline} to estimating integrals with only two different types of line singularities in the form
\begin{equation}\label{two}
		\int_{\mathbb{R}^n}\frac{\langle|x-y|^{-k_1}\rangle\langle|y-z|^{-l_1}\rangle\langle y-y_0\rangle^{-\beta-}}{\langle x-y\rangle^{k_2}\langle y-z\rangle^{l_2}|E_{xyyz}|^p|E_{ww'xy}|^q}\d y,
\end{equation}
and this is due to the crucial fact that our strategy of performing integration by parts will give a specific structure of the indices $\{(\eta_j,j);\,j\in\mathbb{K}\}$ in \eqref{manyline}, which we will call "admissible" in Definition \ref{defadmissible}, and the admissibility of $\{(\eta_j,j);\,j\in\mathbb{K}\}$ allows us to always find a specific variable $x_\tau$, such that a successful estimate of the integral in $x_\tau$ by \eqref{two} will preserve the form of \eqref{manyline}.

\subsection{Plan of the paper}\

We  outline the strategy for proving Theorem \ref{thmmain}.
% Inspired by  previous  works on dispersive estimates for  Schrd\"{o}dinger equations, Nevertheless, what might go slightly beyond the expectation in
Our starting point is the Stone's formula
\begin{equation}\label{equ0.2}
	\langle e^{-\i tH}P_{ac}(H)f,\, g\rangle=\frac{1}{2\pi \i}\int_0^{\infty}e^{-\i t\lambda}\langle(R^+(\lambda)-R^-(\lambda))f,g\rangle \d\lambda,\quad f, g\in C_0^{\infty}(\mathbb{R}^n),
\end{equation}
where $R^\pm(\lambda):=(H-\lambda\mp \i0)^{-1}$. The assumption that $H$ has no positive eigenvalue allows us to split
\begin{align*}
 e^{-\i tH}P_{ac}(H)&=\frac{1}{2\pi i} \int_{0}^{+\infty} e^{-\i  t \lambda}\left(R^{+}(\lambda)-R^{-}(\lambda)\right) (\chi(\lambda)+\tilde{\chi}(\lambda))\, \d \lambda\\
  & :=e^{-\i tH}\chi(H)P_{ac}(H)+e^{-\i tH}\tilde{\chi}(H)P_{ac}(H),
\end{align*}
into low and high energy part, where $\chi(\lambda)+\tilde{\chi}(\lambda)\equiv1$, $\chi\in C_0^\infty((-\lambda_0^\frac{1}{2m},\lambda_0^\frac{1}{2m}))$ some $\lambda_0>0$, and $\chi(\lambda)=1$ when $\lambda\in((-(\frac{\lambda_0}{2})^\frac{1}{2m},(\frac{\lambda_0}{2})^\frac{1}{2m}))$. Theorem \ref{thmmain} follows immediately from the two theorems below.
%\begin{theorem}\label{theorem4.0}
\begin{euclid}
	Under the assumption of Theorem \ref{thmmain}, if $\lambda_{0}\in (0, 1)$ is small enough, then  $e^{-\i tH}\chi(H)P_{ac}(H)$ has integral kernel  $K_{L}(t,x,y)$ satisfying
\begin{equation*}\label{equ4.1.2.0}
		\left|K_{L}(t, x, y)\right| \lesssim (1+|t|)^{-(\frac{n}{2m}-\sigma)}(1+|t|^{-\frac{n}{2 m}})\left(1+|t|^{-\frac{1}{2 m}}|x-y|\right)^{-\frac{n(m-1)}{2 m-1}},\quad t\neq0,\,\,x,y\in\mathbb{R}^n,
	\end{equation*}
where $\sigma=2$ if $0$ is an eigenvalue of $H$, and $\sigma=0$ otherwise.
\end{euclid}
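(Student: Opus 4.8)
The plan is to obtain a detailed asymptotic expansion of the spectral measure $dE_\lambda$ as $\lambda\to 0$ and then insert it into Stone's formula \eqref{equ0.2}, localized by $\chi(\lambda)$. First I would use the factorization $R^\pm(\lambda)=R_0^\pm(\lambda)-R_0^\pm(\lambda)v(M^\pm(\lambda))^{-1}vR_0^\pm(\lambda)$, where $R_0^\pm(\lambda)=((-\Delta)^m-\lambda^{2m}\mp i0)^{-1}$, $v=|V|^{1/2}$, and $M^\pm(\lambda)=U+vR_0^\pm(\lambda)v$ with $U=\operatorname{sgn}V$. Since $n>4m$, the free resolvent kernel $R_0^\pm(\lambda)$ admits a Taylor-type expansion in $\lambda$ whose leading term is the (bounded, Hilbert–Schmidt under the decay hypothesis $|V|\lesssim\langle x\rangle^{-(n+2)-}$) operator $vG_0v$ with $G_0=(-\Delta)^{-m}$; the key point in high dimensions is that there is no $\lambda$-dependent singular (e.g. $\log\lambda$ or negative-power) term obstructing invertibility, so $0$ is either regular or a genuine eigenvalue, with no intermediate resonances. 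I would then invert $M^\pm(\lambda)$ near $\lambda=0$ by a Neumann series in the regular case, and by the symmetric resolvent identity / Feshbach–Schur (Jensen–Kato) reduction onto the zero eigenspace in the eigenvalue case, obtaining $(M^\pm(\lambda))^{-1}=\lambda^{-2m}\,S_1 D S_1 + (\text{bounded, Lipschitz in }\lambda)$, where $S_1$ is the projection onto the zero eigenspace and $D$ is a fixed operator.

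Next I would substitute these expansions back and split $K_L(t,x,y)$ into a free-like contribution plus perturbative contributions. The free-like term is handled exactly as in \cite{Mi} and gives the bound \eqref{regular} with the factor $|t|^{-n/2m}(1+|t|^{-1/2m}|x-y|)^{-n(m-1)/(2m-1)}$; the $(1+|t|)$-decay is irrelevant at low energy because of the cutoff $\chi$. For the perturbative terms I would perform the $\lambda$-integral $\int_0^\infty e^{-it\lambda}\chi(\lambda)(\cdots)\,d\lambda$ by non-stationary phase / repeated integration by parts in $\lambda$, exploiting that each surviving term is, after the expansion, of the form $\lambda^{\ell}\chi(\lambda)$ times a $\lambda$-dependent kernel whose $\lambda$-derivatives are controlled; this yields the time decay $(1+|t|)^{-(n/2m-\sigma)}$ with $\sigma=0$ (regular) or $\sigma=2$ (eigenvalue, the $\lambda^{-2m}$ in $(M^\pm)^{-1}$ costing exactly two powers relative to the regular case, consistent with Goldberg–Green \cite{GG15} when $m=1$). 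The spatial factor $(1+|t|^{-1/2m}|x-y|)^{-n(m-1)/(2m-1)}$ is produced, as in \cite{CHHZ}, by combining the oscillation $e^{i\lambda(|x-x_1|+\cdots+|x_k-y|)}$ coming from the free resolvent kernels with the $\lambda$-integration, after bounding the bad line-singularity free at low energy (the resolvent kernels here are only mildly singular on the diagonal for $n>4m$).

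The main obstacle I expect is \emph{not} the oscillatory analysis (which is mild at low energy because of $\chi$) but the careful bookkeeping needed to show that all perturbative kernels are \emph{absolutely integrable} in the intermediate variables $x_1,\dots,x_k$ uniformly in $x,y$, with constants summable over the number $k$ of resolvent factors, so that the Neumann/Feshbach series converges. This requires the decay $|V(x)|\lesssim\langle x\rangle^{-(n+2)-}$ to beat the slow decay $|x_i-x_{i+1}|^{-(n-2m)}$ of the iterated free resolvent kernels $G_0$, together with the operator bound $\|(M^\pm(\lambda))^{-1}\|\lesssim 1$ (resp. $\lesssim\lambda^{-2m}$) uniform on $\operatorname{supp}\chi$, which in turn rests on the absence of embedded or zero-energy resonances; the regularity hypothesis \eqref{derest} is, as remarked in the text, not used here. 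Once the uniform integrability and the $\lambda$-integration-by-parts bounds are in place, summing the geometric series in $k$ and combining the time and spatial factors gives the stated estimate for $K_L$.
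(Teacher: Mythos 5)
Your proposal correctly identifies the main ingredients: the symmetric resolvent identity through $M^\pm(\lambda)=U+vR_0^\pm(\lambda^{2m})v$, the Jensen--Kato type expansion of $(M^\pm(\lambda))^{-1}$ near zero (with $\lambda^{-2m}$ singular term in the eigenvalue case and no intermediate resonances for $n>4m$), the use of the oscillation $e^{\i\lambda(|x-x_1|+\cdots+|x_k-y|)}$ to produce the spatial factor via one-dimensional oscillatory integral bounds, and the observation that \eqref{derest} is not needed at low energy. This matches the skeleton of the paper's argument.

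There is, however, a genuine gap in the mechanism you propose: you want to ``sum the geometric series in $k$'' so that the ``Neumann/Feshbach series converges,'' i.e.\ you want an \emph{infinite} Born-type expansion with constants summable in the number $k$ of free resolvent factors. This cannot work under the stated hypotheses: the operator $VR_0^\pm(\lambda^{2m})$ (or $vR_0^\pm v$) is not a contraction for general $V$, so the series $\sum_{k\geq 0}(-1)^kR_0(VR_0)^k$ does not converge, and no amount of decay of $V$ will make the spatial constants summable in $k$. The paper avoids this by iterating the resolvent identity only a \emph{finite} number of times, $2N$ with $N>[\tfrac{n}{2m}]+2$ fixed, to write
$e^{-\i tH}\chi(H)P_{ac}(H)=\sum_{k=0}^{2N}\Omega_k^{low}-(\Omega_r^{+,low}-\Omega_r^{-,low})$
as in \eqref{eq4.54}. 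The first sum is finite and estimated term by term using \eqref{eq2.10}, \eqref{equ4.17}, Lemma \ref{lemmaA.2} and Lemma \ref{lem3.10}. The remainder is left with the full $(M^\pm(\lambda))^{-1}$ sandwiched between $N$ resolvents on each side, and it is estimated via the $L^2$-operator bounds of Theorem \ref{thm3.4} together with the $L^2_y$-norm estimates of Lemmas \ref{lemma4.6} and \ref{lemma4.9} (which supply the crucial $\langle x\rangle^{-\frac{n-1}{2}}$ weights needed for the spatial decay), not by a further infinite expansion. You will need to replace ``sum the geometric series'' by ``truncate at a fixed finite order and estimate the remainder through $(M^\pm)^{-1}$ directly.''

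A secondary imprecision: attributing $\sigma=2$ directly to ``the $\lambda^{-2m}$ costing two powers'' is not quite how the count works. The $\lambda^{-2m}$ singularity in $(M^\pm(\lambda))^{-1}$ by itself lowers the symbol class by one block of $2m$, giving a loss of one power of $|t|$ in $\Upsilon_1,\Upsilon_3$ (i.e.\ $\sigma=1$ there). The genuine loss of two powers is seen in $\Upsilon_2$, which involves $(M^+(\lambda))^{-1}-(M^-(\lambda))^{-1}$: the singular $\lambda^{-2m}$ parts cancel, but the error term in Theorem \ref{thm3.4}(ii) is only in $\mathfrak{S}^{n-6m}_{\frac{n+1}{2}}$ (rather than $\mathfrak{S}^{n-2m}_{\frac{n+1}{2}}$ in the regular case), which after multiplying by $\lambda^{2m-1}$ gives a symbol of order $n-4m-1$ and hence the decay $(1+|t|)^{-\frac{n}{2m}+2}$ in Case 1. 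Your heuristic lands on the right $\sigma=2$, but the source of the loss is the degradation of the remainder in the expansion, not the explicit $\lambda^{-2m}$ factor.
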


%\begin{theorem}\label{theorem4.1}
\begin{euclid1}\label{theorem4.1}
	Under the assumption of Theorem \ref{thmmain}, $e^{-\i tH}\tilde{\chi}(H)P_{ac}(H)$ (for any fixed $\lambda_0>0$)  has integral kernel $K_{H}(t,x,y)$ satisfying
\begin{equation*}\label{equ4.1.2.11}
		\left|K_{H}(t, x, y)\right| \lesssim |t|^{-\frac{n}{2 m}}\left(1+|t|^{-\frac{1}{2 m}}|x-y|\right)^{-\frac{n(m-1)}{2 m-1}},\quad t\neq0,\,\,x,y\in\mathbb{R}^n.
	\end{equation*}
%\end{theorem}
\end{euclid1}

In Section \ref{section2}, we give some technical preliminaries, including the kernels of the boundary values of the free resolvents $R_0^\pm(\lambda^{2m})=((-\Delta)^m-\lambda^{2m}\mp\i 0)^{-1}$, the expansions of the perturbed resolvents $R^\pm(\lambda^{2m})=(H-\lambda^{2m}\mp\i 0)^{-1}$, estimates for integrals with point and line singularities like \eqref{two}, and two  oscillatory integral estimates in one dimension.

In Section \ref{section3}, we prove the low energy part of Theorem \ref{thmmain}. We first decompose 
$$
e^{-\i tH}\chi( H)P_{ac}(H)=\sum_{k=0}^{2N}\Omega^{low}_{k}-\left(\Omega^{+, low}_{ r}-\Omega^{-, low}_{r}\right)
$$
via spectral theorem and an iteration of the resolvent identity. The estimate for $\Omega^{low}_{k}(t,x,y)$ is given in Section \ref{sec4.2.3}. The estimate for the remainder term
$(\Omega^{+, low}_{ r}-\Omega^{-, low}_{r})(t,x,y)$ is given in Section \ref{sec4.2.4}. We note that, in contrast to the situation when  $n<4m$, the
asymptotic expansions of $(U+vR_0^\pm(\lambda^{2m})v)^{-1}$ ($v(x)=|V(x)|^{\frac12}, \,\,U=\sgn V$) for small $\lambda>0$ are much simpler in the case $n>4m$, which has been treated in \cite{FSWY} (see Theorem \ref{thm3.4}).

In Section \ref{section4}, we prove the high energy part of Theorem \ref{thmmain} except for a particularly complicated part. Similar to the low energy part, we first decompose the high energy part $e^{-\i tH}\tilde{\chi}(H)P_{ac}(H)$, by a slightly different resolvent identity of $R^\pm(\lambda)$, into
\begin{equation*}
	e^{-\mathrm{i}tH}P_{ac}(H)\tilde{\chi}(H)=\sum_{k=0}^{2K-1}\Omega_k^{high}+\Omega_{K,r}^{+,high}-\Omega_{K,r}^{-,high}.
\end{equation*}
The estimates for $\Omega_{K,r}^{\pm,high}(t,x,y)$ can be proved if $K$ is chosen sufficiently large as already shown in \cite{CHHZ}, so we omit the proof. $\Omega_k^{high}(t,x,y)$ can be written as the oscillatory integral
\begin{equation*} \int_0^{+\infty}e^{\mathrm{i}t\lambda}\tilde{\chi}(\lambda)\left(\int_{\mathbb{R}^{kn}}\left(\mbox{$\prod\limits_{i=0}^kR_0^+(\lambda)(x_i-x_{i+1})-\prod\limits_{i=0}^kR_0^-(\lambda)(x_i-x_{i+1})$}\right)\mbox{$\prod\limits_{i=1}^kV(x_i)$}\d x_1\cdots\d x_k\right)\d\lambda,
\end{equation*}
where $R_0^\pm(\lambda)(x_i-x_{i+1})$ are the kernels of $R_0^\pm$($\lambda$), and we will further decompose $\Omega_k^{high}(t,x,y)$ into $\Omega_k^{high,1}(t,x,y)$ and $\Omega_k^{high,2}(t,x,y)$ by the space-time regions $\{X\leq\delta T\}$ and $\{X\geq\delta T\}$ respectively for some small $\delta>0$, where $X=|x-x_1|+|x_1-x_2|+\cdots+|x_k-y|$, $T=|t|^\frac{1}{2m}+|t|$. The estimate of $\Omega_k^{high,1}(t,x,y)$ is given in Section \ref{section4.2}, but the estimates of $\Omega_k^{high,2}(t,x,y)$ is only sketched in Section \ref{section4.3}, leaving the technical details for a particularly complicated part in Section \ref{section5}.

In Section \ref{section5}, we complete the estimate for $\Omega_k^{high,2}(t,x,y)$, where the techniques for solving the issues indicated in Section \ref{section1.2} are fully demonstrated. We first heuristically discuss in Section \ref{section5.1} the overall plan of treating a particular type of terms $I^{\vec{l}}(t;x,y)$ in the expansion of $\Omega_k^{high,2}(t,x,y)$ where the regularity of $V$ is needed. In Section \ref{section5.2}, we establish an integration by parts scheme for integrals in a more general form than \eqref{toycase}, and the main conclusion is Proposition \ref{stepmu}, where after integration by parts, a bunch of clusters of line singularities $\prod_{i=1}^{s}\|F_i\|^{-{p_i}}$ shows up in the integrand. Here $F_i$ is a particular set of line singularities $\{E_{x_{i_1}x_{i_1+1}x_{j_1}x_{j_1+1}},\cdots,E_{x_{i_r}x_{i_r+1}x_{j_r}x_{j_r+1}}\}$, and
\begin{equation*}
	\|F_i\|=\left(\sum_{l=1}^r|E_{x_{i_l}x_{i_l+1}x_{j_l}x_{j_l+1}}|^2\right)^\frac12.
\end{equation*}
In Section \ref{section5.3}, with a specific structure of these $F_i$ that comes along with our integration by parts scheme, we turn to reduce the family of $\|F_i\|$, so that what have to be estimated will be reduced to integrals in the form of \eqref{manyline} in Section \ref{section5.4}, where we can take the advantage of the admissibility of $\{(\eta_j,j);\,j\in\mathbb{K}\}$ mentioned in Section \ref{section1.2} to successfully complete all relevant estimates by only applying estimate for integrals like \eqref{two} with two different types of line singularities, and the main conclusion is Proposition \ref{intmainest}. With all the technical preparation above, we finally complete the estimate for $\Omega_k^{high,2}(t,x,y)$ in Section \ref{section5.5}.

\subsection{Notations}\

We first collect some common notations and conventions.
Throughout the paper, $\mathbb{N}_+=\{1,2,\ldots\}$, $\mathbb{N}_0=\{0,1,2,\ldots\}$, and $\mathbb{Z}=\{0,\pm1, \pm2,\ldots\}$, $L^2=L^2(\R^n;\mathbb{C})$. $[l]$ denotes the greatest integer at most $l$. $A\lesssim B$ means $A \leq  CB$, where $C>0$ is an absolute constant whose dependence will be specified whenever necessary, and the value of C may vary from line to line. $a-$ (resp. $a+$) means $a-\epsilon$ (resp. $a+\epsilon$) for some $\epsilon>0$.
%Denote by  $R_0(z):=((-\Delta)^m-z)^{-1}$ the free resolvent and $R_V^\pm(\lambda^{2m}):=(H-\lambda^{2m}\mp i0)^{-1}$.

%$C(\cdots)$ and $C_{\{\cdot\}}$ will denote positive constants depending on what are enclosed in the brackets and they may differ on different occasions.
Next, we define some (vector-valued) function classes that will be used in most of the oscillatory integral estimates in this paper.
\begin{definition}
For $b\in\mathbb{R}$,  $K\in\mathbb{N}_0$, and an open set $\Omega\subset\mathbb{R}$.

\noindent\emph{(\romannumeral1)} We say $f\in S_K^b(\Omega)$ if $f(\lambda)\in C^K(\Omega)$ and
\begin{equation}\label{eq1}
|\partial_\lambda^j f(\lambda)|\lesssim|\lambda|^{b-j},\quad\lambda\in\Omega,\,\,0\leq j\leq K.
\end{equation}
We also denote
%\noindent\emph{(\romannumeral2)}
 \begin{equation}\label{eq-Skb-infit}
 S^b(\Omega)=\bigcap_{K=0}^\infty S_K^b(\Omega).
 \end{equation}

\noindent\emph{(\romannumeral2)}
For functions with parameters in the following form, we say
$f\left(\lambda, x, y, s_{1}, s_{2}\right)\in  S_{K}^{b}(\Omega)$, if $\lambda\mapsto f\left(\lambda, x, y, s_{1}, s_{2}\right)$ is in $C^K(\Omega)$ for fixed $x, y, s_{1}, s_{2}$, and
\begin{equation}\label{eq-Skb-2}
  |\partial_{\lambda}^{\gamma} f\left(\lambda, x, y, s_{1}, s_{2}\right)|\lesssim\lambda^{b-\gamma},\quad\lambda\in\Omega,\,\,\,0\leq j\leq K,
\end{equation}
holds uniformly with respect to the parameters  $x, y, s_{1}, s_{2}$.

\noindent\emph{(\romannumeral3)}
We say $g(\lambda, s,\cdot, x)\in S_K^b(\Omega,\,\, \|\cdot \|_{L^{2}})$, if
$\lambda\mapsto g(\lambda, s, y, x)$ is in $C^{K}(\Omega)$ for any fixed $s, x, y$, and
   \begin{equation}\label{eq1.111}
\|\partial_{\lambda}^j g(\lambda, s,\cdot, x)\|_{L^{2}}\leq C_j|\lambda|^{b-j},\quad\lambda\in\Omega,\,\,\,0\leq j\leq K
   \end{equation}
and $C_j>0$  does not depend on the parameters  $x, s$.

\noindent\emph{(\romannumeral4)}
We say $T(\lambda)\in \mathfrak{S}_K^b(\Omega)$ if $\{T(\lambda)\}_{\lambda\in\Omega}$ is a family of bounded operators in $L^2$ such that
\begin{equation}\label{eq2}
|\partial_\lambda^j \langle T(\lambda)f, g\rangle|\leq C_j\|f\|_{L^2}\|g\|_{L^2}|\lambda|^{b-j},\quad\lambda\in\Omega,\,\,\,0\leq j\leq K
\end{equation}
holds for all $f, g\in L^2$, and the constant $C_j$ is independent of $f, g, \lambda$.
\end{definition}

We mention in particular that in Section \ref{section5}, there are some frequently used notations which are not standard, and we make an index here for readers who go into those details.
\begin{itemize}
	\item  $E_{xyyz}$, $E_{w'wxy}$ (See \eqref{linesing}),  $E_{i,j}$ (see \eqref{Eij}), $\|F\|$ (see \eqref{defFnorm});\vspace{10pt}
	
	\item  $L_i$ (see \eqref{defL_i});\vspace{10pt}
	
	\item $N(A,i),\,L(A,i)$ (see \eqref{NL}), $D_iA$ (see \eqref{DiA}), $D_IA$ (see \eqref{DIA});\vspace{10pt}
	
	\item $\iota_j$ (see \eqref{defh_j}).

\end{itemize}

%\Section{Resolvent expansions}\label{section2}
\section{Preliminaries}\label{section2}
In this section, we gather some technical preliminaries that shall be used through the paper.

\subsection{The free resolvents}\label{section2.1}\

For $z\in \mathbb{C}\setminus[0,\infty)$, we set $R_0(z)=((-\Delta)^m-z)^{-1}$. For $\lambda>0$, the well known limiting absorption principle (see \cite{AH}) implies that the weak* limits $R_0^{\pm}(\lambda^{2m}):= R_0(\lambda^{2m}\pm i0)=w*-\lim_{\epsilon\downarrow0}R_0(\lambda^{2m}\pm\i\epsilon)$ exist as bounded operators between certain weighted $L^2$ spaces. When $n\geq 4m$ is odd, it was shown in \cite{CHHZ} that if $\lambda>0$, the convolution kernels of $R_0^\pm(\lambda^{2m})$ satisfy

\begin{equation}\label{eq2.8}
R_0^{\pm}(\lambda^{2m})(x)=\frac{1}{(4\pi)^{\frac{n-1}{2}}m\lambda^{2m}}\sum_{k\in I^{\pm}}\frac{\lambda_k^2e^{ \i\lambda_k|x|}}{|x|^{n-2}}\sum^{\frac{n-3}{2}}_{j=0}{c_j(\i\lambda_k|x|)^j},\quad x\in\mathbb{R}^n,\,\mbox{$c_j=\frac{(-2)^j(n-3-j)!}{j!(\frac{n-3}{2}-j)!}$},
\end{equation}
where $\lambda_k=\lambda e^{\i\frac{\pi k}{m}}$, $I^+=\{0,1,\cdots,m-1\}$ and $I^-=\{1,\cdots,m\}$. We will also need the following facts which are special cases of \cite[Proposition 2.1]{CHHZ}:
%
%\begin{equation}\label{eq2.10}
%	R_0^+(\lambda^{2m})(x)-R_0^-(\lambda^{2m})(x)=\left(\sum_{k\in I^+}-\sum_{k\in I^-}\right)\sum_{j=0}^{\frac{n-3}{2}} C_{j} \lambda_{k}^{n-2 m}\int_{0}^{1} e^{\i s \lambda_{k}|x|}(1-s)^{n-3-j} \d s.
%\end{equation}
%
%
% we apply \eqref{equ2.2.2}	with $\theta=0$ to derive
\begin{equation}\label{eq2.10}
	R_{0}^{+}(\lambda^{2m})(x)-R_{0}^{-}(\lambda^{2m})(x)=\sum_{k\in\{0,m\}}e^{\frac{\i k\pi}{m}}\left( \sum_{j=0}^{\frac{n-3}{2}}C_{j,0}\lambda_k^{n-2m}\int_{0}^{1}e^{\i\lambda_{k}s|x|}(1-s)^{n-3-j}\d s\right),
\end{equation}
and
\begin{equation}\label{equ4.17}
	\begin{split}
		R_{0}^{\pm}(\lambda^{2 m})(x)=&\sum_{k\in I^{\pm}}|x|^{2m-n} \left(\sum_{l=0}^{2m-3} C_{l} \int_{0}^{1} e^{\i s \lambda_{k}|x|}(1-s)^{2m-l-3}\d s\right) \\
		&+\sum_{k\in I^{\pm}}\sum_{l=2m-2}^{\frac{n-3}{2}}D_{l}\lambda_k^{l+2-2m}|x|^{l+2-n}e^{\i\lambda_{k}|x|},
	\end{split}
\end{equation}
hold for some constants $C_{j,0},C_l,D_l$.

\subsection{The perturbed resolvent expansions around zero energy}\label{section2.2}\

In order to study the spectral measure of $H$ near zero energy, we set $R^\pm(\lambda^{2m}):=(H-\lambda^{2m}\mp \i0)^{-1}$, which is well defined under our assumptions on $V$ (see \cite{AH}). Denote
\begin{equation*}\label{equ3.1}
	M^{\pm}(\lambda)=U+vR_0^\pm(\lambda^{2m})v,\quad v(x)=|V(x)|^{\frac12}, \,\,U=\sgn V,
\end{equation*}
where $\sgn x=1$ when $x\ge 0$ and $\sgn x=-1$ when $x< 0$. If $(M^\pm(\lambda))^{-1}$ exist in $L^2$, one checks the following symmetric resolvent identities
\begin{equation}\label{equ3.2}
	R_V^{\pm}(\lambda^{2m})=R_0^\pm(\lambda^{2m})-R_0^\pm(\lambda^{2m})v(M^\pm(\lambda))^{-1}vR_0^\pm(\lambda^{2m}).
\end{equation}
Set
\begin{equation*}\label{equ3.5}
	T_0=U+b_0vG_{2m-n}v,\quad b_0=(4\pi)^{-\frac{n-1}{2}}\sum_{j=0}^{2m-2}\frac{c_j}{(2m-2-j)!},
\end{equation*}
where $c_j$ is defined in \eqref{eq2.8} and
\begin{equation*}\label{equ3.3}
	(G_{2m-n}f)(x)=\int_{\R^n}|x-y|^{2m-n}f(y)\,\d y.
\end{equation*}
Denoted by $S_{m-\frac{n}{2}}$ the orthogonal projection from $L^2$ to $\mbox{ker}\,T_0$, the following expansions have already been proved in \cite{FSWY}.

\begin{theorem}\label{thm3.4}
	Under the assumption of Theorem \ref{thmmain}, there exists some $\lambda_0\in(0,1)$ such that $M^\pm(\lambda)$ have bounded inverses in $L^2$ for all $0<\lambda<\lambda_0$, and the following statements hold.
	
	\noindent(\romannumeral1) If $0$ is not an eigenvalue of $H$, we have
	\begin{equation*}\label{equ3.48}
		(M^\pm(\lambda))^{-1}=
		D_0+\sum\limits_{j=1 }^{[\frac{n}{2m}]-1}\lambda^{2mj}B_j+\Gamma_{0}^\pm(\lambda),\quad0<\lambda<\lambda_0,
	\end{equation*}
	where $D_0$, $B_j$ and $\Gamma_{0}^\pm(\lambda)$ are bounded operators in $L^2$, and
	\begin{equation*}\label{equ3.47.1-000}
		\Gamma_{0}^\pm(\lambda)\in \mathfrak{S}^{n-2m}_{\frac{n+1}{2}}\big((0,\lambda_0)\big).
	\end{equation*}
	\noindent(\romannumeral2)  If $0$ is an eigenvalue of $H$, we have
	\begin{equation*}\label{equ3.48-222}
		(M^\pm(\lambda))^{-1}=
		\lambda^{-2m}S_{m-\frac{n}{2}}D_1S_{m-\frac{n}{2}}+Q_{m-\frac{n}{2}}D_0Q_{m-\frac{n}{2}}+\sum\limits_{j=1 }^{[\frac{n}{2m}]-1}\lambda^{2m(j-1)}B_j+\Gamma_{1}^\pm(\lambda),\quad0<\lambda<\lambda_0,
	\end{equation*}
	where $D_1$, $B_j$ and $\Gamma_{1}^\pm(\lambda)$ are bounded operators in $L^2$,
	\begin{equation*}\label{equ3.47.1}
		\Gamma_{ 2m-\frac{n}{2},  2m-\frac{n}{2}}^\pm(\lambda)\in \mathfrak{S}^{n-6m}_{\frac{n+1}{2}}\big((0,\lambda_0)\big),
	\end{equation*}
	and $Q_{m-\frac{n}{2}}=I-S_{m-\frac{n}{2}}$.
\end{theorem}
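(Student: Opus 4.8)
The plan is to follow \cite{FSWY}, combining the small-$\lambda$ behaviour of the free resolvent recorded in Section \ref{section2.1} with an abstract inversion lemma of Jensen--Nenciu type applied to $M^\pm(\lambda)=U+vR_0^\pm(\lambda^{2m})v$.

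First I would expand $M^\pm(\lambda)$ about $\lambda=0$. Inserting the kernel formulas \eqref{eq2.8}, \eqref{equ4.17} and Taylor-expanding the oscillatory factors $e^{\i s\lambda_k|x|}$ and $e^{\i\lambda_k|x|}$, the rotated phases $\lambda_k=\lambda e^{\i\pi k/m}$ produce cancellations upon summing over $k\in I^\pm$, so that the $\pm$-independent part of $R_0^\pm(\lambda^{2m})$ becomes, modulo a remainder of order $\lambda^{n-2m}$, a polynomial in $\lambda^{2m}$ whose coefficients are convolutions against $|x|^{2m-n},|x|^{4m-n},\dots$, while the $\pm$-dependent branch part \eqref{eq2.10} first enters at order $\lambda^{n-2m}$ (an odd integer, since $n$ is odd and $n>4m$). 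Sandwiching by $v$, this should give
\begin{equation*}
	M^\pm(\lambda)=T_0+\sum_{j=1}^{[\frac{n}{2m}]-1}\lambda^{2mj}T_j+\lambda^{n-2m}\widetilde{E}^\pm(\lambda),\qquad T_0=U+b_0vG_{2m-n}v,
\end{equation*}
with the $T_j$ bounded on $L^2$ and $\widetilde{E}^\pm(\lambda)\in\mathfrak{S}^0_{\frac{n+1}{2}}((0,\lambda_0))$. Here I would stress that when $n>4m$ the $L^2$-boundedness of $vG_{2m-n}v$ and the $T_j$ is \emph{not} a Hilbert--Schmidt fact (the diagonal singularity $|x-y|^{2m-n}$ is too strong), but a Hardy--Littlewood--Sobolev estimate for the Riesz potentials $|x|^{2m-n},|x|^{4m-n},\dots$ sandwiched by $v$, which is where the decay $|V(x)|\lesssim\langle x\rangle^{-(n+2)-}$ is used; and that each $\lambda$-derivative in the remainder is to be absorbed by integrating by parts in the $s$-variable of \eqref{eq2.10}, \eqref{equ4.17}, trading growth in $|x-y|$ for a gain in $\lambda$, so that the same decay of $V$ furnishes uniform $L^2$-bounds and $\tfrac{n+1}{2}$ derivatives in $\lambda$ are affordable in the remainder class.

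With this expansion I would split into cases. If $0$ is not an eigenvalue then $T_0$ is invertible on $L^2$: since there is no nontrivial zero-energy resonance in dimensions $n>4m$, $0$ being regular is equivalent to $\ker T_0=\{0\}$. As $M^\pm(\lambda)-T_0=O(\lambda^{2m})$ in operator norm, $M^\pm(\lambda)$ is invertible for $0<\lambda<\lambda_0$ with $\lambda_0$ small; writing $M^\pm(\lambda)=T_0\bigl(I+T_0^{-1}(M^\pm(\lambda)-T_0)\bigr)$, expanding the Neumann series and regrouping by powers of $\lambda$, I would obtain $(M^\pm(\lambda))^{-1}=D_0+\sum_{j=1}^{[\frac{n}{2m}]-1}\lambda^{2mj}B_j+\Gamma_0^\pm(\lambda)$ with $D_0=T_0^{-1}$, the $B_j$ polynomial in $T_0^{-1}$ and the $T_i$, and — since all factors other than the leftover $\lambda^{n-2m}\widetilde{E}^\pm(\lambda)$ are bounded symbols of order $\geq0$ — a remainder $\Gamma_0^\pm(\lambda)\in\mathfrak{S}^{n-2m}_{\frac{n+1}{2}}((0,\lambda_0))$ by the Leibniz rule. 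If instead $0$ is an eigenvalue, then $S:=S_{m-\frac{n}{2}}$ is a nonzero finite-rank orthogonal projection onto $\ker T_0$, and I would apply the Feshbach/Schur-complement form of the Jensen--Nenciu lemma to the block decomposition of $M^\pm(\lambda)$ relative to $L^2=SL^2\oplus QL^2$, $Q=Q_{m-\frac{n}{2}}$: the block $QM^\pm(\lambda)Q$ is invertible with bounded inverse, and (using $ST_0=T_0S=0$) the Schur complement on $SL^2$ equals $\lambda^{2m}ST_1S+O(\lambda^{4m})$ with $ST_1S$ invertible on $SL^2$ by the non-resonant structure of the zero eigenspace; inverting a quantity of the form $\lambda^{2m}A+(\text{error})$ yields $\lambda^{-2m}A^{-1}-\lambda^{-4m}A^{-1}(\text{error})A^{-1}+\cdots$, so reassembling the blocks gives the stated expansion with $D_1=(ST_1S|_{SL^2})^{-1}$ and $\Gamma_1^\pm(\lambda)\in\mathfrak{S}^{n-6m}_{\frac{n+1}{2}}((0,\lambda_0))$, the drop from $n-2m$ to $n-6m$ being precisely the two extra factors $\lambda^{-2m}$ that the error $\lambda^{n-2m}\widetilde{E}^\pm(\lambda)$ picks up in passing through the inverse of the singular block.

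The hard part is the first step: pinning down exactly which powers of $\lambda$ survive the phase cancellations, checking that every $\lambda$-independent operator appearing is $L^2$-bounded under the stated decay (Hilbert--Schmidt being insufficient, HLS needed), and above all certifying that the remainder genuinely lies in $\mathfrak{S}_{\frac{n+1}{2}}$ with the asserted order ($n-2m$, resp.\ $n-6m$) — this needs a careful integration-by-parts bookkeeping that balances the affordable number of $\lambda$-derivatives against the decay $\langle x\rangle^{-(n+2)-}$ of $V$. In the eigenvalue case there is the extra input that the zero eigenspace is non-resonant, which is what forces the singular part of $(M^\pm(\lambda))^{-1}$ to be exactly $\lambda^{-2m}SD_1S$; all of this is carried out in detail in \cite{FSWY}.
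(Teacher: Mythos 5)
Your proposal is correct and takes the same route as the paper: the paper provides no proof of Theorem \ref{thm3.4}, deferring entirely to \cite{FSWY}, and your sketch (Taylor expansion of $vR_0^\pm(\lambda^{2m})v$ in $\lambda$ with the branch cut entering at order $\lambda^{n-2m}$, Hardy--Littlewood--Sobolev boundedness for the $\lambda$-independent coefficient operators, Neumann series when $0$ is regular, Schur complement / Jensen--Nenciu on $\ker T_0\oplus(\ker T_0)^\perp$ when $0$ is an eigenvalue) is a faithful reconstruction of the argument carried out there.
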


\subsection{Integrals with point and line singularities}\

We now introduce some estimates for integrals with point and at most two different types of line singularities, which will be frequently used especially in the high energy part estimate. We note that these results are valid for all dimensions $n$ but not only the odd ones.

The following lemma can be seen in \cite[Lemma 3.8]{GV} or \cite[Lemma 6.3]{EG10}.
\begin{lemma}\label{lem3.10}
	Let $n\ge 1$. Then there is some absolute constant $C>0$ such that
	\begin{equation*}\label{eq2.20}
		\int_{\mathbb{R}^n}|x-y|^{-k}\langle y\rangle^{-l}\,\d y\leq C\langle x\rangle^{-\min\{k,\, k+l-n\}},
	\end{equation*}
	provided  $l\ge 0$, $0\le k<n$ and $k+l>n$.
\end{lemma}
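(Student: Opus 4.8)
The plan is to split the domain of integration $\mathbb{R}^n$ into three regions according to the relative sizes of $|x|$, $|y|$, and $|x-y|$, and to estimate the integral separately on each. Concretely, fix $x \in \mathbb{R}^n$ with $|x| \geq 1$ (the case $|x| \lesssim 1$ being trivial since the integral is then a bounded quantity, finiteness following from $0 \le k < n$ locally and $l > n - k \ge 0$ globally, hence $l + k > n$ ensures decay at infinity). Set $A = \{|y| \leq |x|/2\}$, $B = \{|x-y| \leq |x|/2\}$, and $C = \mathbb{R}^n \setminus (A \cup B)$. On $A$ we have $|x-y| \sim |x|$, so the factor $|x-y|^{-k}$ comes out as $\langle x \rangle^{-k}$ and what remains is $\int_{|y| \le |x|/2} \langle y \rangle^{-l}\,\d y$; this is $O(1)$ if $l > n$ and $O(\langle x\rangle^{n-l})$ if $l < n$, in either case bounded by $C\langle x\rangle^{\max\{0,\,n-l\}}$, which gives the claimed bound $\langle x\rangle^{-\min\{k,\,k+l-n\}}$ after combining with the $\langle x\rangle^{-k}$ prefactor. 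On $B$ we have $\langle y\rangle \sim \langle x\rangle$, so $\langle y\rangle^{-l}$ comes out as $\langle x\rangle^{-l}$ and the remaining integral $\int_{|x-y|\le |x|/2}|x-y|^{-k}\,\d y$ converges near $y=x$ precisely because $k < n$, and equals $O(\langle x\rangle^{n-k})$; combining, this region contributes $O(\langle x\rangle^{n-k-l})$, which is $\le C\langle x\rangle^{-\min\{k,\,k+l-n\}}$ since $n-k-l = -(k+l-n)$.

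On the remaining region $C$, both $|y| \gtrsim |x|$ and $|x-y| \gtrsim |x|$, so in particular $|y| \gtrsim 1$ and we may replace $\langle y\rangle$ by $|y|$ up to constants. Here the plan is to simply use $|x-y|^{-k} \lesssim |x|^{-k}$ is too lossy in general; instead write $|x - y|^{-k}\langle y\rangle^{-l} \lesssim |x-y|^{-k} |y|^{-l}$ and bound $\int_{|y|\gtrsim |x|,\,|x-y|\gtrsim|x|} |x-y|^{-k}|y|^{-l}\,\d y$. Dyadically decompose in $|y| \sim 2^j|x|$ for $j \ge 0$: on each shell $|y|^{-l} \sim (2^j|x|)^{-l}$, the shell has volume $\sim (2^j|x|)^n$, and $\int_{|y|\sim 2^j|x|} |x-y|^{-k}\,\d y \lesssim (2^j|x|)^{n-k}$ since $k < n$ (the worst case being when $x$ sits inside the shell, which only happens for $j$ small). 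Summing $\sum_{j \ge 0} (2^j|x|)^{n-k}(2^j|x|)^{-l} = |x|^{n-k-l}\sum_{j\ge 0} 2^{j(n-k-l)}$, and this geometric series converges because $n - k - l < 0$, yielding $O(|x|^{n-k-l}) = O(\langle x\rangle^{-(k+l-n)})$, again within the claimed bound.

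The only genuinely delicate point is bookkeeping the two competing exponents: the bound is $\langle x\rangle^{-k}$ when $l > n$ (so the $\langle y\rangle^{-l}$ weight is integrable and localizes mass near the origin, far from a large $x$) and $\langle x\rangle^{-(k+l-n)}$ when $l < n$ (so the weight is not integrable and the dominant contribution comes from $|y| \sim |x|$). In the borderline $l = n$ one picks up a harmless logarithm which is absorbed by the $\langle x\rangle^{-\min\{k,\,k+l-n\}} = \langle x\rangle^{-k}$ bound after shrinking the exponent slightly, but strictly speaking the hypothesis $k + l > n$ with the stated conclusion is most cleanly read as: region $A$ gives $\langle x\rangle^{-k}$ times $\langle x\rangle^{\max\{0,n-l\}}$, regions $B$ and $C$ give $\langle x\rangle^{-(k+l-n)}$, and $\min\{k,\,k+l-n\}$ is exactly the smaller of the two resulting decay rates. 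I expect essentially no obstacle here; the main care is just ensuring the three regions cover $\mathbb{R}^n$ and that the convergence conditions $k<n$ (near-diagonal integrability) and $k+l>n$ (integrability at infinity) are invoked in the right places.
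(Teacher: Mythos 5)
The paper does not give its own proof of Lemma \ref{lem3.10}; it cites \cite[Lemma 3.8]{GV} and \cite[Lemma 6.3]{EG10}. Your proof is the standard self-contained argument (and essentially what those references do): reduce to $|x|\gtrsim 1$, split into the three regions $\{|y|\le|x|/2\}$, $\{|x-y|\le|x|/2\}$, and the far region where both $|y|$ and $|x-y|$ are $\gtrsim|x|$; on the first two, one of the two factors is essentially constant and can be pulled out, and on the third a dyadic sum in $|y|$ converges because $k+l>n$. All the places where $k<n$ and $k+l>n$ are used are correctly identified, and the bookkeeping of the two exponents is right.

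One caution, which you partially noticed: at the borderline $l=n$ the lemma as stated is actually off by a logarithm. In region $A$ one gets $\langle x\rangle^{-k}\log\langle x\rangle$, not $\langle x\rangle^{-k}$, and your suggestion to ``shrink the exponent slightly'' is not a legitimate repair of the stated inequality (the conclusion fixes the exponent $-\min\{k,k+l-n\}=-k$ exactly, with no implicit $\varepsilon$-room). This is really a latent defect of the statement rather than of your proof: the cited source \cite{GV} treats the cases $l<n$ and $l>n$ separately and does not claim the bound at $l=n$. In this paper the lemma is only ever invoked with $l$ given by potential decay rates such as $(n+2)-$, $\frac{n+1}{2}-$, etc., so $l$ is safely off the critical value $n$, and the application is unaffected; but a cleaner statement would either exclude $l=n$ or replace the bound at $l=n$ by $\langle x\rangle^{-k}\log\langle x\rangle$.
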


The following lemma was already introduced in \cite{CHHZ}, whose proof is almost the same to that of \cite[Lemma 6.3]{EG10}.

\begin{lemma}\label{lmEd}
	Suppose $n\geq1$, $k_1,l_1\in[0,n)$, $k_2,l_2\in[0,+\infty)$, $\beta\in(0,+\infty)$, and $k_2+l_2+\beta\geq n$. It follows uniformly in $y_0\in\mathbb{R}^n$ that
	\begin{equation*}
		\begin{split}
			&\int_{\mathbb{R}^n}\frac{\langle|x-y|^{-k_1}\rangle\langle|y-z|^{-l_1}\rangle\langle y-y_0\rangle^{-\beta-}}{\langle x-y\rangle^{k_2}\langle y-z\rangle^{l_2}}\d y\\
			\lesssim&\begin{cases}
				\langle|x-z|^{-\max\{0,k_1+l_1-n\}}\rangle\langle x-z\rangle^{-\min\{k_2,l_2,k_2+l_2+\beta-n\}},\quad&k_1+l_1\neq n,\\
				\langle|x-z|^{0-}\rangle\langle x-z\rangle^{-\min\{k_2,l_2,k_2+l_2+\beta-n\}},&k_1+l_1=n.
			\end{cases}
		\end{split}
	\end{equation*}
\end{lemma}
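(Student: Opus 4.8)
\textbf{Proof proposal for Lemma \ref{lmEd}.}

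The plan is to reduce the two-dimensional complication of the weights $\langle\cdot\rangle^{k_2},\langle\cdot\rangle^{l_2}$ together with the possibly singular factors $\langle|x-y|^{-k_1}\rangle,\langle|y-z|^{-l_1}\rangle$ to a single application of Lemma \ref{lem3.10}. First I would split $\mathbb{R}^n$ into the two regions $\{|x-y|\le|y-z|\}$ and $\{|x-y|\ge|y-z|\}$, which by symmetry are handled identically after swapping the roles of $x$ and $z$; so assume $|x-y|\le|y-z|$. On this region $\langle|y-z|^{-l_1}\rangle\lesssim\langle|x-y|^{-l_1}\rangle$ when $l_1>0$ (and $\equiv1$ when $l_1=0$), so the two possibly-singular factors combine into a single factor bounded by $\langle|x-y|^{-(k_1+l_1)}\rangle$, i.e. by $1+|x-y|^{-(k_1+l_1)}$. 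Splitting once more into $\{|x-y|\le1\}$ and $\{|x-y|\ge1\}$, the outer piece contributes $\int_{|x-y|\ge1}\langle x-y\rangle^{-k_2}\langle y-z\rangle^{-l_2}\langle y-y_0\rangle^{-\beta-}\,\d y$, which is finite and $O(1)$ uniformly in $y_0$ by crudely dropping $\langle y-y_0\rangle^{-\beta-}\le1$ and using $k_2+l_2\ge n-\beta$; actually, to get the claimed decay in $x-z$ one keeps the weights and applies Lemma \ref{lem3.10} after noting $\langle x-y\rangle\langle y-z\rangle\gtrsim\langle x-z\rangle$. The inner piece is where the singularity $|x-y|^{-(k_1+l_1)}$ genuinely appears.

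For the inner piece $\{|x-y|\le1\}$ one has $\langle x-y\rangle\sim1$, $\langle y-z\rangle\sim\langle x-z\rangle$, and $\langle y-y_0\rangle\sim\langle x-y_0\rangle$ up to constants, but the last observation is not needed: I would simply bound $\langle y-y_0\rangle^{-\beta-}\le1$ here since the domain is bounded. Then the inner contribution is $\lesssim\langle x-z\rangle^{-l_2}\int_{|x-y|\le1}(1+|x-y|^{-(k_1+l_1)})\,\d y$, and the integral converges precisely because $k_1+l_1<2n$ always holds here — wait, it converges iff $k_1+l_1<n$. When $k_1+l_1<n$ the integral is a finite constant, giving the bound $\langle x-z\rangle^{-l_2}$, which is consistent with (and stronger than part of) the asserted $\langle|x-z|^{-\max\{0,k_1+l_1-n\}}\rangle\langle x-z\rangle^{-\min\{k_2,l_2,k_2+l_2+\beta-n\}}$ since $\max\{0,k_1+l_1-n\}=0$ in this range. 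When $k_1+l_1\ge n$ the local integral $\int_{|x-y|\le1}|x-y|^{-(k_1+l_1)}\,\d y$ diverges, so this crude split fails and one must instead retain the decaying weight $\langle y-z\rangle^{-l_2}$ (which is $\sim\langle x-z\rangle^{-l_2}$ only when $|x-z|\gtrsim1$) or the $\langle y-y_0\rangle^{-\beta-}$ factor to tame the region near $y=x$; this forces the more careful case analysis that produces the factor $\langle|x-z|^{-(k_1+l_1-n)}\rangle$ (resp. $\langle|x-z|^{0-}\rangle$ when $k_1+l_1=n$, where the integral is only logarithmically divergent). Concretely, for $k_1+l_1>n$ I would further split by whether $|y-x|\le\tfrac12|x-z|$ or not: in the far region $\langle y-z\rangle\sim\langle x-z\rangle$ and Lemma \ref{lem3.10} with exponents $k_1+l_1$ and $l_2+\beta$ (noting $(k_1+l_1)+(l_2+\beta)>n$ by hypothesis, if needed after using $k_2+l_2+\beta\ge n$) yields $\langle x-z\rangle^{-\min\{k_1+l_1,\,k_1+l_1+l_2+\beta-n\}}\cdot\langle x-z\rangle^{-(\text{remaining})}$; in the near region $|y-x|\le\tfrac12|x-z|$ one has $\langle x-z\rangle\le|x-z|\lesssim|y-z|$, the factor $|x-y|^{-(k_1+l_1)}$ is integrated against $1$ over a ball of radius $|x-z|$, producing $|x-z|^{n-(k_1+l_1)}$, and multiplying by $\langle x-z\rangle^{-l_2}$ and using $|x-z|^{n-k_1-l_1}=|x-z|^{-(k_1+l_1-n)}$ gives exactly the singular factor $\langle|x-z|^{-(k_1+l_1-n)}\rangle$ times $\langle x-z\rangle^{-l_2}$.

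Finally I would reconcile the exponents: the output exponent on $\langle x-z\rangle$ should be $\min\{k_2,l_2,k_2+l_2+\beta-n\}$, which arises because in the region $|x-y|\sim|x-z|$ (the "middle" zone) one can borrow either the $\langle x-y\rangle$-weight, the $\langle y-z\rangle$-weight, or combine the leftover of both with $\langle y-y_0\rangle^{-\beta-}$ via Lemma \ref{lem3.10}; each of these three ways of routing the decay gives one of the three terms in the minimum, and the worst (smallest) one is what survives uniformly in $y_0$. Collecting the contributions from the two symmetric halves $\{|x-y|\le|y-z|\}$, $\{|x-y|\ge|y-z|\}$ and from the near/middle/far subregions, and taking the worst bound in each, gives the stated estimate, with the logarithmic endpoint $k_1+l_1=n$ absorbed into $\langle|x-z|^{0-}\rangle$. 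The main obstacle is the bookkeeping in the region $k_1+l_1\ge n$ where the local singularity is non-integrable against the constant: one genuinely needs to spend part of the available decay ($\langle y-z\rangle^{-l_2}$, $\langle y-y_0\rangle^{-\beta-}$, or both) to localize, and one must check that doing so still leaves enough decay to produce $\langle x-z\rangle^{-\min\{k_2,l_2,k_2+l_2+\beta-n\}}$ — this is exactly the hypothesis $k_2+l_2+\beta\ge n$ being used sharply — while simultaneously generating the correct singular prefactor $\langle|x-z|^{-\max\{0,k_1+l_1-n\}}\rangle$; keeping these two demands compatible across all subregions is the delicate point, and is precisely the content of the proof of \cite[Lemma 6.3]{EG10} that this lemma parallels.
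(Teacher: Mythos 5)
The paper does not prove this lemma; it cites \cite{CHHZ} and says the proof "is almost the same to that of \cite[Lemma~6.3]{EG10}," so there is no in-paper argument to compare against line by line. Judged on its own terms, your sketch is in the right family (a multi-region decomposition combined with Lemma~\ref{lem3.10}), but it contains a concrete gap that would make the proof as written fail.

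The problem is the very first reduction. On $\{|x-y|\le|y-z|\}$ you replace $\langle|y-z|^{-l_1}\rangle$ by $\langle|x-y|^{-l_1}\rangle$ and then collapse the two factors into a single $\langle|x-y|^{-(k_1+l_1)}\rangle$. This pointwise inequality is true, but it is far too lossy near $y=x$ when $|x-z|\gtrsim 1$: there the original factor $\langle|y-z|^{-l_1}\rangle$ is just a bounded constant ($\sim\langle|x-z|^{-l_1}\rangle\sim 1$), whereas your replacement blows up like $|x-y|^{-l_1}$ for no reason. Because $k_1<n$ and $l_1<n$ are assumed, the original integrand is locally integrable at $y=x$; your majorant is not as soon as $k_1+l_1\ge n$. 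Concretely, on your inner piece $\{|x-y|\le 1\}$ you are left with $\int_{|x-y|\le 1}|x-y|^{-(k_1+l_1)}\,\mathrm{d}y$, which diverges, and the further split by $|y-x|\lessgtr\tfrac12|x-z|$ does not rescue it once $|x-z|\ge 2$, since then $\{|x-y|\le 1\}\subset\{|y-x|\le\tfrac12|x-z|\}$ already and the domain of integration does not shrink. Neither $\langle y-z\rangle^{-l_2}$ nor $\langle y-y_0\rangle^{-\beta-}$ can "tame" this, as both are bounded by $1$ and cannot improve local integrability at $y=x$. Thus the inner-piece estimate, which you present as the easy case, actually fails on a large range of parameters.

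The structural fix is to decompose by the natural scale $|x-z|$ from the outset (near $x$: $|y-x|\le\tfrac12|x-z|$; near $z$: $|y-z|\le\tfrac12|x-z|$; and the complementary "middle/far" region), and in each region compare the possibly-singular factor whose base point is \emph{far} to a constant in $y$ (e.g.\ on the near-$x$ region use $\langle|y-z|^{-l_1}\rangle\lesssim\langle|x-z|^{-l_1}\rangle$, never $\langle|x-y|^{-l_1}\rangle$). This is exactly what produces the prefactor $\langle|x-z|^{-\max\{0,k_1+l_1-n\}}\rangle$ cleanly, with the borderline $k_1+l_1=n$ contributing a logarithmic loss absorbed into $\langle|x-z|^{0-}\rangle$, and it lets the middle region furnish the $k_2+l_2+\beta-n$ exponent in the minimum via Lemma~\ref{lem3.10}. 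Two smaller issues worth flagging: you wrote $\langle x-z\rangle\le|x-z|$, which is backwards, and the "far region"/"near region" labels in the paragraph on $k_1+l_1>n$ are swapped --- it is on $|y-x|\le\tfrac12|x-z|$ (near $x$) that $\langle y-z\rangle\sim\langle x-z\rangle$ holds, whereas on its complement one only has a one-sided $\gtrsim$.
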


We now turn to integrals with both point singularities line singularities when $n\geq2$. Given separated $w,w',x,z\in\mathbb{R}^n$, we define quantities
\begin{equation}\label{linesing}
	E_{xyyz}=\mbox{$\frac{x-y}{|x-y|}-\frac{y-z}{|y-z|}$},\quad E_{ww'xy}=\mbox{$\frac{w-w'}{|w-w'|}-\frac{x-y}{|x-y|}$}.
\end{equation}

\begin{proposition}\label{twoline}
	Suppose $n\geq2$, $k_1,l_1\in[0,n)$, $k_2,l_2\in[0,+\infty)$, $\beta\in(0,+\infty)$, $k_2+l_2+\beta\geq n$, and $p,q\in[0,n-1)$. It follows uniformly in $y_0\in\mathbb{R}^n$ that
	\begin{equation*}\label{l15}
		\begin{split}
			&\int_{\mathbb{R}^n}\frac{\langle|x-y|^{-k_1}\rangle\langle|y-z|^{-l_1}\rangle\langle y-y_0\rangle^{-\beta-}}{\langle x-y\rangle^{k_2}\langle y-z\rangle^{l_2}|E_{xyyz}|^p|E_{ww'xy}|^q}\d y\\
			\lesssim&|E_{ww'xz}|^{-q}\begin{cases}
				\langle|x-z|^{-\max\{0,k_1+l_1-n\}}\rangle\langle x-z\rangle^{-\min\{k_2,l_2,k_2+l_2+\beta-n,k_2+l_2-\max\{p,q\}\}},\,&k_1+l_1\neq n,\\
				\langle|x-z|^{0-}\rangle\langle x-z\rangle^{-\min\{k_2,l_2,k_2+l_2+\beta-n,k_2+l_2-\max\{p,q\}\}},&k_1+l_1=n.
			\end{cases}
		\end{split}
	\end{equation*}
\end{proposition}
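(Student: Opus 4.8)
The plan is to reduce Proposition \ref{twoline} to Lemma \ref{lmEd} by first disposing of the new line singularity $|E_{ww'xy}|^{-q}$. The key geometric observation is that $E_{ww'xy}$ depends on $y$ only through the direction $\frac{x-y}{|x-y|}$, and on the region where $|E_{ww'xy}|$ is comparable to $|E_{ww'xz}|$ (roughly, when $y$ is far from the segment $[x,z]$ relative to $|x-z|$) we may simply bound $|E_{ww'xy}|^{-q}\lesssim|E_{ww'xz}|^{-q}$ and pull it out of the integral, landing exactly on Lemma \ref{lmEd}. The remaining piece is the contribution of $y$ near the segment $[x,z]$, where $|E_{ww'xy}|$ may be much smaller than $|E_{ww'xz}|$; here one exploits that this near-segment region is small and that on it $|E_{xyyz}|$ is correspondingly under control, trading the two line singularities against each other.

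First I would set up the dichotomy carefully. Write $d=|x-z|$ and split the $y$-integral into $|y-\ell|\lesssim d\,|E_{ww'xz}|$ and its complement, where $\ell$ is the nearest point of the line through $x,z$. On the complement, elementary trigonometry gives $|E_{ww'xy}|\gtrsim|E_{ww'xz}|$ — because moving $y$ off the segment by more than an $|E_{ww'xz}|$-fraction of $d$ changes the direction $\frac{x-y}{|x-y|}$ by at most a comparable angle — so the factor comes out and the bound follows from Lemma \ref{lmEd} verbatim (the extra exponent $k_2+l_2-\max\{p,q\}$ in the conclusion is automatically dominated here since on this region $|E_{xyyz}|^{-p}$ is harmless). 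On the near region, I would use that $|E_{xyyz}|\sim|x-y|^{-1}+|y-z|^{-1}$ times the distance from $y$ to the segment when $y$ is close to it, together with the smallness $|E_{ww'xz}|$ of the tube, to carry out the integral directly in coordinates adapted to the segment $[x,z]$ (one coordinate along, $n-1$ transverse), checking that the transverse integral converges because $p<n-1$ and produces a power $|E_{ww'xz}|^{-q}$ with the claimed decay in $\langle x-z\rangle$. The hypotheses $p,q\in[0,n-1)$ and $k_1,l_1\in[0,n)$ are exactly what make all these one-variable and transverse integrals finite.

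The main obstacle I anticipate is bookkeeping the interplay of the point singularities $\langle|x-y|^{-k_1}\rangle$, $\langle|y-z|^{-l_1}\rangle$ with the line singularities inside the near-segment tube: near the endpoints $x$ and $z$ the point singularities blow up, while $|E_{xyyz}|^{-p}$ is controlled by a transverse-distance factor whose normalization involves $|x-y|^{-1}$ or $|y-z|^{-1}$, so the local integrability there requires combining the two. One should handle it by subdividing the tube into the two end-balls $\{|y-x|\lesssim d\}$, $\{|y-z|\lesssim d\}$ and the middle, estimating each by Lemma \ref{lem3.10} after fixing the direction (which freezes $|E_{ww'xy}|\sim|E_{ww'xz}|$ again on the end-balls once $d$ is small compared to $|x-z|\,|E_{ww'xz}|$, and otherwise absorbing it into the tube-width count). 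A secondary, purely notational, nuisance is the case split $k_1+l_1=n$ versus $\ne n$ inherited from Lemma \ref{lmEd}, which propagates unchanged through the argument and needs no new idea. Once these regions are assembled, taking the minimum of the resulting exponents yields precisely the stated bound, uniformly in $y_0$, $w$, $w'$.
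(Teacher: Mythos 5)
Your opening geometric claim is incorrect, and the proof cannot be repaired without a fundamentally different decomposition. You assert that on the complement of the tube $\{|y-\ell|\lesssim|x-z|\,|E_{ww'xz}|\}$ around the line through $x,z$ one has $|E_{ww'xy}|\gtrsim|E_{ww'xz}|$, so the factor can be pulled out. But the small set of $|E_{ww'xy}|$ is a cone emanating from $x$ in the direction $w'-w$ (the paper's $\tilde\Gamma$), \emph{not} a tube around the segment $[x,z]$: the quantity $E_{ww'xy}=\frac{w-w'}{|w-w'|}-\frac{x-y}{|x-y|}$ vanishes whenever $y-x$ is parallel to $w'-w$. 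Concretely, take $x=0$, $z=(1,0)$, $\frac{w'-w}{|w'-w|}=(\cos\theta,\sin\theta)$ with $\theta$ small, so $|E_{ww'xz}|\sim\theta$. The point $y=R(\cos\theta,\sin\theta)$ for $R\gg1$ lies at distance $R\sin\theta\gg\theta\sim|x-z|\,|E_{ww'xz}|$ from the line through $x,z$ — squarely in your ``far'' region — yet $E_{ww'xy}=0$. So the far region contains the entire half of the singular cone beyond $z$, and there $|E_{ww'xy}|$ is not comparable to $|E_{ww'xz}|$; the claimed reduction to Lemma~\ref{lmEd} does not go through.

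The correct dichotomy (and the one the paper uses) is in terms of the two cones: $\tilde\Gamma$ around the ray from $x$ in direction $w'-w$ (where $|E_{ww'xy}|\sim\frac{|\tilde h_y|}{|x-y|}$), and $\Gamma$ around the segment $[x,z]$ (where $|E_{xyyz}|\sim\frac{|h_y|}{\min(|x-y|,|y-z|)}$). Outside $\tilde\Gamma$, $|E_{ww'xy}|\sim1$ and Lemma~\ref{oneline} applies; inside $\tilde\Gamma$ but outside $\Gamma$, $|E_{xyyz}|\sim1$ but one must still carefully track $|E_{ww'xy}|$ over several sub-regions ($A,B,C,D$ in the appendix) depending on the sizes of $\tilde s_y$, $|y-z|$ and $|\tilde h_y|$ relative to $|x-z|$ and $|\tilde h_z|$. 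The genuinely delicate piece is $\tilde\Gamma\cap\Gamma$, where both line singularities are active; there the paper relies on the identity for $\tilde h_y-h_y$ giving $|\tilde h_y-h_y|\gtrsim|E_{ww'xz}|\,|x-y|$, which permits the trade $|h_y|^{-p}|\tilde h_y|^{-q}\lesssim|E_{ww'xz}|^{-\min\{p,q\}}|x-y|^{-\min\{p,q\}}(|h_y|^{-\max\{p,q\}}+|\tilde h_y|^{-\max\{p,q\}})$. Your proposal does not contain this mechanism, and your tube decomposition neither isolates the region where the trade is needed nor controls the cone beyond $z$. You would need to start over from the $\tilde\Gamma/\Gamma$ geometry to make this work.
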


The tedious proof of Proposition \ref{twoline} will be given in Appendix \ref{app-001}, which is based on first proving the special case $q=0$ that will also be frequently used later in Section \ref{section5.4}.

\subsection{Two oscillatory integral estimates}\

Given $\lambda_0>0$, we will need estimates of one dimensional oscillatory integral in the form of
\begin{equation*}
	I(t,x)=\int_0^{+\infty}e^{-\i t\lambda^{2m}+\i x\lambda}\psi(\lambda)\d\lambda,\quad t\neq0,\,x\in\mathbb{R}.
\end{equation*}
where $\psi$ is supported in $[0,\lambda_0]$ or $[\frac{\lambda_0}{2},+\infty)$. Denoted by
\begin{equation}\label{mu}
	\mbox{$\mu_b=\frac{m-1-b}{2m-1}$},
\end{equation}
the following two estimates on one dimensional oscillatory integrals can be found in \cite{HHZ}.

\begin{lemma}\label{lemmaA.2}
	Suppose $N\in\mathbb{N}_+$, $b\in\mathbb{R}$ and $\psi\in S_N^b((0,\lambda_0))$ supported in $[0,\lambda_0]$.
	
	\noindent (i) If $b\in[-\frac{1}{2},\, 2Nm-1)$, then
	\begin{equation}\label{eqA.4}
		|I(t,x)|\lesssim
		|t|^{-\frac{1+b}{2m}}(|t|^{-\frac{1}{2m}}|x|)^{-\mu_b},\quad |t|^{-\frac{1}{2m}}|x|\gtrsim1.
	\end{equation}
	\noindent (ii) If $b\in(-1,\, 2Nm-1)$, then
	\begin{equation}\label{eqA.5}
		|I(t,x)|\lesssim(1+|t|^{\frac{1}{2m}})^{-(1+b)},\quad 0<|t|^{-\frac{1}{2m}}|x|\lesssim1.
	\end{equation}
\end{lemma}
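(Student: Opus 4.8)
The plan is to reduce the oscillatory integral $I(t,x)=\int_0^{+\infty}e^{-\i t\lambda^{2m}+\i x\lambda}\psi(\lambda)\,\d\lambda$ to a model situation by rescaling, and then to separate the analysis according to whether the stationary phase point of the phase $\phi(\lambda)=-t\lambda^{2m}+x\lambda$ lies inside the support of $\psi$ or not. Write $t>0$ without loss of generality (the case $t<0$ being the complex conjugate after $x\mapsto -x$), set $\lambda=|t|^{-\frac{1}{2m}}\eta$, and let $y=|t|^{-\frac{1}{2m}}|x|$ be the rescaled spatial variable, so that $I(t,x)=|t|^{-\frac{1}{2m}}\int_0^{+\infty}e^{-\i\eta^{2m}+\i y\eta}\,\psi(|t|^{-\frac{1}{2m}}\eta)\,\d\eta$. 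Under this substitution $\psi(|t|^{-\frac{1}{2m}}\eta)$ still obeys symbol bounds of order $b$ in $\eta$ on the region $\eta\lesssim|t|^{\frac{1}{2m}}\lambda_0$ coming from the hypothesis $\psi\in S_N^b((0,\lambda_0))$, but with the $|t|^{-\frac{1}{2m}}$ prefactors absorbed; the net effect is that we must bound an integral of the form $\int e^{-\i\eta^{2m}+\i y\eta}a(\eta)\,\d\eta$ where $a$ is a cutoff symbol of order $b$, against $y^{-\mu_b}$ (for $y\gtrsim 1$) or against a constant times an extra gain $(1+|t|^{\frac1{2m}})^{-(1+b)}$ in the regime $y\lesssim 1$. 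The role of the exponent $\mu_b=\frac{m-1-b}{2m-1}$ is dictated precisely by the degenerate stationary phase: the critical point $\eta_c$ of $-\eta^{2m}+y\eta$ satisfies $\eta_c\sim y^{\frac{1}{2m-1}}$, the second derivative of the phase there is of size $y^{\frac{2m-2}{2m-1}}$, and near $\eta_c$ the symbol contributes $\eta_c^{b}\sim y^{\frac{b}{2m-1}}$, so stationary phase gives a contribution of order $y^{\frac{b}{2m-1}}\cdot y^{-\frac{2m-2}{2(2m-1)}}\cdot(\text{correction})$, which after bookkeeping matches $y^{-\mu_b}$.

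For part (i), where $y\gtrsim 1$, I would split the $\eta$-integral into the region near the critical point $\{|\eta-\eta_c|\lesssim \eta_c^{1-m}y^{?}\}$ (the natural stationary-phase window on which $\phi''$ controls the oscillation) and its complement. On the complement, $|\phi'(\eta)|=|2m\eta^{2m-1}-y|$ is bounded below, and repeated non-stationary integration by parts — using $\psi\in S_N^b$ to control the derivatives hitting the symbol, which is where the hypothesis $b<2Nm-1$ enters to guarantee enough regularity and decay after $N$ integrations — gives a contribution that is $O(y^{-\mu_b})$ and in fact better. Near the critical point one either invokes the van der Corput lemma with the second-derivative bound $|\phi''(\eta)|\gtrsim y^{\frac{2m-2}{2m-1}}$, or more carefully performs a (possibly fractional) stationary phase expansion; the condition $b\ge -\frac12$ is exactly what is needed for the leading stationary-phase term to be the dominant one and for the error terms to be summable (the endpoint $b=-\frac12$ is the classical Airy-type threshold). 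Collecting the two pieces and undoing the rescaling yields $|I(t,x)|\lesssim |t|^{-\frac1{2m}}\cdot y^{-\mu_b}\cdot(\text{size of symbol at }\eta_c)$; tracking the $|t|$-powers through the symbol bound $|\psi(\lambda)|\lesssim\lambda^b$ converts this to $|t|^{-\frac{1+b}{2m}}(|t|^{-\frac1{2m}}|x|)^{-\mu_b}$, which is \eqref{eqA.4}.

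For part (ii), where $0<y\lesssim 1$, the critical point $\eta_c\sim y^{\frac{1}{2m-1}}$ either lies outside the support of $\psi$ or is harmless, and the analysis is essentially a one-variable estimate without genuine stationary phase: on the support of $\psi$ the phase derivative behaves like $-t\lambda^{2m-1}$ up to lower order, so one can integrate by parts $N$ times losing a factor $\lambda^{-1}$ and a factor of the phase-derivative size each time, and combine with the support condition $\lambda\in[0,\lambda_0]$ and the symbol bound. Summing the dyadic pieces $\lambda\sim 2^{-j}$, the worst piece is the largest one with $\lambda\sim \min\{\lambda_0,|t|^{-\frac1{2m}}\}$, and the symbol there is of size $\lambda^b$; one finds $|I(t,x)|\lesssim \int_0^{\min\{\lambda_0,|t|^{-1/2m}\}}\lambda^b\,\d\lambda + (\text{oscillatory tail})\lesssim (1+|t|^{\frac1{2m}})^{-(1+b)}$, using $b>-1$ to make $\int_0\lambda^b\,\d\lambda$ converge at the origin, which is \eqref{eqA.5}. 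The main obstacle in the whole argument is the careful handling of the degenerate critical point in part (i) at the endpoint regularity $b=-\frac12$: one must show the stationary-phase contribution is genuinely of order $y^{-\mu_b}$ and not larger, which requires either an explicit Airy-function comparison or a scaling argument on the window around $\eta_c$, together with verifying that the boundary term at $\lambda=0$ produced by the integration by parts in the non-stationary region is controlled — here the cutoff $\psi$ vanishing to high order (or at least the symbol bound $|\psi|\lesssim\lambda^b$ with $b\ge-\frac12>-1$) is what saves us. Since the paper attributes these to \cite{HHZ}, I would in practice cite that reference for the delicate endpoint bookkeeping rather than reproduce it.
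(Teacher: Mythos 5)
The paper does not prove Lemma \ref{lemmaA.2}; it cites \cite{HHZ} for both estimates, so there is no in-paper argument to compare against. Your proposal is a plausible reconstruction of the argument one would expect there: the rescaling $\lambda=|t|^{-1/(2m)}\eta$, the identification of the critical point $\eta_c\sim y^{1/(2m-1)}$ with $\phi''(\eta_c)\sim y^{(2m-2)/(2m-1)}$, and the stationary-phase/symbol bookkeeping that produces the exponent $\mu_b=\frac{m-1-b}{2m-1}$ are all on target, as is the role of $b<2Nm-1$ (enough integrations by parts) and of $b>-1$ (convergence of $\int_0\lambda^b\,\d\lambda$) in part (ii).

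One point worth correcting is your rationale for $b\geq-\frac12$ in part (i). Calling it an ``Airy-type threshold'' is misleading: in the regime $y\gtrsim1$ the critical point $\eta_c\sim y^{1/(2m-1)}$ is bounded away from $0$, so the stationary phase there is nondegenerate of second order and no Airy-type degeneration occurs. What $b\geq-\frac12$ actually governs is the balance between the stationary-phase contribution and the non-oscillatory contribution from $0\leq\lambda\lesssim1/|x|$, where the whole phase $-t\lambda^{2m}+x\lambda$ is $O(1)$ (note $t/|x|^{2m}=y^{-2m}\lesssim1$) and the integral is simply $\sim\int_0^{1/|x|}\lambda^b\,\d\lambda\sim|x|^{-(1+b)}$. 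Writing $|x|^{-(1+b)}=|t|^{-(1+b)/(2m)}y^{-(1+b)}$ and comparing with the claimed bound $|t|^{-(1+b)/(2m)}y^{-\mu_b}$, the ratio is $y^{\mu_b-(1+b)}=y^{-m(1+2b)/(2m-1)}$, which is $\lesssim1$ for $y\gtrsim1$ exactly when $b\geq-\frac12$. Separately, your sketch does not distinguish the sub-regime $|x|\gtrsim|t|$, where $\eta_c$ falls outside the rescaled support $[0,\lambda_0|t|^{1/(2m)}]$ and there is no interior stationary point; one must still verify that non-stationary integration by parts alone yields precisely $|t|^{-(1+b)/(2m)}y^{-\mu_b}$ and not merely some decay. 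Neither issue is fatal --- you explicitly defer to \cite{HHZ} for the bookkeeping --- but the explanation of the $b\geq-\frac12$ threshold should be corrected.
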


\begin{lemma}\label{lemmaA.222}
	Suppose $N\in\mathbb{N}_+$, $b\in\mathbb{R}$ and $\psi\in S_N^b((\frac{\lambda_{0}}{2},+\infty))$ supported in $[\frac{\lambda_0}{2},+\infty)$.	
	
	\noindent (i) If $b\in[m-1-N(2m-1),2Nm-1)$, then
	\begin{equation}\label{longime}
		|I(t,x)|\lesssim\begin{cases}
			|t|^{-\frac 12+\mu_{b}}|x|^{-\mu_{b}},\quad&|t|\gtrsim1,\,|t|^{-1}|x|\gtrsim1,\\
			|t|^{-N},&|t|\gtrsim1,\,|t|^{-1}|x|<<1.
		\end{cases}
	\end{equation}
	
	\noindent (ii) If $b\in[-\frac 12,2Nm-1)$, then
	\begin{equation}\label{shorttime}
		|I(t,x)|\lesssim|t|^{-\frac{1+b}{2m}}\left(1+|t|^{-\frac{1}{2m}}|x|\right)^{-\mu_{b}},\quad0<|t|\lesssim1,\,x\in\mathbb{R}.
	\end{equation}
	
	The constants in estimates \eqref{eqA.4}, \eqref{eqA.5}, \eqref{longime} and \eqref{shorttime} stay bounded when the seminorms of $\psi$ in $S_N^b$ stay bounded.
\end{lemma}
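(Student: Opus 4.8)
\emph{Proof proposal.} The plan is a scaling normalization followed by a dyadic stationary‑phase / integration‑by‑parts analysis. Substituting $\lambda=|t|^{-1/2m}\eta$ turns $I(t,x)$ into
\[
I(t,x)=|t|^{-\frac{1+b}{2m}}\int_{\delta}^{\infty}e^{\i\Phi(\eta)}g(\eta)\,\d\eta,\qquad \Phi(\eta)=-\,\mathrm{sgn}(t)\,\eta^{2m}+y\eta,
\]
where $y=|t|^{-1/2m}x$, $\delta=\tfrac{\lambda_0}{2}|t|^{1/2m}$, and $g(\eta)=|t|^{b/2m}\psi(|t|^{-1/2m}\eta)$ lies in $S_N^b\big((\delta,\infty)\big)$ with seminorms controlled by those of $\psi$ (since $|\partial_\eta^j g(\eta)|\lesssim|t|^{(b-j)/2m}|\psi^{(j)}(|t|^{-1/2m}\eta)|\lesssim\eta^{b-j}$). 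Hence it suffices to prove for $J:=\int_\delta^\infty e^{\i\Phi}g\,\d\eta$ the two bounds $|J|\lesssim(1+|y|)^{-\mu_b}$, and $|J|\lesssim\delta^{\,1+b-2mN}$ when $|y|\lesssim\delta^{2m-1}$. Granting these: in regime $|t|\lesssim1$ one has $\delta\lesssim1$ and the first bound gives \eqref{shorttime}; in regime $|t|\gtrsim1$ one has $\delta\gtrsim1$, the conditions $|t|^{-1}|x|\gtrsim1$ and $|t|^{-1}|x|\ll1$ translate into $|y|\gtrsim\delta^{2m-1}$ and $|y|\ll\delta^{2m-1}$, and—using the algebraic identity $\mu_b-\tfrac12=-\tfrac{1+2b}{2(2m-1)}=\tfrac1{2m}(\mu_b-1-b)$ together with $\delta\sim|t|^{1/2m}$—the two bounds on $J$ give exactly the two lines of \eqref{longime} (the second because $|t|^{-\frac{1+b}{2m}}\delta^{\,1+b-2mN}=|t|^{-N}$).

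To estimate $J$ I would split $\eta$ into dyadic shells $\eta\sim\rho$, $\rho=2^j\ge\delta$; note $|\Phi^{(k)}(\eta)|\sim\eta^{2m-k}$ for $2\le k\le2m$, and that the only possible critical point is $\eta_c\sim|y|^{1/(2m-1)}$, present only when $xt>0$. On each shell I use one of three tools. \textbf{(a) Stationary shell} $\rho\sim\eta_c$ (only when $\eta_c\gtrsim\delta$): here $|\Phi''|\sim\rho^{2m-2}$, so van der Corput of order two gives a contribution $\lesssim\rho^{b-(m-1)}=|y|^{-\mu_b}$; the required $L^1$‑bound $\int_{\eta\sim\rho}|g'|\lesssim\rho^b$ holds on a single shell, though it would fail globally. \textbf{(b) Schrödinger‑dominated shells} $\rho\gg\eta_c$ (or no critical point at all): $|\Phi'|\sim\rho^{2m-1}$ beats all higher derivatives, so $N$‑fold non‑stationary‑phase integration by parts yields $\lesssim\rho^{\,1+b-2mN}$, summing geometrically (as $b<2Nm-1$) to $\lesssim\min(\eta_c,\delta)^{\,1+b-2mN}$. \textbf{(c) Transport‑dominated / non‑oscillatory shells} $\delta\le\rho\ll\eta_c$: here $|\Phi'|\sim|y|$, so for $|y|\rho\gtrsim1$ integration by parts gives $\lesssim|y|^{-N}\rho^{\,1+b-N}$, while for $|y|\rho\lesssim1$ the phase is $O(1)$ and the trivial bound $\lesssim\rho^{1+b}$ is used; summing (using $1+b>0$) this contributes $\lesssim|y|^{-(1+b)}$, plus, when $1+b-N<0$, a bottom‑shell term $\lesssim\delta^{\,1+b-N}|y|^{-N}$.

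Adding up: the main term is the stationary shell, $|y|^{-\mu_b}$; the tail (b) is $\lesssim\eta_c^{\,1+b-2mN}=|y|^{\frac{1+b-2mN}{2m-1}}$, which is $\lesssim|y|^{-\mu_b}$ for $|y|\gtrsim1$ because the exponent gap is $\tfrac{m(1-2N)}{2m-1}<0$; the tail (c) is $\lesssim|y|^{-(1+b)}\lesssim|y|^{-\mu_b}$ for $|y|\gtrsim1$, which is precisely the inequality $1+b\ge\mu_b$, i.e. $m(1+2b)\ge0$—this is where $b\ge-\tfrac12$ enters. When $|y|\lesssim\delta^{2m-1}$ no critical point lies in $[\delta,\infty)$ and only (b) survives, giving $|J|\lesssim\delta^{\,1+b-2mN}$. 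Finally, in the sub‑case $1+b-N<0$ the bottom‑shell term $\delta^{\,1+b-N}|y|^{-N}$ has to be absorbed: this occurs only in regime $|t|\gtrsim1$, where $\delta\gtrsim1$ makes it $\lesssim|y|^{-N}$, and $|y|^{-N}\lesssim|y|^{-\mu_b}$ is the last hypothesis $\mu_b\le N$, i.e. $b\ge m-1-N(2m-1)$. (Alternatively one may first peel off the part of $\psi$ supported in a fixed compact set away from $0$; for it $\lambda\mapsto\psi(\lambda)e^{-\i t\lambda^{2m}}$ has $C_c^\infty$ norms $\lesssim(1+|t|)^{O(1)}$, hence a rapidly decaying Fourier transform, which disposes of any low‑frequency endpoint issue and lets one invoke Lemma \ref{lemmaA.2} for that piece.)

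The main obstacle is exactly this bookkeeping in the dyadic summation: correctly partitioning the shells according to the competition between $\delta$, $1/|y|$ and $\eta_c$, and checking in each regime that the geometric series produced is dominated by $(1+|y|)^{-\mu_b}$ (or $\delta^{\,1+b-2mN}$). The two endpoint restrictions on $b$ are consumed precisely here—$b\ge-\tfrac12$ for the non‑oscillatory low‑frequency tail and $b\ge m-1-N(2m-1)$ for the deep integration‑by‑parts tail—and one must keep track that the van der Corput amplitude hypothesis holds shellwise even though $\psi\sim\lambda^b$ need not be globally integrable.
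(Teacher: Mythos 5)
The paper only states this lemma and cites \cite{HHZ} for the proof without reproducing it, so there is no in-paper argument to compare against; I assess your sketch on its own. Your scaling reduction, the uniform symbol class for $g$, the exponent identity $\mu_b-\tfrac12=\tfrac1{2m}(\mu_b-1-b)$, and the translation of \eqref{longime}--\eqref{shorttime} into the two bounds on $J$ are all correct, and the three-regime dyadic stationary-phase plan, with the two endpoint restrictions on $b$ emerging in identifiable summations, is a reasonable route to the result.

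However, there is a genuine gap in the verification of $|J|\lesssim(1+|y|)^{-\mu_b}$: you explicitly only add up the dyadic contributions ``for $|y|\gtrsim1$,'' but for the short-time estimate \eqref{shorttime} the bound is needed in the whole range $|y|\lesssim1$, including the subcase $|y|\ll\delta^{2m-1}$ with $\delta\lesssim1$ where there is no stationary shell. In that range your stated per-shell bounds overestimate and their sums diverge. The van der Corput bound $\rho^{b-(m-1)}$ on the stationary shell gives $|y|^{-\mu_b}\to\infty$ as $|y|\to0$ whenever $\mu_b>0$; for $\eta_c\lesssim1$ one must instead use the trivial bound $\eta_c^{1+b}\lesssim1$ (here $1+b\ge\tfrac12>0$). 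Likewise, the $N$-fold IBP estimate $\rho^{1+b-2mN}$ on a Schr\"odinger shell only improves on the trivial $\rho^{1+b}$ when $\rho\gtrsim1$; the regime-(b) sum is bottom-shell dominated at $\rho\sim\max(\eta_c,\delta)$ (so the base should be $\max(\eta_c,\delta)$, not $\min(\eta_c,\delta)$ as written), and it blows up when $\max(\eta_c,\delta)\ll1$, which is precisely the configuration of small $|t|$ and small $|x|$. The correct bookkeeping is to take $\min(\text{trivial},\text{oscillatory})$ on every shell, in particular using trivial estimates on all shells with $\rho\lesssim1$; you acknowledge the bookkeeping as ``the main obstacle,'' but as stated the estimates do not produce $|J|\lesssim1$ for $|y|\lesssim1$, so the bulk of \eqref{shorttime} remains unproved. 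A secondary remark: $\psi$ need only vanish on $(-\infty,\lambda_0/2)$ and may be discontinuous at $\lambda_0/2$, so the dyadic cutoff and each integration by parts pick up boundary terms at $\eta=\delta$; these are controlled by the same powers of $\delta$, but should be tracked explicitly.
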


\section{The proof of Theorem \ref{thmmain} (low energy part)}\label{section3}
By an iteration  of the resolvent identity and \eqref{equ3.2}, we have
\begin{align}\label{eq4.54}\small
	&e^{-\i tH}\chi( H)P_{ac}(H)\nonumber\\
	=&
	\sum_{k=0}^{2N}\frac{(-1)^{k}m}{\pi \i}\, \int_{0}^{+\infty}e^{-\i t\lambda^{2m}}\left(R_{0}^{+}(\lambda^{2m})(VR_{0}^{+}(\lambda^{2m}))^{k}-R_{0}^{-}(\lambda^{2m})(VR_{0}^{-}(\lambda^{2m}))^{k}\right)\lambda^{2m-1}\chi(\lambda^{2m})\d\lambda\nonumber\\
	&-\frac{m}{\pi \i}\, \int_{0}^{\infty}e^{-\i t\lambda^{2m}}(R_{0}^{+}(\lambda^{2m})V)^{N}R_{0}^{+}vM^{+}(\lambda)^{-1}vR_{0}^{+}(\lambda^{2m})(VR_{0}^{+}(\lambda^{2m}))^{N}\lambda^{2m-1}\chi(\lambda^{2m})\d\lambda\nonumber\\	
	&+\frac{m}{\pi \i}\, \int_{0}^{\infty}e^{-\i t\lambda^{2m}}(R_{0}^{-}(\lambda^{2m})V)^{N}R_{0}^{-}vM^{-}(\lambda)^{-1}vR_{0}^{-}(\lambda^{2m})(VR_{0}^{-}(\lambda^{2m}))^{N}\lambda^{2m-1}\chi(\lambda^{2m})\d\lambda\nonumber\\
	:=& \sum_{k=0}^{2N}\Omega^{low}_{k}-\left(\Omega^{+, low}_{ r}-\Omega^{-, low}_{r}\right),
\end{align}
where $N\in \mathbb{N}_0$ is fixed and will be chosen later.
Since
$\Omega^{low}_{0}=e^{-\i tH_0}\chi(H_0)$,
by \cite{Mi,HHZ}, we have
\begin{equation} \label{eq4.1.1}
	\left|\Omega^{ low}_{0}(t, x, y)\right|\lesssim |t|^{-\frac{n}{2 m}} \left(1+|t|^{-\frac{1}{2 m}}|x-y|\right)^{-\frac{n(m-1)}{2 m-1}},\quad t\neq0,\,\,x,y\in\mathbb{R}^n.
\end{equation}

%{\color{blue}We shall discuss the case $0\le \k\le m_n$ and the case $\k=m_n+1$ separately.}

%Note that $R_{0} ^ { \pm }(\lambda^{2 m})(\cdot)$ has singularity $|\cdot|^{2m-n}$ and it doesn't belong to $L^2_{loc}(\R^n)$. We overcome such difficulty by choosing $N=[\frac{n}{2m}]+3$ in  \eqref{eq4.54}.    
We next estimate the kernels of the initial terms $\Omega^{low}_{k}$ for $1\le k\le 2N$, and then prove estimate for the kernel of the remainder term $\Omega^{+, low}_{ r}-\Omega^{-, low}_{r}$. Theorem \ref{thmmain} (low energy part) shall follow immediately from \eqref{eq4.1.1}, and \eqref{eq4.57}, \eqref{equ-ker-remai} below.

\subsection{Estimate for $\Omega^{low}_{k}(t,x,y)$}\label{sec4.2.3}\

%
%		In this case, $\tilde{m}_n=0$. We first
For any $k\in \mathbb{N}_+$, we shall prove in this part that if  $|V(x)|\lesssim\langle x\rangle^{-(n+2)-}$, then the kernel of $\Omega^{low}_{k}$ satisfies
\begin{equation}\label{eq4.57}
	\left|\Omega^{low}_{k}(t,x,y)\right| \lesssim |t|^{-\frac{n}{2m}}\left(1+|t|^{-\frac{1}{2m}}|x-y|\right)^{-\frac{n(m-1)}{2m-1}}.
\end{equation}
%\emph{Proof of the Claim.}\,
Note that the kernel $\Omega^{low}_{k}(t,x,y)$ can be expressed by
 \begin{align*}\label{eq4.58}
 	&\frac{(-1)^{k}m}{\pi \i}\int_{0}^{+\infty}\int_{\R^{nk}}e^{-\i t\lambda^{2m}}\left(R_{0}^{+}(\lambda^{2m})(z_{0}-z_{1})\prod_{j=1}^{k}(V(z_{j})R_{0}^{+}(\lambda^{2m})(z_{j}-z_{j+1}))\right.\nonumber\\	&\left.-R_{0}^{-}(\lambda^{2m})(z_{0}-z_{1})\prod_{j=1}^{k}(V(z_{j})R_{0}^{-}(\lambda^{2m})(z_{j}-z_{j+1}))\right)\lambda^{2m-1}\chi(\lambda^{2m})\d z_{1}\cdots \d z_{k}\d\lambda,
 \end{align*}
 where $z_{0}=x,\ z_{k+1}=y$, we further decompose this integral by restricting the integration in $\mathbb{R}^{nk}$ to
%split $\R^{nk}=U_1\bigcup U_2$, where
$$
U_1=\left\{(z_1,\ldots, z_k)\in \R^{nk}: \,|t|^{-\frac{1}{2m}}(|z_{0}-z_{1}|+\cdots+|z_{k}-z_{k+1}|)\leq1\right\},\,\quad U_2=\R^{nk}\setminus  U_2,
$$
and denote the part in $U_i$ by $\Omega^{low,i}_{k}(t,x,y)$.

By \eqref{eq2.10}, \eqref{equ4.17} and the algebraic identity
$$
\prod_{j=0}^{k+1}A_{j}^{+}-\prod_{j=0}^{k+1}A_{j}^{-}=\sum_{j=0}^{k+1}A_{0}^{-}\cdots(A_{j}^{+}-A_{j}^{-})A_{j+1}^{+}\cdots A_{k}^{+},
$$
$\Omega^{low,1}_{k}(t,x,y)$ can be written as a linear combination of
\begin{equation*}\label{eq4.59}
\begin{split}
	\int_{[0,1]^{k+1}}&\int_{U_1}\int_{0}^{+\infty}e^{-\i t\lambda^{2m}+\i\lambda_{k_{j_{0}}}s_{j_{0}}|z_{j_{0}-1}-z_{j_{0}}|+\sum_{j=1,j\neq j_{0}}^{k+1}\i\lambda_{k_{j}}s_{j}^{p_{j}}|z_{j-1}-z_{j}|}\prod_{j=1,j\neq j_{0}}^{k+1}|z_{j-1}-z_{j}|^{-t_{j}}\\
	&\times\prod_{j=1}^{k}V(z_{j})\lambda^{n-1+\sigma}\chi(\lambda^{2m})\prod_{j=1}^{k+1}(1-s_{j})^{q_{j}}	\d \lambda \d z_1\cdots \d z_k \d s_1\cdots \d s_{k+1},
\end{split}
\end{equation*}
where $\sigma\geqslant0$, $\lambda_{k_{j}}=\lambda \exp{(\frac{\i k_{j}z}{m})}$, $\lambda_{k_{j_{0}}}=\lambda\,\mbox{or}\,-\lambda$, $1\leq j_{0}\leq k+1$, 
\begin{equation*}
\begin{cases}
	k_{j}\in\{1,\cdots,m\} \qquad\,\,\,\,\,  \,\,\,\,\,\,\mbox{ if }\   j<j_{0},\\
	k_{j}\in\{0,\cdots,m-1\} \ \,\,\,\,\, \,\,\,\,\,\,\,\mbox{ if }\   j_{0}<j\leq k+1,\\
	t_{j}\in\{\frac{n-1}{2},\cdots,n-2m\}\ \,\,\,\,\, \mbox{ for }\  j=1,\cdots,k+1,\  j\neq j_{0},\\
	p_{j}\in\{0,1\}\,\,\,\,\,\,\,\,\,\,\,\qquad\qquad  \mbox{ for }\  j\neq j_{0},
\end{cases}
\end{equation*}
$	q_{j}\in\{0,\cdots,2m-3\}$, and $	q_{j_{0}}\in\{\frac{n-3}{2},\frac{n-1}{2},\cdots,n-3\}$.
Since in $U_1$, it follows that
$$|t|^{-\frac{1}{2m}}\left| \sum_{j=1,j\neq j_{0}}^{k+1}(s_{j}^{p_{j}}|z_{j-1}-z_{j}|)\pm s_{j_{0}}|z_{j_{0}-1}-z_{j_{0}}|\right| \leq1,$$
and $|t|^{-\frac{1}{2m}}|x-y|\leq1$ by the triangle inequality, we can apply Lemma \ref{lemmaA.2} with $b=n-1$ to have
\begin{equation*}
\begin{aligned}
	|\Omega^{low,1}_{k}(t,x,y)|&\lesssim |t|^{-\frac{n}{2m}}\int_{\R^{nk}}\prod_{j=1,j\neq j_{0}}^{k+1}|z_{j-1}-z_{j}|^{-t_{j}}\prod_{j=1}^{k}\langle z_{j}\rangle^{-\beta}dz_{1}\cdots dz_{k}\\
	&\lesssim |t|^{-\frac{n}{2m}}\left(1+|t|^{-\frac{1}{2m}}|x-y|\right)^{-\frac{n(m-1)}{2m-1}},
\end{aligned}
\end{equation*}
where the last inequality follows from Lemma  \ref{lem3.10}.

$\Omega^{low,2}_{k}(t,x,y)$ is, apart from a constant, the integral
$$
\int_{0}^{+\infty}\int_{U_2}e^{-\i t\lambda^{2m}}R_{0}^{\pm}(\lambda^{2m})(z_{0}-z_{1})\prod_{j=1}^{k}\left(V(z_{j})R_{0}^{\pm}(\lambda^{2m})(z_{j}-z_{j+1})\right)\lambda^{2m-1}\chi(\lambda^{2m})\d\lambda \d z_{1}\cdots \d z_{k},
$$
and we will only consider the sign $"+"$ here since the other case is similar. We further split $U_2=\bigcup_{i=1}^k U_{2, i}$ where
$$U_{2, i}=\left\{(z_1,\ldots, z_k)\in U_2: \,|z_{i-1}-z_{i}|\geq|z_{j-1}-z_{j}|\,\mbox{for all}\, j=1,\cdots,k+1\right\}.$$
Without loss of generality, we only consider the integral in $ U_{2, 1}$ which, by \eqref{equ4.17}, can be written as a linear combination of
\begin{equation}\label{eq4.60}
\begin{split}
	&\int_{[0,1]^k}\int_{U_{2, 1}}\int_{0}^{+\infty}e^{-\i t\lambda^{2m}+\i\lambda_{k_{1}}|z_{0}-z_{1}|+\sum_{j=2}^{k+1}\i\lambda_{k_{j}}s_{j}^{p_{j}}|z_{j-1}-z_{j}|}\prod_{j=1}^{k+1}|z_{j-1}-z_{j}|^{-t_{j}}\\
	&\times\prod_{j=1}^{k}V(z_{j})\prod_{j=2}^{k+1}(1-s_{j})^{q_{j}}
	\lambda^{n-1-t_{1}+\sigma}\chi(\lambda^{2m})\d\lambda \d z_{1}\cdots \d z_{k}\d s_{2}\cdots\d s_{k+1},
\end{split}
\end{equation}
where $\sigma\geq 0$,  $\lambda_{k_{j}}=\lambda \exp(\frac{\i k_{j}\pi}{m})$ and $k_{j}\in\{0,\cdots,m-1\}$ for $j=1,\cdots,k+1$, $p_{j}\in\{0,1\}$, $q_{j}\in\{0,\cdots,2m-3\}$ and
\begin{equation*}
\begin{cases}
	t_{1}\in\{\frac{n-1}{2},\cdots,n-2\},\\
	t_{j}\in\{\frac{n-1}{2},\frac{n+1}{2},\cdots,n-2m\}\,\,\, \mbox{ for}\,\,  j\neq1.
\end{cases}
\end{equation*}
In order to apply Lemma \ref{lemmaA.2} to estimate \eqref{eq4.60}, we write
\begin{equation}\label{equ4.low}
\begin{split}	e^{\i\lambda_{k_{1}}|z_{0}-z_{1}|+\sum_{j=2}^{k+1}\i\lambda_{k_{j}}s_{j}^{p_{j}}|z_{j-1}-z_{j}|}=&e^{\i\lambda|z_{0}-z_{1}|+\sum_{j=2}^{k+1}\i\lambda s_{j}^{p_{j}}|z_{j-1}-z_{j}|}\\
	&\times e^{\i\lambda_{k}|z_{0}-z_{1}|-\i\lambda|z_{0}-z_{1}|+\sum_{j=2}^{k+1}
		(\i\lambda_{k_{j}}s_{j}^{p_{j}}|z_{j-1}-z_{j}|-\i\lambda s_{j}^{p_{j}}|z_{j-1}-z_{j}|)}.
\end{split}
\end{equation}
One easily checks with respect to $\lambda$ that
$$
e^{\i\lambda_{k}|z_{0}-z_{1}|-\i\lambda|z_{0}-z_{1}|+\sum_{j=2}^{k+1}
(\i\lambda_{k_{j}}s_{j}^{p_{j}}|z_{j-1}-z_{j}|-\i\lambda s_{j}^{p_{j}}|z_{j-1}-z_{j}|)}\in S^{0}((0,1)),\quad k=0,\cdots,m-1,
$$	
where the seminorms are uniformly bounded in parameters, so we can plug the identity \eqref{equ4.low} into \eqref{eq4.60}, and apply Lemma \ref{lemmaA.2} with $b=n-1-t_1$ to obtain
\begin{equation*}\label{eq4.61}
\begin{split} |\eqref{eq4.60}|\lesssim&\int_{U_{2, 1}}\int_{[0,1]^k}|t|^{-\frac{n-t_1}{2m}}\left(|t|^{-\frac{1}{2m}}\left(|z_{0}-z_{1}|+\sum_{j=2}^{k+1}s_{j}^{p_{j}}|z_{j-1}-z_{j}|\right)\right)^{-\frac{m-1-(n-1-t_{1})}{2m-1}}\\
	&\times\prod_{j=1}^{k+1}|z_{0}-z_{1}|^{-t_{j}}\prod_{j=1}^{k}\langle z_{j}\rangle^{-\beta}\d s_{2}\cdots \d s_{k+1}\d z_{1}\cdots \d z_{k}\\
	\lesssim& \int_{U_{2, 1}} |t|^{-\frac{n}{2m}}(1+|t|^{-\frac{1}{2m}}|z_{0}-z_{1}|)^{-t_1-\frac{m-1-(n-1-t_{1})}{2m-1}}\prod_{j=1}^{k+1}|z_{0}-z_{1}|^{-t_{j}}\prod_{j=1}^{k}\langle z_{j}\rangle^{-\beta}\d z_{1}\cdots \d z_{k}\\
	\lesssim& |t|^{-\frac{n}{2m}}\left(1+|t|^{-\frac{1}{2m}}|x-y|\right)^{-\frac{n(m-1)}{2m-1}},
\end{split}
\end{equation*}
where in the second inequality, we have used Lemma \ref{lem3.10} and the fact in $U_{2, 1}$ that
$$|z_{0}-z_{1}|\sim|z_{0}-z_{1}|+\sum_{j=z}^{k+1}s_{j}^{p_{j}}|z_{j-1}-z_{j+1}|,\quad|t|^{-\frac{1}{2m}}|z_{0}-z_{1}|\gtrsim1;$$
in the last inequality, we have used the facts that
$|z_{0}-z_{1}|\gtrsim \sum_{j=1}^{k+1}{|z_{j-1}-z_j|}\ge |x-y|$ holds in $U_{2, 1}$, and that $t_1+\frac{m-1-(n-1-t_{1})}{2m-1}\ge \frac{n(m-1)}{2m-1}$ holds when $t_1\ge \frac{n-1}{2}$. Therefore we complete the proof of \eqref{eq4.57}.

\subsection{Estimate for the remainder term $(\Omega_{r}^{+, low}-\Omega_{r}^{-, low})(t,x,y)$}\label{sec4.2.4}\

Let $N>[\frac{n}{2m}]+2$ in \eqref{eq4.54}. The aim of this part is to prove that the kernel of $\Omega_{r}^{+, low}-\Omega_{r}^{-, low}$ satisfies
\begin{equation}\label{equ-ker-remai}
	\left|(\Omega_{r}^{+, low}-\Omega_{r}^{-, low})(t,x,y)\right| \lesssim (1+|t|)^{-(\frac{n}{2m}-\sigma)}(1+|t|^{-\frac{n}{2 m}})\left(1+|t|^{-\frac{1}{2 m}}|x-y|\right)^{-\frac{n(m-1)}{2 m-1}},
\end{equation}
where $\sigma=0$ if zero is not an  eigenvalue of $H$, and $\sigma=2$ otherwise.

We first introduce the following two lemmas, whose proofs are given in Appendix \ref{app-002}.
\begin{lemma}\label{lemma4.6}
	Let $n>4m$ be odd, $l\in\{[\frac{n}{2m}]+3,\cdots\}$, $s_{i}\geq 0$ be integers with $s_0+\cdots+s_{l}\leq \frac{n+1}{2}$, and $R_{0}^{\pm,(s_{i})}(\lambda^{2m})$ be the operators with integral kernels $\partial_{\lambda}^{s_{i}}\big[R_{0}^{\pm}(\lambda^{2m})(x-y)\big]$.
	If $|V(x)|\lesssim\langle x\rangle^{-(n+2)-}$, then for each $k\in I^\pm$ and $\tau\in\{0,\cdots,n-1\}$, it follows that
	{\small
		\begin{equation}\label{eq4.53.1}
			\left\|v(y) \left(\prod_{j=0}^{l-1}(R_{0}^{\pm,(s_{j})}(\lambda^{2m})V)\partial_\lambda^{s_l}\left(|x-\cdot|^{-\tau}e^{\mp \i\lambda s|x|}e^{\i\lambda_{k}s|x-y|}  \right)\right)(y)\right\|_{L^2_y} \lesssim \lambda^{-s_l}\langle x\rangle^{-\tau},
		\end{equation}
	}where the estimate holds uniformly in $\lambda,s\in(0,1)$, and $x\in\mathbb{R}^n$. Further,
	we also have for each $k\in I^+$, $j\in\{0,\cdots,l-1\}$ and $\tau\in\{\frac{n-1}{2},\cdots,n-2m\}$ that
	\begin{equation}\label{eq4.53.2}
		{\small
			\begin{aligned}
				\big\|  v(y)\left(R_{0}^{-,(s_{0})}(\lambda^{2m})
				\right. V& \cdots  \left(R_{0}^{+,(s_{j})}(\lambda^{2m})-R_{0}^{-,(s_{j})}(\lambda^{2m})\right)V
				\cdots R_{0}^{+,(s_{l-2})}(\lambda^{2m}) \\
				&\left.
				\times VR_{0}^{+,(s_{l-1})}(\lambda^{2m})V\partial_\lambda^{s_l}\left(|x-\cdot|^{-\tau}e^{-\i\lambda s|x|}e^{\i\lambda_{k}s|x-y|}  \right)\right)(y)\big\|_{L^2_y}\lesssim{\lambda^{n-2m-s_j}},
			\end{aligned}
		}
	\end{equation}
	which holds uniformly in $\lambda\in(0,1)$ and $x\in\mathbb{R}^n$.
\end{lemma}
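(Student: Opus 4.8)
The plan is to establish both $L^2_y$-bounds by a unified estimate on the $L^2$-operator norms of the building blocks $R_0^{\pm,(s_j)}(\lambda^{2m})V$ appearing in the product, together with an $L^2_y$-bound on the rightmost factor involving the derivatives of $|x-\cdot|^{-\tau}e^{\mp\i\lambda s|x|}e^{\i\lambda_k s|x-\cdot|}$. First I would record from the explicit kernel formulas \eqref{eq2.8}, \eqref{equ4.17} that for $0<\lambda<1$ and any fixed number of $\lambda$-derivatives, $\partial_\lambda^{s}[R_0^\pm(\lambda^{2m})(z)]$ is a sum of terms of the shape $\lambda^{\sigma-s}|z|^{-\tau'}(\text{bounded oscillatory factor})$ with $\sigma\geq 0$ and $\tau'\in\{\tfrac{n-1}{2},\dots,n-2m\}$ in the principal-order part, plus lower-order smoother pieces $|z|^{2m-n+j}$ with $j\geq 0$; since $n>4m$, all these kernels are dominated pointwise by $|z|^{-(n-2m)}+|z|^{-(n-2)}\lesssim |z|^{2m-n}$ on the relevant range (for the leading part, by $\lambda^{-s}|z|^{-\frac{n-1}{2}}$ up to $|z|\lesssim 1$ and $\lambda^{-s}$ decay otherwise). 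Using this and the weighted Schur-type bound of Lemma \ref{lem3.10}, each operator $\langle\cdot\rangle^{-\beta}R_0^{\pm,(s_j)}(\lambda^{2m})\langle\cdot\rangle^{-\beta}$ (with $\beta$ determined by the decay of $V$, so that $V=v^2$ splits off enough powers of $\langle\cdot\rangle^{-\beta}$) is bounded on $L^2$ with norm $\lesssim\lambda^{-s_j}$ uniformly in $\lambda\in(0,1)$ — here the decay hypothesis $|V(x)|\lesssim\langle x\rangle^{-(n+2)-}$ is exactly what makes the weights integrate against $|z|^{2m-n}$ (one needs $k+l>n$, $k<n$ in Lemma \ref{lem3.10}, which holds since $2m-n<n$ and $(n+2)+(2m-n)=2m+2>n$ fails — so actually the correct split is to keep $v$ on one side and absorb $\langle\cdot\rangle^{-(n+1)-}$ worth of decay, using $n-2m<n$ and $(n-2m)+(n+1)>n$). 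Composing $l$ such blocks yields the factor $\lambda^{-(s_0+\cdots+s_{l-1})}$, which combined with the rightmost-factor estimate gives the total power $\lambda^{-s_l}$ in \eqref{eq4.53.1} after noting $s_0+\cdots+s_l\leq\frac{n+1}{2}$ keeps everything in the allowed derivative range.

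For the rightmost factor, I would differentiate $|x-y|^{-\tau}e^{\mp\i\lambda s|x|}e^{\i\lambda_k s|x-y|}$ in $\lambda$ up to order $s_l$: each derivative either lands on $e^{\mp\i\lambda s|x|}$ producing a factor $\mp\i s|x|$, or on $e^{\i\lambda_k s|x-y|}$ producing $\i e^{\i\pi k/m}s|x-y|$, both of which are harmless since $s\in(0,1)$ and the resulting polynomial growth in $|x|,|x-y|$ is killed by the smoothing (the exponentials have modulus $1$) and, after applying the preceding operator product whose kernels decay like $|z|^{2m-n}$ and are integrated against $v(y)^2\lesssim\langle y\rangle^{-(n+2)-}$, the growth $|x-y|\leq |x-z_{l-1}|+\cdots$ gets distributed and absorbed; the net effect is the clean bound $\lambda^{-s_l}\langle x\rangle^{-\tau}$, where the weight $\langle x\rangle^{-\tau}$ survives because $|x-y|^{-\tau}$ times the decay of $V$ at the innermost integration, via Lemma \ref{lem3.10}, produces $\langle x\rangle^{-\tau}$ (using $\tau\leq n-2m<n$ and $\tau+(n+2)>n$). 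For \eqref{eq4.53.2} the same scheme applies, except one block $R_0^{\pm,(s_j)}$ is replaced by the difference $R_0^{+,(s_j)}-R_0^{-,(s_j)}$; by \eqref{eq2.10} this difference has kernel $\sum_{k\in\{0,m\}}\sum_j C_{j,0}\lambda_k^{n-2m}\int_0^1 e^{\i\lambda_k s|z|}(1-s)^{n-3-j}\d s$, which is $O(\lambda^{n-2m-s_j})$ after $s_j$ derivatives and has a bounded (indeed smooth) convolution kernel, so it is bounded on $L^2$ after trivial weighting with norm $\lesssim\lambda^{n-2m-s_j}$; combining with the $\lambda^{-s_i}$ norms of the other blocks and the $\lambda^{-s_l}$ from the rightmost factor, and using that the $s_i$ with $i\ne j$ together with $s_l$ sum to at most $\frac{n+1}{2}-s_j$, gives the stated power $\lambda^{n-2m-s_j}$ (the gain $\lambda^{n-2m}>0$ comes entirely from the difference structure, which is why only the "$+$" chain to the right of the difference needs $\tau\in\{\frac{n-1}{2},\dots,n-2m\}$, i.e. no $\lambda$-growth from those kernels).

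The main obstacle I anticipate is the careful bookkeeping of the weights: one must verify that the decay exponent $(n+2)-$ on $V$ (equivalently, $\langle\cdot\rangle^{-\frac{n+2}{2}-}$ on $v$) is enough to simultaneously (a) make each $v R_0^{\pm,(s_j)}(\lambda^{2m}) v$-type block $L^2$-bounded with the right power of $\lambda$, (b) absorb the polynomial growth in $|x-y|$, $|x|$ generated by differentiating the exponentials in the rightmost factor, and (c) still leave the clean output weight $\langle x\rangle^{-\tau}$. This requires iterating Lemma \ref{lem3.10} along the chain $z_0=x\to z_1\to\cdots\to y$ and checking at each step that the hypotheses $0\le k<n$, $k+l>n$ of that lemma are met — which is where oddness of $n$ and the strict inequality $n>4m$ are silently used to guarantee $n-2m\in(0,n)$ and that the combined point-singularity exponents never reach $n$. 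A secondary subtlety is that \eqref{eq4.53.2} asks for uniformity in $x\in\mathbb{R}^n$ only (not in $s$), so one may freely use $s\in(0,1)$ to bound $s|x|\lesssim |x|$ and $s|x-y|\lesssim|x-y|$; the derivative count $s_0+\cdots+s_l\leq\frac{n+1}{2}$ matches exactly the $C^K$-regularity available in the resolvent kernels \eqref{eq2.8}, so no regularity is lost.
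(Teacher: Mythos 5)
Your plan has a genuine gap in the $\lambda$-power bookkeeping, and the resulting arithmetic simply does not close. You propose showing that each weighted block $\langle\cdot\rangle^{-\beta}R_0^{\pm,(s_j)}(\lambda^{2m})\langle\cdot\rangle^{-\beta}$ is $L^2$-bounded with norm $\lesssim\lambda^{-s_j}$, so that composing $l$ such blocks gives $\lambda^{-(s_0+\cdots+s_{l-1})}$; you then multiply by the rightmost factor's $\lambda^{-s_l}$ and assert the "total power $\lambda^{-s_l}$." But $\lambda^{-(s_0+\cdots+s_{l-1})}\cdot\lambda^{-s_l}=\lambda^{-(s_0+\cdots+s_l)}\gg\lambda^{-s_l}$ as $\lambda\to0$; the constraint $s_0+\cdots+s_l\leq\frac{n+1}{2}$ only caps the number of derivatives and does nothing to cancel the extra inverse powers. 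If your claimed intermediate bound were correct, the lemma as stated would be false.

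The observation you are missing is that in dimension $n>4m$ the $\lambda$-derivatives of the free resolvent kernel incur no $\lambda$-singularity at all. Using the decomposition \eqref{equ4.17} (rather than a naive Leibniz expansion of \eqref{eq2.8}, which appears to produce spurious factors $\lambda^{j-2m+2}$ with $j<2m-2$), one has, uniformly in $\lambda\in(0,1)$,
\begin{equation*}
|\partial_{\lambda}^{s_{j}}R_{0}^{\pm}(\lambda^{2m})(z)|\ \lesssim\ |z|^{2m-n}+|z|^{-\frac{n-1}{2}+s_j},\qquad 0\leq s_j\leq\tfrac{n+1}{2}.
\end{equation*}
Differentiating in $\lambda$ trades low-frequency regularity for spatial growth $|z|^{s_j}$; that growth is then absorbed by the decay of $V$ along the chain $z_0\to z_1\to\cdots\to z_l$ by peeling off $\langle z_i\rangle^{-s_i}\langle z_{i+1}\rangle^{-s_i}$ to kill $|z_i-z_{i+1}|^{s_i}$, at which point the iterated spatial integrals close via Lemma~\ref{lmEd} exactly as you envision. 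The single $\lambda^{-s_l}$ in \eqref{eq4.53.1} comes only from the shifted-phase rightmost factor $e^{\mp\i\lambda s|x|}e^{\i\lambda_k s|x-\cdot|}$, after the regrouping $e^{\i s\lambda_k(|x-y|-|x|)}e^{\i s(\lambda_k\mp\lambda)|x|}$: the first exponential has $\lambda$-derivatives bounded by $\langle y\rangle^j$, and the second, for $k\neq0$, is exponentially damped in $\lambda|x|$, so each $\lambda$-derivative brings down $s|x|$ but costs at most $\lambda^{-1}$. The resolvent blocks contribute no $\lambda$-power. The same structural point applies to \eqref{eq4.53.2}: there the single $\lambda^{n-2m-s_j}$ comes from the difference $R_0^{+,(s_j)}-R_0^{-,(s_j)}$ (estimated with weights $\langle z_j\rangle^{-1}\langle z_{j+1}\rangle^{-1}$ to cover $\lambda|z_j-z_{j+1}|\geq1$ via \eqref{eq2.8}), while the remaining blocks and the rightmost factor must contribute no net $\lambda$-power, which again fails under your proposed per-block $\lambda^{-s_i}$ bound.
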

%\begin{proof}
%	The proof is given in Appendix \ref{app-002}.
%	%This, together with the arguments for proving \eqref{eq4.53.1}, yields \eqref{eq4.53.2}.	
%\end{proof}

\begin{lemma}\label{lemma4.9}
	Assume that $n>4m$ is odd, $|V(x)|\lesssim\langle x\rangle^{-(n+2)-}$ and   $l>[\frac{n}{2m}]+2$, then
	\begin{equation}\label{eq4.62}
		v(R_{0}^{\pm}(\lambda^{2m})V)^{l}R_{0}^{\pm}(\lambda^{2m})(x-\cdot)
		=\int_{0}^{1}e^{\pm \i\lambda|x|}\omega_{1,0}^{\pm}(\lambda,s,\cdot,x)\d s+\int_{0}^{1}e^{\pm \i\lambda s|x|}\omega_{1,1}^{\pm}(\lambda,s,\cdot,x)\d s,
	\end{equation}
	where
	\begin{equation}\label{eq4.62.1}
		\langle x\rangle^{\frac{n-1}{2}}\omega_{1,q}^{\pm}(\lambda,s,\cdot,x)\in S_{\frac{n+1}{2}}^{0}((0,1),\,\, \|\cdot \|_{L^{2}} ),\quad q=0,1.
	\end{equation}
	We also have
	\begin{equation}\label{eq4.63}
		\begin{split}
			&v\left((R_{0}^{+}(\lambda^{2m})V)^{l}R_{0}^{+}(\lambda^{2m})(x-\cdot)-(R_{0}^{-}(\lambda^{2m})V)^{l}R_{0}^{-}(\lambda^{2m})(x-\cdot)\right) \\
			=&\sum_{i=0}^1\left(\int_{0}^{1}e^{\i\lambda s^q|x|}\omega_{2,i}^{+}(\lambda,s,\cdot,x)\d s-\int_{0}^{1}e^{-\i\lambda s^q|x|}\omega_{2,i}^{-}(\lambda,s,\cdot,x)\d s\right),
		\end{split}
	\end{equation}
	and for $i=0,1$ that
	\begin{equation}\label{eq4.63.2}
		\omega_{2,i}^{\pm}(\lambda,s,\cdot,x)\in S_{\frac{n+1}{2}}^{n-2m}((0,1),\,\, \|\cdot \|_{L^{2}} ),
	\end{equation}
	\begin{equation}\label{eq4.63.1}
		\langle x\rangle^{\frac{n-1}{2}}\omega_{2,i}^{\pm}(\lambda,s,\cdot,x)\in S_{\frac{n+1}{2}}^{\frac{n+1}{2}-2m}((0,1),\,\, \|\cdot \|_{L^{2}}).
	\end{equation}
\end{lemma}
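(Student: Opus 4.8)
The plan is to expand the product $(R_0^\pm(\lambda^{2m})V)^l R_0^\pm(\lambda^{2m})$ using the two representations \eqref{eq2.10} and \eqref{equ4.17} for the convolution kernel of each $R_0^\pm$, then collect the resulting exponential phases into the two advertised families $e^{\pm\i\lambda|x|}$ and $e^{\pm\i\lambda s|x|}$, with everything else absorbed into the amplitudes $\omega_{1,q}^\pm$ and $\omega_{2,i}^\pm$. First I would treat the last factor $R_0^\pm(\lambda^{2m})(x-\cdot)$, which is the only factor carrying the distinguished variable $x$: using \eqref{equ4.17} with $x$ replaced by $x-y$, the kernel splits into a piece with phase $e^{\i s\lambda_k|x-y|}$ (the $l\le 2m-3$ sum, an integral over $s\in[0,1]$) and a piece with phase $e^{\i\lambda_k|x-y|}$ (the $l\ge 2m-2$ sum); in both, $|x-y|^{-\tau}$-type point singularities appear with $\tau\in\{\frac{n-1}{2},\dots,n-2m\}\cup\{2m-n,\dots\}$. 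The $\lambda_k$ can be replaced by $\pm\lambda$ up to multiplying by a symbol $e^{\i(\lambda_k\mp\lambda)|x-y|}\in S^0((0,1))$ as already done in the low-energy computations of Section~\ref{sec4.2.3}, which is harmless since $|x-y|$ is not small here (we only need the estimates to hold uniformly, and the weight $\langle x\rangle^{\frac{n-1}{2}}$ is provided by the point singularity after using the $U_{2,i}$-type splitting implicitly inside Lemma~\ref{lemma4.6}). The key input making everything work is Lemma~\ref{lemma4.6}: estimates \eqref{eq4.53.1} and \eqref{eq4.53.2} bound exactly the $L^2_y$-norm of $v(y)$ times the iterated operator applied to $|x-\cdot|^{-\tau}e^{\mp\i\lambda s|x|}e^{\i\lambda_k s|x-y|}$ and its $\lambda$-derivatives, giving the bounds $\lambda^{-s_l}\langle x\rangle^{-\tau}$ respectively $\lambda^{n-2m-s_j}$.

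For \eqref{eq4.62}, I would differentiate the expansion in $\lambda$ up to order $\frac{n+1}{2}$: each $\partial_\lambda$ either falls on one of the $l$ inner resolvent kernels (producing $R_0^{\pm,(s_j)}$ with $\sum s_j\le\frac{n+1}{2}$, the hypothesis of Lemma~\ref{lemma4.6}) or on the last factor, or on the $\chi$-type symbol factor. Applying \eqref{eq4.53.1} with $s=1$ for the $e^{\pm\i\lambda|x|}$ branch — here the phase $e^{\i\lambda_k|x-y|}$ is the one coming from the $l\ge 2m-2$ sum in \eqref{equ4.17}, combined with $e^{\pm\i\lambda|x|}$ extracted from writing $e^{\i\lambda_k|x-y|}=e^{\pm\i\lambda|x|}e^{\i(\lambda_k|x-y|\mp\lambda|x|)}$ and noting $|x|-|x-y|$ is bounded on the relevant region — and with general $s\in[0,1]$ for the $e^{\pm\i\lambda s|x|}$ branch, we get $\|\partial_\lambda^j(\langle x\rangle^{\frac{n-1}{2}}\omega_{1,q}^\pm)\|_{L^2_y}\lesssim\lambda^{-j}$ for $0\le j\le\frac{n+1}{2}$, which is precisely \eqref{eq4.62.1} since the base decay $b=0$ there. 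The powers of $\lambda$ that accumulate from the resolvent kernels ($\lambda^{2m-n}$ per inner resolvent from the prefactor in \eqref{eq2.8}, balanced against the $\lambda$'s hidden in the $s$-integral representations) must be tracked to confirm the net exponent is $0$; this is the same bookkeeping as in \cite{CHHZ} and is routine.

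For \eqref{eq4.63}, I would use the telescoping identity $\prod A_j^+ - \prod A_j^- = \sum_j A_0^-\cdots(A_j^+-A_j^-)\cdots A_k^+$ already used in Section~\ref{sec4.2.3}, so that the difference becomes a sum of terms each containing one factor $R_0^{+,(s_j)}-R_0^{-,(s_j)}$. Two cases arise: if the distinguished difference is at the last ($x$-carrying) slot, \eqref{eq2.10} shows that factor is $\sum_{k\in\{0,m\}}\lambda_k^{n-2m}\int_0^1 e^{\i\lambda_k s|x-y|}(1-s)^{\cdots}\d s$, contributing an extra $\lambda^{n-2m}$ and the $s$-phase; if it is at an inner slot, estimate \eqref{eq4.53.2} directly gives the $\lambda^{n-2m-s_j}$ bound, again yielding net $S^{n-2m}$ after the $\frac{n+1}{2}$ derivatives are distributed — this is \eqref{eq4.63.2}. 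The weighted version \eqref{eq4.63.1} with exponent $\frac{n+1}{2}-2m$ then follows by the same mechanism that produced \eqref{eq4.62.1}: the $x$-dependent point singularity $|x-y|^{-\tau}$ with $\tau\ge\frac{n-1}{2}$ in \eqref{equ4.17} yields a factor $\langle x\rangle^{-\frac{n-1}{2}}$ via Lemma~\ref{lemma4.6}, while the $\lambda$-decay improves from $n-2m$ to $n-2m-\frac{n-1}{2}=\frac{n+1}{2}-2m-\frac12$... wait, more carefully the improvement is by exactly $\frac{n-1}{2}$ units relative to the weighted $\omega_{1}$ case, consistent with the stated exponent; I would verify this arithmetic against the precise constants $c_j, C_l, D_l$ in \eqref{eq2.8}--\eqref{equ4.17}. \textbf{The main obstacle} I anticipate is the careful orchestration of which phase ($e^{\pm\i\lambda|x|}$ vs $e^{\pm\i\lambda s|x|}$) each term lands in and ensuring the symbol estimates survive the worst-case allocation of the $\frac{n+1}{2}$ $\lambda$-derivatives among the $l+1$ factors and the amplitude remainders — this is exactly where the hypothesis $\sum_j s_j\le\frac{n+1}{2}$ in Lemma~\ref{lemma4.6} is tight, so the combinatorics of the Leibniz expansion must be matched precisely against that constraint, and the region restriction needed to pass from $\lambda_k|x-y|$ to $\pm\lambda|x|$ in the phase must be handled so that the dropped factor genuinely lies in $S^0$ uniformly.
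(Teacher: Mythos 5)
Your overall plan---expand via the two kernel representations \eqref{eq2.10}, \eqref{equ4.17}, feed the resulting chains into Lemma~\ref{lemma4.6}, and use the telescoping identity for the difference in \eqref{eq4.63}---is the same as the paper's, and your treatment of \eqref{eq4.62}--\eqref{eq4.62.1} is essentially correct. The $L^2$ estimates for $\omega_{1,q}^\pm$ indeed come directly from \eqref{eq4.53.1}: the weight $\langle x\rangle^{\frac{n-1}{2}}$ is supplied by the point singularity $|x-\cdot|^{-\tau}$ with $\tau\geq\frac{n-1}{2}$ in the last factor, and no cutoff is needed there because only the weighted bound $S^0$ is asserted.

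There is, however, a genuine gap in your handling of \eqref{eq4.63.2}--\eqref{eq4.63.1}. You must simultaneously produce, for the \emph{same} function $\omega_{2,i}^\pm$, the unweighted bound $\|\omega_{2,i}^\pm\|_{L^2}\lesssim\lambda^{n-2m-j}$ and the weighted bound $\langle x\rangle^{\frac{n-1}{2}}\|\omega_{2,i}^\pm\|_{L^2}\lesssim\lambda^{\frac{n+1}{2}-2m-j}$. For the telescoping term in which $R_0^+-R_0^-$ sits in the \emph{last} (i.e.\ $x$-carrying) slot, you say you would use the representation from \eqref{eq2.10}, which carries an explicit $\lambda^{n-2m}$ prefactor but \emph{no} $|x-\cdot|^{-\tau}$ point singularity; Lemma~\ref{lemma4.6} then gives no $\langle x\rangle$-decay at all, so multiplying by $\langle x\rangle^{\frac{n-1}{2}}$ destroys the estimate for large $|x|$. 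Conversely, the alternative representation $\tilde{r}_{2m-n,1}^\pm+r_{2m-n,0}^\pm$ from \eqref{equ4.17} delivers $\langle x\rangle^{-(n-2m)}$ but has no $\lambda^{n-2m}$ prefactor. The paper resolves this by inserting a cutoff $\phi(\lambda\langle x\rangle)$ into the very definition of $\omega_{2,i}^\pm$, using the first representation when $\lambda\langle x\rangle\lesssim1$ (where $\langle x\rangle^{\frac{n-1}{2}}\lesssim\lambda^{-\frac{n-1}{2}}$ converts the $\lambda$-gain into an $x$-weight) and the second when $\lambda\langle x\rangle\gtrsim1$ (where $\langle x\rangle^{-(n-2m)}\lesssim\lambda^{n-2m}$ converts the $x$-decay back into a $\lambda$-gain). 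Your aside "wait, more carefully the improvement is by exactly $\frac{n-1}{2}$ units\ldots I would verify this arithmetic" is precisely where this missing trade-off lives; the arithmetic $n-2m-\frac{n-1}{2}=\frac{n+1}{2}-2m$ is fine, but there is no uniform mechanism that subtracts $\frac{n-1}{2}$ from the $\lambda$-exponent while gaining $\langle x\rangle^{-\frac{n-1}{2}}$ unless one interpolates across the threshold $\lambda\langle x\rangle\sim1$ as the cutoff does. Without this device the construction of $\omega_{2,i}^\pm$ satisfying both \eqref{eq4.63.2} and \eqref{eq4.63.1} is not established.
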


Now we are ready to prove the results for $\Omega_{r}^{+, low}-\Omega_{r}^{-, low}$ in \eqref{eq4.54}. If $\lambda_0$ is given by Theorem \ref{thm3.4}, note that $\mathrm{supp}\,\chi(\lambda^{2m})\subset[-\lambda_0,\lambda_0]$, then it suffices to estimate
\begin{equation}\label{eq4.64}\small
\begin{aligned}
	\int_{0}^{+\infty}&e^{-\i t\lambda^{2m}}\chi(\lambda^{2m})\left( \left\langle  (M^+(\lambda))^{-1}v(R_{0}^{+}(\lambda^{2m})V)^{N}R_{0}^{+}(\lambda^{2m})(\cdot-y),
	v(R_{0}^{-}(\lambda^{2m})V)^{N}R_{0}^{-}(\lambda^{2m})(\cdot-x)\right\rangle\right.  \\
	&-\left. \left\langle (M^-(\lambda))^{-1}v(R_{0}^{-}(\lambda^{2m})V)^{N}R_{0}^{-}(\lambda^{2m})(\cdot-y),
	v(R_{0}^{+}(\lambda^{2m})V)^{N}R_{0}^{+}(\lambda^{2m})(\cdot-x)\right\rangle\right)\lambda^{2m-1}
	\d\lambda.
\end{aligned}
\end{equation}
If we set
\begin{equation*}\small
\begin{aligned}
	\Upsilon_{1}(\lambda,x,y)=&
	\left\langle(M^+(\lambda))^{-1}
	v\left( (R_{0}^{+}(\lambda^{2m})V)^{N}R_{0}^{+}(\lambda^{2m})(\cdot-y)
	-(R_{0}^{-}(\lambda^{2m})V)^{N}R_{0}^{-}(\lambda^{2m})(\cdot-y)\right) ,\right.\\
	&\quad\left. v(R_{0}^{-}(\lambda^{2m})V)^{N}R_{0}^{-}(\lambda^{2m})(\cdot-x)\right\rangle,
\end{aligned}
\end{equation*}		
\begin{equation*}\small
\begin{aligned}
	&\Upsilon_2(\lambda,x,y)\\
	=&\left\langle\left( (M^+(\lambda))^{-1}-(M^-(\lambda))^{-1}\right) v(R_{0}^{+}(\lambda^{2m})V)^{N}R_{0}^{+}(\lambda^{2m})(\cdot-y),
	\, v(R_{0}^{+}(\lambda^{2m})V)^{N}R_{0}^{+}(\lambda^{2m})(\cdot-x)\right\rangle,
\end{aligned}
\end{equation*}
and	
\begin{equation*}\small
\begin{aligned}
	\Upsilon_{3}(\lambda,x,y)=&\left\langle (M^-(\lambda))^{-1}
	v(R_{0}^{-}(\lambda^{2m})V)^{N}R_{0}^{-}(\lambda^{2m})(\cdot-y),\right. \\
	&\quad\left.\,v\left( (R_{0}^{-}(\lambda^{2m})V)^{N}R_{0}^{-}(\lambda^{2m})(\cdot-x)-(R_{0}^{+}(\lambda^{2m})V)^{N}R_{0}^{+}(\lambda^{2m})(\cdot-x)\right) \right\rangle,
\end{aligned}
\end{equation*}
then we can rewrite \eqref{eq4.64} as
\begin{equation*}
\int_{0}^{+\infty}e^{-\i t\lambda^{2m}}\chi(\lambda^{2m})(\Upsilon_{1}(\lambda,x,y)+\Upsilon_{2}(\lambda,x,y)+\Upsilon_{3}(\lambda,x,y))\lambda^{2m-1}\d\lambda.
\end{equation*}
It follows from \eqref{eq4.62}--\eqref{eq4.63} that $\Upsilon_{1}(\lambda,x,y)$ has the following expression
\begin{equation*}\label{eq4.65}\small
\begin{aligned}
	\sum_{\pm}\sum_{p,q=0,1}\int_{0}^{1}\int_{0}^{1}e^{\pm \i\lambda s_{1}^{p}|y|+\i\lambda s_{2}^{q}|x|}\left\langle  (M^+(\lambda))^{-1}\omega_{2,p}^{\pm}(\lambda,s_{1},\cdot,y),\,\omega_{1,q}^{-}(\lambda,s_{2},\cdot,x)\right\rangle \d s_{1}\d s_{2},
\end{aligned}
\end{equation*}
$\Upsilon_{2}(\lambda,x,y)$ can be written as
\begin{equation*}\label{eq4.66.1}\small
\begin{aligned}
	\sum_{p,q=0,1}\int_{0}^{1}\int_{0}^{1}e^{\i\lambda s_{1}^{p}|y|-\i\lambda s_{2}^{q}|x|}\left\langle  ((M^+(\lambda))^{-1}-(M^-(\lambda))^{-1})\omega_{1,p}^{+}(\lambda,s_{1},\cdot,y),\,\omega_{1,q}^{+}(\lambda,s_{2},\cdot,x)\right\rangle \d s_{1}\d s_{2},
\end{aligned}
\end{equation*}
%where we use the fact $(M^+(\lambda))^{-1}-(M^-(\lambda))^{-1}=\Gamma_{1}^{+}(\lambda)-\Gamma_{1}^{-}(\lambda)$, 
and	$\Upsilon_{3}(\lambda,x,y)$ can be written as
\begin{equation*}\label{eq4.66}\small
\begin{aligned}
	\sum_{\pm}\sum_{p,q=0,1}\int_{0}^{1}\int_{0}^{1}e^{-\i\lambda s_{1}^{p}|y|\mp \i\lambda s_{2}^{q}|x|}\left\langle  (M^-(\lambda))^{-1}\omega_{1,p}^{-}(\lambda,s_{1},\cdot,y),\,\omega_{2,q}^{\pm}(\lambda,s_{2},\cdot,x)\right\rangle \d s_{1}\d s_{2}.
\end{aligned}
\end{equation*}

\noindent\emph{Case 1:} $|t|^{-\frac{1}{2m}}(|x|+|y|)\leq1$.\

If the zero energy of $H$ is regular, then Theorem \ref{thm3.4}, \eqref{eq4.62.1} and \eqref{eq4.63.2} yield
$$\lambda^{2m-1}\left\langle  (M^+(\lambda))^{-1}\omega_{2,p}^{\pm}(\lambda,s_{1},\cdot,y),\,\omega_{1,q}^{-}(\lambda,s_{2},\cdot,x)\right\rangle\in S_{\frac{n+1}{2}}^{n-1}((0,\lambda_{0})),$$
$$\lambda^{2m-1}\left\langle  (M^-(\lambda))^{-1}\omega_{1,p}^{-}(\lambda,s_{1},\cdot,y),\,\omega_{2,q}^{\pm}(\lambda,s_{2},\cdot,x)\right\rangle\in S_{\frac{n+1}{2}}^{n-1}((0,\lambda_{0})),$$
$$\lambda^{2m-1}\left\langle ((M^+(\lambda))^{-1}-(M^-(\lambda))^{-1})\omega_{1,p}^{+}(\lambda,s_{1},\cdot,y),\,\omega_{1,q}^{+}(\lambda,s_{2},\cdot,x)\right\rangle \in S_{\frac{n+1}{2}}^{n-1}((0,\lambda_{0})).$$
Thus, we use \eqref{eqA.5} in Lemma \ref{lemmaA.2} with $b=n-1$ to obtain
\begin{equation}\label{eq4.67}
	\left| \eqref{eq4.64}\right| \lesssim (1+|t|)^{-\frac{n}{2m}}\left(1+|t|^{-\frac{1}{2m}}|x-y|\right)^{-\frac{n(m-1)}{2m-1}}.
\end{equation}

On the other hand, if $0$ is an eigenvalue of $H$, then Theorem \ref{thm3.4}, \eqref{eq4.63} and \eqref{eq4.63.2} imply
$$\lambda^{2m-1}\left\langle  (M^+(\lambda))^{-1}\omega_{2,p}^{\pm}(\lambda,s_{1},\cdot,y),\,\omega_{1,q}^{-}(\lambda,s_{2},\cdot,x)\right\rangle\in S_{\frac{n+1}{2}}^{n-2m-1}((0,\lambda_{0})),$$
$$\lambda^{2m-1}\left\langle  (M^-(\lambda))^{-1}\omega_{1,p}^{-}(\lambda,s_{1},\cdot,y),\,\omega_{2,q}^{\pm}(\lambda,s_{2},\cdot,x)\right\rangle\in S_{\frac{n+1}{2}}^{n-2m-1}((0,\lambda_{0})),$$
$$\lambda^{2m-1}\left\langle ((M^+(\lambda))^{-1}-(M^-(\lambda))^{-1})\omega_{1,p}^{+}(\lambda,s_{1},\cdot,y),\,\omega_{1,q}^{+}(\lambda,s_{2},\cdot,x)\right\rangle \in S_{\frac{n+1}{2}}^{n-4m-1}((0,\lambda_{0})).$$
Thus, we use \eqref{eqA.5} in Lemma \ref{lemmaA.2} with $b=n-4m-1$ to obtain
\begin{equation}\label{eq4.67000}
\left| \eqref{eq4.64}\right| \lesssim (1+|t|)^{-\frac{n-4m}{2m}}\left(1+|t|^{-\frac{1}{2m}}|x-y|\right)^{-\frac{n(m-1)}{2m-1}}.
\end{equation}

\noindent\emph{Case 2:}  $|t|^{-\frac{1}{2m}}(|x|+|y|)\geq1$.\

If the zero energy of $H$ is regular, then Theorem \ref{thm3.4}, \eqref{eq4.62.1} and \eqref{eq4.63.1}  yield
{\small
$$\langle x\rangle^{\frac{n-1}{2}} \langle y\rangle^{\frac{n-1}{2}}\lambda^{2m-1}\left\langle  (M^+(\lambda))^{-1}\omega_{2,p}^{\pm}(\lambda,s_{1},\cdot,y),\,\omega_{1,q}^{-}(\lambda,s_{2},\cdot,x)\right\rangle\in S_{\frac{n+1}{2}}^{\frac{n-1}{2}}((0,\lambda_{0})),$$
$$\langle x\rangle^{\frac{n-1}{2}} \langle y\rangle^{\frac{n-1}{2}}\lambda^{2m-1}\left\langle  (M^-(\lambda))^{-1}\omega_{1,p}^{-}(\lambda,s_{1},\cdot,y),\,\omega_{2,q}^{\pm}(\lambda,s_{2},\cdot,x)\right\rangle\in S_{\frac{n+1}{2}}^{\frac{n-1}{2}}((0,\lambda_{0})),$$
$$\langle x\rangle^{\frac{n-1}{2}} \langle y\rangle^{\frac{n-1}{2}}\lambda^{2m-1}\left\langle ((M^+(\lambda))^{-1}-(M^-(\lambda))^{-1})\omega_{1,p}^{+}(\lambda,s_{1},\cdot,y),\,\omega_{1,q}^{+}(\lambda,s_{2},\cdot,x)\right\rangle \in S_{\frac{n+1}{2}}^{n-1}((0,\lambda_{0})).$$
}
Thus we apply Lemma \ref{lemmaA.2} with $b=\frac{n-1}{2}$ to get
\begin{equation} \label{eq4.68}
	\begin{aligned}
		\left| \eqref{eq4.64}\right|
		&\lesssim(1+|t|)^{-\frac{\frac{n+1}{2}}{2m}} \left( |t|^{-\frac{1}{2m}}(|x|+|y|)\right) ^{-\frac{m-\frac{n+1}{2}}{2m-1}}\langle x\rangle^{-\frac{n-1}{2}} \langle y\rangle^{-\frac{n-1}{2}}\\
		&\lesssim |t|^{-\frac{n}{2m}}\left(1+|t|^{-\frac{1}{2m}}|x-y|\right)^{-\frac{n(m-1)}{2m-1}},
	\end{aligned}
\end{equation}
where in the last inequality, we have used the identity
$$
\mbox{$\frac{n-1}{2}+\frac{m-\frac{n+1}{2}}{2m-1}=\frac{n(m-1)}{2m-1}$}.
$$

On the other hand, if $0$ is an eigenvalue of $H$, then Theorem \ref{thm3.4}, \eqref{eq4.62.1} and \eqref{eq4.63.1} yield
\begin{equation*}\small
	\begin{split}
		\langle x\rangle^{\frac{n-1}{2}} \langle y\rangle^{\frac{n-1}{2}}\lambda^{2m-1}\left\langle  (M^+(\lambda))^{-1}\omega_{2,p}^{\pm}(\lambda,s_{1},\cdot,y),\,\omega_{1,q}^{-}(\lambda,s_{2},\cdot,x)\right\rangle\in S_{\frac{n+1}{2}}^{\frac{n-1}{2}-2m}((0,\lambda_{0})),\\
		\langle x\rangle^{\frac{n-1}{2}} \langle y\rangle^{\frac{n-1}{2}}\lambda^{2m-1}\left\langle  (M^-(\lambda))^{-1}\omega_{1,p}^{-}(\lambda,s_{1},\cdot,y),\,\omega_{2,q}^{\pm}(\lambda,s_{2},\cdot,x)\right\rangle\in S_{\frac{n+1}{2}}^{\frac{n-1}{2}-2m}((0,\lambda_{0})),\\
		\langle x\rangle^{\frac{n-1}{2}} \langle y\rangle^{\frac{n-1}{2}}\lambda^{2m-1}\left\langle ((M^+(\lambda))^{-1}-(M^-(\lambda))^{-1})\omega_{1,p}^{+}(\lambda,s_{1},\cdot,y),\,\omega_{1,q}^{+}(\lambda,s_{2},\cdot,x)\right\rangle \in S_{\frac{n+1}{2}}^{n-4m-1}((0,\lambda_{0})).
	\end{split}
\end{equation*}
Since $n-4m-1\geq \frac{n-1}{2}-2m$ when $n>4m$, we can apply Lemma \ref{lemmaA.2} with $b=\frac{n-1}{2}-2m$ to get
\begin{equation} \label{eq4.69}
\begin{aligned}
	\left| \eqref{eq4.64}\right|
	&\lesssim(1+|t|)^{-\frac{\frac{n+1}{2}-2m}{2m}} \left( |t|^{-\frac{1}{2m}}(|x|+|y|)\right) ^{-\min\left\{\frac{3m-\frac{n+1}{2}}{2m-1},0\right\}}\langle x\rangle^{-\frac{n-1}{2}} \langle y\rangle^{-\frac{n-1}{2}}\\
	&\lesssim |t|^{-\frac{n-2m}{2m}}\left(1+|t|^{-\frac{1}{2m}}|x-y|\right)^{-\frac{n(m-1)}{2m-1}}.\\
\end{aligned}
\end{equation}
Here, we have used $\langle x\rangle^{-1}\langle y\rangle^{-1}\le (|x|+|y|)^{-1}\le |x-y|^{-1}$ and
$\mbox{$\frac{n(m-1)}{2m-1}\le  \min\{\frac{3m-\frac{n+1}{2}}{2m-1},0\}+\frac{n-1}{2}$}.$

Now it follows from \eqref{eq4.67}-\eqref{eq4.69} that the kernel of $\Omega_{r}^{+, low}-\Omega_{r}^{-, low}$ satisfies that 
\eqref{equ-ker-remai}.

\section{The proof of Theorem \ref{thmmain} (high energy part)}\label{section4}

\subsection{Overview}\

Given $K\in\mathbb{N}_+$, we apply the resolvent identity
\begin{equation*}
	R^\pm(\lambda)=\sum_{k=0}^{2K-1}(-1)^kR_0^\pm(\lambda)(VR_0^\pm(\lambda))^k+(R_0^\pm(\lambda) V)^KR^\pm(\lambda)(VR_0^\pm(\lambda))^K,
\end{equation*}
to the Stone's formula of $e^{-\mathrm{i}tH}P_{ac}(H)\tilde{\chi}(H)$, then
\begin{equation*}
	e^{-\mathrm{i}tH}P_{ac}(H)\tilde{\chi}(H)=\sum_{k=0}^{2K-1}\Omega_k^{high}+\Omega_{K,r}^{+,high}-\Omega_{K,r}^{-,high},
\end{equation*}
where
\begin{equation*}\label{Omegak}
	\Omega_k^{high}=\frac{(-1)^k}{2\pi\i}\int_0^{+\infty}e^{-\mathrm{i}t\lambda}\tilde{\chi}(\lambda)\left(R_0^+(\lambda)(VR_0^+(\lambda))^k-R_0^-(\lambda)(VR_0^-(\lambda))^k\right)\d\lambda,
\end{equation*}
\begin{equation}\label{OmegaKr}
	\Omega_{K,r}^{\pm,high}=\frac{1}{2\pi\i}\int_0^{+\infty}e^{-\mathrm{i}t\lambda}\tilde{\chi}(\lambda)(R_0^\pm(\lambda)V)^KR^\pm(\lambda)(VR_0^\pm(\lambda))^K\d\lambda.
\end{equation}
These integrals converge in weak* sense when $V$ the assumption of Theorem \ref{thmmain}, and $K$ will be chosen sufficiently large later.

The distribution kernel of $\Omega_0^{high}$ is
\begin{equation*}
	\Omega_0^{high}(t,x,y)=\mathscr{F}^{-1}\left(\tilde{\chi}(|\cdot|^{2m})e^{-\i t|\cdot|^{2m}}\right)(x-y),
\end{equation*}
by the Fourier representation of the spectral measure of $(-\Delta)^{m}$, and the estimate for $\Omega_0^{high}(t,x,y)$ has already been shown in \cite[Lemma 2.1]{HHZ}.

The distribution kernel of $\Omega_k^{high}$ when $k\geq1$ is formally the repeated integral
\begin{equation*}
	\begin{split}
		&\Omega_k^{high}(t,x,y)\\
		=&\frac{(-1)^k}{2\pi\i}\int_0^{+\infty}e^{-\mathrm{i}t\lambda}\tilde{\chi}(\lambda)\left(\int_{\mathbb{R}^{kn}}\left(\mbox{$\prod\limits_{i=0}^kR_0^+(\lambda)(r_i)-\prod\limits_{i=0}^kR_0^-(\lambda)(r_i)$}\right)\mbox{$\prod\limits_{i=1}^kV(x_i)$}dx_1\cdots dx_k\right)\d\lambda,
	\end{split}
\end{equation*}
where $r_i=x_i-x_{i+1}$, $x_0=x$ and $x_{k+1}=y$. For every fix $k\in\{1,\cdots,2K-1\}$, let
\begin{equation*}
	X=|r_0|+\cdots+|r_k|,\quad T=|t|^\frac{1}{2m}+|t|,\quad\mathbb{K}=\{1,\cdots,k\},\quad\mathbb{K}_0=\{0,\cdots,k\}.
\end{equation*}
We fix a sufficiently small $\delta>0$ which will be chosen later, take $\phi\in C_c^\infty(\mathbb{R})$ where $\operatorname{supp}\phi\subset[-1,1]$ and $\phi=1$ in $[-\frac12,\frac12]$, and further decompose $\Omega_k^{high}(t,x,y)$ into
\begin{equation}\label{Omegak1origin}
	\begin{split}
		&\Omega_k^{high,1}(t,x,y)
		=\frac{(-1)^k}{2\pi\i}\int_0^{+\infty}e^{-\mathrm{i}t\lambda}\tilde{\chi}(\lambda)\\
		&\left(\int_{\mathbb{R}^{kn}}\left(\mbox{$\prod\limits_{i\in\mathbb{K}_0}R_0^+(\lambda)(r_i)-\prod\limits_{i\in\mathbb{K}_0}R_0^-(\lambda)(r_i)$}\right)\mbox{$\prod\limits_{i\in\mathbb{K}}V(x_i)$}\phi(\mbox{$\frac{X}{\delta T}$})dx_1\cdots dx_k\right)\d\lambda,
	\end{split}
\end{equation}
and
\begin{equation}\label{Omegak2origin}
	\begin{split}
		&\Omega_k^{high,2}(t,x,y)=\frac{(-1)^k}{2\pi\i}\int_0^{+\infty}e^{-\mathrm{i}t\lambda}\tilde{\chi}(\lambda)\\
		&\times\left(\int_{\mathbb{R}^{kn}}\left(\mbox{$\prod\limits_{i\in\mathbb{K}_0}R_0^+(\lambda)(r_i)-\prod\limits_{i\in\mathbb{K}_0}R_0^-(\lambda)(r_i)$}\right)\mbox{$\prod\limits_{i\in\mathbb{K}}V(x_i)$}(1-\phi(\mbox{$\frac{X}{\delta T}$}))dx_1\cdots dx_k\right)\d\lambda.
	\end{split}
\end{equation}

To prove Theorem \ref{thmmain} (high energy part), we summarize what remains to show:

\begin{itemize}
	\item[(\romannumeral1)] Show the existence of a sufficiently large $K$ such that
	\begin{equation*}\label{estOmegaK}
		|\Omega_{K,r}^{\pm,high}(t,x,y)|\lesssim|t|^{-\frac{n}{2m}}(1+|t|^{-\frac{1}{2m}}|x-y|)^{-\frac{n(m-1)}{2m-1}},\quad|t|>0,\,x,y\in\mathbb{R}^n.
	\end{equation*}
	The proof will not be presented in this paper, because it is exactly the same one in \cite{CHHZ}, and $K>[\frac{n-4}{4}]+2$ suffices to give such estimate.
	
	\item[(\romannumeral2)] For any fixed $k\geq1$, show the existence of a sufficiently small $\delta>0$ depending on $\lambda_0$ such that
	\begin{equation}\label{Omegahigh1}
		|\Omega_k^{high,1}(t,x,y)|\begin{cases}
			\lesssim_N|t|^{-N},\quad|t|\gtrsim1,\,0<|x-y|\lesssim\delta|t|,\,N\in\mathbb{N}_+,\\
			\lesssim|t|^{-\frac{n}{2m}},\quad0<|t|\lesssim1,\,0<|x-y|\lesssim\delta|t|^\frac{1}{2m}.
		\end{cases}
	\end{equation}
	The proof is a slight adjustment of that in the low dimensional case \cite{CHHZ}, and we will present the proof later in Section \ref{section4.2} for the sake of completeness. 

	\item[(\romannumeral3)] For any fixed $\delta>0$ and $k\geq1$, show
	\begin{equation}\label{Ogema2bound}
		|\Omega_k^{high,2}(t,x,y)|\lesssim\begin{cases}
			|t|^{-\frac{n}{2}}(1+|t|^{-1}|x-y|)^{-\frac{n(m-1)}{2m-1}},\quad&|t|\gtrsim1,\,x,y\in\mathbb{R}^n,\\
			|t|^{-\frac{n}{2m}}(1+|t|^{-\frac{1}{2m}}|x-y|)^{-\frac{n(m-1)}{2m-1}},\quad&0<|t|\lesssim1,\,x,y\in\mathbb{R}^n.
		\end{cases}
	\end{equation}
	This estimate is the subtlest one in the paper where the regularity of $V$ is involved, and we will first sketch the modules of its proof in Section \ref{section4.3}, leaving the lengthy technical details for a particularly complicated part in the next Section \ref{section5}.
\end{itemize}

We also remark that the RHS of \eqref{Omegahigh1} and \eqref{Ogema2bound} are stronger than claimed in Theorem \ref{thmmain} (high energy part).

\subsection{Estimate for $\Omega_k^{high,1}(t,x,y)$}\label{section4.2}\

To show \eqref{Omegahigh1}, we only need to assume $V\in L^\infty$. Since $\Omega_k^{high,1}(t,x,y)=0$ when $|x-y|>\delta T$, we only need to consider its estimates when $0<|x-y|\leq\delta T$. We start by scaling the formal expression \eqref{Omegak1origin} of $\Omega_k^{high,1}(t,x,y)$:
\begin{equation*}
	\begin{split}
		\Omega_k^{high,1}(t,x,y)=&\frac{(-1)^km}{\pi\i}\int_0^{+\infty}e^{-\mathrm{i}t\lambda^{2m}}\tilde{\chi}(\lambda^{2m})\lambda^{2m-1}\\
		&\times\left(\int_{\mathbb{R}^{kn}}\left(\mbox{$\prod\limits_{i\in\mathbb{K}_0}R_0^+(\lambda^{2m})(r_i)-\prod\limits_{i\in\mathbb{K}_0}R_0^-(\lambda^{2m})(r_i)$}\right)\mbox{$\prod\limits_{i\in\mathbb{K}}V(x_i)$}\phi(\mbox{$\frac{X}{\delta T}$})dx_1\cdots dx_k\right)\d\lambda.
	\end{split}
\end{equation*}
Note that
\begin{equation*}
	\prod\limits_{i\in\mathbb{K}_0}R_0^+(\lambda^{2m})(r_i)-\prod\limits_{i\in\mathbb{K}_0}R_0^-(\lambda^{2m})(r_i)=\sum\limits_{i\in\mathbb{K}_0}\left(R_0^+(\lambda^{2m})(r_i)-R_0^-(\lambda^{2m})(r_i)\right)\prod\limits_{i'\in\mathbb{K}_0\setminus\{i\}}R_0^{\delta_{i,i'}}(\lambda^{2m})(r_{i'}),
\end{equation*}
where
\begin{equation*}
	\delta_{i,i'}=\begin{cases}
		-,\quad&i'<i,\\
		+,&i'>i,
	\end{cases}
\end{equation*}
and we know by \eqref{eq2.10} that $R_0^+(\lambda^{2m})(r_i)-R_0^-(\lambda^{2m})(r_i)$ is a finite linear combination of the form
\begin{equation*}
	\lambda^{n-2m}\int_0^1e^{\pm\i s\lambda|r_i|}(1-s)^{n-3-j}\d s,\quad\mbox{$j=0,\cdots,\frac{n-3}{2}$}.
\end{equation*}
Also note that when $\lambda>0$, $R_0^{\pm}(\lambda^{2m})(r_{i'})$ is a finite linear combination of the form
\begin{equation*}\label{R0term1}
	e^{\pm\mathrm{i}\lambda|r_{i'}|}|r_{i'}|^{-(n-2-l)}\lambda^{-(2m-2-l)}f_l(\lambda,r_{i'}),\quad\mbox{$l=0,\cdots,\frac{n-3}{2}$},
\end{equation*}
where
\begin{equation*}\label{R0term2}
	|\partial_\lambda^j f_l(\lambda,r_{i'})|\lesssim_j\lambda^{-j},\quad\lambda>0,\,r_{i'}\in\mathbb{R}^n\setminus\{0\},\,j\in\mathbb{N}_0.
\end{equation*}
We know from the above facts that $\Omega_k^{high,1}(t,x,y)$ is the linear combination of
\begin{equation}\label{Omegak1i}
	\begin{split}
		&\Omega_{k,i,j,\vec{l},\vec{\delta}}^{high,1}(t,x,y)\\
		:=&\int_{\mathbb{R}^{kn}}\phi(\mbox{$\frac{X}{\delta T}$})V(x_1)\cdots V(x_k)\prod\limits_{i'\in\mathbb{K}_0\setminus\{i\}}|r_{i'}|^{-(n-2-l_{i'})}\\
		&\times\int_0^1\left(\int_0^{+\infty}e^{-\mathrm{i}t\lambda^{2m}+\mathrm{i}\lambda(\delta_0|r_0|\cdots+\delta_k|r_k|)}g_{\vec{l},i}(\lambda,r_0,\cdots,r_k)\tilde{\chi}(\lambda^{2m})\d\lambda\right)(1-s)^{n-3-j}\d s\d x_1\cdots\d x_k,\\
		&\mbox{$i=0,\cdots,k,\quad j=0,\cdots,\frac{n-3}{2},$}\\
		&\mbox{$\vec{l}=\{l_0,\cdots,l_k\}\setminus\{l_i\}\in\left\{0,\cdots,\frac{n-3}{2}\right\}^k,\quad\vec{\delta}=\{\delta_0,\cdots,\delta_k\}\in\{-1,1\}^{k+1}.$}
	\end{split}
\end{equation}	
where
\begin{equation*}
	\left|\partial_\lambda^qg_{\vec{l},i}(\lambda,r_0,\cdots,r_k)\right|\lesssim_q\lambda^{n-1-\sum_{i'\in\mathbb{K}_0\setminus\{i\}}(2m-2-l_{i'})},\quad\lambda>0,\,r_0,\cdots,r_k\in\mathbb{R}^n\setminus\{0\},\,q\in\mathbb{N}_0.
\end{equation*}

To deal with the integral in $\lambda$, note that
\begin{equation*}
	g_{\vec{l},i}(\cdot,r_0,\cdots,r_k)\tilde{\chi}(\cdot^{2m})\in S^{n-1-\sum_{i'\in\mathbb{K}_0\setminus\{i\}}(2m-2-l_{i'})}\left((\mbox{$\frac{\lambda_0}{2}$},+\infty)\right),
\end{equation*}
with seminorms uniformly bounded in $r_0,\cdots,r_k$. In particular, if $\sum_{i'\in\mathbb{K}_0\setminus\{i\}}(2m-2-l_{i'})\geq0$, we further have
\begin{equation*}
	g_{\vec{l},i}(\cdot,r_0,\cdots,r_k)\tilde{\chi}(\cdot^{2m})\in S^{n-1}\left((\mbox{$\frac{\lambda_0}{2}$},+\infty)\right).
\end{equation*}
Also note that when $X\leq\delta T$ we have $\left|\delta_0|r_0|\cdots+\delta_k|r_k|\right|\leq\delta T$, so if $\delta>0$ is chosen sufficiently small, we can apply Lemma \ref{lemmaA.222} to obtain when $X\leq\delta T$ that
\begin{equation*}
	\begin{split}
			&\left|\int_{\mathbb{R}}e^{-\mathrm{i}t\lambda^{2m}+\mathrm{i}\lambda(\delta_0|r_0|\cdots+\delta_k|r_k|)}g_{\vec{l},i}(\lambda,r_0,\cdots,r_k)\tilde{\chi}(\lambda^{2m})\d\lambda\right|\\
			\lesssim&\begin{cases}
			C_N|t|^{-N},\quad&|t|\gtrsim1,\,N\in\mathbb{N}_+,\\
			|t|^{-\frac{n}{2m}},&0<|t|\lesssim1,\,\,\text{if}\,\,\sum_{i'\in\mathbb{K}_0\setminus\{i\}}(2m-2-l_{i'})\geq0,\\
			|t|^{-\frac{n-\sum_{i'\in\mathbb{K}_0\setminus\{i\}}(2m-2-l_{i'})}{2m}},&0<|t|\lesssim1,\,\,\text{if}\,\,\sum_{i'\in\mathbb{K}_0\setminus\{i\}}(2m-2-l_{i'})<0.
		\end{cases}
	\end{split}
\end{equation*}
Combining this and \eqref{Omegak1i}, we have the long time estimate:
\begin{equation}\label{Omegahigh1t>1}
	\begin{split}
		|\Omega_{k,i,j,\vec{l},\vec{\delta}}^{high,1}(t,x,y)|&\lesssim_N|t|^{-N}\int_{\{X\lesssim\delta T\}}\prod\limits_{i'\in\mathbb{K}_0\setminus\{i\}}|r_{i'}|^{-(n-2-l_{i'})}\d x_1\cdots \d x_k\\
		&\lesssim_N|t|^{-N+kn-\sum_{i'\in\mathbb{K}_0\setminus\{i\}}(n-2-l_{i'})}\\
		&\lesssim_{N'}|t|^{-N'},\quad|t|\gtrsim1,\,0<|x-y|\lesssim\delta|t|,\,N'\in\mathbb{N}_+,
	\end{split}
\end{equation}
the short time estimate when $\sum_{i'\in\mathbb{K}_0\setminus\{i\}}(2m-2-l_{i'})\geq0$:
\begin{equation}\label{Omegahigh1t<1>0}
	\begin{split}
		|\Omega_{k,i,j,\vec{l},\vec{\delta}}^{high,1}(t,x,y)|&\lesssim|t|^{-\frac{n}{2m}}\int_{\{X\lesssim\delta T\}}\prod\limits_{i'\in\mathbb{K}_0\setminus\{i\}}|r_{i'}|^{-(n-2-l_{i'})}\d x_1\cdots \d x_k\\
		&\lesssim|t|^{-\frac{n}{2m}+kn-\sum_{i'\in\mathbb{K}_0\setminus\{i\}}(n-2-l_{i'})}\\
	&=|t|^{-\frac{n}{2m}+\sum_{i'\in\mathbb{K}_0\setminus\{i\}}(2+l_{i'})}\\
&\lesssim|t|^{-\frac{n}{2m}},\quad0<|t|\lesssim1,\,0<|x-y|\lesssim\delta|t|^{\frac{1}{2m}},
	\end{split}
\end{equation}
and the short time estimate when $\sum_{i'\in\mathbb{K}_0\setminus\{i\}}(2m-2-l_{i'})\leq0$:
\begin{equation}\label{Omegahigh1t<1<0}
	\begin{split}
		|\Omega_{k,i,j,\vec{l},\vec{\delta}}^{high,1}(t,x,y)|&\lesssim|t|^{-\frac{n-\sum_{i'\in\mathbb{K}_0\setminus\{i\}}(2m-2-l_{i'})}{2m}}\int_{\{X\lesssim\delta T\}}\prod\limits_{i'\in\mathbb{K}_0\setminus\{i\}}|r_{i'}|^{-(n-2-l_{i'})}\d x_1\cdots \d x_k\\
		&\lesssim|t|^{-\frac{n-\sum_{i'\in\mathbb{K}_0\setminus\{i\}}(2m-2-l_{i'})}{2m}+kn-\sum_{i'\in\mathbb{K}_0\setminus\{i\}}(n-2-l_{i'})}\\
		&=|t|^{-\frac{n}{2m}+k+\frac{2m-1}{2m}\sum_{i'\in\mathbb{K}_0\setminus\{i\}}(2+l_{i'})}\\
		&\lesssim|t|^{-\frac{n}{2m}},\quad0<|t|\lesssim1,\,0<|x-y|\lesssim\delta|t|^{\frac{1}{2m}},
	\end{split}
\end{equation}
by scaling, and the estimates starting from the integral in $x_i$ using Lemma \ref{lmEd}. Now the estimate \eqref{Omegahigh1} for $\Omega^{high,1}(t,x,y)$ has been shown by \eqref{Omegahigh1t>1} \eqref{Omegahigh1t<1>0} and \eqref{Omegahigh1t<1<0}.

\subsection{The sketch of the estimate for $\Omega_k^{high,2}(t,x,y)$}\label{section4.3}\

To show \eqref{Ogema2bound}, we need to assume $V\in C^{\frac{n+1}{2}-2m}(\mathbb{R}^n)$ and \eqref{derest}. We start by the formal expression \eqref{Omegak2origin} of $\Omega_k^{high,2}(t,x,y)$ which also has the form
\begin{equation*}\label{Omegak2origin'}
	\begin{split}
		&\Omega_k^{high,2}(t,x,y)=\frac{(-1)^km}{\pi\i}\int_0^{+\infty}e^{-\mathrm{i}t\lambda^{2m}}\lambda^{2m-1}\tilde{\chi}(\lambda^{2m})\\
		&\times\left(\int_{\mathbb{R}^{kn}}\left(\mbox{$\prod\limits_{i\in\mathbb{K}_0}R_0^+(\lambda^{2m})(r_i)-\prod\limits_{i\in\mathbb{K}_0}R_0^-(\lambda^{2m})(r_i)$}\right)\mbox{$\prod\limits_{i\in\mathbb{K}}V(x_i)$}(1-\phi(\mbox{$\frac{X}{\delta T}$}))\d x_1\cdots \d x_k\right)\d\lambda.
	\end{split}
\end{equation*}
When $\lambda>0$, we know from \eqref{eq2.8} that $R_0^\pm(\lambda^{2m})(r_i)$ is a finite linear combination of the form
\begin{equation*}
	e^{\pm\mathrm{i}\lambda|r_i|}\lambda^{-(2m-2-l)}|r_i|^{-(n-2-l)}f_l(\lambda;r_i),\quad\mbox{$l=0,\cdots,\frac{n-3}{2}$},
\end{equation*}
where
\begin{equation*}
	|\partial_\lambda^j\partial_{r_i}^\alpha f_l(\lambda;r_i)|\lesssim_{j,\alpha}\lambda^{-j}|r_i|^{-|\alpha|},\quad\lambda,|r_i|>0,\,\alpha\in\mathbb{N}_0^n,
\end{equation*}
therefore $\Omega_k^{high,2}(t,x,y)$ is a finite linear combination of the form
\begin{equation}\label{Ivecl}
	\begin{split}
		I^{\vec{l}}(t;x,y)=&\int_0^{+\infty}e^{-\mathrm{i}t\lambda^{2m}}\tilde{\chi}(\lambda^{2m})\lambda^{2m-1-\sum_{i\in\mathbb{K}_0}(2m-2-l_i)}\times\\
		&\left(\int_{\mathbb{R}^{kn}}e^{\pm\mathrm{i}\lambda X}f(\lambda,r_0,\cdots,r_k)\mbox{$\prod\limits_{i\in\mathbb{K}_0}|r_i|^{-(n-2-l_i)}$}\mbox{$\prod\limits_{i\in\mathbb{K}}V(x_i)$}(1-\phi(\mbox{$\frac{X}{\delta T}$}))\d x_1\cdots \d x_k\right)\d\lambda,
	\end{split}
\end{equation}
where $\vec{l}=(l_0,\cdots,l_k)$ with $l_i\in\{\mbox{$0,\cdots,\frac{n-3}{2}$}\}$, and
\begin{equation*}\label{fsymbol}
	\left|\partial_\lambda^j\partial_{r_0}^{\alpha_0}\cdots\partial_{r_k}^{\alpha_k}f(\lambda;r_0,\cdots,r_k)\right|\lesssim_{j,\alpha_0,\cdots,\alpha_k}\lambda^{-j}|r_0|^{-|\alpha_0|}\cdots|r_k|^{-|\alpha_k|},\quad\lambda,|r_i|>0.
\end{equation*}

We split the discussion for $I^{\vec{l}}(t;x,y)$ into two cases according to $\vec{l}$.

\noindent\emph{Case 1}: \textit{There exists $i_0\in\mathbb{K}_0$ with}
\begin{equation}\label{lgood}
	2m-2-l_i\geq0,\quad i\in\mathbb{K}_0\setminus\{i_0\}.
\end{equation}

This case is irrelevant to the regularity of $V$, and somehow similar to that in \cite{CHHZ}. For $j\in\mathbb{K}_0$, let $D_j=\{(x_1,\cdots,x_k)\in\mathbb{R}^{kn};\,|r_j|=\max_{i\in\mathbb{K}_0}|r_i|\}$, then $X\sim|r_j|$ holds in $D_j$. We first rewrite \eqref{Ivecl}
\begin{equation*}
	\begin{split}
		I^{\vec{l}}(t;x,y)=&\sum_{j\in\mathbb{K}_0}\int_{D_j}\mbox{$\prod\limits_{i\in\mathbb{K}_0}|r_i|^{-(n-2-l_i)}$}\mbox{$\prod\limits_{i\in\mathbb{K}}V(x_i)$}(1-\phi(\mbox{$\frac{X}{\delta T}$}))\\
		&\times\left(\int_0^{+\infty}e^{-\mathrm{i}t\lambda^{2m}\pm\mathrm{i}\lambda X}\tilde{\chi}(\lambda^{2m})\lambda^{2m-1-\sum_{i\in\mathbb{K}_0}(2m-2-l_i)}f(\lambda,r_0,\cdots,r_k)\d\lambda\right)\d x_1\cdots \d x_k\\
		:=&\sum_{j\in\mathbb{K}_0}I_j^{\vec{l}}(t;x,y),
	\end{split}
\end{equation*}
and it follows that
\begin{equation}\label{symbolinitial}
	\begin{split}
		\tilde{\chi}(\lambda^{2m})\lambda^{2m-1-\sum_{i\in\mathbb{K}_0}(2m-2-l_i)}f(\lambda,r_0,\cdots,r_k)\in&S^{2m-1-\sum_{i\in\mathbb{K}_0}(2m-2-l_i)}\left((\mbox{$\frac{\lambda_0}{2},+\infty$})\right)\\
		\subset&S^{1+l_{i_0}}\left((\mbox{$\frac{\lambda_0}{2},+\infty$})\right),
	\end{split}
\end{equation}
where the inclusion is due to assumption \eqref{lgood}, and every relevant seminorm is bounded uniformly in parameters $r_0,\cdots,r_k$ by \eqref{fsymbol}. Since $l_{i_0}\geq-1$ implies $1+l_{i_0}\geq-\frac12$, we apply Lemma \ref{lemmaA.222} whenever $0<|t|\lesssim1$ or $|t|\gtrsim1$ to get
\begin{equation*}
	\begin{split}
		&\left|\int_0^{+\infty}e^{-\mathrm{i}t\lambda^{2m}\pm\mathrm{i}\lambda X}\tilde{\chi}(\lambda^{2m})\lambda^{2m-1-\sum_{i\in\mathbb{K}_0}(2m-2-l_i)}f(\lambda,r_0,\cdots,r_k)\d\lambda\right|\\
		\lesssim&|t|^{-\frac12+\mu_{1+l_{i_0}}}X^{-\mu_{1+l_{i_0}}},\quad \mbox{$X\geq\frac{\delta T}{2}\gtrsim T$},
	\end{split}
\end{equation*}
where $\mu_{1+l_{i_0}}$ is defined by \eqref{mu}. Consequently
\begin{equation*}
	\begin{split}
		&|I_j^{\vec{l}}(t;x,y)|\\
		\lesssim&|t|^{-\frac12+\mu_{1+l_{i_0}}}\int_{\{X\gtrsim T\}\cap D_j}X^{-\mu_{1+l_{i_0}}}\mbox{$\prod\limits_{i\in\mathbb{K}}\langle x_i\rangle^{-\frac{n+1}{2}-}$}\mbox{$\prod\limits_{i\in\mathbb{K}_0}|r_i|^{-(n-2-l_i)}$}\d x_1\cdots \d x_k\\
		\sim&|t|^{-\frac12+\mu_{1+l_{i_0}}}\int_{\{X\gtrsim T\}\cap D_j}X^{-\mu_{1+l_{i_0}}-(n-2-l_j)}\mbox{$\prod\limits_{i\in\mathbb{K}}\langle x_i\rangle^{-\frac{n+1}{2}-}$}\mbox{$\prod\limits_{i\in\mathbb{K}_0\setminus\{j\}}|r_i|^{-(n-2-l_i)}$}\d x_1\cdots \d x_k,
	\end{split}
\end{equation*}
where we note that
\begin{equation}\label{Xindexrange}
	\mu_{1+l_{i_0}}+(n-2-l_j)=\mbox{$\frac{n(m-1)}{2m-1}$}+\mbox{$\frac{1}{2m-1}(\frac{n-3}{2}-l_{i_0})$}+(\mbox{$\frac{n-3}{2}-l_j$})\geq0.
\end{equation}

To show long time estimate, we have when $X\sim|r_j|\gtrsim T\sim|t|\gtrsim1$ that
\begin{equation}\label{cal1}
	\begin{split}
		&|t|^{-\frac12+\mu_{1+l_{i_0}}}X^{-\mu_{1+l_{i_0}}-(n-2-l_j)}\\
		\lesssim&|t|^{-\frac12+\mu_{1+l_{i_0}}}(|t|+|x-y|)^{-\mu_{1+l_{i_0}}-\frac{n-1}{2}}\langle r_j\rangle^{-(\frac{n-3}{2}-l_j)}\\
		\sim&\langle r_j\rangle^{-(\frac{n-3}{2}-l_j)}\begin{cases}
			|t|^{-\frac n2},\quad&|t|>|x-y|,\\
			|t|^{-\frac n2+\frac{n(m-1)}{2m-1}-\frac{1}{2m-1}(\frac{n-3}{2}-l_{i_0})}|x-y|^{-\frac{n(m-1)}{2m-1}},&|t|\leq|x-y|,
		\end{cases}\\
		\lesssim&|t|^{-\frac n2}(1+|t|^{-1}|x-y|)^{-\frac{n(m-1)}{2m-1}}\langle r_j\rangle^{-(\frac{n-3}{2}-l_j)},
	\end{split}
\end{equation}
which implies when $|t|\gtrsim1$ and $x,y\in\mathbb{R}^n$ that
\begin{equation*}
	\begin{split}
		|I^{\vec{l}}(t;x,y)|\lesssim&|t|^{-\frac n2}(1+|t|^{-1}|x-y|)^{-\frac{n(m-1)}{2m-1}}\\
		&\times\sum_{j\in\mathbb{K}_0}\int_{\mathbb{R}^{kn}}\langle r_j\rangle^{-(\frac{n-3}{2}-l_j)}\mbox{$\prod\limits_{i\in\mathbb{K}}\langle x_i\rangle^{-\frac{n+1}{2}-}$}\mbox{$\prod\limits_{i\in\mathbb{K}_0\setminus\{j\}}|r_i|^{-(n-2-l_i)}$}\d x_1\cdots \d x_k.
	\end{split}
\end{equation*}
The fact that the integrals above are bounded uniformly in $x_0$ and $x_{k+1}$ is a consequence of Lemma \ref{lmEd} if we first estimate the integral in $x_j$ to get
\begin{equation*}\label{intxj}
	\int_{\mathbb{R}^n}\frac{\langle x_j\rangle^{-\frac{n-1}{2}-}\d x_j}{|x_{j-1}-x_j|^{n-2-l_{j-1}}\langle x_j-x_{j+1}\rangle^{\frac{n-3}{2}-l_j}}\lesssim1,\quad|x_{j-1}-x_{j+1}|>0,
\end{equation*}
and then estimate the repeated integral in variables remained starting from $x_{j-1}$ to $x_1$ or from $x_{j+1}$ to $x_k$.

To show short time estimate, recall \eqref{Xindexrange}, it follows when $0<|t|\lesssim1$ and $X\sim|r_j|\gtrsim T\sim|t|^\frac{1}{2m}$ that, if $l_{i_0}\leq l_j$, we have
\begin{equation}\label{cal2}
	\begin{split}
		&|t|^{-\frac12+\mu_{1+l_{i_0}}}X^{-\mu_{1+l_{i_0}}-(n-2-l_j)}\\
		\lesssim&|t|^{-\frac12+\mu_{1+l_{i_0}}}(|t|^{\frac{1}{2m}}+|x-y|)^{-\mu_{1+l_{i_0}}-(n-2-l_j)}\\
		\sim&\begin{cases}
			|t|^{-\frac{n}{2m}+\frac{1}{2m}(l_j-l_{i_0})},\quad&|t|^\frac{1}{2m}>|x-y|,\\
			|t|^{-\frac n2+\frac{n(m-1)}{2m-1}+\frac{1}{2m}(l_j-l_{i_0})}|x-y|^{-\frac{n(m-1)}{2m-1}},&|t|^\frac{1}{2m}\leq|x-y|,
		\end{cases}\\
		\lesssim&|t|^{-\frac{n}{2m}}(1+|t|^{-\frac{1}{2m}}|x-y|)^{-\frac{n(m-1)}{2m-1}},
	\end{split}
\end{equation}
and if $l_{i_0}>l_j$, we similarly have
\begin{equation*}
	\begin{split}
		|t|^{-\frac12+\mu_{1+l_{i_0}}}X^{-\mu_{1+l_{i_0}}-(n-2-l_j)}=&|t|^{-\frac12+\mu_{1+l_{i_0}}}X^{-\mu_{1+l_{i_0}}-(n-2-l_{i_0})}X^{-(l_{i_0}-l_j)}\\
		\lesssim&|t|^{-\frac12+\mu_{1+l_{i_0}}}X^{-\mu_{1+l_{i_0}}-(n-2-l_{i_0})}|r_{i_0}|^{-(l_{i_0}-l_j)}\\
		\lesssim&|t|^{-\frac{n}{2m}}(1+|t|^{-\frac{1}{2m}}|x-y|)^{-\frac{n(m-1)}{2m-1}}|r_{i_0}|^{-(l_{i_0}-l_j)}.
	\end{split}
\end{equation*}
This implies when $0<|t|\lesssim1$ and $x,y\in\mathbb{R}^n$ that
\begin{equation*}
	\begin{split}
		&|I_j^{\vec{l}}(t;x,y)|\\
		\lesssim&|t|^{-\frac{n}{2m}}(1+|t|^{-\frac{1}{2m}}|x-y|)^{-\frac{n(m-1)}{2m-1}}\\
		&\times\begin{cases}
			\int_{\mathbb{R}^{kn}}\mbox{$\prod\limits_{i\in\mathbb{K}}\langle x_i\rangle^{-\frac{n+1}{2}-}$}\mbox{$\prod\limits_{i\in\mathbb{K}_0\setminus\{j\}}|r_i|^{-(n-2-l_i)}$}\d x_1\cdots \d x_k,\quad&\text{if\,}l_{i_0}\leq l_j,\\
			\int_{\mathbb{R}^{kn}}|r_{i_0}|^{-(n-2-l_j)}\mbox{$\prod\limits_{i\in\mathbb{K}}\langle x_i\rangle^{-\frac{n+1}{2}-}$}\mbox{$\prod\limits_{i\in\mathbb{K}_0\setminus\{j,i_0\}}|r_i|^{-(n-2-l_i)}$}\d x_1\cdots \d x_k,\quad&\text{if\,}l_{i_0}>l_j,
		\end{cases}
	\end{split}
\end{equation*}
and the fact that the integrals above are bounded uniformly in $x_0$ and $x_{k+1}$ is also a consequence of Lemma \ref{lmEd} if we first estimate the integral in $x_j$ to get
\begin{equation*}
	\int_{\mathbb{R}^n}\frac{\langle x_j\rangle^{-\frac{n+1}{2}-}\d x_j}{|x_{j-1}-x_j|^{n-2-l_{j-1}}}\lesssim1,\quad|x_{j-1}-x_j|>0,~\text{if}~i_0\neq j-1,
\end{equation*}
or
\begin{equation*}
	\int_{\mathbb{R}^n}\frac{\langle x_j\rangle^{-\frac{n+1}{2}-}\d x_j}{|x_{j-1}-x_j|^{n-2-l_j}}\lesssim1,\quad|x_{j-1}-x_j|>0,~\text{if}~i_0=j-1,
\end{equation*}
and then estimate the remained repeated integral in the same way.

Now we have proved
\begin{equation*}\label{Iest}
	|I^{\vec{l}}(t;x,y)|\lesssim\begin{cases}
		|t|^{-\frac{n}{2}}(1+|t|^{-1}|x-y|)^{-\frac{n(m-1)}{2m-1}},\quad&|t|\gtrsim1,\,x,y\in\mathbb{R}^n,\\
		|t|^{-\frac{n}{2m}}(1+|t|^{-\frac{1}{2m}}|x-y|)^{-\frac{n(m-1)}{2m-1}},\quad&0<|t|\lesssim1,\,x,y\in\mathbb{R}^n,
	\end{cases}
\end{equation*}
under the assumption \eqref{lgood}.

\noindent\emph{Case 2}: \textit{There are at least two different indices $i_1,i_2\in\mathbb{K}_0$ with}
\begin{equation}\label{lbad}
	l_{i_1}+2-2m>0,\,l_{i_2}+2-2m>0.
\end{equation}

This case is relevant to the regularity of $V$, and we need the whole next section to establish the complicated techniques.

\section{An integration by parts scheme for estimating $\Omega_k^{high,2}(t,x,y)$}\label{section5}

\subsection{Heuristics}\label{section5.1}\

This section takes on our final task left at the end of the previous section, that is to obtain estimate \eqref{lgood} for $I^{\vec{l}}(t;x,y)$ (defined by \eqref{Ivecl}) under the assumption \eqref{lbad}, which completes the estimate \eqref{Ogema2bound} for $\Omega_k^{high,2}(t,x,y)$.

We first quickly give an impression of what we shall do in a rough sense. If we write
\begin{equation}\label{U}
	\begin{split}
		&U(\lambda,x_0,x_{k+1},t)\\
		=&\int_{\mathbb{R}^{kn}}e^{\pm\mathrm{i}\lambda X}f(\lambda,r_0,\cdots,r_k)\mbox{$\prod\limits_{i\in\mathbb{K}_0}|r_i|^{-(n-2-l_i)}$}\mbox{$\prod\limits_{i\in\mathbb{K}}V(x_i)$}(1-\phi(\mbox{$\frac{X}{\delta T}$}))\d x_1\cdots \d x_k,
	\end{split}
\end{equation}
then
\begin{equation*}
	I^{\vec{l}}(t;x,y)=\int_0^{+\infty}e^{-\mathrm{i}t\lambda^{2m}}\tilde{\chi}(\lambda^{2m})\lambda^{2m-1-\sum_{i\in\mathbb{K}_0}(2m-2-l_i)}U(\lambda,x_0,x_{k+1},t)\d\lambda.
\end{equation*}
Heuristically, we want to show that $U(\lambda,x_0,x_{k+1},t)$ can be written as a linear combination of the form
\begin{equation}\label{decaylambda}
		\lambda^{-J}\int_{\mathbb{R}^{nk}}e^{\pm\mathrm{i}\lambda X}g(\lambda,x_0,\cdots,x_{k+1},t)\d x_1\cdots\d x_k,
\end{equation}
where $g$ has bounded derivatives in $\lambda$ for fixed $(x_0,\cdots,x_{k+1},t)$, so that $I^{\vec{l}}(t;x,y)$ can be written as a linear combination of the form
\begin{equation*}
	\int_{\mathbb{R}^{nk}}\left(\int_0^{+\infty}e^{-\mathrm{i}t\lambda^{2m}\pm\mathrm{i}\lambda X}\tilde{\chi}(\lambda^{2m})\lambda^{2m-1-\sum_{i\in\mathbb{K}_0}(2m-2-l_i)-J}g(\lambda,x_0,\cdots,x_{k+1},t)\d\lambda\right)\d x_1\cdots\d x_k,
\end{equation*}
by changing the order of integration in the sense of oscillatory integral. 

If $J$ is large enough, which means the amplitude
\begin{equation}\label{amplitude}
	\tilde{\chi}(\lambda^{2m})\lambda^{2m-1-\sum_{i\in\mathbb{K}_0}(2m-2-l_i)-J}g(\lambda,x_0,\cdots,x_{k+1},t)
\end{equation}
 decays fast enough in $\lambda$, the mechanism run in Section \ref{section4.3} would first implies a good estimate at least in $t$ for the integral in $\lambda$, for example a bound like
\begin{equation}\label{firstinlambda}
	\begin{split}
		&\left|\int_0^{+\infty}e^{-\mathrm{i}t\lambda^{2m}\pm\mathrm{i}\lambda X}\tilde{\chi}(\lambda^{2m})\lambda^{2m-1-\sum_{i\in\mathbb{K}_0}(2m-2-l_i)-J}g(\lambda,x_0,\cdots,x_{k+1},t)\d\lambda\right|\\
		\lesssim&|t|^{-\frac{n}{2m}}G(x_0,\cdots,x_{k+1},t),
	\end{split}
\end{equation}
which intuitively leads to a correct dispersive type estimate, so that we can further hope the integration of $G(x_0,\cdots,x_{k+1})$ in $(x_1,\cdots,x_k)$ to give the spatial counterpart of the estimate in \eqref{lgood}.

There will be two major difficulties when bootstrapping the above heuristics:

\begin{itemize}
	\item[(\romannumeral1)] If $\lambda^{-J}$ in \eqref{decaylambda} comes from integrating \eqref{U} by parts $J$ times in $(x_1,\cdots,x_k)$, then $V$ is of course required to have some regularity. In an extreme case where $l_0=\cdots=l_k=\frac{n-3}{2}$, referring to \eqref{symbolinitial} in the previous section, if we want \eqref{amplitude} to lie in $S^{\frac{n-1}{2}}((\frac{\lambda_0}{2},+\infty))$, then we need $J=-k(2m-2-\frac{n-3}{2})=k(\frac{n+1}{2}-2m)$, which roughly means averagely we have to integrate by parts $\frac{n+1}{2}-2m$ times in every $x_i$ ($i=1,\cdots,k$), and the regularity $V\in C^{\frac{n+1}{2}-2m}$ is necessary for such purpose. The first difficulty is to show that why $V\in C^{\frac{n+1}{2}-2m}$ is actually sufficient, as expected in the Introduction that such regularity condition is possibly the sharp one.
	
	\item[(\romannumeral2)] If $g$ has bounded derivatives in $\lambda$ for fixed $(x_0,\cdots,x_{k+1},t)$, then the oscillatory integral estimate \eqref{firstinlambda} has little to do with $g$, and the behavior of $G$ in variables $(x_1,\cdots,x_k)$ should be consistent with $g$. On the other hand, $g$ is at least as singular in $(x_1,\cdots,x_k)$ as the integrand in \eqref{U}, and so is $G$. However, besides the already existing point singularities $\prod_{i\in\mathbb{K}_0}|r_i|^{-(n-2-l_i)}$, we will see that $G$ or $g$ can also have singularities at some line segments, which are introduced when integrating \eqref{U} by parts, while in fact more point singularities will also be introduced. The second difficulty is to show that why such mixture of point and line singularities are so integrable to give desired estimates.
\end{itemize}

In the rest of this section, we will rigorously establish an integration by parts scheme to serve the above heuristics. There two main technical results. The first one that explains \eqref{decaylambda} in details is Proposition \ref{stepmuprop}. The second one that explains the estimate of the above $G$ is Proposition \ref{intmainest}. We will finally apply these two results to complete the estimate of $\Omega_k^{high,2}(t,x,y)$ in Section \ref{section5.5}.

\subsection{The integration by parts scheme}\label{section5.2}\

Recall $k\in\mathbb{N}_+$, $\mathbb{K}=\{1,\cdots,k\}$ and $\mathbb{K}_0=\{0,\cdots,k\}$, we consider the oscillatory integral in the form of
%we assume $n\geq4m+1$, fix any $k\in\mathbb{N}_+$, denote $\mathbb{K}=\{1,\cdots,k\}$ and $\mathbb{K}_0=\{0,\cdots,k\}$, and consider the oscillatory integral in the form of
\begin{equation*}\label{U^l}
	\begin{split}
		U^{\vec{l}}=\int_{\mathbb{R}^{nk}}&e^{\mathrm{i}\lambda X}\prod_{i\in\mathbb{K}}V(x_i)\prod_{i\in\mathbb{K}_0}|r_i|^{-(n-2-l_i)}f(\lambda,r_0,\cdots,r_k)\phi(X/T)\d x_1\cdots \d x_k,
	\end{split}
\end{equation*}
where $r_i=x_i-x_{i+1}$, $X=|r_0|+\cdots+|r_k|$, $\vec{l}=\{l_1,\cdots,l_k\}$ with $0\leq l_i\leq\frac{n-3}{2}$, $\phi\in C^\infty(\mathbb{R})$ is bounded with $\phi'\in\C_c^\infty(\mathbb{R})$, $T>0$, $V\in C^{\frac{n+1}{2}-2m}(\mathbb{R}^n)$ satisfies \eqref{derest}, and $f$ satisfies 
\begin{equation*}
	\left|\partial_\lambda^j\partial_{r_0}^{\alpha_0}\cdots\partial_{r_k}^{\alpha_k}f(\lambda;r_0,\cdots,r_k)\right|\lesssim_{j,\alpha_0,\cdots,\alpha_k}\lambda^{-j}|r_0|^{-|\alpha_0|}\cdots|r_k|^{-|\alpha_k|},\quad\lambda,|r_i|>0.
\end{equation*}

Below are the assumptions and notations throughout the rest of this section.

\begin{itemize}
	
	\item Assume \eqref{lbad}, i.e. there are at least two different indices $i_1,i_2\in\mathbb{K}_0$ such that
	\begin{equation}\label{assumptionl}
		l_{i_1}+2-2m>0,\,l_{i_2}+2-2m>0.
	\end{equation}
	\item  Let $\sigma$ be a fixed permutation of $\mathbb{K}_0$ such that $L_k\geq L_{k-1}\geq\cdots\geq L_0$ where
	\begin{equation}\label{defL_i}
		L_i=\max\{0,l_{\sigma(i)}+2-2m\},\quad i\in\mathbb{K}_0,
	\end{equation}
	and we define $k_0=\min\{i\in\mathbb{K}_0;\,L_i>0\}$.
	\item If $A$ is a non-empty finite subset of $\mathbb{Z}$, we define
	\begin{equation}\label{NL}
		\begin{split}
			N(A,i)&=\min\{j\in A;\,j\geq i\},\quad i\leq\max A,\\
			L(A,i)&=\max\{j\in A;\,j<i\},\quad i>\min A.
		\end{split}
	\end{equation}
	\item If $A$ is a finite subset of $\mathbb{Z}$, we define
	\begin{equation}\label{DiA}
		D_iA=\begin{cases}
			A\setminus\{N(A,i)\},\quad&A\neq\emptyset~\text{and}~i\leq\max A,\\
			A,&\text{otherwise}.
		\end{cases}
	\end{equation}
	One checks that $D_iD_jA=D_jD_iA$ always holds, so it is reasonable to denote
	\begin{equation}\label{DIA}
		D_IA=\left(\prod_{i\in I}D_i\right)A,\quad I\subset\mathbb{Z},
	\end{equation}
	and it is also true that $D_{I_1}D_{I_2}A=D_{I_2}D_{I_1}A$ for any $I_1,I_2\subset\mathbb{Z}$, but it may not be equal to $\prod_{i\in I_1\cup I_2}D_iA$ if $I_1\cap I_2\neq\emptyset$. It obviously follows that $D_{I_1}A\subset D_{I_2}B$ if $I_1\supset I_2$ and $A\subset B$.
	
	\item Denoted by
	\begin{equation}\label{Eij}
		E_{i,j}=\frac{x_i-x_{i+1}}{|x_i-x_{i+1}|}-\frac{x_j-x_{j+1}}{|x_j-x_{j+1}|},\quad i,j\in\mathbb{K}_0,\,i\neq j,
	\end{equation}
	if $F$ is a non-empty finite set of $E_{i,j}$ with $i<j$, we define the norm of $F$ to be
	\begin{equation}\label{defFnorm}
		\|F\|=\left(\sum_{E_{i,j}\in F}|E_{i,j}|^2\right)^\frac12.
	\end{equation}
	If $F=\{E_{i_1,j_1},\cdots,E_{i_r,j_r}\}$ with $j_1<\cdots<j_r$, we sometimes interpret $F$ to be the vector $F=(E_{i_1,j_1},\cdots,E_{i_r,j_r})\in\mathbb{R}^{rn}$ for convenience.
\end{itemize}

Now we introduce our first main technical result that will be used in Section \ref{section5.5} when studying a specific type of oscillatory integrals.

\begin{proposition}\label{stepmuprop}
	For every $\mu\in\{1,\cdots,k-k_0\}$, $U^{\vec{l}}$ is a finite linear combination of oscillatory integrals in the form of
	\begin{equation}\label{stepmu}
		\begin{split}
			\lambda^{-J}\int_{\mathbb{R}^{nk}}&e^{\mathrm{i}\lambda X}\prod_{i\in\mathbb{K}}V^{(\alpha_i)}(x_i)\prod_{i\in\mathbb{K}_0}|r_i|^{-(n-2-l_i+d_i)}\prod_{i=1}^{s}\|F_i\|^{-{p_i}}\\
			&\times g(\lambda,r_0,\cdots,r_k,F_1,\cdots,F_s)\psi(X/T)\d x_1\cdots \d x_k,
		\end{split}
	\end{equation}
	and every such integral is equipped with two sequences of indices
	\begin{equation}\label{musequences}
		\begin{split}
			\emptyset=&I_{0,1}^*\subset\cdots\subset I_{s,1}^*=\{\mbox{$i\in\mathbb{K};\,|\alpha_i|=\frac{n+1}{2}-2m$}\},\\
			\left\{i\in\mathbb{K}_0;\,l_i+2-2m\leq0\right\}=&I_{0,2}^*\subset\cdots\subset I_{s,2}^*=\{i\in\mathbb{K}_0;\,d_i=\max\{0,l_i+2-2m\}\},
		\end{split}
	\end{equation}
	satisfying the following constraints:
	\begin{itemize}
		
		\item[(1)] $J=\sum_{i\in\mathbb{K}}|\alpha_i|+\sum_{i\in\mathbb{K}_0}d_i$, and it follows that
		\begin{equation}\label{stepmuconstraint1}
			\begin{cases}
				1\leq s\leq\mu,\\
				|\alpha_i|\leq\frac{n+1}{2}-2m,\quad&i\in\mathbb{K},\\
				0\leq d_i\leq\max\{0,l_i+2-2m\},\quad&i\in\mathbb{K}_0,\\
				L_{k_0}+\cdots+L_{k_0+\mu-1}\leq J\leq L_{k_0}+\cdots+L_{k-1}.\\
			\end{cases}
		\end{equation}
		If $s<\mu$, it further follows that $J=L_{k_0}+\cdots+L_{k-1}$.
		\item[(2)] If $J<L_{k_0}+\cdots+L_{k-1}$, then for $i=1,\cdots,s$, there exists $\tau^{(i)}\in\mathbb{K}_0$ for either
		\begin{equation}\label{stepmuconstraint2}
			\begin{cases}
				I_{i,1}^*\setminus I_{i-1,1}^*=\{\tau^{(i)}\}\\
				I_{i,2}^*=I_{i-1,2}^*	
			\end{cases}\quad\text{or}\quad\begin{cases}
				I_{i,1}^*=I_{i-1,1}^*\\
				I_{i,2}^*\setminus I_{i-1,2}^*=\{\tau^{(i)}\}
			\end{cases}
		\end{equation}
		to hold. If $J=L_{k_0}+\cdots+L_{k-1}$, $s\geq2$ and $1\leq i\leq s-1$, such $\tau^{(i)}$ also exists.
		
		\item[(3)] Denoted by
		\begin{equation}\label{defI_I^*}
			I_i^*=D_{I_{i,2}^*}(\mathbb{K}\setminus I_{i,1}^*)=D_{I_{i,2}^*}D_{I_{i,1}^*}\mathbb{K}=D_{I_{i,1}^*}D_{I_{i,2}^*}\mathbb{K},\quad i=0,\cdots,s,
		\end{equation}
		it follows that $I_{i-1}^*\neq\emptyset$ whenever $\tau^{(i)}$ exists.

		\item[(4)] Denoted by
		\begin{equation}\label{defF_i}
			F_i=\left\{E_{j_1,j_2};\,j_2\in I_{i-1}^*,\,j_1=L(\mathbb{K}_0\setminus I_{i-1,2}^*,j_2)\right\},\quad i=1,\cdots,s,
		\end{equation}
		it follows that
		\begin{equation}\label{stepmuconstraint4}
			p_i+\cdots+p_s\leq\sum_{j\in I_{F_i}^1}|\alpha_j|+\sum_{j\in I_{F_i}^2}2d_j,\quad i=1,\cdots,s,
		\end{equation}
		where
		\begin{equation}\label{defI_F12}
			\begin{split}
				I_{F_i}^1&=\{j\in\mathbb{K};\,j_1<j\leq j_2,~E_{j_1,j_2}\in F_i\},\\
				I_{F_i}^2&=\{j\in\mathbb{K}_0;\,E_{j,j'}~\text{or}~E_{j',j}\in F_i\}.
			\end{split}
		\end{equation}
		It also follows for $i=1,\cdots,s$ that
		\begin{equation}\label{I_i^*I_F^}
			I_{i,1}^*\setminus I_{i-1,1}^*\subset I_{F_i}^1,\quad I_{i,2}^*\setminus I_{i-1,2}^*\subset I_{F_i}^2.
		\end{equation}
		
		\item[(5)] $g\in C^\infty(\mathbb{R}^{1+(k+1)n+\sum_{i=0}^{s-1}(k-k_0-i)n}\setminus\{0\})$ satisfies
		\begin{equation}\label{Pvariables}
			|\partial_\lambda^j\partial_{r_0}^{\alpha_0}\cdots\partial_{r_k}^{\alpha_k}\partial_{F_1}^{\beta_1}\cdots\partial_{F_s}^{\beta_s}g|\lesssim\lambda^{-j}|r_0|^{-|\alpha_0|}\cdots|r_k|^{-|\alpha_k|}\|F_1\|^{-|\beta_1|}\cdots\|F_s\|^{-|\beta_s|}.
		\end{equation}
		We also have $\psi\in C^\infty(\mathbb{R})$ bounded and $\mathrm{supp}\psi\subset\mathrm{supp}\phi$.
	\end{itemize}
\end{proposition}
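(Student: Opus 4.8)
The plan is to prove Proposition \ref{stepmuprop} by induction on $\mu$. The base case $\mu=1$ amounts to a single pass of integration by parts in $U^{\vec l}$ that eliminates (roughly) $L_{k_0}$ powers of $\lambda$: since $L_{k_0}>0$ by the ordering in \eqref{defL_i}, there is at least one index $i\in\mathbb{K}_0$ with $l_i+2-2m>0$, and the factor $\lambda^{-J_0}$ with $J_0=L_{k_0}$ is produced by applying the operator $\lambda^{-1}(\pm\mathrm{i})^{-1}\nabla_{x_\tau}\cdot E_{\eta,\tau}\|E_{\eta,\tau}\|^{-2}$ to $e^{\mathrm{i}\lambda X}$ the appropriate number of times in a single cleverly chosen variable $x_\tau$ (where $\tau$ is the first coordinate at which $X$ depends on $x_\tau$ along the relevant ray; here one uses the identity $\nabla_{x_\tau}e^{\mathrm{i}\lambda X}=\mathrm{i}\lambda e^{\mathrm{i}\lambda X}(\widehat{x_{\tau-1}-x_\tau}-\widehat{x_\tau-x_{\tau+1}})$ recalled in the Introduction). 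Each such integration by parts moves a derivative onto $V(x_\tau)$ (raising $|\alpha_\tau|$ by one, legal because $|\alpha_\tau|\le\frac{n+1}{2}-2m=L_{k_0}$ is not yet exhausted at this stage only if $l_\tau$ is maximal — more precisely, we never need more than $\frac{n+1}{2}-2m$ derivatives on any single $V(x_i)$, which is exactly the available regularity), onto a point-singularity factor $|r_{\tau-1}|^{-(n-2-l_{\tau-1})}$ or $|r_\tau|^{-(n-2-l_\tau)}$ (raising the corresponding $d_i$ by one, legal up to $\max\{0,l_i+2-2m\}$), onto $f$ (absorbed into $g$, preserving the symbol bound \eqref{Pvariables} by the product rule and the derivative estimates on $E$-factors recalled in Section \ref{section1.2}), onto the cutoff $\phi(X/T)$ (which stays of the same type since $X\gtrsim T$ on its support, absorbed into $\psi$), or onto a previously created $\|E_{\eta,\tau}\|^{-1}$ (raising the exponent $p_i$ and introducing one further point singularity, again accounted by the $d_i$ bookkeeping). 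Carefully tracking which factor each derivative lands on gives the first appearance of a cluster $F_1$ of line singularities, and the index sets $I_{1,1}^*,I_{1,2}^*,I_1^*,I_{F_1}^1,I_{F_1}^2$ are \emph{defined} by this process; one then checks \eqref{stepmuconstraint1}, \eqref{stepmuconstraint2}, \eqref{defI_I^*}, \eqref{defF_i}, \eqref{stepmuconstraint4}, \eqref{I_i^*I_F^} directly from the combinatorics of the iteration.

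For the inductive step, suppose the claim holds for $\mu$; given a summand of the form \eqref{stepmu} with its two chains $I_{\bullet,1}^*,I_{\bullet,2}^*$ and the clusters $F_1,\dots,F_s$, if already $J=L_{k_0}+\cdots+L_{k-1}$ there is nothing further to do (the constraint for $\mu+1$ is vacuous beyond what $\mu$ gives). Otherwise $J<L_{k_0}+\cdots+L_{k-1}$, so $s\le\mu<k-k_0$ and the counting inequality in \eqref{stepmuconstraint1} leaves room for at least $L_{k_0+\mu}>0$ more powers of $\lambda$ to be removed; the key structural point — and the heart of the argument — is that the \emph{admissibility} built into the chains $I_{\bullet,2}^*$ and the formula \eqref{defF_i} guarantees the existence of a coordinate $x_{\tau^{(s+1)}}$ with $I_s^*\neq\emptyset$ (constraint (3)) in which one more round of integration by parts can be performed so that: the newly created line-singularity cluster is exactly $F_{s+1}$ as given by \eqref{defF_i} with $i=s+1$; the derivatives that land on $V$'s and on point singularities extend the chains to $I_{s+1,1}^*,I_{s+1,2}^*$ obeying \eqref{stepmuconstraint2}; and the new exponents $p_{s+1}$ are controlled by \eqref{stepmuconstraint4}. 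One must check that differentiating the \emph{old} clusters $\|F_i\|^{-p_i}$ ($i\le s$) in $x_{\tau^{(s+1)}}$ only raises those same $p_i$ (never creates genuinely new \emph{types}) and contributes at most the point singularities already budgeted — this is where the precise shape of $F_i$ in \eqref{defF_i}, namely that each $E_{j_1,j_2}\in F_i$ has $j_1=L(\mathbb{K}_0\setminus I_{i-1,2}^*,j_2)$ linking consecutive surviving indices, is used to see that a derivative in a new variable either falls outside the support of that $E$-factor or reproduces a line singularity of the same cluster. Summing over the finitely many choices made at each step (which variable to differentiate, which factor absorbs each derivative) writes $U^{\vec l}$ as a finite linear combination of terms of the form \eqref{stepmu} with $\mu$ replaced by $\mu+1$, completing the induction.

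The main obstacle I expect is precisely the bookkeeping in the inductive step: showing that differentiating a previously introduced cluster $\|F_i\|^{-p_i}$ in a \emph{later} variable $x_{\tau^{(s+1)}}$ does not destroy the rigid structure \eqref{defF_i}–\eqref{defI_F12}, but only increments $p_i$ and adds point singularities already covered by the right-hand side of \eqref{stepmuconstraint4}. Concretely, one needs the gradient estimates
\[
\nabla_{x_\tau}\|F\|^{-1}=\|F\|^{-2}\,O\!\left(\sum_{E_{j_1,j_2}\in F,\;\tau\in\{j_1,j_1+1,j_2,j_2+1\}}\bigl(|r_{j_1}|^{-1}+|r_{j_2}|^{-1}\bigr)\right),
\]
together with the fact that $\nabla_{x_\tau}E_{j_1,j_2}$ is a bounded matrix times $|r_{j_1}|^{-1}$ or $|r_{j_2}|^{-1}$, and that the admissibility condition forces $\tau\in\{j_1,j_1+1,j_2,j_2+1\}$ for at most the $E$-factors already in $F_i$ (no new pair of indices can appear that is not of the consecutive-surviving-index form). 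Managing this consistently across all $s$ clusters, while simultaneously verifying the two nested chains \eqref{musequences} remain nested and the alternative \eqref{stepmuconstraint2} holds at the new step, is the delicate part; everything else — the symbol bounds \eqref{Pvariables}, the treatment of $f$, $\phi$, and the elementary power-counting $J=\sum|\alpha_i|+\sum d_i$ — follows by routine application of the Leibniz rule and the derivative estimates recalled in Section \ref{section1.2}.
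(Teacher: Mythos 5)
Your broad outline — induction on $\mu$, integration by parts to trade powers of $\lambda$ for derivatives on $V$, on $|r_i|^{-(n-2-l_i)}$, on $f$, and on line-singularity factors — does match the paper's strategy in spirit. But there is a genuine gap at the heart of your mechanism: you describe integration by parts \emph{in a single variable $x_\tau$} at a time, applying $\lambda^{-1}(\pm\i)^{-1}\nabla_{x_\tau}\cdot E_{\eta,\tau}|E_{\eta,\tau}|^{-2}$ repeatedly. Each such step produces one more power of a \emph{single} line singularity $|E_{\eta,\tau}|^{-1}$. You never explain how the clusters $\|F_i\|$, which are $\ell^2$-norms of \emph{sets} of $E$-vectors with $\#F_i=k-k_0-i+1$ elements, are created. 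These are not the same objects: $\|F_i\|^{-1}$ is much milder near the common zero set than a product $\prod_{E\in F_i}|E|^{-1}$, and this difference is crucial for the later integrability in Sections \ref{section5.3}--\ref{section5.4}. As stated, your procedure would prove something with products of individual $|E_{\eta,\tau}|^{-q_\tau}$'s (closer to what Erdo\v{g}an--Green did for $n=5,7$), not the statement \eqref{stepmu} you are asked to prove.

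The paper's Lemma \ref{lmintonce} resolves this with two ideas you leave out. First, a change of variables (Lemma \ref{lmchangeofvariables}): set $y_i=x_i-x_{i+1}$ for $i\in\mathbb{K}\cap I_{\dot{s},2}^*$ and $y_i=x_i$ otherwise, after which $\nabla_{y_i}X=-E_{L(\mathbb{K}_0\setminus I_{\dot{s},2}^*,i),i}$ for $i\in I_{\dot{s}}^*$; this is how the non-adjacent index pairs in \eqref{defF_i} arise. Second, the integration by parts is done \emph{simultaneously} in the full vector variable $y_{I_{\dot{s}}^*}=(y_{i_1},\ldots,y_{i_\nu})$: since $|\nabla_{y_{I_{\dot{s}}^*}}X|=\|F_{\dot{s}+1}\|$, a \emph{single} integration by parts (removing one power of $\lambda$) produces exactly one factor of $\|F_{\dot{s}+1}\|^{-1}$, and the cluster $F_{\dot{s}+1}$ defined by \eqref{defF_i} appears in one shot rather than being assembled from separate scalar steps. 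The careful accounting of the five resulting term types $\mathrm{I}$--$\mathrm{V}$ in the proof of Lemma \ref{lmintonce}, plus Lemmas \ref{lmIF} and \ref{lmI_FI^*}, is then what keeps the nested chains \eqref{musequences}, the exponent bound \eqref{stepmuconstraint4}, and the symbol bound \eqref{Pvariables} intact. Your concern about derivatives on old clusters is legitimate, and your gradient estimate for $\nabla_{x_\tau}\|F\|^{-1}$ is essentially the one the paper verifies inside Lemma \ref{lmintonce}; but without the change of variables and the simultaneous vector integration by parts, the rest of the bookkeeping cannot get started in the form the proposition requires.
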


\begin{remark}\label{stepmuproprk1}
	In the third constraint, that $I_{i-1}^*\neq\emptyset$ whenever $\tau^{(i)}$ exists is a consequence of the second constraint, because $i-1<s$ always holds in such cases, and we know by $\#\,I_{0,1}^*=0$ and $\#\,I_{0,2}^*=k_0$ that
	\begin{equation*}
		\#I_{i-1}^*=k-k_0-(i-1)>k-k_0-s\geq k-k_0-\mu\geq0.
	\end{equation*}
	If $\mu<k-k_0$ and $J<L_{k_0}+\cdots+L_{k-1}$, we then have $I_s^*\neq\emptyset$ since $\tau^{(s)}$ must exist and so
	\begin{equation*}
		\# I_s^*=k-k_0-s\geq k-k_0-\mu\geq1.
	\end{equation*}
\end{remark}

\begin{remark}\label{stepmuproprk2}
	That $F_i$ in \eqref{defF_i} is well defined is a consequence of $I_{i-1}^*\neq\emptyset$, which holds in the context for $\#I_0^*=k-k_0\geq1$ and the other cases are explained in Remark \ref{stepmuproprk1}. To see this, we first note that $\mathbb{K}_0\setminus I_{i-1,2}^*\neq\emptyset$ for it contains the non-empty $I_{i-1}^*$. If $j=\min I_{i-1}^*$, it follows that $j>\min(\mathbb{K}_0\setminus I_{i-1,2}^*)$, for otherwise we must have $\{0,\cdots,j-1\}\subset I_{i-1,2}^*$, and then $I_{i-1}^*\subset D_{I_{i-1,2}^*}\mathbb{K}\subset D_{\{0,\cdots,j-1\}}\mathbb{K}=\mathbb{K}\setminus\{1,\cdots,j\}$ yields a contradiction. Therefore $j_1$ is well defined for each $j_2\in I_{i-1}^*$ in \eqref{defF_i}. If $E_{j_1,j_2},E_{j_3,j_4}\in F_i$ and $E_{j_1,j_2}\neq E_{j_3,j_4}$, it is easy to check that either $j_1<j_2\leq j_3<j_4$ or $j_3<j_4\leq j_1<j_2$ must hold, so $\|F_i\|$ is also well defined.
\end{remark}

\begin{remark}
	It follows by definition that
	\begin{equation}\label{defI_F12'}
		\begin{split}
			I_{F_i}^1=&\bigcup_{j\in I_{i-1}^*}\{L(\mathbb{K}_0\setminus I_{i-1,2}^*,j)+1,\cdots,j\},\\
			I_{F_i}^2=&\bigcup_{j\in I_{i-1}^*}\{L(\mathbb{K}_0\setminus I_{i-1,2}^*,j),j\}.
		\end{split}
	\end{equation}
	Therefore \eqref{I_i^*I_F^} implies that
	\begin{equation}\label{stepmuconstraint3}
		\tau^{(i)}\leq\max I_{i-1}^*,
	\end{equation}
	holds whenever $\tau^{(i)}$ defined in constraint 3) exists.
\end{remark}

\begin{lemma}\label{lmIF}
	If integral \eqref{stepmu} in Proposition \ref{stepmuprop} is given with $s\geq2$, then
	\begin{equation}
		I_{F_i}^1\supset I_{F_{i+1}}^1,\,I_{F_i}^2\supset I_{F_{i+1}}^2,\,\|F_i\|\geq\|F_{i+1}\|,\quad i=1,\cdots,s-1.
	\end{equation}
	The conclusion also holds for $i=s$ if $\mu<k-k_0$ and $J<L_{k_0}+\cdots+L_{k-1}$, while $I_s^*\neq\emptyset$ by Remark \ref{stepmuproprk1} and then $F_{s+1}$ can be defined in the way of \eqref{defF_i} by Remark \ref{stepmuproprk2}.
\end{lemma}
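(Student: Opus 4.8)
The plan is to unwind the definitions \eqref{defF_i}, \eqref{defI_F12} (equivalently \eqref{defI_F12'}) and track how the index sets $I_{i,1}^*,I_{i,2}^*$ and hence $I_i^*$ change as $i$ increases. The starting point is constraint (2): whenever $\tau^{(i)}$ exists (which covers all $i=1,\dots,s-1$ under the hypotheses, and also $i=s$ in the range asserted in the lemma by Remark \ref{stepmuproprk1}), exactly one of the two alternatives in \eqref{stepmuconstraint2} holds, so passing from $i-1$ to $i$ either removes one element from $\mathbb K\setminus I_{i,1}^*$ (the element $\tau^{(i)}$) or applies one more $D_{\tau^{(i)}}$ via the growth of $I_{i,2}^*$. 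In either case I would show $I_i^*=D_{\tau^{(i)}}I_{i-1}^*$ (using the commutation identities for the $D_j$'s recorded after \eqref{DIA}, and the definition \eqref{defI_I^*}), and moreover that $\tau^{(i)}\le\max I_{i-1}^*$ by \eqref{stepmuconstraint3}. Thus at each step $I_i^*$ is obtained from $I_{i-1}^*$ by deleting a single element $N(I_{i-1}^*\cup\{\text{something}\},\tau^{(i)})$-type entry; in particular $I_i^*\subsetneq I_{i-1}^*$ with $\#I_i^*=\#I_{i-1}^*-1$, consistent with $\#I_i^*=k-k_0-i$.

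Next I would prove the set containments $I_{F_i}^1\supset I_{F_{i+1}}^1$ and $I_{F_i}^2\supset I_{F_{i+1}}^2$. By \eqref{defI_F12'}, $I_{F_i}^1$ and $I_{F_i}^2$ are unions over $j\in I_{i-1}^*$ of the blocks $\{L(\mathbb K_0\setminus I_{i-1,2}^*,j)+1,\dots,j\}$ and $\{L(\mathbb K_0\setminus I_{i-1,2}^*,j),j\}$ respectively. Going from $i$ to $i+1$, the index set $I_i^*$ over which we take the union is a subset of $I_{i-1}^*$ (one element fewer, by the previous paragraph), and the "left neighbor" function changes from $L(\mathbb K_0\setminus I_{i-1,2}^*,\cdot)$ to $L(\mathbb K_0\setminus I_{i,2}^*,\cdot)$, where $I_{i,2}^*\supset I_{i-1,2}^*$. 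Enlarging the deleted set $I_{i,2}^*$ can only push $L(\cdot)$ further to the left, so each surviving block $\{L(\mathbb K_0\setminus I_{i,2}^*,j)+1,\dots,j\}$ contains the corresponding old block $\{L(\mathbb K_0\setminus I_{i-1,2}^*,j)+1,\dots,j\}$. Combined with $I_i^*\subset I_{i-1}^*$, this gives $I_{F_{i+1}}^1\subset I_{F_i}^1$; the argument for the superscript-$2$ version is the same, noting only the extra subtlety that the left-endpoint of a block for $i+1$ could coincide with a point already counted, which does not hurt a containment statement. Here I would be careful that when $\tau^{(i)}$ corresponds to the second alternative of \eqref{stepmuconstraint2}, the element added to $I_{i,2}^*$ is precisely one whose removal is compatible with \eqref{I_i^*I_F^}, so no block is "broken" in a way that loses points.

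Finally, the norm inequality $\|F_i\|\ge\|F_{i+1}\|$ should follow once we know that $F_{i+1}$, as a set of vectors $E_{j_1,j_2}$, is "dominated" by $F_i$ in the sense that each $E_{j_1',j_2'}\in F_{i+1}$ can be matched, via the block structure just described, with a sub-collection of the $E_{j_1,j_2}\in F_i$ whose squared norms sum to at least $|E_{j_1',j_2'}|^2$ — concretely, if $j_2'\in I_i^*\subset I_{i-1}^*$ then $j_2'$ is also an index of some $E$ in $F_i$, but with a possibly larger (more-to-the-left) left index, and one uses the triangle-type inequality $|E_{a,c}|\le|E_{a,b}|+|E_{b,c}|$ for the unit-vector differences together with the fact that the intermediate indices $b$ that got deleted are themselves left indices of other members of $F_i$. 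Summing squares and applying $(p+q)^2\le 2(p^2+q^2)$ (or, more cleanly, just the monotonicity of the $\ell^2$ norm under the chaining) yields $\|F_{i+1}\|\lesssim\|F_i\|$; since we only need $\|F_i\|\ge\|F_{i+1}\|$ up to the implicit constants already absorbed into \eqref{stepmu}, this suffices, and in fact with the ordering $j_1<j_2\le j_3<\cdots$ of the members the chaining is along disjoint blocks so one gets the clean inequality. The main obstacle I anticipate is precisely this last step: verifying that the "telescoping"/chaining of the line-singularity vectors $E_{j_1,j_2}$ is set up correctly when several deletions of $I_{i,2}^*$-type accumulate between consecutive $F_i$'s, so that the left indices really do telescope through deleted points that are themselves endpoints of earlier $F_i$ members — this is where the precise form of the second alternative in \eqref{stepmuconstraint2} together with \eqref{I_i^*I_F^} has to be used, rather than just the cardinality bookkeeping.
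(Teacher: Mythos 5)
Your general plan — track the one-step change $I_i^*=D_{\tau^{(i)}}I_{i-1}^*$ and compare the union-of-blocks description \eqref{defI_F12'} of $I_{F_i}^1$ and $I_{F_i}^2$ across consecutive $i$ — is the same strategy as the paper's, but the key inference you rely on for the set containments is not valid as stated. You argue that, since $I_i^*\subset I_{i-1}^*$ (index set shrinks) and each surviving block $\{L(\mathbb{K}_0\setminus I_{i,2}^*,j)+1,\dots,j\}$ contains the old one (blocks can only grow), the union must shrink. That does not follow: a union of larger sets over a smaller index set can be strictly larger. The missing piece is the structural fact that a block for $j\in I_i^*$ can only grow when $\tau^{(i)}=L(\mathbb{K}_0\setminus I_{i-1,2}^*,j)$, and that in this situation $\tau^{(i)}$ must itself lie in $I_{i-1}^*$ (if $\tau^{(i)}\notin I_{i-1}^*$ then $j^{(i)}=N(I_{i-1}^*,\tau^{(i)})$ satisfies $\tau^{(i)}<j^{(i)}<j$ and $j^{(i)}\in\mathbb{K}_0\setminus I_{i-1,2}^*$, contradicting $\tau^{(i)}=L(\mathbb{K}_0\setminus I_{i-1,2}^*,j)$). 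Only then is the new coverage $\{L(\mathbb{K}_0\setminus I_{i-1,2}^*,\tau^{(i)})+1,\dots,\tau^{(i)}\}$ exactly the block $B_{\tau^{(i)}}^{\mathrm{old}}$ already contained in $I_{F_i}^1$. The paper supplies exactly this kind of information via a four-way case analysis according to which alternative of \eqref{stepmuconstraint2} holds and whether $\tau^{(i)}\in I_{i-1}^*$; in three of the four cases it gets $F_{i+1}\subset F_i$ outright (which makes all three conclusions trivial), and in the remaining case it identifies the precise ``two elements replaced by one'' structure $F_i=F_i^{(1)}\cup\{E_{\tilde j,\tau^{(i)}},E_{\tau^{(i)},N}\}$, $F_{i+1}=F_i^{(1)}\cup\{E_{\tilde j,N}\}$ with $E_{\tilde j,N}=E_{\tilde j,\tau^{(i)}}+E_{\tau^{(i)},N}$. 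You sense the subtlety (``no block is broken'') but do not resolve it.

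On the norm comparison, your first instinct — that the chaining/triangle inequality only yields $\|F_{i+1}\|\lesssim\|F_i\|$ with an innocuous absolute constant, and that this is all that the downstream Lemma \ref{lmFp} actually requires — is the defensible conclusion. The subsequent claim that ``one gets the clean inequality'' by chaining along disjoint blocks is unjustified: in the replaced-pair case, $\|F_i\|^2-\|F_{i+1}\|^2=|E_{\tilde j,\tau^{(i)}}|^2+|E_{\tau^{(i)},N}|^2-|E_{\tilde j,N}|^2=2(e_{\tilde j}-e_{\tau^{(i)}})\cdot(e_N-e_{\tau^{(i)}})$, which can be negative for suitable unit vectors $e_{\tilde j},e_{\tau^{(i)}},e_N$. (The paper's appeal to the triangle inequality at this step raises the same question; what truly matters for the sequel is the $\lesssim$ version you correctly identified, and the deduction should be stated in that form.)
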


\begin{proof}
	We first note that the conclusion holds in the special case $F_{i+1}\subset F_i$. Also note that when $s\geq2$ and $1\leq i\leq s-1$, we always have the existence of $\tau^{(i)}$, and $I_i^*=D_{\tau^{(i)}}I_{i-1}^*$ holds by constraint 2), meaning $I_i^*\subset I_{i-1}^*$ and
	\begin{equation}\label{Ii-1-Ii}
		I_{i-1}^*\setminus I_i^*=\{N(I_{i-1}^*,\tau^{(i)})\},
	\end{equation}
	where $N(I_{i-1}^*,\tau^{(i)})$ is well defined because of \eqref{stepmuconstraint3}. By Remark \ref{stepmuproprk1}, \eqref{Ii-1-Ii} is also true for $i=s$ if $\mu<k-k_0$ and $J<L_{k_0}+\cdots+L_{k-1}$.
	
	If $I_{i,1}^*\setminus I_{i-1,1}^*=\{\tau^{(i)}\}$ and $I_{i,2}^*=I_{i-1,2}^*$, we of course have $F_{i+1}\subset F_i$ by definition.
	
	If $I_{i,1}^*=I_{i-1,1}^*$, $I_{i,2}^*\setminus I_{i-1,2}^*=\{\tau^{(i)}\}$, and $\tau^{(i)}=\max I_{i-1}^*$, then $\tau^{(i)}=N(I_{i-1}^*,\tau^{(i)})$ by definition. For any $E_{j_1,j_2}\in F_{i+1}$, we have $j_2\in I_i^*\subset I_{i-1}^*$, $j_2<\tau^{(i)}$ and
	\begin{equation}\label{j_1}
		j_1=L(\mathbb{K}_0\setminus(I_{i-1,2}^*\cup\{\tau^{(i)}\}),j_2)=L(\mathbb{K}_0\setminus I_{i-1,2}^*,j_2),
	\end{equation}
	which means $E_{j_1,j_2}\in F_i$. So $F_{i+1}\subset F_i$ is also true.
	
	If $I_{i,1}^*=I_{i-1,1}^*$, $I_{i,2}^*\setminus I_{i-1,2}^*=\{\tau^{(i)}\}$, $\tau^{(i)}<\max I_{i-1}^*$ and $\tau^{(i)}\notin I_{i-1}^*$, we must have $\tau^{(i)}\notin I_i^*$ and $\tau^{(i)}<N(I_{i-1}^*,\tau^{(i)})$. So for any $E_{j_1,j_2}\in F_{i+1}$, we have either $j_2<\tau^{(i)}$ or $j_2>N(I_{i-1}^*,\tau^{(i)})$. Now \eqref{j_1} of course holds when $j_2<\tau^{(i)}$, and it also holds when $j_2>N(I_{i-1}^*,\tau^{(i)})$ because $\tau^{(i)}<N(I_{i-1}^*,\tau^{(i)})\in I_{i-1}^*\subset\mathbb{K}_0\setminus I_{i-1,2}^*$, which combined with the assumption $I_{i,2}^*\setminus I_{i-1,2}^*=\{\tau^{(i)}\}$ also implies $N(I_{i-1}^*,\tau^{(i)})\in I_{i-1}^*\subset\mathbb{K}_0\setminus I_{i,2}^*$. Therefore $E_{j_1,j_2}\in F_i$, and $F_{i+1}\subset F_i$ still holds.
	
	Lastly, if $I_{i,1}^*=I_{i-1,1}^*$, $I_{i,2}^*\setminus I_{i-1,2}^*=\{\tau^{(i)}\}$, $\tau^{(i)}<\max I_{i-1}^*$ and $\tau^{(i)}\in I_{i-1}^*$, we must have $\tau^{(i)}=N(I_{i-1}^*,\tau^{(i)})$ and the existence of $N(I_{i-1}^*,\tau^{(i)}+1)=N(I_i^*,\tau^{(i)})\in I_i^*$. Splitting $F_i=F_i^{(1)}\cup F_i^{(2)}$ where
	\begin{equation}\label{F_isplit}
		\begin{split}
			F_i^{(1)}=\{E_{j_1,j_2}\in F_i;\,j_2<\tau^{(i)}\,\text{or}\,j_2>N(I_{i-1}^*,\tau^{(i)}+1)\},\\
			F_i^{(2)}=\{E_{j_1,j_2}\in F_i;\,j_2=\tau^{(i)}\,\text{or}\,j_2=N(I_{i-1}^*,\tau^{(i)}+1)\},
		\end{split}
	\end{equation}
	the same discussion in the previous case shows that
	\begin{equation}
		F_i^{(1)}=\{E_{j_1,j_2}\in F_{i+1};\,j_2<\tau^{(i)}\,\text{or}\,j_2>N(I_{i-1}^*,\tau^{(i)}+1)\},
	\end{equation}
	and therefore
	\begin{equation}\label{F_i+1}
		F_{i+1}=F_i^{(1)}\cup\{E_{\tilde{j},N(I_{i-1}^*,\tau^{(i)}+1)}\},
	\end{equation}
	where $\tilde{j}=L(\mathbb{K}_0\setminus I_{i,2}^*,N(I_{i-1}^*,\tau^{(i)}+1))$. If
	\begin{equation*}
		L(\mathbb{K}_0\setminus I_{i-1,2}^*,N(I_{i-1}^*,\tau^{(i)}+1))>\tau^{(i)},
	\end{equation*}
	we must have
	\begin{equation*}
		L(\mathbb{K}_0\setminus I_{i-1,2}^*,N(I_{i-1}^*,\tau^{(i)}+1))=L(\mathbb{K}_0\setminus(I_{i-1,2}^*\cup\{\tau^{(i)}\}),N(I_{i-1}^*,\tau^{(i)}+1))=\tilde{j},
	\end{equation*}
	which means $E_{\tilde{j},N(I_{i-1}^*,\tau^{(i)}+1)}\in F_i$ and therefore $F_{i+1}\subset F_i$. If
	\begin{equation}\label{tildej}
		L(\mathbb{K}_0\setminus I_{i-1,2}^*,N(I_{i-1}^*,\tau^{(i)}+1))=\tau^{(i)},
	\end{equation}
	which is the only possibility left since $N(I_{i-1}^*,\tau^{(i)}+1)>\tau^{(i)}\in\mathbb{K}_0\setminus I_{i-1,2}^*$, we must have
	\begin{equation*}
		\tilde{j}=L(\mathbb{K}_0\setminus(I_{i-1,2}^*\cup\{\tau^{(i)}\}),N(I_{i-1}^*,\tau^{(i)}+1))<\tau^{(i)},
	\end{equation*}
	and consequently
	\begin{equation*}
		\tilde{j}=L(\mathbb{K}_0\setminus I_{i,2}^*,\tau^{(i)})=L(\mathbb{K}_0\setminus(I_{i-1,2}^*\cup\{\tau^{(i)}\}),\tau^{(i)})=L(\mathbb{K}_0\setminus I_{i-1,2}^*,\tau^{(i)}),
	\end{equation*}
	which together with \eqref{F_isplit} and \eqref{tildej} implies
	\begin{equation}\label{F_i}
		F_i=F_i^{(1)}\cup\{E_{L(\mathbb{K}_0\setminus I_{i-1,2}^*,\tau^{(i)}),\tau^{(i)}},E_{\tau^{(i)},N(I_{i-1}^*,\tau^{(i)}+1)}\}=F_i^{(1)}\cup\{E_{\tilde{j},\tau^{(i)}},E_{\tau^{(i)},N(I_{i-1}^*,\tau^{(i)}+1)}\},
	\end{equation}
	so it is now obvious to see $I_{F_i}^1=I_{F_{i+1}}^1$ and $I_{F_i}^2\supset I_{F_{i+1}}^2$ if we compare \eqref{F_i+1} with \eqref{F_i}, while $\|F_i\|\geq\|F_{i+1}\|$ is a consequence of the triangle inequality applied to $E_{\tilde{j},N(I_{i-1}^*,\tau^{(i)}+1)}=E_{\tilde{j},\tau^{(i)}}+E_{\tau^{(i)},N(I_{i-1}^*,\tau^{(i)}+1)}$.
\end{proof}

\begin{remark}
	The proof also shows that either $F_{i+1}\subset F_i$ holds, or there exist $j_1<j_2<j_3$ such that $E_{j_1,j_2},E_{j_2,j_3}\in F_i$, $E_{j_1,j_3}\in F_{i+1}$ and $F_i\setminus\{E_{j_1,j_2},E_{j_2,j_3}\}=F_{i+1}\setminus\{E_{j_1,j_3}\}$.
\end{remark}

Next, we introduce some lemmas for the proof of Proposition \ref{stepmuprop}, and formally the key one is Lemma \ref{lmintonce} which exploits the pattern if we integrate \eqref{stepmu} by parts once and once again.

\begin{lemma}\label{lmI_FI^*}
	If integral \eqref{stepmu} in Proposition \ref{stepmuprop} is given, then
	\begin{equation*}
		I_{F_i}^1\subset\mathbb{K}\setminus I_{i-1,1}^*,~I_{F_i}^2\subset\mathbb{K}_0\setminus I_{i-1,2}^*,\quad i=1,\cdots,s.
	\end{equation*}
	The conclusion also holds for $i=s+1$ if $I_s^*\neq\emptyset$ while $F_{s+1}$ can be defined in the way of \eqref{defF_i} by Remark \ref{stepmuproprk2}.
\end{lemma}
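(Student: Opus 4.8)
The plan is to prove both inclusions by a purely combinatorial argument from the definitions of $D_i$, $N(\cdot,\cdot)$, $L(\cdot,\cdot)$, the reformulation \eqref{defI_F12'} of $I_{F_i}^1,I_{F_i}^2$, and the commutation $D_iD_j=D_jD_i$ already granted in the excerpt. Since $I_{i-1}^*\neq\emptyset$ in the relevant range (Remarks \ref{stepmuproprk1}, \ref{stepmuproprk2}), $F_i$ and $I_{F_i}^1,I_{F_i}^2$ are well defined, and the case $i=s+1$ is word-for-word the same with $(I_s^*,I_{s,1}^*,I_{s,2}^*)$ replacing $(I_{i-1}^*,I_{i-1,1}^*,I_{i-1,2}^*)$, so I will only discuss $i\in\{1,\ldots,s\}$. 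First I would record the elementary fact that $A\cap D_AB=\emptyset$ for every finite $A\subset\mathbb{Z}$ and every $B\subset\mathbb{Z}$: computing $D_AB$ by applying the (commuting) operators $D_a$, $a\in A$, in increasing order of $a$, at the moment $D_a$ acts the element $a$ has either already been removed or equals $N(\cdot,a)$ and gets removed, while no element is ever reinserted. Applying this with $A=I_{i-1,2}^*$, $B=\mathbb{K}\setminus I_{i-1,1}^*$ gives $I_{i-1}^*\subset(\mathbb{K}\setminus I_{i-1,1}^*)\setminus I_{i-1,2}^*$, so each $j\in I_{i-1}^*$ lies in $\mathbb{K}$ and avoids both $I_{i-1,1}^*$ and $I_{i-1,2}^*$; since $L(\mathbb{K}_0\setminus I_{i-1,2}^*,j)\in\mathbb{K}_0\setminus I_{i-1,2}^*$ by definition, the formula $I_{F_i}^2=\bigcup_{j\in I_{i-1}^*}\{L(\mathbb{K}_0\setminus I_{i-1,2}^*,j),j\}$ in \eqref{defI_F12'} immediately gives $I_{F_i}^2\subset\mathbb{K}_0\setminus I_{i-1,2}^*$.

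The core is the inclusion $I_{F_i}^1\subset\mathbb{K}\setminus I_{i-1,1}^*$. By \eqref{defI_F12'} it suffices to prove, for every $j\in I_{i-1}^*$ with $j_1:=L(\mathbb{K}_0\setminus I_{i-1,2}^*,j)$ (well defined by Remark \ref{stepmuproprk2}), that $\{j_1+1,\ldots,j\}\cap I_{i-1,1}^*=\emptyset$; note $\{j_1+1,\ldots,j\}\subset\mathbb{K}$ because $j_1\geq0$ and $j\leq k$. As $j\notin I_{i-1,1}^*$ the case $j=j_1+1$ is trivial, so assume $j\geq j_1+2$. By maximality of $j_1$, every integer in $(j_1,j)$ lies in $I_{i-1,2}^*$, while $j_1,j\notin I_{i-1,2}^*$; hence, computing $I_{i-1}^*=D_{I_{i-1,2}^*}(\mathbb{K}\setminus I_{i-1,1}^*)$ by applying the $D_a$, $a\in I_{i-1,2}^*$, in increasing order, the operators $D_{j_1+1},\ldots,D_{j-1}$ act consecutively, and throughout this block $j$ is still present (it is never removed, since $j\in I_{i-1}^*$). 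When $D_d$ acts for $d\in\{j_1+1,\ldots,j-1\}$, the current set contains $j\geq d$, so $D_d$ removes $N(\cdot,d)$, and this element must lie in $[d,j-1]$: otherwise it is $\geq j$, hence equal to $j$ since the smallest present element $\geq d$ cannot exceed the present element $j$, contradicting $j\in I_{i-1}^*$. Thus the $j-j_1-1$ operators $D_{j_1+1},\ldots,D_{j-1}$ remove $j-j_1-1$ pairwise distinct elements of $(\mathbb{K}\setminus I_{i-1,1}^*)\cap[j_1+1,j-1]$; since $[j_1+1,j-1]\cap\mathbb{Z}$ has exactly $j-j_1-1$ elements, this forces $[j_1+1,j-1]\subset\mathbb{K}\setminus I_{i-1,1}^*$, which together with $j\notin I_{i-1,1}^*$ yields $\{j_1+1,\ldots,j\}\cap I_{i-1,1}^*=\emptyset$.

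The step I expect to demand the most care is this last counting argument: one must verify that the operators $D_{j_1+1},\ldots,D_{j-1}$ genuinely act as an isolated consecutive block when $I_{i-1,2}^*$ is processed in increasing order, that each of them actually removes an element (which is precisely where the persistence of $j$ throughout the block is used), and that the removed elements are pairwise distinct and all lie in $[j_1+1,j-1]$, so that an injection into a set of equal cardinality is forced to exhaust $[j_1+1,j-1]\cap\mathbb{Z}$. Everything else — the identity $A\cap D_AB=\emptyset$, the inclusion $I_{i-1}^*\subset\mathbb{K}\setminus(I_{i-1,1}^*\cup I_{i-1,2}^*)$, and the $I_{F_i}^2$ inclusion — is routine bookkeeping with the definitions of $N,L$ and the already-granted commutation of the $D_i$, plus the well-definedness of $j_1$ supplied by Remark \ref{stepmuproprk2}.
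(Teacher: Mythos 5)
Your proof is correct, but for the first (nontrivial) inclusion it takes a genuinely different route from the paper's. The paper's argument is a short proof by contradiction: given $j\in I_{F_i}^1$, one picks the witness $i_0\in I_{i-1}^*$ with $L(\mathbb{K}_0\setminus I_{i-1,2}^*,i_0)<j\leq i_0$, observes that $N(\mathbb{K}_0\setminus I_{i-1,2}^*,j)=i_0$ so that $D_j(\mathbb{K}_0\setminus I_{i-1,2}^*)=(\mathbb{K}_0\setminus I_{i-1,2}^*)\setminus\{i_0\}$, and then, assuming $j\in I_{i-1,1}^*$, invokes the stated monotonicity $D_{I_1}A\subset D_{I_2}B$ for $I_1\supset I_2$, $A\subset B$ to conclude $I_{i-1}^*=D_{I_{i-1,1}^*}D_{I_{i-1,2}^*}\mathbb{K}\subset D_j(\mathbb{K}_0\setminus I_{i-1,2}^*)$, which misses $i_0$ — contradicting $i_0\in I_{i-1}^*$. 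Your proof instead fixes $j\in I_{i-1}^*$, isolates the consecutive block of deletions $D_{j_1+1},\dots,D_{j-1}$ inside the computation of $D_{I_{i-1,2}^*}(\mathbb{K}\setminus I_{i-1,1}^*)$, checks that each deletion lands in $[j_1+1,j-1]$ and that the deleted elements are distinct, and then uses the pigeonhole $|\{j_1+1,\dots,j-1\}|=j-j_1-1$ to force $\{j_1+1,\dots,j-1\}\subset\mathbb{K}\setminus I_{i-1,1}^*$. Both arguments are sound, and your preliminary observation that $A\cap D_AB=\emptyset$ is exactly what the paper leaves implicit for the second inclusion. The trade-off: the paper's monotonicity trick is a one-liner once the witness $i_0$ is identified, while your bookkeeping proof is longer but more self-contained (it avoids appealing to the monotonicity of $D_I$ in the index set $I$) and in fact yields slightly more, namely that the entire interval $\{j_1+1,\dots,j\}$ misses $I_{i-1,1}^*$ simultaneously, not just that each point of $I_{F_i}^1$ does.
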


\begin{proof}
	To show the first inclusion, we pick up any $j\in I_{F_i}^1$, then there exists $i_0\in I_{i-1}^*\subset\mathbb{K}_0\setminus I_{i-1,2}^*$ with
	\begin{equation*}
		L(\mathbb{K}_0\setminus I_{i-1,2}^*,i_0)<j\leq i_0,
	\end{equation*}
	which implies $D_j(\mathbb{K}_0\setminus I_{i-1,2}^*)=(\mathbb{K}_0\setminus I_{i-1,2}^*)\setminus\{i_0\}$, so it is impossible for $j\in I_{i-1,1}^*$ to hold, because otherwise
	\begin{equation*}
		I_{i-1}^*=D_{I_{i-1,1}^*}D_{I_{i-1,2}^*}\mathbb{K}\subset D_j(\mathbb{K}_0\setminus I_{i-1,2}^*)=(\mathbb{K}_0\setminus I_{i-1,2}^*)\setminus\{i_0\}\notni i_0,
	\end{equation*}
	which is a contradiction. The second inclusion is obvious by \eqref{defI_F12'}.
\end{proof}

\begin{lemma}\label{lmchangeofvariables}
	Given $I_1\subset\mathbb{K}$, $I_2\subset\mathbb{K}_0$, $I=D_{I_2}(\mathbb{K}\setminus I_1)\neq\emptyset$, $y_0=x_0$, $y_{k+1}=x_{k+1}$ and the change of variables
	\begin{equation*}
		y_i=\begin{cases}
			x_i-x_{i+1},\quad&i\in\mathbb{K}\cap I_2,\\
			x_i,&i\in\mathbb{K}\setminus I_2.
		\end{cases}
	\end{equation*}
	Then for each $i\in I$, the following statements hold:
	\begin{itemize}
		\item [1)] If $j\in\mathbb{K}_0$, $j\neq i$ and $j\neq L(\mathbb{K}_0\setminus I_2,i)$, then $x_j-x_{j+1}$ is independent of $y_i$ in the $y$-coordinates.
		
		\item [2)] $\nabla_{y_i}X=-E_{L(\mathbb{K}_0\setminus I_2,i),i}$ holds where $E_{L(\mathbb{K}_0\setminus I_2,i),i}$ is defined.
	\end{itemize}
\end{lemma}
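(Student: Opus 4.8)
The plan is to make the change of variables completely explicit and then read off both assertions by differentiation; the only genuine work is the combinatorics that certifies the index $L(\mathbb{K}_0\setminus I_2,i)$ in the statement.

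\emph{Step 1 (the indices are legitimate).} I would first record two facts about $I=D_{I_2}(\mathbb{K}\setminus I_1)$. First, $I\cap I_2=\emptyset$, hence $i\notin I_2$ for every $i\in I$: if $j\in I_2$ and $j\in\mathbb{K}\setminus I_1$, apply the (commuting) operators $\{D_{j'}\}_{j'\in I_2}$ one at a time; at the stage where $D_j$ acts on the current set $A$ one has $N(A,j)=j$ whenever $j\in A$ (since $j=\min\{l\in A:\ l\ge j\}$), so $j$ is deleted and never returns, while if $j\notin\mathbb{K}\setminus I_1$ then $j\notin I$ already. Second, $\mathbb{K}_0\setminus I_2\ne\emptyset$ and $\min I>\min(\mathbb{K}_0\setminus I_2)$: if not, then $\{0,1,\dots,\min I-1\}\subset I_2$, and then, exactly as in Remark~\ref{stepmuproprk2}, $I\subset D_{I_2}\mathbb{K}\subset D_{\{0,\dots,\min I-1\}}\mathbb{K}=\mathbb{K}\setminus\{1,\dots,\min I\}$, contradicting $\min I\in I$. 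Therefore, for $i\in I$ the index $j_\ast:=L(\mathbb{K}_0\setminus I_2,i)$ is defined, satisfies $j_\ast<i$, and $E_{j_\ast,i}$ makes sense.

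\emph{Step 2 (closed form of the $y$-coordinates).} For $j\in I_2$ the relation $y_j=x_j-x_{j+1}$ gives $x_j=y_j+x_{j+1}$; I would iterate this, using $k+1\notin I_2$, to obtain, with $m_j:=\min\{l\in(\mathbb{K}_0\cup\{k+1\})\setminus I_2:\ l\ge j\}$,
\begin{equation*}
	x_j=\sum_{l=j}^{m_j-1}y_l+x_{m_j},\qquad x_{m_j}=\begin{cases}y_{m_j},& m_j\le k,\\ x_{k+1},& m_j=k+1,\end{cases}
\end{equation*}
a formula that also covers $j\in\mathbb{K}_0\setminus I_2$ (then $m_j=j$, the sum is empty, $x_j=y_j$ or $x_0$). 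Now fix $i\in I$, so $i\notin I_2$ and $i\le k$. The $y$-variables entering the closed form of $x_j$ are among $y_j,y_{j+1},\dots,y_{m_j}$ (the last replaced by the frozen $x_{k+1}$ when $m_j=k+1$); since $j,\dots,m_j-1$ all lie in $I_2$ whereas $i$ does not, $x_j$ depends on $y_i$ if and only if $i=m_j$, i.e. if and only if $j\le i$ and $\{j,j+1,\dots,i-1\}\subset I_2$, and in that case $x_j=y_i+\sum_{l=j}^{i-1}y_l$, so $\partial x_j/\partial y_i=\mathrm{Id}$; otherwise $\partial x_j/\partial y_i=0$.

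\emph{Step 3 (conclusion).} Writing $r_j=x_j-x_{j+1}$ and comparing the criterion of Step~2 for $j$ and for $j+1$, one finds that $\partial r_j/\partial y_i=\partial x_j/\partial y_i-\partial x_{j+1}/\partial y_i$ equals $\mathrm{Id}$ precisely when $j=i$, equals $-\mathrm{Id}$ precisely when $j\notin I_2$, $j<i$ and $\{j+1,\dots,i-1\}\subset I_2$ (that is, when $j=j_\ast$), and equals $0$ for every other $j\in\mathbb{K}_0$ — which is exactly statement~1). For statement~2), since $X=\sum_{j\in\mathbb{K}_0}|r_j|$ and $\nabla_{y_i}|r_j|=\frac{r_j}{|r_j|}\,\partial r_j/\partial y_i$, only $j=i$ and $j=j_\ast$ contribute, so
\begin{equation*}
	\nabla_{y_i}X=\frac{r_i}{|r_i|}-\frac{r_{j_\ast}}{|r_{j_\ast}|}=-\Big(\frac{x_{j_\ast}-x_{j_\ast+1}}{|x_{j_\ast}-x_{j_\ast+1}|}-\frac{x_i-x_{i+1}}{|x_i-x_{i+1}|}\Big)=-E_{j_\ast,i}.
\end{equation*}

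The substantive point is Step~1 — the $D_{(\cdot)}$-bookkeeping that excludes $i\in I_2$ and forces $i>\min(\mathbb{K}_0\setminus I_2)$; once the closed form of $x_j$ in Step~2 is in hand, statements 1) and 2) are immediate differentiations, and the boundary cases $j_\ast=0$ and $i=k$ (where $x_0$, resp.\ $x_{k+1}$, is a frozen parameter) are absorbed automatically by the $m_j=j$, resp.\ $m_j=k+1$, branches of the formula. I do not anticipate any real obstacle beyond keeping the ranges of indices straight.
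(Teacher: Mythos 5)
Your proof is correct and takes essentially the same route as the paper: change to the $y$-coordinates, verify the combinatorics of $D_{I_2}$ that make $L(\mathbb{K}_0\setminus I_2,i)$ well defined (your Step~1 reproduces the argument of Remark~\ref{stepmuproprk2}), and read off both assertions by differentiation; the paper writes the closed form directly for $r_j=x_j-x_{j+1}$ while you go through $x_j$ and take differences, a cosmetic variation. One small slip worth fixing in Step~2: the iteration ``For $j\in I_2$ the relation $y_j=x_j-x_{j+1}$'' is only valid for $j\in\mathbb{K}\cap I_2$, since when $0\in I_2$ one has $y_0=x_0$, not $x_0-x_1$, so the claimed closed form for $x_0$ is actually false in that case. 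This happens to be harmless: by your Step~1, $i>\min(\mathbb{K}_0\setminus I_2)$ for every $i\in I$, hence $\{0,\dots,i-1\}\not\subset I_2$ and your dependence criterion still returns the correct $\partial x_0/\partial y_i=0$, but the derivation should either restrict $j$ to $\mathbb{K}$ and treat $x_0=y_0$ as a frozen parameter, or (as the paper does) place $j=0$ in the $(\mathbb{K}\setminus I_2)\cup\{0\}$ branch of the $r_j$ formula so that no false identity is asserted along the way.
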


\begin{proof}
	For each $i\in I$, we first note that $L(\mathbb{K}_0\setminus I_2,i)$ is well defined for the same reason explained in Remark \ref{stepmuproprk2}. To show 1), one checks that
	\begin{equation*}
		x_j-x_{j+1}=\begin{cases}
			y_j,&j\in\mathbb{K}\cap I_2,\\
			y_j-\sum_{\tau=j+1}^{j^*}y_\tau,\quad&j\in(\mathbb{K}\setminus I_2)\cup\{0\},
		\end{cases}
	\end{equation*}
	where
	\begin{equation*}
		j^*=N((\mathbb{K}\setminus I_2)\cup\{k+1\},j+1).
	\end{equation*}
	The case of $j\in\mathbb{K}\cap I_2$ is obvious because $I\subset\mathbb{K}\setminus I_2$. If $j\in\mathbb{K}_0\setminus(\mathbb{K}\cap I_2)$, $j\neq i$ and $j\neq L(\mathbb{K}_0\setminus I_2,i)$, since $L(\mathbb{K}_0\setminus I_2,i)=L(\mathbb{K}_0\setminus(\mathbb{K}\cap I_2),i)$ always holds, we have either $j<j+1\leq\cdots\leq j^*<i$ or $i<j<j+1\leq\cdots\leq j^*$, and therefore $x_j-x_{j+1}=y_j-\sum_{\tau=j+1}^{j^*}y_\tau$ is also independent of $y_i$.
	
	To show 2), by the conclusion of 1) and the fact that $L(\mathbb{K}_0\setminus I_2,i)^*=i$, we derive
	\begin{equation}\label{nablaX}
		\begin{split}
			\nabla_{y_i}X&=\nabla_{y_i}|x_i-x_{i+1}|+\nabla_{y_i}|x_{L(\mathbb{K}_0\setminus I_2,i)}-x_{L(\mathbb{K}_0\setminus I_2,i)+1}|\\
			&=\nabla_{y_i}\left|\mbox{$y_i-\sum_{\tau=i+1}^{i^*}y_\tau$}\right|+\nabla_{y_i}\left|y_{L(\mathbb{K}_0\setminus I_2,i)}-\mbox{$\sum_{\tau=L(\mathbb{K}_0\setminus I_2,i)+1}^{i}y_\tau$}\right|\\
			&=\frac{\mbox{$y_i-\sum_{\tau=i+1}^{i^*}y_\tau$}}{\left|\mbox{$y_i-\sum_{\tau=i+1}^{i^*}y_\tau$}\right|}-\frac{y_{L(\mathbb{K}_0\setminus I_2,i)}-\mbox{$\sum_{\tau=L(\mathbb{K}_0\setminus I_2,i)+1}^{i}y_\tau$}}{\left|y_{L(\mathbb{K}_0\setminus I_2,i)}-\mbox{$\sum_{\tau=L(\mathbb{K}_0\setminus I_2,i)+1}^{i}y_\tau$}\right|}\\
			&=\frac{x_i-x_{i+1}}{|x_i-x_{i+1}|}-\frac{x_{L(\mathbb{K}_0\setminus I_2,i)}-x_{L(\mathbb{K}_0\setminus I_2,i)+1}}{|x_{L(\mathbb{K}_0\setminus I_2,i)}-x_{L(\mathbb{K}_0\setminus I_2,i)+1}|}\\
			&=-E_{L(\mathbb{K}_0\setminus I_2,i),i}.
		\end{split}
	\end{equation}
\end{proof}

\begin{lemma}\label{lmintonce}
	Given $\dot{s}\in\{0,\cdots,k-k_0-1\}$, $\dot{\alpha}_i\in\mathbb{N}_0^n$ for $i\in\mathbb{K}$, $\dot{d}_i\in\mathbb{N}_0$ for $i\in\mathbb{K}_0$, and two sequences of indices
	\begin{equation}\label{intonceseq}
		I_{0,1}^*\subset\cdots\subset I_{\dot{s},1}^*\subset\mathbb{K},\quad I_{0,2}^*\subset\cdots\subset I_{\dot{s},2}^*\subset\mathbb{K}_0,
	\end{equation}
	with $I_i^*\neq\emptyset$ for $i=0,\cdots,\dot{s}$ where $I_i^*$ is defined in the way of \eqref{defI_I^*} so that $F_1,\cdots,F_{\dot{s}+1}$ can be defined in the way of \eqref{defF_i} by Remark \ref{stepmuproprk2}, and we assume
	\begin{equation}\label{seqalpha}
		\begin{cases}
			I_{F_1}^1\supset\cdots\supset I_{F_{\dot{s}+1}}^1,\,I_{F_1}^2\supset\cdots\supset I_{F_{\dot{s}+1}}^2,\\			
			|\dot{\alpha}_i|\leq\mbox{$\frac{n+1}{2}$}-2m,&i\in\mathbb{K}\setminus I_{F_{\dot{s}+1}}^1,\\			
			0\leq\dot{d}_i\leq\max\{0,l_i+2-2m\},&i\in\mathbb{K}_0\setminus I_{F_{\dot{s}+1}}^2,\\
			|\dot{\alpha}_i|<\mbox{$\frac{n+1}{2}-2m$},&i\in I_{F_{\dot{s}+1}}^1,\\
			0\leq\dot{d}_i<\max\{0,l_i+2-2m\},&i\in I_{F_{\dot{s}+1}}^2,
		\end{cases}
	\end{equation}
	where $I_{F_i}^1$ and $I_{F_i}^2$ are defined in the way of \eqref{defI_F12} which is also equivalent to \eqref{defI_F12'}. Consider the expression
	\begin{equation}\label{dotJ}
		\begin{split}
			\lambda^{-\dot{J}}\int_{\mathbb{R}^{nk}}&e^{\mathrm{i}\lambda X}\prod_{i\in\mathbb{K}}V^{(\dot{\alpha}_i)}(x_i)\prod_{i\in\mathbb{K}_0}|r_i|^{-(n-2-l_i+\dot{d}_i)}\prod_{i=1}^{\dot{s}+1}\|F_i\|^{-{\dot{p}_i}}\\
			&\times\dot{g}(\lambda,r_0,\cdots,r_k,F_1,\cdots,F_{\dot{s}+1})\dot{\psi}(X/T)\d x_1\cdots \d x_k,
		\end{split}
	\end{equation}
	where $\dot{J}=|\dot{\alpha}_1|+\cdots+|\dot{\alpha}_k|+\dot{d}_0+\cdots+\dot{d}_k$, $\dot{g}(\lambda,r_0,\cdots,r_k,F_1,\cdots,F_{\dot{s}+1})$ is smooth and supported away from the origin in every variable satisfying estimates of the same type to \eqref{Pvariables}, $\dot{\psi}\in C^\infty(\mathbb{R})$ is bounded with $\dot{\psi}'\in C_0^\infty(\mathbb{R})$, and $\dot{p}_i\in\mathbb{N}_0$ with
	\begin{equation}\label{sumdotp}
		\dot{p}_i+\cdots+\dot{p}_{\dot{s}+1}\leq\sum_{j\in I_{F_i}^1}|\dot{\alpha}_j|+\sum_{j\in I_{F_i}^2}2\dot{d}_j,\quad i=1,\cdots,\dot{s}+1.
	\end{equation}
	Then integral \eqref{dotJ} is a finite linear combination of the form
	\begin{equation}\label{dotJ+1}
		\begin{split}
			\lambda^{-\dot{J}-1}\int_{\mathbb{R}^{nk}}&e^{\mathrm{i}\lambda X}\prod_{i\in\mathbb{K}}V^{(\ddot{\alpha}_i)}(x_i)\prod_{i\in\mathbb{K}_0}|r_i|^{-(n-2-l_i+\ddot{d}_i)}\prod_{i=1}^{\dot{s}+1}\|F_i\|^{-{\ddot{p}_i}}\\
			&\times \ddot{g}(\lambda,r_0,\cdots,r_k,F_1,\cdots,F_{\dot{s}+1})\ddot{\psi}(X/T)\d x_1\cdots \d x_k,
		\end{split}
	\end{equation}
	where $\ddot{g}$ and $\ddot{\psi}$ inherit the properties of $\dot{g}$ and $\dot{\psi}$ respectively, and furthermore, there either exists $i_0\in I_{F_{\dot{s}+1}}^1$ such that
	\begin{equation}\label{alt1}
		\begin{cases}
			|\ddot{\alpha}_{i_0}|=|\dot{\alpha}_{i_0}|+1,\\
			\ddot{\alpha}_i=\dot{\alpha}_i,\quad&i\in\mathbb{K}\setminus\{i_0\},\\
			\ddot{d}_i=\dot{d}_i,&i\in\mathbb{K}_0,
		\end{cases}
	\end{equation}
	or exists $i_0\in I_{F_{\dot{s}+1}}^2$ such that
	\begin{equation}\label{alt2}
		\begin{cases}
			\ddot{d}_{i_0}=\dot{d}_{i_0}+1,\\
			\ddot{\alpha}_i=\dot{\alpha}_i,\quad&i\in\mathbb{K},\\
			\ddot{d}_i=\dot{d}_i,&i\in\mathbb{K}_0\setminus\{i_0\},
		\end{cases}
	\end{equation}
	while in both cases we always have
	\begin{equation}\label{ddotp}
		\ddot{p}_i+\cdots+\ddot{p}_{\dot{s}+1}\leq\sum_{j\in I_{F_i}^1}|\ddot{\alpha}_j|+\sum_{j\in I_{F_i}^2}2\ddot{d}_j,\quad i=1,\cdots,\dot{s}+1.
	\end{equation}
\end{lemma}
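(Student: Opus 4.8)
The plan is to integrate \eqref{dotJ} by parts exactly once, in the coordinate directions dual to the cluster $F_{\dot{s}+1}$, so as to produce the extra factor $\lambda^{-1}$ while keeping control of every singularity. First I would change variables $x\mapsto y$ as in Lemma \ref{lmchangeofvariables} with $I_1=I_{\dot{s},1}^*$ and $I_2=I_{\dot{s},2}^*$, so that $I=I_{\dot{s}}^*\neq\emptyset$ and, by part 2) of that lemma, $\nabla_{y_{j_2}}X=-E_{L(\mathbb{K}_0\setminus I_{\dot{s},2}^*,\,j_2),\,j_2}$ for each $j_2\in I_{\dot{s}}^*$; by \eqref{defF_i} the vectors $E_{j_1,j_2}$ with $j_1=L(\mathbb{K}_0\setminus I_{\dot{s},2}^*,j_2)$ are exactly the entries of $F_{\dot{s}+1}$. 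I would then introduce the first order operator
\begin{equation*}
	L=\frac{1}{\mathrm{i}\lambda\,\|F_{\dot{s}+1}\|^{2}}\sum_{E_{j_1,j_2}\in F_{\dot{s}+1}}E_{j_1,j_2}\cdot\nabla_{y_{j_2}},\qquad\text{which satisfies}\qquad Le^{\mathrm{i}\lambda X}=-e^{\mathrm{i}\lambda X}
\end{equation*}
because $\sum_{E_{j_1,j_2}\in F_{\dot{s}+1}}|E_{j_1,j_2}|^{2}=\|F_{\dot{s}+1}\|^{2}$. Writing the remaining integrand of \eqref{dotJ} as $h$ and integrating by parts in each $y_{j_2}$ -- there are no boundary terms, since the decay \eqref{derest} of $V^{(\dot{\alpha}_i)}$ and of the point factors $|r_i|^{-(n-2-l_i+\dot{d}_i)}$ forces the integrand to vanish at infinity, while local integrability near the singular sets holds since $n>4m$ -- the integral \eqref{dotJ} becomes a constant times
\begin{equation*}
	\lambda^{-\dot{J}-1}\sum_{E_{j_1,j_2}\in F_{\dot{s}+1}}\int_{\mathbb{R}^{nk}}e^{\mathrm{i}\lambda X}\,\nabla_{y_{j_2}}\!\cdot\!\bigl(\|F_{\dot{s}+1}\|^{-2}E_{j_1,j_2}\,h\bigr)\,\mathrm{d}x_1\cdots\mathrm{d}x_k.
\end{equation*}
Using the whole cluster $F_{\dot{s}+1}$, rather than a single $E_{j_1,j_2}$, is precisely what makes this identity exact and leaves only the sum of squares $\|F_{\dot{s}+1}\|^{2}$ in the denominator.

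Next I would expand each divergence by the product and quotient rules and locate the newly created singularities using part 1) of Lemma \ref{lmchangeofvariables}: one has $\partial_{y_{j_2}}(x_i-x_{i+1})\neq0$ only for $i\in\{j_1,j_2\}$, and a short telescoping computation gives $\partial_{y_{j_2}}x_i\neq0$ only for $j_1<i\le j_2$. Combined with \eqref{defF_i} and \eqref{defI_F12'}, this yields: whenever $\nabla_{y_{j_2}}$ falls on a factor $V^{(\dot{\alpha}_i)}(x_i)$ the index obeys $i\in I_{F_{\dot{s}+1}}^{1}$, hence $|\dot{\alpha}_i|<\tfrac{n+1}{2}-2m$ by \eqref{seqalpha}, so the multi-index may be raised by one unit -- this is alternative \eqref{alt1}; whenever $\nabla_{y_{j_2}}$ creates a fresh point factor $|r_i|^{-1}$ -- be it from $\nabla_{y_{j_2}}\!\cdot E_{j_1,j_2}$, from $\nabla_{y_{j_2}}|r_i|^{-(\cdots)}$, from $\nabla_{y_{j_2}}\|F_\ell\|^{-\dot{p}_\ell}$ through $\partial_{y_{j_2}}E'$ with $E'\in F_\ell$, from $\nabla_{y_{j_2}}\dot{g}$, or from $\nabla_{y_{j_2}}\dot{\psi}(X/T)$ after writing $\tfrac1T\dot{\psi}'(X/T)=\tfrac{1}{|r_{j_1}|}\cdot\tfrac{|r_{j_1}|}{X}\cdot\eta(X/T)$ with $\eta(u)=u\dot{\psi}'(u)$ -- the affected index lies in $\{j_1,j_2\}\subset I_{F_{\dot{s}+1}}^{2}$, hence $\dot{d}_i<\max\{0,l_i+2-2m\}$ by \eqref{seqalpha}, so $\dot{d}_i$ may be raised by one -- this is alternative \eqref{alt2}. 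All remaining contributions stay within the declared symbol classes and are absorbed into $\ddot{g}$ or $\ddot{\psi}$; in particular the bounded factor $|E_{j_1,j_2}|^{2}/\|F_{\dot{s}+1}\|^{2}$ and the homogeneous degree zero factor $|r_{j_1}|/X$ become part of $\ddot{g}$, and $\eta$ becomes the new $\ddot{\psi}$.

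Finally I would carry out the bookkeeping for the powers of the $\|F_i\|$'s needed for \eqref{ddotp}. Each resulting term picks up at most an extra $\|F_{\dot{s}+1}\|^{-2}$ from the operator (as in $\|F_{\dot{s}+1}\|^{-2}\nabla_{y_{j_2}}\!\cdot E_{j_1,j_2}$ and $E_{j_1,j_2}\cdot\nabla_{y_{j_2}}\|F_{\dot{s}+1}\|^{-2}$), or only $\|F_{\dot{s}+1}\|^{-1}$ together with one more differentiated factor when the stray $E_{j_1,j_2}$ meets a derivative of $h$, or even no new power of $\|F_{\dot{s}+1}\|$ at all in the $\dot{\psi}$-term; and whenever $\nabla_{y_{j_2}}$ hits some $\|F_\ell\|^{-\dot{p}_\ell}$ with $\ell\le\dot{s}$ that power rises by exactly one, since $|E'|\le\|F_\ell\|$ for $E'\in F_\ell$. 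In case \eqref{alt1} the right side of \eqref{ddotp} increases by one for every $i$, and so does the left side, because $I_{F_i}^{1}\supset I_{F_{\dot{s}+1}}^{1}$ makes the incremented multi-index be counted; in case \eqref{alt2} the right side increases by two while the left side increases by at most two, using $I_{F_i}^{2}\supset I_{F_{\dot{s}+1}}^{2}$. In either case \eqref{ddotp} then follows from the hypothesis \eqref{sumdotp}. The hard part is precisely this balancing: it is tight, and it succeeds only because Lemma \ref{lmchangeofvariables} confines every differentiated factor to the active index sets $I_{F_{\dot{s}+1}}^{1}$ and $I_{F_{\dot{s}+1}}^{2}$ -- so that the bumped $\alpha$ or $d$ is genuinely counted on the right of \eqref{ddotp} -- and because building the operator from the full cluster $F_{\dot{s}+1}$ contributes no more than the slack already present in \eqref{sumdotp}; a coarser integration-by-parts, e.g.\ dividing by a single $|E_{j_1,j_2}|^{2}$, would destroy this equality.
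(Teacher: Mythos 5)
Your proof follows essentially the same route as the paper's: the same change of variables via Lemma \ref{lmchangeofvariables}, the same integration-by-parts operator built from the whole cluster $F_{\dot{s}+1}$ (your sum $\sum_{E_{j_1,j_2}\in F_{\dot{s}+1}} E_{j_1,j_2}\cdot\nabla_{y_{j_2}}$ is the paper's $\nabla_{y_{I_{\dot{s}}^*}}X\cdot\nabla_{y_{I_{\dot{s}}^*}}$ up to sign), and the same tally of which indices get incremented and how the $\|F_i\|$-powers change. Two small remarks: your explicit rewriting $\tfrac1T\dot\psi'(X/T)=\tfrac1{|r_{j_1}|}\cdot\tfrac{|r_{j_1}|}{X}\cdot\eta(X/T)$ is a welcome clarification of what the paper folds into its type-V term, but your justification that boundary terms vanish "since $n>4m$" is not the right one — the lemma already assumes $\dot g$ is supported away from the origin, and that assumption (not integrability) is what kills boundary contributions near the singular sets.
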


\begin{proof}
	Set $y_0=x_0$, $y_{k+1}=x_{k+1}$, and the change of variables
	\begin{equation*}
		y_i=\begin{cases}
			x_i-x_{i+1},\quad&i\in\mathbb{K}\cap I_{\dot{s},2}^*\\
			x_i,&i\in\mathbb{K}\setminus I_{\dot{s},2}^*,
		\end{cases}
	\end{equation*}
	we have
	\begin{equation*}
		r_i=x_i-x_{i+1}=\begin{cases}
			y_i,&i\in\mathbb{K}\cap I_{\dot{s},2}^*,\\
			y_i-\sum_{\tau=i+1}^{i^*}y_\tau,\quad&i\in(\mathbb{K}\setminus I_{\dot{s},2}^*)\cup\{0\},
		\end{cases}
	\end{equation*}
	where $i^*=N((\mathbb{K}\setminus I_{\dot{s},2}^*)\cup\{k+1\},i+1)$, and	
	\begin{equation*}
		x_i=\sum_{\tau=i}^{(i-1)^*}y_\tau,\quad i\in\mathbb{K},
	\end{equation*}
	so such change of variables and its inverse only leave with universal constants. We denote $\tilde{V}=\prod_{i\in\mathbb{K}}V^{(\dot{\alpha}_i)}(x_i)$, $\tilde{r}=\prod_{i\in\mathbb{K}_0}|r_i|^{-(n-2-l_i+\dot{d}_i)}$, $\tilde{g}=\dot{g}(\lambda,r_0,\cdots,r_k,F_1,\cdots,F_{\dot{s}+1})$ and $\tilde{\psi}=\dot{\psi}(X/T)$ for convention.

	In the sequel, we will frequently use Lemma \ref{lmchangeofvariables} with $I_1=I_{\dot{s},1}^*$, $I_2=I_{\dot{s},2}^*$ and so $I=I_{\dot{s}}^*\neq\emptyset$. Let $\nabla_{y_{I_{\dot{s}}^*}}=(\nabla_{y_{i_1}},\cdots,\nabla_{y_{i_\nu}})$ where $i_1,\cdots,i_\nu\in I_{\dot{s}}^*$ be increasing. The second conclusion of Lemma \ref{lmchangeofvariables} shows that
	\begin{equation}\label{nablaX*}
		\nabla_{y_{I_{\dot{s}}^*}}X=-\left(E_{L(\mathbb{K}_0\setminus I_{\dot{s},2}^*,i_1),i_1},\cdots,E_{L(\mathbb{K}_0\setminus I_{\dot{s},2}^*,i_\nu),i_\nu}\right),
	\end{equation}
	and therefore $|\nabla_{y_{I_{\dot{s}}^*}}X|=\|F_{\dot{s}+1}\|$. Note that
	\begin{equation*}
		e^{\mathrm{i}\lambda X}=\mathrm{i}^{-1}\lambda^{-1}|\nabla_{y_{I_{\dot{s}}^*}}X|^{-2}\nabla_{y_{I_{\dot{s}}^*}}X\cdot\nabla_{y_{I_{\dot{s}}^*}}e^{\mathrm{i}\lambda X}=\mathrm{i}^{-1}\lambda^{-1}\|F_{\dot{s}+1}\|^{-2}\nabla_{y_{I_{\dot{s}}^*}}X\cdot\nabla_{y_{I_{\dot{s}}^*}}e^{\mathrm{i}\lambda X},
	\end{equation*}
	integration by parts in the $y$-coordinates gives
	\begin{equation}\label{intonce}
		-\mathrm{i}^{-1}\lambda^{-\dot{J}-1}\int_{\mathbb{R}^{nk}}e^{\mathrm{i}\lambda X}\nabla_{y_{I_{\dot{s}}^*}}\cdot\left(\tilde{g}\tilde{\psi}\tilde{V}\tilde{r}\|F_{\dot{s}+1}\|^{-(\dot{p}_{\dot{s}+1}+2)}\prod_{i=1}^{\dot{s}}\|F_i\|^{-{\dot{p}_i}}\nabla_{y_{I_{\dot{s}}^*}}X\right)\d y_1\cdots \d y_k,
	\end{equation}
	and there are five types of integrals derived from \eqref{intonce}.
	
	The first type is
	\begin{equation*}
		\mathrm{I}=\lambda^{-\dot{J}-1}\int_{\mathbb{R}^{nk}}e^{\mathrm{i}\lambda X}\tilde{g}\tilde{\psi}\tilde{r}\|F_{\dot{s}+1}\|^{-(\dot{p}_{\dot{s}+1}+1)}\prod_{i=1}^{\dot{s}}\|F_i\|^{-{\dot{p}_i}}\left(\nabla_{y_{I_{\dot{s}}^*}}\tilde{V}\cdot\frac{\nabla_{y_{I_{\dot{s}}^*}}X}{\|F_{\dot{s}+1}\|}\right)\d y_1\cdots \d y_k.
	\end{equation*}
	Note that
	\begin{equation}\label{nablaV}
		\nabla_{y_{I_{\dot{s}}^*}}\tilde{V}=\nabla_{y_{I_{\dot{s}}^*}}\mbox{$\left(\prod_{i\in\mathbb{K}}V^{(\dot{\alpha}_i)}(\sum_{\tau=i}^{(i-1)^*}y_\tau)\right)$},
	\end{equation}
	if $i\in\mathbb{K}\setminus I_{F_{\dot{s}+1}}^1$, it must follows that $\{i,\cdots,(i-1)^*\}\cap I_{\dot{s}}^*=\emptyset$, otherwise there exists $j\in\{i,\cdots,(i-1)^*\}\cap I_{\dot{s}}^*\subset\{i,\cdots,N((\mathbb{K}\setminus I_{\dot{s},2}^*)\cup\{k+1\},i)\}\cap(\mathbb{K}\setminus I_{\dot{s},2}^*)$, and then $j=N((\mathbb{K}\setminus I_{\dot{s},2}^*)\cup\{k+1\},i)=N(\mathbb{K}\setminus I_{\dot{s},2}^*,i)$ holds, which implies $L(\mathbb{K}\setminus I_{\dot{s},2}^*,j)<i\leq j$ and the contradiction $i\in I_{F_{\dot{s}+1}}^1$. So the gradient in \eqref{nablaV} only falls on $V^{(\dot{\alpha}_{i_0})}(\sum_{\tau=i_0}^{(i_0-1)^*}y_\tau)$ where $i_0\in I_{F_{\dot{s}+1}}^1$. On the other hand, \eqref{nablaX*} indicates that each one dimensional component of $\nabla_{y_{I_{\dot{s}}^*}} X/\|F_{\dot{s}+1}\|$ is a component of $E_{j_1,j_2}/\|F_{\dot{s}+1}\|$ for some $E_{j_1,j_2}\in F_{\dot{s}+1}$. We thus conclude that $\mathrm{I}$ is a finite linear combination of
	\begin{equation}\label{Icom}
		\begin{split}
			&\lambda^{-\dot{J}-1}\int_{\mathbb{R}^{nk}}e^{\mathrm{i}\lambda X}\tilde{r}\|F_{\dot{s}+1}\|^{-(\dot{p}_{\dot{s}+1}+1)}\prod_{i<\dot{s}+1}\|F_i\|^{-{\dot{p}_i}}\\
			&\quad\quad\quad\quad\times V^{(\ddot{\alpha}_{i_0})}(\mbox{$\sum_{\tau=i_0}^{(i_0-1)^*}y_\tau$})\prod_{i\in\mathbb{K}\setminus\{i_0\}}V^{(\dot{\alpha}_i)}(\mbox{$\sum_{\tau=i}^{(i-1)^*}y_\tau$})\tilde{g}\tilde{\psi}Q \d y_1\cdots \d y_k\\
			=&C\lambda^{-\dot{J}-1}\int_{\mathbb{R}^{nk}}e^{\mathrm{i}\lambda X}V^{(\ddot{\alpha}_{i_0})}(x_{i_0})\prod_{i\in\mathbb{K}\setminus\{i_0\}}V^{(\dot{\alpha}_i)}(x_i)\prod_{i\in\mathbb{K}_0}|r_i|^{-(n-2-l_i+\dot{d}_i)}\\
			&\quad\quad\quad\quad\times\|F_{\dot{s}+1}\|^{-(\dot{p}_{\dot{s}+1}+1)}\prod_{i<\dot{s}+1}\|F_i\|^{-{\dot{p}_i}}\tilde{g}\tilde{\psi}Q \d x_1\cdots \d x_k,
		\end{split}
	\end{equation}
	where $i_0\in I_{F_{\dot{s}+1}}^1$, $|\ddot{\alpha}_{i_0}|=|\dot{\alpha}_{i_0}|+1$, and $Q$ is a monomial in $E_{j_1,j_2}/\|F_{\dot{s}+1}\|$ for some $E_{j_1,j_2}\in F_{\dot{s}+1}$.
	
	The second type of integrals derived from \eqref{intonce} is
	\begin{equation*}
		\mathrm{II}=\lambda^{-\dot{J}-1}\int_{\mathbb{R}^{nk}}e^{\mathrm{i}\lambda X}\tilde{g}\tilde{\psi}\tilde{V}\|F_{\dot{s}+1}\|^{-(\dot{p}_{\dot{s}+1}+1)}\prod_{i<\dot{s}+1}\|F_i\|^{-{\dot{p}_i}}\left(\nabla_{y_{I_{\dot{s}}^*}}\tilde{r}\cdot\frac{\nabla_{y_{I_{\dot{s}}^*}}X}{\|F_{\dot{s}+1}\|}\right)\d y_1\cdots \d y_k.
	\end{equation*}
	If $i\in I_{\dot{s}}^*$ and $i_0\in\mathbb{K}_0$, Lemma \ref{lmchangeofvariables} indicates that $y_i$ is independent of $r_{i_0}$ unless $i_0=i$ or $i_0=L(\mathbb{K}_0\setminus I_{\dot{s},2}^*,i)$, where in either case we have $i_0\in I_{F_{\dot{s}+1}}^2$ by definition, and of course $i_0\in(\mathbb{K}\setminus I_{\dot{s},2}^*)\cup\{0\}$. One checks in a way similar to \eqref{nablaX} that
	\begin{equation*}\label{nablar_j}
		\nabla_{y_i}|r_{i_0}|^{-1}=\begin{cases}
			-|r_{i_0}|^{-2}\frac{r_{i_0}}{|r_{i_0}|},\quad&i_0=i,\\
			|r_{i_0}|^{-2}\frac{r_{i_0}}{|r_{i_0}|},&i_0=L(\mathbb{K}_0\setminus I_{\dot{s},2}^*,i),
		\end{cases}
	\end{equation*}
	so we conclude that $\mathrm{II}$ is a finite linear combination of
	\begin{equation}\label{IIcom}
		\begin{split}
			&\lambda^{-\dot{J}-1}\int_{\mathbb{R}^{nk}}e^{\mathrm{i}\lambda X}\tilde{V}\|F_{\dot{s}+1}\|^{-(\dot{p}_{\dot{s}+1}+1)}\prod_{i<\dot{s}+1}\|F_i\|^{-{\dot{p}_i}}\\
			&\quad\quad\quad\times|r_{i_0}|^{-(n-2-l_{i_0}+\dot{d}_{i_0}+1)}\prod_{i\in\mathbb{K}_0\setminus\{i_0\}}|r_i|^{-n-2-l_i+\dot{d}_i}\tilde{g}\tilde{\psi}Q \d y_1\cdots \d y_k\\
			=&C\lambda^{-\dot{J}-1}\int_{\mathbb{R}^{nk}}e^{\mathrm{i}\lambda X}|r_{i_0}|^{-(n-2-l_{i_0}+\dot{d}_{i_0}+1)}\prod_{i\in\mathbb{K}}V^{(\dot{\alpha}_i)}(x_i)\prod_{i\in\mathbb{K}_0\setminus\{i_0\}}|r_i|^{-n-2-l_i+\dot{d}_i}\\
			&\quad\quad\quad\times\|F_{\dot{s}+1}\|^{-(\dot{p}_{\dot{s}+1}+1)}\prod_{i<\dot{s}+1}\|F_i\|^{-{\dot{p}_i}}\tilde{g}\tilde{\psi}Q \d x_1\cdots \d x_k,
		\end{split}
	\end{equation}
	where $i_0\in I_{F_{\dot{s}+1}}^2$ and $Q$ is a polynomial in $(r_{i_0}/|r_{i_0}|,E_{j_1,j_2}/\|F_{\dot{s}+1}\|)$ for some $E_{j_1,j_2}\in F_{\dot{s}+1}$.
	
	The third type of integrals derived from \eqref{intonce} is
	\begin{equation*}
		\mathrm{III}=\lambda^{-\dot{J}-1}\int_{\mathbb{R}^{nk}}e^{\mathrm{i}\lambda X}\tilde{g}\tilde{\psi}\tilde{V}\tilde{r}\nabla_{y_{I_{\dot{s}}^*}}\left(\|F_{\dot{s}+1}\|^{-(\dot{p}_{\dot{s}+1}+2)}\prod_{i<\dot{s}+1}\|F_i\|^{-{\dot{p}_i}}\right)\cdot\nabla_{y_{I_{\dot{s}}^*}}X\d y_1\cdots \d y_k.
	\end{equation*}
	If $i\in I_{\dot{s}}^*$ and $j_1,j_2\in\mathbb{K}_0$, by Lemma \ref{lmchangeofvariables}, $E_{j_1,j_2}$ is independent of $y_i$ unless
	\begin{equation*}\label{j_1j_2}
		\{j_1,j_2\}\cap\{L(\mathbb{K}_0\setminus I_{\dot{s},2}^*,i),i\}\neq\emptyset,
	\end{equation*}
	so then for $i\in I_{\dot{s}}^*$ and $1\leq j_0\leq\dot{s}+1$, we have
	\begin{equation*}
		\nabla_{y_i}\|F_{j_0}\|^{-1}=-\mbox{$\frac12$}\|F_{j_0}\|^{-3}\sum_{\substack{E_{j_1,j_2}\in F_{j_0}\\\{j_1,j_2\}\cap\{L(\mathbb{K}_0\setminus I_{\dot{s},2}^*,i),i\}\neq\emptyset}}\nabla_{y_i}\left|E_{j_1,j_2}\right|^2,
	\end{equation*}
	and there are at most three non-trivial terms in the summation since $L(\mathbb{K}_0\setminus I_{\dot{s},2}^*,i)<i$:
	\begin{itemize}
		\item If $j_1=L(\mathbb{K}_0\setminus I_{\dot{s},2}^*,i)$ and $j_2\neq i$, then $y_i$ is independent of $r_{j_2}$ by Lemma \ref{lmchangeofvariables}, and similar to \eqref{nablaX}, we have
		\begin{equation*}
			\begin{split}
				\nabla_{y_i}\left|E_{j_1,j_2}\right|^2=&\nabla_{y_i}\left|\frac{y_{L(\mathbb{K}_0\setminus I_{\dot{s},2}^*,i)}-\mbox{$\sum_{\tau=L(\mathbb{K}_0\setminus I_{\dot{s},2}^*,i)+1}^{i}y_\tau$}}{|y_{L(\mathbb{K}_0\setminus I_{\dot{s},2}^*,i)}-\mbox{$\sum_{\tau=L(\mathbb{K}_0\setminus I_{\dot{s},2}^*,i)+1}^{i}y_\tau$}|}-\frac{r_{j_2}}{|r_{j_2}|}\right|^2\\
				=&-2\left(E_{L(\mathbb{K}_0\setminus I_{\dot{s},2}^*,i),j_2}\cdot r_{L(\mathbb{K}_0\setminus I_{\dot{s},2}^*,i)}\right)\frac{r_{L(\mathbb{K}_0\setminus I_{\dot{s},2}^*,i)}}{|r_{L(\mathbb{K}_0\setminus I_{\dot{s},2}^*,i)}|^3}-2\frac{E_{L(\mathbb{K}_0\setminus I_{\dot{s},2}^*,i),j_2}}{|r_{L(\mathbb{K}_0\setminus I_{\dot{s},2}^*,i)}|}.
			\end{split}
		\end{equation*}
		
		\item If $j_1=L(\mathbb{K}_0\setminus I_{\dot{s},2}^*,i)$ and $j_2=i$, then
		\begin{equation*}
			\begin{split}
				\nabla_{y_i}\left|E_{j_1,j_2}\right|^2=&\nabla_{y_i}\left|\frac{y_{L(\mathbb{K}_0\setminus I_{\dot{s},2}^*,i)}-\mbox{$\sum_{\tau=L(\mathbb{K}_0\setminus I_{\dot{s},2}^*,i)+1}^{i}y_\tau$}}{|y_{L(\mathbb{K}_0\setminus I_{\dot{s},2}^*,i)}-\mbox{$\sum_{\tau=L(\mathbb{K}_0\setminus I_{\dot{s},2}^*,i)+1}^{i}y_\tau$}|}-\frac{\mbox{$y_i-\sum_{\tau=i+1}^{i^*}y_\tau$}}{|\mbox{$y_i-\sum_{\tau=i+1}^{i^*}y_\tau$}|}\right|^2\\
				=&-2\left(E_{L(\mathbb{K}_0\setminus I_{\dot{s},2}^*,i),i}\cdot r_{L(\mathbb{K}_0\setminus I_{\dot{s},2}^*,i)}\right)\frac{r_{L(\mathbb{K}_0\setminus I_{\dot{s},2}^*,i)}}{|r_{L(\mathbb{K}_0\setminus I_{\dot{s},2}^*,i)}|^3}+2\left(E_{L(\mathbb{K}_0\setminus I_{\dot{s},2}^*,i),i}\cdot r_i\right)\frac{r_i}{|r_i|^3}\\
				&-2\left(|r_{L(\mathbb{K}_0\setminus I_{\dot{s},2}^*,i)}|^{-1}+|r_i|^{-1}\right)E_{L(\mathbb{K}_0\setminus I_{\dot{s},2}^*,i),i}.
			\end{split}
		\end{equation*}
		
		\item If $j_2=L(\mathbb{K}_0\setminus I_{\dot{s},2}^*,i)$, then $y_i$ is independent of $r_{j_1}$, and calculation in the first case above implies
		\begin{equation*}
			\nabla_{y_i}\left|E_{j_1,j_2}\right|^2=2\left(E_{j_1,L(\mathbb{K}_0\setminus I_{\dot{s},2}^*,i)}\cdot r_{L(\mathbb{K}_0\setminus I_{\dot{s},2}^*,i)}\right)\frac{r_{L(\mathbb{K}_0\setminus I_{\dot{s},2}^*,i)}}{|r_{L(\mathbb{K}_0\setminus I_{\dot{s},2}^*,i)}|^3}+2\frac{E_{j_1,L(\mathbb{K}_0\setminus I_{\dot{s},2}^*,i)}}{|r_{L(\mathbb{K}_0\setminus I_{\dot{s},2}^*,i)}|}.
		\end{equation*}
	\end{itemize}
	Since $I_{F_{\dot{s}+1}}^2=\cup_{i\in I_{\dot{s}}^*}\{L(\mathbb{K}_0\setminus I_{\dot{s},2}^*,i),i\}$, the above argument implies that when $1\leq j_0\leq\dot{s}+1$, each one dimensional component of $\nabla_{y_{I_{\dot{s}}^*}}\|F_{j_0}\|^{-1}$ is a sum of the form $\|F_{j_0}\|^{-2}|r_{i_0}|^{-1}Q$ where $i_0\in I_{F_{\dot{s}+1}}^2$ and $Q$ is a polynomial in $(r_{i_0}/|r_{i_0}|,E_{j_1,j_2}/\|F_{j_0}\|)$ for some $E_{j_1,j_2}\in F_{j_0}$. Hence if $\dot{s}\geq1$, $\mathrm{III}$ is then a finite linear combination of
	\begin{equation}\label{nabFs+1}
		\begin{split}
			&\lambda^{-\dot{J}-1}\int_{\mathbb{R}^{nk}}e^{\mathrm{i}\lambda X}\tilde{V}\tilde{r}|r_{i_0}|^{-1}\|F_{\dot{s}+1}\|^{-(\dot{p}_{\dot{s}+1}+2)}\prod_{i<\dot{s}+1}\|F_i\|^{-{\dot{p}_i}}\tilde{g}\tilde{\psi}Q_1 \d y_1\cdots \d y_k\\
			=&C\lambda^{-\dot{J}-1}\int_{\mathbb{R}^{nk}}e^{\mathrm{i}\lambda X}|r_{i_0}|^{-(n-2-l_{i_0}+\dot{d}_{i_0}+1)}\prod_{i\in\mathbb{K}}V^{(\dot{\alpha}_i)}(x_i)\prod_{i\in\mathbb{K}_0\setminus\{i_0\}}|r_i|^{-(n-2-l_i+\dot{d}_i)}\\
			&\quad\quad\quad\quad\times\|F_{\dot{s}+1}\|^{-(\dot{p}_{\dot{s}+1}+2)}\prod_{i<\dot{s}+1}\|F_i\|^{-{\dot{p}_i}}\tilde{g}\tilde{\psi}Q_1 \d x_1\cdots \d x_k,
		\end{split}
	\end{equation}
	and	
	\begin{equation}\label{IIIcom}
		\begin{split}
			\lambda^{-\dot{J}-1}\int_{\mathbb{R}^{nk}}&e^{\mathrm{i}\lambda X}\tilde{V}\tilde{r}|r_{i_0}|^{-1}\|F_{\dot{s}+1}\|^{-(\dot{p}_{\dot{s}+1}+1)}\|F_{j_0}\|^{-(\dot{p}_{j_0}+1)}\prod_{\substack{i<\dot{s}+1\\i\neq j_0}}\|F_i\|^{-{\dot{p}_i}}\tilde{g}\tilde{\psi}Q_2 \d y_1\cdots \d y_k\\
			=C\lambda^{-\dot{J}-1}\int_{\mathbb{R}^{nk}}&e^{\mathrm{i}\lambda X}|r_{i_0}|^{-(n-2-l_{i_0}+\dot{d}_{i_0}+1)}\prod_{i\in\mathbb{K}}V^{(\dot{\alpha}_i)}(x_i)\prod_{i\in\mathbb{K}_0\setminus\{i_0\}}|r_i|^{-(n-2-l_i+\dot{d}_i)}\\
			&\times\|F_{\dot{s}+1}\|^{-(\dot{p}_{\dot{s}+1}+1)}\|F_{j_0}\|^{-(\dot{p}_{j_0}+1)}\prod_{\substack{i<\dot{s}+1\\i\neq j_0}}\|F_i\|^{-{\dot{p}_i}}\tilde{g}\tilde{\psi}Q_2 \d x_1\cdots \d x_k,
		\end{split}
	\end{equation}
	where $i_0\in I_{F_{\dot{s}+1}}^2$, $Q_1$ is a polynomial in $(r_{i_0}/|r_{i_0}|,E_{j_1,j_2}/\|F_{\dot{s}+1}\|)$ for some $E_{j_1,j_2}\in F_{\dot{s}+1}$, $1\leq j_0\leq\dot{s}$, and $Q_2$ is a polynomial in $(r_{i_0}/|r_{i_0}|,E_{j_1,j_2}/\|F_{\dot{s}+1}\|,E_{j_1',j_2'}/\|F_{j_0}\|)$ for some $E_{j_1,j_2}\in F_{\dot{s}+1}$ and some $E_{j_1',j_2'}\in F_{j_0}$. If $\dot{s}=0$, we then only have terms like \eqref{nabFs+1} in the combination.
	
	The fourth type of integrals derived from \eqref{intonce} is
	\begin{equation*}
		\mathrm{IV}=\lambda^{-\dot{J}-1}\int_{\mathbb{R}^{nk}}e^{\mathrm{i}\lambda X}\tilde{g}\tilde{\psi}(\Delta_{y_{I_{\dot{s}}^*}}X)\tilde{V}\tilde{r}\|F_{\dot{s}+1}\|^{-(\dot{p}_{\dot{s}+1}+2)}\prod_{i=1}^{\dot{s}}\|F_i\|^{-{\dot{p}_i}}\d y_1\cdots \d y_k.
	\end{equation*}
	If $i\in I_{\dot{s}}^*$, it follows by \eqref{nablaX*} and \eqref{nablaX} that
	\begin{equation*}
		\begin{split}
			\Delta_{y_i}X=&-\nabla_{y_i}\cdot E_{L(\mathbb{K}_0\setminus I_{\dot{s},2}^*),i}=\nabla_{y_i}\cdot\left(\frac{\mbox{$y_i-\sum_{\tau=i+1}^{i^*}y_\tau$}}{|\mbox{$y_i-\sum_{\tau=i+1}^{i^*}y_\tau$}|}-\frac{y_{L(\mathbb{K}_0\setminus I_{\dot{s},2}^*,i)}-\mbox{$\sum_{\tau=L(\mathbb{K}_0\setminus I_{\dot{s},2}^*,i)+1}^{i}y_\tau$}}{|y_{L(\mathbb{K}_0\setminus I_{\dot{s},2}^*,i)}-\mbox{$\sum_{\tau=L(\mathbb{K}_0\setminus I_{\dot{s},2}^*,i)+1}^{i}y_\tau$}|}\right)\\
			=&(n-1)\left(|r_{L(\mathbb{K}_0\setminus I_{\dot{s},2}^*,i)}|^{-1}+|r_i|^{-1}\right),
		\end{split}
	\end{equation*}
	so we conclude by $I_{F_{\dot{s}+1}}^2=\cup_{i\in I_{\dot{s}}^*}\{L(\mathbb{K}_0\setminus I_{\dot{s},2}^*,i),i\}$ again that $\mathrm{IV}$ is a finite linear combination of
	\begin{equation}\label{IVcom}
		\begin{split}
			\lambda^{-\dot{J}-1}\int_{\mathbb{R}^{nk}}&e^{\mathrm{i}\lambda X}|r_{i_0}|^{-1}\tilde{V}\tilde{r}\|F_{\dot{s}+1}\|^{-(\dot{p}_{\dot{s}+1}+2)}\prod_{i<\dot{s}+1}\|F_i\|^{-{\dot{p}_i}}\tilde{g}\tilde{\psi} \d y_1\cdots \d y_k\\
			=C\lambda^{-\dot{J}-1}\int_{\mathbb{R}^{nk}}&e^{\mathrm{i}\lambda X}|r_{i_0}|^{-(n-2-l_{i_0}+\dot{d}_{i_0}+1)}\prod_{i\in\mathbb{K}}V^{(\dot{\alpha}_i)}(x_i)\prod_{i\in\mathbb{K}_0\setminus\{i_0\}}|r_i|^{-(n-2-l_i+\dot{d}_i)}\\
			&\times\|F_{\dot{s}+1}\|^{-(\dot{p}_{\dot{s}+1}+2)}\prod_{i<\dot{s}+1}\|F_i\|^{-{\dot{p}_i}}\tilde{g}\tilde{\psi} \d x_1\cdots \d x_k.
		\end{split}
	\end{equation}
	where $i_0\in I_{F_{\dot{s}+1}}^2$.
	
	The fifth type of integrals derived from \eqref{intonce} is
	\begin{equation*}
		\mathrm{V}=\lambda^{-\dot{J}-1}\int_{\mathbb{R}^{nk}}e^{\mathrm{i}\lambda X}\tilde{V}\tilde{r}\|F_{\dot{s}+1}\|^{-(\dot{p}_{\dot{s}+1}+1)}\prod_{i<\dot{s}+1}\|F_i\|^{-{\dot{p}_i}}\left(\nabla_{y_{I_{\dot{s}}^*}}(\tilde{g}\tilde{\psi})\cdot\frac{\nabla_{y_{I_{\dot{s}}^*}}X}{\|F_{\dot{s}+1}\|}\right)\d y_1\cdots \d y_k.
	\end{equation*}
	By the properties presumed for $\dot{g}$ and $\dot{\psi}$, with a mixture of the arguments for $\mathrm{II}$ and $\mathrm{III}$, we conclude without more details that $\mathrm{V}$ is a finite linear combination of the form \eqref{dotJ+1} with properties that come along.
	
	Now \eqref{ddotp} when $i=\dot{s}+1$ and the alternatives \eqref{alt1}, \eqref{alt2} are obviously seen from \eqref{Icom}, \eqref{IIcom}, \eqref{nabFs+1}, \eqref{IIIcom} ,\eqref{IVcom}, the discussion for $\mathrm{V}$, and \eqref{sumdotp}, while the other cases in \eqref{ddotp} is a result of \eqref{sumdotp} and the inclusions in \eqref{seqalpha}.
\end{proof}

Now we are ready to show Proposition \ref{stepmuprop}.

\begin{proof}[Proof of Proposition \ref{stepmuprop}]
	The proof will be an induction on $\mu$ by repeatedly using Lemma \ref{lmintonce}. To satisfy the condition that $\dot{g}$ being supported away from the origin in each of $r_1,\cdots,r_k,F_1,\cdots,F_{\dot{s}+1}$, we in principle should first introduce cutoffs in $|r_i|/\epsilon$ and $\|F_i\|/\epsilon$ whose derivatives have the same type of bounds in the discussion for $\mathrm{II}$ and $\mathrm{III}$ in the proof of Lemma \ref{lmintonce}, and let $\epsilon\rightarrow0$ when such application comes to an end. The convergence is actually a result of the fact that \eqref{stepmu} is absolutely convergent for $\mu=k-k_0$, which is a consequence of Proposition \ref{intmainest} studied later. To avoid distraction, we will pretend that such condition on $\dot{g}$ has been satisfied in the following application.
	
	We first prove the statement for $\mu=1$. Note that Lemma \ref{lmintonce} is first applicable to $U^l$, that is $\dot{s}=0$, $|\dot{\alpha}_i|\equiv0$, $\dot{d}_i\equiv0$, $\dot{p}_1=0$, $\dot{g}=f$, $\dot{\psi}=\phi$, $\dot{P}\equiv1$, $I_{0,1}^*=\emptyset$,
	\begin{equation}\label{I_02intitial}
		I_{0,2}^*=\{i\in\mathbb{K}_0;l_i+2-2m\leq0\}=\{i\in\mathbb{K}_0;0=\max\{0,l_i+2-2m\}\},
	\end{equation}
	and therefore $I_0^*=\{i\in\mathbb{K};l_i+2-2m>0\}\neq\emptyset$ by the initial assumption \eqref{assumptionl}. Inductively, after finitely many times of applying Lemma \ref{lmintonce} whenever applicable to terms in the combination, we must end up with the fact that $U^l$ is a finite linear combination of integrals in the form of
	\begin{equation}\label{J'}
		\begin{split}
			\lambda^{-\check{J}}\int_{\mathbb{R}^{nk}}&e^{\mathrm{i}\lambda X}\prod_{i\in\mathbb{K}}V^{(\check{\alpha}_i)}(x_i)\prod_{i\in\mathbb{K}_0}|r_i|^{-(n-2-l_i+\check{d}_i)}\|F_1\|^{-{\check{p}_1}}\\
			&\times\check{g}(\lambda,r_0,\cdots,r_k,F_1)\check{\psi}(X/T)\d x_1\cdots \d x_k,
		\end{split}
	\end{equation}
	satisfying
	\begin{equation*}
		\begin{cases}
			\check{J}=|\check{\alpha}_1|+\cdots+|\check{\alpha}_k|+\check{d}_0+\cdots+\check{d}_k,\\
			|\check{\alpha}_i|=0,\quad&i\in\mathbb{K}\setminus I_{F_1}^1,\\
			\check{d}_i=0,&i\in\mathbb{K}_0\setminus I_{F_1}^2,\\
			\check{p}_1\leq\sum_{j\in I_{F_1}^1}|\check{\alpha}_j|+\sum_{j\in I_{F_1}^2}2\check{d}_j,
		\end{cases}
	\end{equation*}
	and the subordinate parameters are subject to three cases:
	\begin{itemize}
		\item[\emph{Case 1}] \textit{There exists $i_0\in I_{F_1}^1$ such that $|\check{\alpha}_{i_0}|=\frac{n+1}{2}-2m$ and}
		\begin{equation*}
			\begin{cases}
				|\check{\alpha}_i|<\frac{n+1}{2}-2m,\quad&i\in I_{F_1}^1\setminus\{i_0\},\\
				0\leq \check{d}_i<\max\{0,l_i+2-2m\},&i\in I_{F_1}^2,\\
				L_{k_0}\leq\frac{n+1}{2}-2m\leq\check{J}\leq L_{k_0}+\cdots+L_{k-1}.
			\end{cases}
		\end{equation*}
		In this case, we define $I_{1,1}^*=\{i_0\}$, $I_{1,2}^*=I_{0,2}^*$. It is obvious that
		\begin{equation}\label{I11}
			I_{1,1}^*=\{i\in\mathbb{K};|\check{\alpha}_i|=\mbox{$\frac{n+2}{2}-2m$}\},
		\end{equation}
		It also follows by \eqref{defI_F12'} that
		\begin{equation}\label{IF12inclusion}
			I_{F_1}^2\subset\mathbb{K}_0\setminus I_{0,2}^*=\{i\in\mathbb{K}_0;l_i+2-2m>0\},
		\end{equation}
		which combining with \eqref{I_02intitial} implies
		\begin{equation}\label{I12}
			I_{1,2}^*=\{i\in\mathbb{K}_0;\check{d}_i=\max\{0,l_i+2-2m\}\}.
		\end{equation}
		
		\item[\emph{Case 2}] \textit{There exists $i_0\in I_{F_1}^2$ such that $0<\check{d}_{i_0}=\max\{0,l_{i_0}+2-2m\}$ and}
		\begin{equation*}
			\begin{cases}
				|\check{\alpha}_i|<\frac{n+1}{2}-2m,\quad&i\in I_{F_1}^1,\\
				0\leq\check{d}_i<\max\{0,l_i+2-2m\},&i\in I_{F_1}^2\setminus\{i_0\},\\
				L_{k_0}\leq\max\{0,l_{i_0}+2-2m\}\leq\check{J}\leq L_{k_0}+\cdots+L_{k-1}.
			\end{cases}
		\end{equation*}
		In this case, we define $I_{1,1}^*=I_{0,1}^*=\emptyset$ and $I_{1,2}^*=I_{0,2}^*\cup\{i_0\}$. Now \eqref{IF12inclusion} implies $i_0\notin I_{0,2}^*$ and \eqref{I12} holds in a similar way, while \eqref{I11} holds trivially.
		
		\item[\emph{Case 3}] \textit{It follows that}
		\begin{equation*}
			\begin{cases}
				|\check{\alpha}_i|<\frac{n+1}{2}-2m,\quad&i\in I_{F_1}^1,\\
				0\leq\check{d}_i<\max\{0,l_i+2-2m\},&i\in I_{F_1}^2,\\
				\check{J}=L_{k_0}+\cdots+L_{k-1}.
			\end{cases}
		\end{equation*}
		In this case, we just need to define $I_{1,1}^*=I_{0,1}^*$ and $I_{1,2}^*=I_{0,2}^*$.
	\end{itemize}
	In all three cases, \eqref{I_i^*I_F^} holds for $i=1$ by definition. Now all constraints are checked for $\mu=1$ in the statement if we equip integral \eqref{J'} with the sequences $I_{0,1}^*\subset I_{1,1}^*$ and $I_{0,2}^*\subset I_{1,2}^*$ defined respectively in the above three cases.
	
	By induction, we now suppose $k-k_0\geq2$ and validity of the statement for some $\mu\in\{1,\cdots,k-k_0-1\}$. First note that for every integral \eqref{stepmu} equipped with sequences \eqref{musequences} in the combination, if $J=L_{k_0}+\cdots+L_{k-1}$, it is then trivial that all subordinate constraints remain true with $\mu$ replaced by $\mu+1$.
	
	So discussion is only needed when $s=\mu$ and $J<L_{k_0}+\cdots+L_{k-1}$, while we recall that $\tau^{(\mu)}$ in \eqref{stepmuconstraint2} must exist. Now Lemma \ref{lmintonce} is first applicable to \eqref{stepmu}, that is, $\dot{s}=\mu$,
	\begin{equation*}
		\begin{cases}
			\dot{J}=J,\,\dot{p}_{\mu+1}=0,\\
			\dot{\alpha}_i=\alpha_i,\quad&i\in\mathbb{K},\\
			\dot{d}_i=d_i,&i\in\mathbb{K}_0\\
			\dot{p}_i=p_i,&i=0,\cdots,\mu,\\
		\end{cases}
	\end{equation*}
	sequences \eqref{intonceseq} given by \eqref{musequences}, $\dot{g}=g$, $\dot{\psi}=\psi$ and $\dot{P}=P$. This is because $I_0^*,\cdots,I_\mu^*$ are nonempty by Remark \ref{stepmuproprk1}, the inclusions in the first line of \eqref{seqalpha} are guaranteed by Lemma \ref{lmIF}, we also know by Lemma \ref{lmI_FI^*} that
	\begin{equation*}
		\begin{cases}
			I_{F_{\mu+1}}^1\subset\mathbb{K}\setminus I_{\mu,1}^*=\{i\in\mathbb{K};\,|\alpha_i|<\frac{n+1}{2}-2m\},\\
			I_{F_{\mu+1}}^2\subset\mathbb{K}_0\setminus I_{\mu,2}^*=\{i\in\mathbb{K}_0;\,0\leq d_i<\max\{0,l_i+2-2m\}\},
		\end{cases}
	\end{equation*}
	which checks the last two lines of \eqref{seqalpha}, and \eqref{sumdotp} trivially holds. Inductively using Lemma \ref{lmintonce}, we end up with the fact that integral \eqref{stepmu} is a finite linear combination of integrals in the form of
	\begin{equation}\label{tildeJ}
		\begin{split}
			\lambda^{-\tilde{J}}\int_{\mathbb{R}^{nk}}&e^{\mathrm{i}\lambda X}\prod_{i\in\mathbb{K}}V^{(\tilde{\alpha}_i)}(x_i)\prod_{i\in\mathbb{K}_0}|r_i|^{-(n-2-l_i+\tilde{d}_i)}\prod_{i=1}^{\mu+1}\|F_i\|^{-{\tilde{p}_i}}\\
			&\times\tilde{g}(\lambda,r_0,\cdots,r_k,F_1,\cdots,F_{\mu+1})\tilde{\psi}(X/T)\d x_1\cdots \d x_k,
		\end{split}
	\end{equation}
	satisfying
	\begin{equation}\label{tildeJconstraint}
		\begin{cases}
			\tilde{J}=|\tilde{\alpha}_1|+\cdots+|\tilde{\alpha}_k|+\tilde{d}_0+\cdots+\tilde{d}_k,\\
			\tilde{\alpha}_i=\alpha_i,\quad&i\in\mathbb{K}\setminus I_{F_{\mu+1}}^1,\\
			\tilde{d}_i=d_i,&i\in\mathbb{K}_0\setminus I_{F_{\mu+1}}^2,\\
			\tilde{p}_i+\cdots+\tilde{p}_{s+1}\leq\sum_{j\in I_{F_i}^1}|\tilde{\alpha}_j|+\sum_{j\in I_{F_i}^2}2\tilde{d}_j,&i=1,\cdots,\mu+1,
		\end{cases}
	\end{equation}
	and the subordinate parameters are subject to three cases:
	\begin{itemize}
		\item[\emph{Case 1'}] \textit{There exists $i_0\in I_{F_{\mu+1}}^1$ such that $|\tilde{\alpha}_{i_0}|=\frac{n+1}{2}-2m$ and}
		\begin{equation*}\label{dotJcase1}
			\begin{cases}
				|\tilde{\alpha}_i|<\frac{n+1}{2}-2m,\quad&i\in I_{F_{\mu+1}}^1\setminus\{i_0\},\\
				0\leq\tilde{d}_i<\max\{0,l_i+2-2m\},&i\in I_{F_{\mu+1}}^2,\\
				L_{k_0}+\cdots+L_{k_0+\mu}\leq\tilde{J}\leq L_{k_0}+\cdots+L_{k-1}.
			\end{cases}
		\end{equation*}
		In this case, we define $I_{\mu+1,1}^*=I_{\mu,1}^*\cup\{i_0\}$ and $I_{\mu+1,2}^*=I_{\mu,2}^*$.
		
		\item[\emph{Case 2'}] \textit{There exists $i_0\in I_{F_{\mu+1}}^2$ such that $0<\tilde{d}_{i_0}=\max\{0,l_{i_0}+2-2m\}$}
		\begin{equation*}\label{dotJcase2}
			\begin{cases}
				|\tilde{\alpha}_i|<\frac{n+1}{2}-2m,\quad&i\in I_{F_{\mu+1}}^1,\\
				0\leq\tilde{d}_i<\max\{0,l_i+2-2m\},&i\in I_{F_{\mu+1}}^2\setminus\{i_0\},\\
				L_{k_0}+\cdots+L_{k_0+\mu}\leq\tilde{J}\leq L_{k_0}+\cdots+L_{k-1}.
			\end{cases}
		\end{equation*}
		In this case, we define $I_{\mu+1,1}^*=I_{\mu,1}^*$ and $I_{\mu+1,2}^*=I_{\mu,2}^*\cup\{i_0\}$.
		
		\item[\emph{Case 3'}] \textit{It follows that}
		\begin{equation*}
			\begin{cases}
				|\tilde{\alpha}_i|<\frac{n+1}{2}-2m,\quad&i\in I_{F_{\mu+1}}^1,\\
				0\leq\tilde{d}_i<\max\{0,l_i+2-2m\},&i\in I_{F_{\mu+1}}^2,\\
				\tilde{J}=L_{k_0}+\cdots+L_{k-1}.
			\end{cases}
		\end{equation*}
		In this case, we define $I_{\mu+1,1}^*=I_{\mu,1}^*$ and $I_{\mu+1,2}^*=I_{\mu,2}^*$.
	\end{itemize}
	The remaining discussion is parallel to those from Case 1 to Case 3 previously. That $i_0\notin I_{\mu,1}^*$ in Case 1' and that $i_0\notin I_{\mu,2}^*$ in Case 2' are implied by Lemma \ref{lmI_FI^*}. We also conclude in all three cases that \eqref{I_i^*I_F^} holds for $i=\mu+1$ by definition, and that
	\begin{equation*}
		\begin{cases}
			I_{\mu+1,1}^*=\{i\in\mathbb{K};\,|\tilde{\alpha}_i|=\mbox{$\frac{n+1}{2}-2m$}\},\\
			I_{\mu+1,2}^*=\{i\in\mathbb{K}_0;\,\tilde{d}_i=\max\{0,l_i+2-2m\}\},
		\end{cases}
	\end{equation*}
	which follows by \eqref{tildeJconstraint}, the definitions of $I_{\mu,1}^*$ and $I_{\mu,2}^*$, and the consequence of Lemma \ref{lmI_FI^*} saying
	\begin{equation*}
		I_{\mu,1}^*\subset\mathbb{K}\setminus I_{F_{\mu+1}}^1,\quad I_{\mu,2}^*\subset\mathbb{K}_0\setminus I_{F_{\mu+1}}^2.
	\end{equation*}
	Now integral \eqref{tildeJ} satisfies all constraints in the statement for $\mu+1$ if we equip it with the sequences $I_{0,1}^*\subset\cdots\subset I_{\mu,1}^*\subset I_{\mu+1,1}^*$ and $I_{0,2}^*\subset\cdots\subset I_{\mu,2}^*\subset I_{\mu+1,2}^*$ defined respectively above, and the proof is complete.
\end{proof}

\subsection{Reduction of line singularities}\label{section5.3}\

We now turn to a reduction of the family of $\|F_i\|$ in the integral \eqref{stepmu}, and the main result in this part is Proposition \ref{lmlinereduction}. The key property of $F_1,\cdots,F_s$ we observe is that the subscripts of their elements have a nested pattern described by the following.

\begin{definition}\label{defadmissible}
	We call subset $A$ of $\mathbb{N}_0\times\mathbb{N}_0$ is admissible, if
	\begin{itemize}
		\item [1)] $(i,j)\in A$ implies $i<j$.
		
		\item [2)] $(i_1,j_1),(i_2,j_2)\in A$ and $i_1<j_2<j_1$ imply $i_1\leq i_2$.
	\end{itemize}
\end{definition}

If integral \eqref{stepmu} in Proposition \ref{stepmuprop} is given with $s\geq2$, then $\tau^{(1)},\cdots,\tau^{(s-1)}$ defined in \eqref{stepmuconstraint2} must exist, and it has been shown in \eqref{Ii-1-Ii} that
\begin{equation*}
	I_{i-1}^*\setminus I_i^*=\{j^{(i)}\}=\{N(I_{i-1}^*,\tau^{(i)})\},\quad i=1,\cdots,s-1,
\end{equation*}
We define for $j\in\mathbb{K}$ that
\begin{equation}\label{defh_j}
	\begin{split}
		\iota_j&=\begin{cases}
			j-1,&j\in\mathbb{K}\setminus I_0^*,\\
			L(\mathbb{K}_0\setminus I_{i-1,2}^*,j),\quad&j\in I_{i-1}^*\setminus I_i^*,\,i=1,\cdots,s-1,\\
			L(\mathbb{K}_0\setminus I_{s-1,2}^*,j),&j\in I_{s-1}^*,
		\end{cases}\\
		&=\begin{cases}
			j-1,&j\in\mathbb{K}\setminus I_0^*,\\
			L(\mathbb{K}_0\setminus I_{i-1,2}^*,j^{(i)}),\quad&j=j^{(i)},\,i=1,\cdots,s-1,\\
			L(\mathbb{K}_0\setminus I_{s-1,2}^*,j),&j\in I_{s-1}^*.
		\end{cases}
	\end{split}
\end{equation}
It immediately follows that $\iota_j<j$ and
\begin{equation}\label{EFiota}
	\begin{split}
		E_{\iota_{j^{(i)}},j^{(i)}}\in F_i,\quad&i=1,\cdots,s-1,\\
		E_{\iota_j,j}\in F_s,\quad&j\in I_{s-1}^*.
	\end{split}
\end{equation}

We note in advance that the following property is crucial for Proposition \ref{intmainest} in the next section.
\begin{proposition}\label{lmadmissible}
	If integral \eqref{stepmu} in Proposition \ref{stepmuprop} is given with $s\geq2$, then $\{(\iota_j,j);\,j\in\mathbb{K}\}$ is admissible.	
\end{proposition}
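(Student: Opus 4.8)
The plan is to verify the two defining properties of admissibility (Definition \ref{defadmissible}) for the set $A=\{(\iota_j,j);\,j\in\mathbb{K}\}$ directly from the formula \eqref{defh_j}. Property 1), that $\iota_j<j$, has already been recorded immediately after \eqref{defh_j}, so the whole task reduces to property 2): given $j_1,j_2\in\mathbb{K}$ with $\iota_{j_1}<j_2<j_1$, show $\iota_{j_1}\le\iota_{j_2}$.

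First I would observe that $j_1$ cannot lie in the first branch of \eqref{defh_j}: there $\iota_{j_1}=j_1-1$, and no integer $j_2$ lies strictly between $j_1-1$ and $j_1$. Hence $j_1\in I_0^*$, and since $I_{i-1}^*\setminus I_i^*=\{j^{(i)}\}$ (as in \eqref{Ii-1-Ii}) yields the disjoint decomposition $I_0^*=\{j^{(1)}\}\sqcup\cdots\sqcup\{j^{(s-1)}\}\sqcup I_{s-1}^*$, the index $j_1$ falls in the second or third branch of \eqref{defh_j}. In either case one can write $\iota_{j_1}=L(\mathbb{K}_0\setminus I_{p_1,2}^*,j_1)$ for a suitable $p_1\in\{0,\dots,s-1\}$ with $j_1\in I_{p_1}^*$ — namely $p_1=i_1-1$ when $j_1=j^{(i_1)}$ and $p_1=s-1$ when $j_1\in I_{s-1}^*$; the well-definedness of this $L$ is the same observation already used in Remark \ref{stepmuproprk2}, since $j_1\in I_{p_1}^*\subset\mathbb{K}_0\setminus I_{p_1,2}^*$ forces $j_1>\min(\mathbb{K}_0\setminus I_{p_1,2}^*)$. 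By the definition of $L$ in \eqref{NL}, $\iota_{j_1}=\max\{\ell\in\mathbb{K}_0\setminus I_{p_1,2}^*;\,\ell<j_1\}$, so every integer in $\{\iota_{j_1}+1,\dots,j_1-1\}$ lies in $I_{p_1,2}^*$; in particular the hypothesis $\iota_{j_1}<j_2<j_1$ gives $j_2\in I_{p_1,2}^*$.

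Next I would run the same trichotomy on $j_2$. If $j_2$ is in the first branch of \eqref{defh_j}, then $\iota_{j_2}=j_2-1\ge\iota_{j_1}$ because $j_2>\iota_{j_1}$ and both are integers, and we are done. Otherwise $j_2\in I_0^*$ and, exactly as for $j_1$, $\iota_{j_2}=L(\mathbb{K}_0\setminus I_{q_2,2}^*,j_2)$ for a suitable $q_2\in\{0,\dots,s-1\}$ with $j_2\in I_{q_2}^*$. The key structural input is the inclusion $I_{q_2}^*\subset\mathbb{K}_0\setminus I_{q_2,2}^*$ (i.e.\ $I_p^*\cap I_{p,2}^*=\emptyset$, the same fact invoked in the proof of Remark \ref{stepmuproprk2}), which gives $j_2\notin I_{q_2,2}^*$; combined with $j_2\in I_{p_1,2}^*$ and the monotonicity $I_{0,2}^*\subset\cdots\subset I_{s-1,2}^*$ from \eqref{musequences}, this forces $p_1>q_2$, hence $\mathbb{K}_0\setminus I_{p_1,2}^*\subset\mathbb{K}_0\setminus I_{q_2,2}^*$. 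Since $\iota_{j_1}\in\mathbb{K}_0\setminus I_{p_1,2}^*$ and $\iota_{j_1}<j_2$, we get $\iota_{j_1}\in\{\ell\in\mathbb{K}_0\setminus I_{q_2,2}^*;\,\ell<j_2\}$, whose maximum is $\iota_{j_2}$, so $\iota_{j_2}\ge\iota_{j_1}$, which finishes property 2).

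The argument is short and presents no serious analytic obstacle; the only care required is bookkeeping — correctly matching each of $j_1$ and $j_2$ to a branch of \eqref{defh_j} and to the set $\mathbb{K}_0\setminus I_{p,2}^*$ over which the corresponding $L$ is taken, and then chaining together the three structural facts $I_{i-1}^*\setminus I_i^*=\{j^{(i)}\}$, $I_p^*\cap I_{p,2}^*=\emptyset$, and the monotonicity of $\{I_{i,2}^*\}$. I would also double-check at the outset that every $L$ appearing in \eqref{defh_j} and in the proof is well defined, which is exactly the point already made in Remark \ref{stepmuproprk2}.
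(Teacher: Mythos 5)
Your proof is correct and takes essentially the same approach as the paper's. Both arguments show $j_1\in I_0^*$, express $\iota_{j_1}=L(\mathbb{K}_0\setminus I_{p_1,2}^*,j_1)$ for the appropriate level $p_1$, deduce $j_2\in I_{p_1,2}^*$ from $\iota_{j_1}<j_2<j_1$, place $j_2$ at a strictly earlier level $q_2<p_1$ using $I_{q_2}^*\cap I_{q_2,2}^*=\emptyset$ together with the monotonicity of $\{I_{i,2}^*\}$, and then compare the two $L$-values; your last step, noting that $\iota_{j_1}$ is a competitor for the maximum defining $\iota_{j_2}$, is a mild streamlining of the paper's explicit chain $\iota_{j'}\geq L(\mathbb{K}_0\setminus I_{i-1,2}^*,j')=L(\mathbb{K}_0\setminus I_{i-1,2}^*,j)=\iota_j$ but is not a different argument.
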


\begin{proof}
	Suppose $\iota_j<j'<j$ for some $j\in\mathbb{K}$. We first note that $j\in I_0^*$ must hold because $\iota_j\leq j-2$, and the conclusion is obviously trivial when $j'\in\mathbb{K}_0\setminus I_0^*$.
	
	If $j'\in I_0^*$ and $\iota_j=L(\mathbb{K}_0\setminus I_{i-1,2}^*,j)$ for some $i=1,\cdots,s$, we know by $\iota_j<j'<j$ that $j'\notin\mathbb{K}_0\setminus I_{i-1,2}^*$. Since $I_{i-1}^*\subset\mathbb{K}_0\setminus I_{i-1,2}^*$, then $j'\notin I_{i-1}^*$ holds and thus $j'\in I_0^*\setminus I_{i-1}^*$ follows. Therefore, there exists $i'\in\mathbb{K}$ with $1\leq i'\leq i-1$ such that $I_{i'-1}\setminus I_{i'}^*=\{j'\}$, and consequently
	\begin{equation*}
		\iota_{j'}=L(\mathbb{K}_0\setminus I_{i'-1,2}^*,j')\geq L(\mathbb{K}_0\setminus I_{i-1,2}^*,j')=L(\mathbb{K}_0\setminus I_{i-1,2}^*,j)=\iota_j,
	\end{equation*}
	where the inequality is a result of $I_{i'-1,2}^*\subset I_{i-1,2}^*$, and the second equality is a conclusion of $\iota_j<j'<j$ again.
\end{proof}

\begin{lemma}\label{lmFp}
	Suppose integral \eqref{stepmu} in Proposition \ref{stepmuprop} is given with $s\geq2$, and
	\begin{equation}\label{p<tildep}
		p_i+\cdots+p_s\leq \tilde{p}_i+\cdots+\tilde{p}_s,\quad i=1,\cdots,s,
	\end{equation}
	where $\tilde{p}_1,\cdots,\tilde{p}_s\geq0$. Then
	\begin{equation*}
		\|F_1\|^{-p_1}\cdots\|F_s\|^{-p_s}\lesssim\|F_1\|^{-\tilde{p}_1}\cdots\|F_s\|^{-\tilde{p}_s}.
	\end{equation*}
\end{lemma}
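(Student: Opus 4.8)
The plan is to reduce the claim to a short Abel‑summation (summation‑by‑parts) estimate, resting on two essentially free auxiliary facts. First I would record the monotonicity $\|F_1\|\ge\|F_2\|\ge\cdots\ge\|F_s\|$, which is exactly Lemma \ref{lmIF} and is available since the integral \eqref{stepmu} in Proposition \ref{stepmuprop} is given with $s\ge2$. Second I would note that each $\|F_i\|$ is bounded above by a constant $C_0$ depending only on $k$: every $E_{i,j}$ is the difference of two unit vectors, so $|E_{i,j}|\le2$, and $F_i$ (as defined in \eqref{defF_i}) contains at most $k$ such vectors, whence $\|F_i\|\le2\sqrt{k}$. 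As usual the asserted pointwise inequality is understood on the set where its left‑hand side is finite, so we may assume $\|F_i\|>0$ for all $i$.

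Granting these, it suffices to show $\prod_{i=1}^s\|F_i\|^{\tilde p_i-p_i}\lesssim 1$. I would set $a_i=\|F_i\|$, $b_i=\tilde p_i-p_i$, and introduce the tail sums $S_j=\sum_{i=j}^s b_i$ with the convention $S_{s+1}=0$; then the hypothesis \eqref{p<tildep} is precisely the statement that $S_j\ge0$ for every $j=1,\dots,s$. Writing $b_i=S_i-S_{i+1}$ and summing by parts, $\sum_{i=1}^s b_i\log a_i=S_1\log a_1+\sum_{i=2}^s S_i(\log a_i-\log a_{i-1})$. In the last sum each term is $\le0$, because $S_i\ge0$ while $\log a_i-\log a_{i-1}\le0$ by the monotonicity of $(a_i)$; and $S_1\log a_1\le S_1\log C_0$ is bounded above by a constant depending only on $C_0$ and on $S_1=\sum_i\tilde p_i-\sum_i p_i$. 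Exponentiating yields $\prod_{i=1}^s a_i^{b_i}\lesssim1$, which is the desired bound.

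I do not expect a genuine obstacle here — the argument is purely elementary. The only points requiring care are bookkeeping: getting the index shift in the summation by parts right (in particular using $S_{s+1}=0$ to kill the boundary term at the upper end), and invoking Lemma \ref{lmIF} with the correct orientation, namely that $\|F_i\|$ is \emph{decreasing} in $i$, which is what makes $\log a_i-\log a_{i-1}\le0$ line up with $S_i\ge0$. A minor remark could be appended on the degenerate locus $\{\|F_i\|=0\}$: by the monotonicity this forces $\|F_j\|=0$ for all $j\ge i$, the inequality is then read in the extended sense, and this set plays no role in the later integral estimates where the lemma is applied.
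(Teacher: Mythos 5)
Your proof is correct and rests on exactly the same two ingredients the paper uses: the monotonicity $\|F_1\|\geq\cdots\geq\|F_s\|$ from Lemma \ref{lmIF} and the uniform upper bound $\|F_i\|\lesssim1$. The paper proves it by inductively peeling off one factor at a time (shifting any excess exponent $p_1-\tilde p_1>0$ onto $\|F_2\|$), which is just the unrolled version of your Abel summation, so the two arguments are essentially the same.
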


\begin{proof}
	Lemma \ref{lmIF} implies that $1\gtrsim\|F_1\|\geq\cdots\geq\|F_s\|$, so
	\begin{equation*}
		\begin{split}
			\|F_1\|^{-p_1}\cdots\|F_s\|^{-p_s}=&\|F_1\|^{-\tilde{p}_1}\|F_1\|^{-(p_1-\tilde{p}_1)}\|F_2\|^{-p_2}\cdots\|F_s\|^{-p_s}\\
			\lesssim&\|F_1\|^{-\tilde{p}_1}\times\begin{cases}
				\|F_2\|^{-p_2}\cdots\|F_s\|^{-p_s},\quad&\text{if}\,p_1\leq\tilde{p}_1,\\
				\|F_2\|^{-(p_1+p_2-\tilde{p}_1)}\cdots\|F_s\|^{-p_s},&\text{if}\,p_1>\tilde{p}_1,
			\end{cases}
		\end{split}
	\end{equation*}
	and the conclusion follows by an induction on $s$ if we use \eqref{p<tildep}.
\end{proof}

\begin{lemma}\label{lmJlower}
	If integral \eqref{stepmu} in Proposition \ref{stepmuprop} is given with $s\geq2$, it follows that
	\begin{equation}\label{Jlowerbd2}
		\sum_{j\in I_{F_i}^1}|\alpha_j|+\sum_{j\in I_{F_i}^2}d_j\leq J-(L_{k_0}+\cdots+L_{k_0+i-2}),\quad i=2,\cdots,s.
	\end{equation}
\end{lemma}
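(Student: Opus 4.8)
The plan is to turn \eqref{Jlowerbd2} into a lower bound for $\sum_{j\in\mathbb{K}\setminus I_{F_i}^1}|\alpha_j|+\sum_{j\in\mathbb{K}_0\setminus I_{F_i}^2}d_j$, which equals $J-\bigl(\sum_{j\in I_{F_i}^1}|\alpha_j|+\sum_{j\in I_{F_i}^2}d_j\bigr)$ since $J=\sum_{j\in\mathbb{K}}|\alpha_j|+\sum_{j\in\mathbb{K}_0}d_j$. By Lemma \ref{lmI_FI^*} one has $I_{i-1,1}^*\subset\mathbb{K}\setminus I_{F_i}^1$ and $I_{i-1,2}^*\subset\mathbb{K}_0\setminus I_{F_i}^2$, so, all terms being nonnegative, it suffices to prove $\sum_{j\in I_{i-1,1}^*}|\alpha_j|+\sum_{j\in I_{i-1,2}^*}d_j\ge L_{k_0}+\cdots+L_{k_0+i-2}$.

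To evaluate this left-hand side I would use constraint (2) of Proposition \ref{stepmuprop} (i.e.\ \eqref{stepmuconstraint2}): since $s\ge2$ and $i\le s$, the indices $\tau^{(1)},\dots,\tau^{(i-1)}$ all exist, and each step from $(I^*_{j-1,1},I^*_{j-1,2})$ to $(I^*_{j,1},I^*_{j,2})$ adjoins $\tau^{(j)}$ either to $I^*_{\cdot,1}$ (say of \emph{type I}) or to $I^*_{\cdot,2}$ (of \emph{type II}); as exactly one set strictly grows each time and $I_{0,1}^*=\emptyset$, this identifies $I_{i-1,1}^*$ with the type-I members and $I_{i-1,2}^*\setminus I_{0,2}^*$ with the type-II members of the pairwise distinct family $\{\tau^{(1)},\dots,\tau^{(i-1)}\}$. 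On $I_{0,2}^*=\{j\in\mathbb{K}_0;\,l_j+2-2m\le0\}$ we have $d_j=0$ by \eqref{stepmuconstraint1}; on $I_{i-1,1}^*\subset I_{s,1}^*$ we have $|\alpha_j|=\tfrac{n+1}{2}-2m$, and on $I_{i-1,2}^*\subset I_{s,2}^*$ we have $d_j=\max\{0,l_j+2-2m\}$, both by \eqref{musequences}; and a type-II $\tau^{(j)}$ lies outside $I_{0,2}^*$, whence $l_{\tau^{(j)}}+2-2m>0$ and $d_{\tau^{(j)}}=l_{\tau^{(j)}}+2-2m$. Thus $\sum_{j\in I_{i-1,1}^*}|\alpha_j|+\sum_{j\in I_{i-1,2}^*}d_j=\sum_{j=1}^{i-1}w^{(j)}$, where $w^{(j)}=\tfrac{n+1}{2}-2m$ for type I and $w^{(j)}=l_{\tau^{(j)}}+2-2m$ for type II.

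Finally I would match the $w^{(j)}$'s injectively to the positive numbers among $L_{k_0},\dots,L_k$. Writing $L_p=\max\{0,l_{\sigma(p)}+2-2m\}$, which is nondecreasing in $p$ and positive exactly for $k_0\le p\le k$ by the definition of $k_0$, a type-II $\tau^{(j)}$ satisfies $w^{(j)}=\max\{0,l_{\tau^{(j)}}+2-2m\}=L_{\sigma^{-1}(\tau^{(j)})}$ with $\sigma^{-1}(\tau^{(j)})\in\{k_0,\dots,k\}$, and these indices are distinct for distinct $j$ as $\sigma$ is a bijection; a type-I $\tau^{(j)}$ satisfies $w^{(j)}=\tfrac{n+1}{2}-2m\ge L_p$ for every $p$, because $l_p\le\tfrac{n-3}{2}$. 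Since $i\le s\le\mu\le k-k_0$ gives $i-1\le k-k_0-1<k-k_0+1=\#\{k_0,\dots,k\}$, there is room to extend $j\mapsto\sigma^{-1}(\tau^{(j)})$ (defined on the type-II indices) to an injection $\rho\colon\{1,\dots,i-1\}\to\{k_0,\dots,k\}$ with $w^{(j)}\ge L_{\rho(j)}$ for all $j$. Then $\sum_{j=1}^{i-1}w^{(j)}\ge\sum_{p\in\rho(\{1,\dots,i-1\})}L_p\ge L_{k_0}+\cdots+L_{k_0+i-2}$, the last inequality because $\rho(\{1,\dots,i-1\})$ is an $(i-1)$-element subset of $\{k_0,\dots,k\}$ and $L_{k_0}\le\cdots\le L_k$; combined with the reduction of the first paragraph this yields \eqref{Jlowerbd2}.

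I do not anticipate a genuine obstacle: the statement is combinatorial bookkeeping on the index data produced by Proposition \ref{stepmuprop}. The step requiring the most care is the one in the second paragraph — reading off from \eqref{stepmuconstraint2} and the monotone sequences \eqref{musequences} that $I_{i-1,1}^*$ and $I_{i-1,2}^*\setminus I_{0,2}^*$ are precisely the type-I and type-II members of $\{\tau^{(1)},\dots,\tau^{(i-1)}\}$, together with the saturation facts $|\alpha_j|=\tfrac{n+1}{2}-2m$ on $I_{i-1,1}^*$, $d_j=\max\{0,l_j+2-2m\}$ on $I_{i-1,2}^*$, and $d_j=0$ on $I_{0,2}^*$; once these are secured, the matching argument is an elementary rearrangement inequality.
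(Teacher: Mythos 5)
Your proof is correct and follows essentially the same route as the paper's: both reduce via Lemma \ref{lmI_FI^*} and nonnegativity to bounding $\sum_{j\in I_{i-1,1}^*}|\alpha_j|+\sum_{j\in I_{i-1,2}^*\setminus I_{0,2}^*}d_j$ below by $L_{k_0}+\cdots+L_{k_0+i-2}$, and then both use that each of the $i-1$ contributing terms is either $|\alpha_j|=\tfrac{n+1}{2}-2m$ (which dominates every $L_p$) or $d_j=L_{\sigma^{-1}(j)}$ with $\sigma^{-1}(j)\in\{k_0,\dots,k\}$. The paper states this counting step tersely, whereas you spell out the injection $\rho$ explicitly; this is the same argument with more detailed bookkeeping, not a genuinely different approach.
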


\begin{proof}
	The inclusions \eqref{musequences} imply
	\begin{equation*}\label{alphaL}
		|\alpha_j|=\mbox{$\frac{n+1}{2}-2m$}\geq L_\tau,\quad j\in I_{i-1,1}^*,
	\end{equation*}
	for any $k_0\leq\tau\leq k$ because $l_\tau\leq\frac{n-3}{2}$, and also imply
	\begin{equation*}\label{d_j>0}
		d_j=L_{\sigma^{-1}(j)}>0,\quad j\in I_{i-1,2}^*\setminus I_{0,2}^*,
	\end{equation*}
	because $d_j=\max\{0,l_j+2-2m\}=l_j+2-2m>0$ must hold. Since the constraint 2) in Proposition \ref{stepmuprop} also implies $\#I_{i-1,1}^*+\#(I_{i-1,2}^*\setminus I_{0,2}^*)=i-1$, so
	\begin{equation*}
		\begin{split}
			J\geq&\left(\sum_{j\in I_{i-1,1}^*}|\alpha_j|+\sum_{j\in I_{i-1,2}^*\setminus I_{0,2}^*}d_j\right)+\left(\sum_{j\in\mathbb{K}\setminus I_{i-1,1}^*}|\alpha_j|+\sum_{j\in\mathbb{K}_0\setminus I_{i-1,2}^*}d_j\right)\\
			\geq&\left(L_{k_0}+\cdots+L_{k_0+i-2}\right)+\left(\sum_{j\in I_{F_i}^1}|\alpha_j|+\sum_{j\in I_{F_i}^2}d_j\right),
		\end{split}
	\end{equation*}
	and the last line is a consequence of Lemma \ref{lmI_FI^*}.
\end{proof}

\begin{proposition}\label{lmlinereduction}
	Suppose integral \eqref{stepmu} in Proposition \ref{stepmuprop} is given, and $\{E_{\iota_j,j}\}_{j\in\mathbb{K}}$ is defined through \eqref{defh_j}.
	\begin{itemize}
		\item [1)] If $s\geq2$ and $I_{s-1,1}^*\neq\emptyset$, which guarantees the existence of $i_0\in\{1,\cdots,s-1\}$ with $I_{i_0,1}^*\setminus I_{i_0-1,1}^*=\{\tau^{(i_0)}\}$ and $I_{i_0,2}^*=I_{i_0-1,2}^*$, then
		\begin{equation}\label{FE1}
			\|F_1\|^{-p_1}\cdots\|F_s\|^{-p_s}\lesssim|E_{\iota_{j^{(i_0)}},j^{(i_0)}}|^{-(\frac{n+1}{2}-2m)}\prod_{j\in\mathbb{K}\setminus\{j^{(i_0)}\}}|E_{\iota_j,j}|^{-(n+1-4m)}.
		\end{equation}
		If $j^{(i_0)}<k$, then $j^{(i_0)}\leq\iota_j$ holds for all $j\in\{j^{(i_0)}+1,\cdots,k\}$.
		
		\item [2)] If $s\geq2$ and $I_{s-1,1}^*=\emptyset$, then
		\begin{equation*}
			\|F_1\|^{-p_1}\cdots\|F_s\|^{-p_s}\lesssim|E_{\iota_k,k}|^{-\min\{n+1-4m,\frac{n+1}{2}-2m+d_0+d_k\}}\prod_{j\in\mathbb{K}\setminus\{k\}}|E_{\iota_j,j}|^{-(n+1-4m)},
		\end{equation*}
		which also holds if $s=1$.
	\end{itemize}
\end{proposition}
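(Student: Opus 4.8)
The plan is to deduce Proposition \ref{lmlinereduction} from three soft reductions together with one genuinely combinatorial estimate. First I would record two elementary facts: since $|E_{i,j}|\le 2$ by the triangle inequality and $n>4m$, each factor $|E_{\iota_j,j}|^{-(n+1-4m)}$ on the right-hand sides is bounded below by a positive constant, so such factors may be inserted freely; in particular the indices $j\in\mathbb{K}\setminus I_0^*$ (for which $\iota_j=j-1$) can be disregarded. Second, by Lemma \ref{lmFp} it suffices to bound $\prod_{i=1}^s\|F_i\|^{-\tilde p_i}$ for any nonnegative integers $\tilde p_i$ with $\tilde p_i+\cdots+\tilde p_s\ge p_i+\cdots+p_s$ for all $i$. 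Third — the structural input — by \eqref{EFiota} one has $E_{\iota_{j^{(i)}},j^{(i)}}\in F_i$ and $E_{\iota_j,j}\in F_s$ for $j\in I_{s-1}^*$, and by Lemma \ref{lmIF} the norms are monotone, $\|F_1\|\ge\cdots\ge\|F_s\|\gtrsim$ nothing but $\|F_i\|\lesssim 1$; hence $\|F_i\|\ge|E_{\iota_j,j}|$ for \emph{every} $j\in I_{i-1}^*$ (for $j=j^{(i')}$ with $i'\ge i$ use $\|F_i\|\ge\|F_{i'}\|$, and for $j\in I_{s-1}^*$ use $\|F_i\|\ge\|F_s\|$). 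Consequently, splitting each exponent as $\tilde p_i=\sum_{j\in I_{i-1}^*}q^{(i)}_j$ with $q^{(i)}_j\ge 0$ gives $\prod_i\|F_i\|^{-\tilde p_i}\lesssim\prod_{j\in I_0^*}|E_{\iota_j,j}|^{-Q_j}$ with $Q_j=\sum_{i:\,j\in I_{i-1}^*}q^{(i)}_j$, and (raising exponents via $|E_{\iota_j,j}|\le 2$ once more) it then suffices that each $Q_j$ not exceed the exponent prescribed for $E_{\iota_j,j}$ on the right-hand side.

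At this point the problem becomes a feasibility question for a bipartite transportation: the ``sources'' $i=1,\dots,s$ with demands $\tilde p_i$ (subject to the tail-sum freedom of Lemma \ref{lmFp}) must be routed to ``sinks'' $j\in I_0^*$ with capacities equal to the prescribed exponents, along edges $(i,j)$ that exist precisely when $j\in I_{i-1}^*$. Since the sets $I_{i-1}^*$ are nested decreasing with $I_{i-1}^*\setminus I_i^*$ a singleton $\{j^{(i)}\}$ (Lemma \ref{lmIF} and \eqref{Ii-1-Ii}), one has $\bigcup_{i'\ge i}I_{i'-1}^*=I_{i-1}^*$, so the Hall/max-flow feasibility condition collapses to the family of inequalities
\begin{equation*}
	p_i+\cdots+p_s\ \le\ \sum_{j\in I_{i-1}^*}\bigl(\text{prescribed exponent of }E_{\iota_j,j}\bigr),\qquad i=1,\dots,s.
\end{equation*}
In Part 1 the single reduced-capacity sink is $j^{(i_0)}$ (which lies in $I_{i-1}^*$ exactly for $i\le i_0$), with capacity $\frac{n+1}{2}-2m$, all other sinks having capacity $n+1-4m=2(\frac{n+1}{2}-2m)$; in Part 2 the reduced-capacity sink is $k$ when $k\in I_0^*$ (otherwise every sink has capacity $n+1-4m$ and the factor at $k$ contributes only a harmless constant), with capacity $\min\{n+1-4m,\frac{n+1}{2}-2m+d_0+d_k\}$. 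Using the cardinality relations of Remark \ref{stepmuproprk1}, the right-hand side above becomes an explicit multiple of $\frac{n+1}{2}-2m$ (plus $d_0+d_k$ in the relevant Part 2 subcase), while the left-hand side is controlled by $B_i:=\sum_{j\in I_{F_i}^1}|\alpha_j|+2\sum_{j\in I_{F_i}^2}d_j$ through constraint \eqref{stepmuconstraint4}.

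The hard part will be verifying these budget inequalities, i.e.\ that $B_i$ never exceeds the explicit right-hand side. For $i\ge 2$ the main tool is Lemma \ref{lmJlower}, which bounds $\sum_{I_{F_i}^1}|\alpha_j|+\sum_{I_{F_i}^2}d_j$ by $J-(L_{k_0}+\cdots+L_{k_0+i-2})$, combined with $J\le L_{k_0}+\cdots+L_{k-1}$ from \eqref{stepmuconstraint1} and with a careful count of the extra term $\sum_{I_{F_i}^2}d_j$, using that the elements of $I_{F_i}^2=\bigcup_{l\in I_{i-1}^*}\{L(\mathbb{K}_0\setminus I_{i-1,2}^*,l),l\}$ occur in straddling pairs about the $l\in I_{i-1}^*$, that $d_j$ attains its cap $L_{\sigma^{-1}(j)}$ only on $I_{s,2}^*$, and the ordering $L_k\ge\cdots\ge L_0$. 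For $i=1$ I would argue directly: $B_1\le\sum_{\mathbb{K}}|\alpha_j|+2\sum_{\mathbb{K}_0}d_j=J+\sum_{\mathbb{K}_0}d_j$; in Part 1 the hypothesis $I_{s-1,1}^*\ne\emptyset$ forces some $|\alpha_j|=\frac{n+1}{2}-2m$, hence $\sum_{\mathbb{K}_0}d_j=J-\sum|\alpha_j|\le J-(\tfrac{n+1}{2}-2m)$, so $B_1\le 2J-(\tfrac{n+1}{2}-2m)\le 2(L_{k_0}+\cdots+L_{k-1})-(\tfrac{n+1}{2}-2m)=(2(k-k_0)-1)(\tfrac{n+1}{2}-2m)$, exactly what is needed; in Part 2 the corresponding step requires the sharper accounting of $d_0,d_k$ — the endpoint orders can only be ``spent'' on a line singularity touching $r_0$ or $r_k$, which forces their weight onto $E_{\iota_k,k}$ and is precisely why its capacity carries the extra $d_0+d_k$, and why the half-exponent $\frac{n+1}{2}-2m$ appears at all (one unit of budget per $\alpha$-differentiation versus two per $d$-differentiation). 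This endpoint bookkeeping, and the parallel counting for $2\le i\le s$, is where the real effort lies.

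Finally, the positional assertions follow quickly from admissibility. In Part 1, if some $j>j^{(i_0)}$ had $\iota_j<j^{(i_0)}<j$, then Proposition \ref{lmadmissible} applied to $(\iota_j,j)$ and $(\iota_{j^{(i_0)}},j^{(i_0)})$ would force $\iota_j\le\iota_{j^{(i_0)}}$; combined with the value $\iota_{j^{(i_0)}}=L(\mathbb{K}_0\setminus I_{i_0-1,2}^*,j^{(i_0)})$ from \eqref{defh_j}, with $I_{i_0,2}^*=I_{i_0-1,2}^*$ in the type-1 branching, and with $j^{(i_0)}\in I_{i_0-1}^*\subseteq\mathbb{K}_0\setminus I_{i_0-1,2}^*$ (together with $\tau^{(i_0)}>\iota_{j^{(i_0)}}$), this yields a contradiction, so $\iota_j\ge j^{(i_0)}$ for all $j>j^{(i_0)}$. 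Part 2 with $s=1$ is the base case of the same scheme (there $F_1=\{E_{\iota_j,j}:j\in I_0^*\}$ and $J=\sum|\alpha_j|+\sum d_j\le L_{k_0}+\cdots+L_{k-1}$ give the lone budget inequality at once), and $s\ge 2$ with $I_{s-1,1}^*=\emptyset$ is covered by the transportation argument above with $k$ in the distinguished role.
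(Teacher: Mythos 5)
The core reduction you propose --- using Lemma \ref{lmFp} together with the monotonicity $\|F_1\|\geq\cdots\geq\|F_s\|$ from Lemma \ref{lmIF} and the memberships $E_{\iota_{j^{(i)}},j^{(i)}}\in F_i$ (for $i<s$), $E_{\iota_j,j}\in F_s$ (for $j\in I_{s-1}^*$) from \eqref{EFiota}, hence $\|F_i\|\geq|E_{\iota_j,j}|$ for every $j\in I_{i-1}^*$ --- is exactly what the paper does, and your recasting of the choice of $\tilde p_i$ as a bipartite transportation/Hall feasibility question is a correct though nonessential abstraction of the paper's direct choice $\tilde p_i=\text{cap}(j^{(i)})$ for $i<s$ and $\tilde p_s=\sum_{j\in I_{s-1}^*}\text{cap}(j)$. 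Your verification of the $i=1$ budget inequality for Part 1 is correct and matches the paper's calculation. You sketch the remaining budget inequalities (pointing to Lemma \ref{lmJlower}) without carrying them out, in particular the Part 2 subcase $\{0,k\}\cap I_{s-1,2}^*=\emptyset$ where the endpoint bookkeeping is genuinely delicate (the paper's estimates \eqref{sumcase1}--\eqref{sumcase2} and the split over whether $\{0,k\}$ meets $I_{s-1,2}^*$); this is the part you correctly flag as ``the real effort,'' but it is left undone.

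The more serious gap is the positional assertion in Part 1. You apply Proposition \ref{lmadmissible} to $(\iota_j,j)$ and $(\iota_{j^{(i_0)}},j^{(i_0)})$ to get $\iota_j\leq\iota_{j^{(i_0)}}$, and then assert a contradiction from the facts that $\iota_{j^{(i_0)}}=L(\mathbb{K}_0\setminus I_{i_0-1,2}^*,j^{(i_0)})$, that $I_{i_0,2}^*=I_{i_0-1,2}^*$, that $j^{(i_0)}\in\mathbb{K}_0\setminus I_{i_0-1,2}^*$, and that $\tau^{(i_0)}>\iota_{j^{(i_0)}}$. These facts together do not yield a contradiction: one is left with the entirely consistent chain $\iota_j\leq\iota_{j^{(i_0)}}<\tau^{(i_0)}\leq j^{(i_0)}<j$. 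In particular, although $\iota_j=L(\mathbb{K}_0\setminus I_{i_0'-1,2}^*,j)<j^{(i_0)}$ forces $\{j^{(i_0)},\ldots,j-1\}\subset I_{i_0'-1,2}^*$ for the relevant $i_0'$, this does not contradict $j^{(i_0)}\in\mathbb{K}_0\setminus I_{i_0-1,2}^*$ because $I_{i_0-1,2}^*\subsetneq I_{i_0'-1,2}^*$ in general (one first has to show $i_0'>i_0$). The paper's actual argument is substantially longer and of a different nature: after showing $i_0'>i_0$ and sharpening to $\{\tau^{(i_0)},\ldots,j-1\}\subset I_{i_0'-1,2}^*$, it exploits $\tau^{(i_0)}\in I_{i_0,1}^*\subset I_{i_0'-1,1}^*$ and the $D$-operation calculus \eqref{DiA}--\eqref{DIA} to deduce $I_{i_0'-1}^*\subset D_{\tau^{(i_0)}}D_{\{\tau^{(i_0)},\ldots,j-1\}}\mathbb{K}$, whence $j\notin I_{i_0'-1}^*$, contradicting the membership established from $\iota_j\leq j-2$. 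Admissibility (Proposition \ref{lmadmissible}) is not the right tool here and does not close this step; the $D$-operation computation is the crux you are missing.
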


\begin{proof}
	First note that \eqref{EFiota} implies
	\begin{equation}\label{Fj(i)}
		\begin{split}
			\|F_i\|\geq|E_{\iota_{j^{(i)}},j^{(i)}}|,\quad&i=1,\cdots,s-1,\\
			\|F_s\|\geq|E_{\iota_j,j}|,\quad&j\in I_{s-1}^*.
		\end{split}
	\end{equation}
	
	We first show 1). If $1\leq i<i_0$, since $I_{i_0,1}^*\setminus I_{i_0-1,1}^*=\{\tau^{(i_0)}\}$ implies $\tau^{(i_0)}\in I_{F_{i_0}}^1\subset I_{F_i}^1$ by \eqref{I_i^*I_F^} and Lemma \ref{lmIF}, we deduce
	\begin{equation*}
		\begin{split}
			p_i+\cdots+p_s=&2\left(\sum_{j\in I_{F_i}^1}|\alpha_j|+\sum_{j\in I_{F_i}^2}d_j\right)-\sum_{j\in I_{F_i}^1}|\alpha_j|\\
			\leq&\begin{cases}
				2J-|\alpha_{i_0}|,\quad&i=1,\\
				2(J-L_{k_0}-\cdots-L_{k_0+i-2})-|\alpha_{i_0}|,&1<i<i_0,
			\end{cases}\\
			\leq&2(L_{k_0+i-1}+\cdots+L_{k-1})-(\mbox{$\frac{n+1}{2}-2m$})\\
			\leq&(n+1-4m)(k-k_0-i)+(\mbox{$\frac{n+1}{2}-2m$}),
		\end{split}
	\end{equation*}
	where we have used \eqref{stepmuconstraint4}, \eqref{Jlowerbd2}, the last inequality in \eqref{stepmuconstraint1}, and the fact that $|\alpha_{i_0}|=\frac{n+1}{2}-2m$ for $\tau^{(i_0)}\in I_{i_0,1}^*\subset I_{s,1}^*$. If $i_0\leq i\leq s$, we always have by \eqref{Jlowerbd2} that
	\begin{equation}\label{trivialcontrol}
		\begin{split}
			p_i+\cdots+p_s\leq&2\left(\sum_{j\in I_{F_i}^1}|\alpha_j|+\sum_{j\in I_{F_i}^2}d_j\right)\\
			\leq&\begin{cases}
				2J,\quad&i=1,\\
				2(J-L_{k_0}-\cdots-L_{k_0+i-2}),&1<i\leq s,
			\end{cases}\\
			\leq&2(L_{k_0+i-1}+\cdots+L_{k-1})\\
			\leq&(n+1-4m)(k-k_0-i+1).
		\end{split}
	\end{equation}
	Applying Lemma \ref{lmFp} with
	\begin{equation*}
		\tilde{p}_i=\begin{cases}
			n+1-4m,\quad&i\notin\{i_0,s\},\\
			\frac{n+1}{2}-2m,&i=i_0,\\
			(n+1-4m)(k-k_0-s+1),&i=s,
		\end{cases}
	\end{equation*}
	and we get by using \eqref{Fj(i)}, $I_0^*\setminus I_{s-1}^*=\bigcup_{i=1}^{s-1}\{j^{(i)}\}$ and $\# I_{s-1}^*=k-k_0-s+1$ that
	\begin{equation}\label{Ficontrol1}
		\begin{split}
			\|F_1\|^{-p_1}\cdots\|F_s\|^{-p_s}\lesssim&\|F_{i_0}\|^{-(\frac{n+1}{2}-2m)}\left(\prod_{\substack{1\leq i\leq s-1\\i\neq i_0}}\|F_i\|^{-(n+1-4m)}\right)\|F_s\|^{-(n+1-4m)(k-k_0-s+1)}\\
			\leq&|E_{\iota_{j^{(i_0)}},j^{(i_0)}}|^{-(\frac{n+1}{2}-2m)}\prod_{\substack{1\leq i\leq s-1\\i\neq i_0}}|E_{\iota_{j^{(i)}},j^{(i)}}|^{-(n+1-4m)}\prod_{j\in I_{s-1}^*}|E_{\iota_j,j}|^{-(n+1-4m)}\\
			=&|E_{\iota_{j^{(i_0)}},j^{(i_0)}}|^{-(\frac{n+1}{2}-2m)}\prod_{j\in I_0^*\setminus\{j^{(i_0)}\}}|E_{\iota_j,j}|^{-(n+1-4m)},
		\end{split}
	\end{equation}
	which implies \eqref{FE1} for $|E_{\iota_j,j}|\lesssim1$. Now suppose $j^{(i_0)}<k$ and
	\begin{equation}\label{iotakjk}
		\iota_i<j^{(i_0)}<i,
	\end{equation}
	for some $i\in\{j^{(i_0)}+1,\cdots,k\}$, which indicates $\iota_i\leq i-2$, thus
	\begin{equation}\label{iinIi_0'-1*}
		i\in I_{i_0'-1}^*,
	\end{equation}
	and $\iota_i=L(\mathbb{K}_0\setminus I_{i_0'-1,2}^*,i)$ hold for some $i_0'\geq1$ by \eqref{defh_j}. Note that $i_0'>i_0$ must hold, because otherwise $i_0'<i_0$ for $j^{(i_0)}\neq i$, and then $j^{(i_0)}\in I_{i_0-1}^*\subset I_{i_0'-1}^*\subset\mathbb{K}_0\setminus I_{i_0'-1,2}^*$ yields the contradiction $\iota_i\geq j^{(i_0)}$ to \eqref{iotakjk}. On the other hand, we have shown at the beginning of the proof that $\tau^{(i_0)}\in I_{F_{i_0}}^1$, so there in the view of \eqref{defI_F12'} exists some $j\in I_{i_0-1}^*$ with
	\begin{equation}\label{taui_0range}
		L(\mathbb{K}_0\setminus I_{i_0-1,2}^*,j)<\tau^{(i_0)}\leq j.
	\end{equation}
	Note that $j\in I_{i_0-1}^*$ implies $N(I_{i_0-1}^*,\tau^{(i_0)})\leq j$, that is $j^{(i_0)}\leq j$. It is in fact impossible for $j^{(i_0)}<j$ to hold, otherwise, the fact by definition that $j^{(i_0)}\in I_{i_0-1}^*\subset\mathbb{K}_0\setminus I_{i_0-1,2}^*$ implies $j^{(i_0)}\leq L(\mathbb{K}_0\setminus I_{i_0-1,2}^*,j)<j$, and the contradiction $\tau^{(i_0)}\leq N(I_{i_0-1}^*,\tau^{(i_0)})=j^{(i_0)}\leq L(\mathbb{K}_0\setminus I_{i_0-1,2}^*,j)<\tau^{(i_0)}$ to \eqref{taui_0range}. Now we must have $j=j^{(i_0)}$, that is
	\begin{equation*}
		L(\mathbb{K}_0\setminus I_{i_0-1,2}^*,j^{(i_0)})<\tau^{(i_0)}\leq j^{(i_0)}.
	\end{equation*}
	This, and the fact $\{j^{(i_0)},\cdots,i-1\}\subset I_{i_0'-1,2}^*$ implied by \eqref{iotakjk}, together show $\{\tau^{(i_0)},\cdots,i-1\}\subset I_{i_0'-1,2}^*$, and consequently
	\begin{equation*}
		I_{i_0'-1}^*=D_{I_{i_0'-1,1}^*}D_{I_{i_0'-1,2}^*}\mathbb{K}\subset D_{\tau^{(i_0)}}D_{\{\tau^{(i_0)},\cdots,i-1\}}\mathbb{K},
	\end{equation*}
	which indicates the contradiction $i\notin I_{i_0'-1}^*$ to \eqref{iinIi_0'-1*}. Therefore, \eqref{iotakjk} is not true, and the proof of 1) is complete.

	To show 2), if $s\geq2$, $I_{s-1,1}^*=\emptyset$ and $\{0,k\}\cap I_{s-1,2}^*\neq\emptyset$, note that \eqref{trivialcontrol} is true for $i=1,\cdots,s$, we may apply Lemma \ref{lmFp} with
	\begin{equation*}
		\tilde{p}_i=\begin{cases}
			n+1-4m,\quad&i=1,\cdots,s-1,\\
			(k-k_0-s+1)(n+1-4m),&i=s,
		\end{cases}
	\end{equation*}
	and similar to \eqref{Ficontrol1}, we deduce
	\begin{equation*}
		\begin{split}
			\|F_1\|^{-p_1}\cdots\|F_s\|^{-p_s}\lesssim&\left(\prod_{1\leq i\leq s-1}\|F_i\|^{-(n+1-4m)}\right)\|F_s\|^{-(n+1-4m)(k-k_0-s+1)}\\
			\leq&\prod_{1\leq i\leq s-1}|E_{\iota_{j^{(i)}},j^{(i)}}|^{-(n+1-4m)}\prod_{j\in I_{s-1}^*}|E_{\iota_j,j}|^{-(n+1-4m)}\\
			\leq&\prod_{j\in\mathbb{K}}|E_{\iota_j,j}|^{-(n+1-4m)},
		\end{split}
	\end{equation*}
	and this is a stronger estimate for $n+1-4m\leq\frac{n+1}{2}-2m+d_0+d_k$ holds when $\{0,k\}\cap I_{s-1,2}^*\neq\emptyset$.
	
	If $I_{s-1,1}^*=\emptyset$ and $\{0,k\}\cap I_{s-1,2}^*=\emptyset$, then
	\begin{equation}\label{Is-1*}
		I_{s-1}^*=D_{I_{s-1,2}^*}\mathbb{K}=\mathbb{K}\setminus I_{s-1,2}^*\ni k,
	\end{equation}
	so it is easy to get from \eqref{defI_F12'} that
	\begin{equation}\label{IFs}
		\begin{split}
			I_{F_s}^1=&\bigcup_{j\in\mathbb{K}\setminus I_{s-1,2}^*}\{L(\mathbb{K}_0\setminus I_{s-1,2}^*,j)+1,\cdots,j\}=\mathbb{K},\\
			I_{F_s}^2=&\bigcup_{j\in\mathbb{K}\setminus I_{s-1,2}^*}\{L(\mathbb{K}_0\setminus I_{s-1,2}^*,j),j\}=\mathbb{K}_0\setminus I_{s-1,2}^*.
		\end{split}
	\end{equation}
	Next for $i=1,\cdots,s$,
	\begin{equation}\label{pips}
		\begin{split}
			p_i+\cdots+p_s\leq&\sum_{j\in I_{F_i}^1}|\alpha_j|+\sum_{j\in I_{F_i}^2}2d_j\\
			\leq&\sum_{j\in\mathbb{K}}|\alpha_j|+\sum_{j\in\mathbb{K}_0\setminus I_{i-1,2}^*}2d_j\\
			=&\sum_{j\in I_{F_s}^1}|\alpha_j|+\sum_{j\in\mathbb{K}_0\setminus I_{s-1,2}^*}2d_j+\sum_{j\in I_{s-1,2}^*\setminus I_{i-1,2}^*}2d_j\\
			\leq&\sum_{j\in I_{F_s}^1}|\alpha_j|+\sum_{j\in\mathbb{K}_0\setminus I_{s-1,2}^*}2d_j+(n+1-4m)(s-i),
		\end{split}
	\end{equation}
	where we have used Lemma \ref{lmI_FI^*} and the fact that $\#(I_{s-1,2}^*\setminus I_{i-1,2}^*)\leq s-i$. Further,
	\begin{equation}\label{sumcase1}
		\begin{split}
			&\sum_{j\in I_{F_s}^1}|\alpha_j|+\sum_{j\in\mathbb{K}_0\setminus I_{s-1,2}^*}2d_j=\sum_{j\in I_{F_s}^1}|\alpha_j|+\sum_{j\in I_{F_s}^2}d_j+\sum_{j\in\mathbb{K}_0\setminus I_{s-1,2}^*}d_j\\
			\leq&L_{k_0+s-1}+\cdots+L_{k-1}+\sum_{j\in\mathbb{K}_0\setminus I_{s-1,2}^*}d_j\\
			\leq&(\mbox{$\frac{n+1}{2}-2m$})(k-k_0-s+1)+\sum_{j\in\mathbb{K}_0\setminus I_{s-1,2}^*}d_j\\
			=&(\mbox{$\frac{n+1}{2}-2m$})(k-k_0-s)+\sum_{j\in\mathbb{K}\setminus(I_{s-1,2}^*\cup\{k\})}d_j+(\mbox{$\frac{n+1}{2}-2m$})+d_0+d_k\\
			\leq&(\mbox{$\frac{n+1}{2}-2m$})(k-k_0-s)+(\mbox{$\frac{n+1}{2}-2m$})\#\left(\mathbb{K}\setminus(I_{s-1,2}^*\cup\{k\})\right)+\mbox{$\frac{n+1}{2}-2m$}+d_0+d_k\\
			\leq&(n+1-4m)(k-k_0-s)+\mbox{$\frac{n+1}{2}-2m$}+d_0+d_k,
		\end{split}
	\end{equation}
	where we have used \eqref{IFs}, an application of \eqref{Jlowerbd2} that is similar to \eqref{trivialcontrol}, the assumption $\{0,k\}\cap I_{s-1,2}^*=\emptyset$, and the fact that $\#(\mathbb{K}\setminus(I_{s-1,2}^*\cup\{k\}))=k-k_0-s$ which is due to $\emptyset=I_{s-1,1}^*=\cdots=I_{0,1}^*$ and consequently $I_{s-1,2}^*=I_{0,2}^*\cup\{\tau^{(1)},\cdots,\tau^{(s-1)}\}$. On the other hand, we also have
	\begin{equation}\label{sumcase2}
		\begin{split}
			\sum_{j\in I_{F_s}^1}|\alpha_j|+\sum_{j\in\mathbb{K}_0\setminus I_{s-1,2}^*}2d_j\leq&2\left(\sum_{j\in I_{F_s}^1}|\alpha_j|+\sum_{j\in I_{F_s}^2}d_j\right)\\
			\leq&2(L_{k_0+s-1}+\cdots+L_{k-1})\\
			\leq&(n+1-4m)(k-k_0-s+1).
		\end{split}
	\end{equation}
	Combining \eqref{sumcase1}, \eqref{sumcase2} and \eqref{pips}, we derive for $i=i,\cdots,s$ that
	\begin{equation*}
		\begin{split}
			&p_i+\cdots+p_s\\
			\leq&(n+1-4m)(k-k_0-i)+\min(n+1-4m,\mbox{$\frac{n+1}{2}-2m$}+d_0+d_k),
		\end{split}
	\end{equation*}
	so we may apply Lemma \ref{lmFp} with
	\begin{equation*}
		\tilde{p}_i=\begin{cases}
			n+1-4m,\quad&i=1,\cdots,s-1,\\
			(n+1-4m)(k-k_0-s)+\min\{n+1-4m,\mbox{$\frac{n+1}{2}-2m$}+d_0+d_k\},&i=s,
		\end{cases}
	\end{equation*}
	to deduce
	\begin{equation*}
		\begin{split}
			&\|F_1\|^{-p_1}\cdots\|F_s\|^{-p_s}\\
			\lesssim&\left(\prod_{1\leq i\leq s-1}\|F_i\|^{-(n+1-4m)}\right)\|F_s\|^{-(n+1-4m)(k-k_0-s)-\min\{n+1-4m,\frac{n+1}{2}-2m+d_0+d_k\}}\\
			\leq&|E_{\iota_k,k}|^{-\min\{n+1-4m,\frac{n+1}{2}-2m+d_0+d_k\}}\prod_{1\leq i\leq s-1}|E_{\iota_{j^{(i)}},j^{(i)}}|^{-(n+1-4m)}\prod_{j\in I_{s-1}^*\setminus\{k\}}|E_{\iota_j,j}|^{-(n+1-4m)}\\
			\leq&|E_{\iota_k,k}|^{-\min\{n+1-4m,\frac{n+1}{2}-2m+d_0+d_k\}}\prod_{j\in\mathbb{K}\setminus\{k\}}|E_{\iota_j,j}|^{-(n+1-4m)},
		\end{split}
	\end{equation*}
	where in the second inequality we have used \eqref{Is-1*} and consequently $\#(I_{s-1}^*\setminus\{k\})=k-k_0-s$. Now the proof of 2) is complete.
\end{proof}

\subsection{Estimates for integrals with point and reduced line singularities}\label{section5.4}\

Since $\|F_1\|^{-p_1}\cdots\|F_s\|^{-p_s}$ in \eqref{stepmu} has been estimated in Proposition \ref{lmlinereduction} in particular forms, we now need to consider the estimates for integrals in the form of
\begin{equation}\label{Ix_0x_k}
	\begin{split}
		&I_k(x_0,x_{k+1};\beta_1,\cdots,\beta_k,a_0,\cdots,a_k,q_1,\cdots,q_k)\\
		=&\int_{\mathbb{R}^{kn}}\frac{\langle x_1\rangle^{-\beta_1-}\cdots\langle x_k\rangle^{-\beta_k-}}{|x_0-x_1|^{a_0}\cdots|x_k-x_{k+1}|^{a_k}}|E_{\eta_1,1}|^{-q_1}\cdots|E_{\eta_k,k}|^{-q_k}\d x_1\cdots \d x_k,
	\end{split}
\end{equation}
based on the three lemmas (Lemma \ref{propIkest1} to Lemma \ref{propIkest3}), which together prove the second main technical result Proposition \ref{intmainest} in Section \ref{section5}.

Before introducing these results, it is crucial to note that when there are a lot of line singularities $|E_{\eta_j,j}|^{-q_j}$ in the integral \eqref{Ix_0x_k}, the possibility of bounding such integrals  will come from further assuming $\{(\eta_j,j);\,j\in\mathbb{K}\}$ to be admissible by Definition \ref{defadmissible}, because then we are always able to choose a specific variable $x_\tau$ such that Proposition \ref{twoline} is first applicable in the integral of $x_\tau$ where at most two line singularities are relevant, and the admissibility of $\{(\eta_j,j);\,j\in\mathbb{K}\}$ will also allow such mechanism after each time of applying Proposition \ref{twoline}. The choice of such $x_\tau$ is asked to obey certain constraints for technical reasons.

\begin{proposition}\label{proad}
	Suppose that $\{(\eta_j,j);\,j\in\mathbb{K}\}$ in \eqref{Ix_0x_k} is admissible. Then we can find $\tau\in\mathbb{K}$ such that $x_{\tau}$ is independent of $E_{\eta_j,j}$ unless $j\in\{\tau-1,\tau\}$, and we also assert the following:
	
	\noindent (i) If there exists $s\in \mathbb{K}$ with $\eta_s \leq s-3$, then $\tau$ can be chosen to satisfy $\eta_s<\tau< s$.
	
	\noindent (ii) $\tau$ can always be chosen to satisfy either
	\begin{equation}\label{j_01r}
		\eta_k\leq\eta_{\tau+1}<\eta_\tau<\tau<\tau+1\leq k,
	\end{equation}
	or
	\begin{equation}\label{j_02r}
		\eta_k<\tau=k.
	\end{equation}		
\end{proposition}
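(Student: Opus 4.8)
The plan is to use the admissibility of $\{(\eta_j,j);\,j\in\mathbb K\}$ to pick out a "rightmost innermost" index. First I would observe that $x_\tau$ appears in $E_{\eta_j,j}$ only when $j=\tau$ (through the summand $\frac{x_\tau-x_{\tau+1}}{|x_\tau-x_{\tau+1}|}$) or when $\eta_j=\tau$, i.e. $j=N(\{\,j:\eta_j=\tau\,\},\cdot)$ — but admissibility forces such a $j$ to be essentially unique once we choose $\tau$ correctly. Concretely, the first requirement "$x_\tau$ independent of $E_{\eta_j,j}$ unless $j\in\{\tau-1,\tau\}$" should be read as: the only $j$ with $\eta_j=\tau$ is $j=\tau+1$ — wait, more precisely, I want $\tau$ chosen so that $\eta_j\neq\tau$ for all $j\neq\tau+1$; but the statement writes $j\in\{\tau-1,\tau\}$ for the relevant indices in the integrand $|E_{\eta_j,j}|^{-q_j}$, so after relabelling this is the condition that $x_\tau$ enters at most the two factors $E_{\eta_{\tau-1},\tau-1}$ (if $\eta_{\tau-1}=\tau$, impossible since $\eta_{\tau-1}<\tau-1$) and $E_{\eta_\tau,\tau}$ plus one factor $E_{\eta_j,j}$ with $\eta_j=\tau$. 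I would therefore reduce the whole proposition to finding $\tau\in\mathbb K$ such that there is \emph{at most one} index $j>\tau$ with $\eta_j=\tau$, together with the positional constraints in (i) and (ii).

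The core combinatorial step is this: consider the set $\mathcal I=\{(\eta_j,j):j\in\mathbb K,\ \eta_j\le j-2\}$ of "long" pairs (those producing genuinely singular $E$-factors, since $\eta_j=j-1$ gives no line singularity). If $\mathcal I=\emptyset$ we are in a trivial situation and may take $\tau=k$, landing in \eqref{j_02r}. Otherwise, among all pairs $(\eta_j,j)\in\mathcal I$ pick the one with $\eta_j$ \emph{maximal}, and among those with $j$ maximal; call it $(\eta_{j_0},j_0)$ and set $\tau=j_0-1$ — no, I would rather set $\tau$ to be an index strictly between $\eta_{j_0}$ and $j_0$, using (i)'s hint. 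Here is where admissibility bites: if $(\eta_{j_1},j_1)\in\mathcal I$ with $\eta_{j_1}=\tau$ and $\tau<j_1$, then since $\eta_{j_0}<\tau=\eta_{j_1}<j_0$ (when $j_1\ne j_0$ and the intervals overlap), property 2) of Definition \ref{defadmissible} applied with $(i_1,j_1)=(\eta_{j_0},j_0)$, $(i_2,j_2)=(\eta_{j_1},j_1)$ — or the reverse — yields a contradiction with the maximality of $\eta_{j_0}$; the nested/non-crossing structure guaranteed by admissibility means the overlapping long pairs are linearly ordered by inclusion of their index intervals, so there is a unique "innermost" one and $\tau$ can be slotted just inside it. For assertion (i), if some $s$ has $\eta_s\le s-3$ then $(\eta_s,s)\in\mathcal I$ and its interval $(\eta_s,s)$ has length $\ge 3$; I would choose $\tau$ inside the innermost long pair whose interval is contained in (or equals) a maximal chain through $(\eta_s,s)$, which can be done with $\eta_s<\tau<s$ by a counting argument since that interval contains at least two integers. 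For (ii), I would show that the innermost long pair can be taken to end at $k$ or that $\eta_k\le\eta_{\tau+1}<\eta_\tau<\tau<\tau+1\le k$ holds: this follows by tracking the chain of nested long pairs anchored at the right end, using admissibility to compare $\eta_k$, $\eta_{\tau+1}$, $\eta_\tau$ — each is forced to be $\le$ the previous because their $j$-indices are nested and their left endpoints must be non-increasing as we go outward.

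The main obstacle I anticipate is the bookkeeping of all the ways $x_\tau$ can enter the various $E_{\eta_j,j}$: one must carefully separate the "$j=\tau$" contribution (which is always present and harmless, being one of the at-most-two allowed factors) from the "$\eta_j=\tau$" contributions, and show admissibility collapses the latter to a single $j$. This requires a clean case analysis on whether the index intervals of long pairs overlap, are disjoint, or are nested, and verifying that in every case the greedy choice of the innermost-rightmost long pair leaves $x_\tau$ in $\le 2$ singular factors while simultaneously satisfying the positional constraints \eqref{j_01r} or \eqref{j_02r}. I would organize this by first proving a structural lemma — that an admissible set, restricted to pairs with overlapping intervals, forms a chain under interval inclusion — and then the rest of the proposition becomes a short deduction. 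The constraint in (ii) that we land in exactly one of the two displayed configurations is the subtlest point, since it pins down not just $\tau$ but the relative order of $\eta_\tau,\eta_{\tau+1},\eta_k$, and I expect to need one more invocation of property 2) of admissibility with the triple $(\eta_k,k)$, $(\eta_{\tau+1},\tau+1)$, $(\eta_\tau,\tau)$ to get the chain of inequalities.
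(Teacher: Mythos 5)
Your reduction of the independence requirement is incorrect, and the error propagates. Since $E_{\eta_j,j}=\frac{x_{\eta_j}-x_{\eta_j+1}}{|x_{\eta_j}-x_{\eta_j+1}|}-\frac{x_j-x_{j+1}}{|x_j-x_{j+1}|}$, the variable $x_\tau$ appears in $E_{\eta_j,j}$ precisely when $\tau\in\{\eta_j,\eta_j+1,j,j+1\}$, not merely when $\tau=j$ or $\eta_j=\tau$. For $j\leq\tau-1$ nothing can go wrong since $\eta_j+1\leq j<\tau$, so the real requirement is that $\eta_j\notin\{\tau-1,\tau\}$ for \emph{every} $j\in\{\tau+1,\dots,k\}$: in particular $\eta_{\tau+1}=\tau$ is forbidden (not the one index you are prepared to allow), and $\eta_j=\tau-1$ is a second forbidden pattern that you never track because you only consider $\tau=j$ and $\tau=\eta_j$. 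Your reformulation ``find $\tau$ with at most one $j>\tau$ having $\eta_j=\tau$'' is therefore wrong in both the count (it must be zero) and the set of forbidden values.

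This gap defeats the greedy construction. Take $k=3$ and $\eta=(0,1,1)$, which is admissible: your $\mathcal I=\{(1,3)\}$, so $\tau=2$ is the only interior choice, but then $\eta_3=1=\tau-1$ with $3>\tau$, violating independence; moreover no $s$ satisfies $\eta_s\leq s-3$ (so assertion (i) is vacuous) and the only valid choice is $\tau=k=3$, which your scheme never reaches since $\mathcal I\neq\emptyset$. The paper avoids this with the explicit choice $\tau=\max\{j\in\{1,\dots,s-1\}:\eta_j=j-1\text{ or }\eta_j=\eta_{j-1}\}$ (well-defined because $\eta_1=0$); its maximality combined with admissibility forces a strictly decreasing chain $\eta_{s-1}<\dots<\eta_{\tau+1}<\eta_\tau<\tau$, from which $\eta_j\notin\{\tau-1,\tau\}$ for all $j>\tau$ follows by one further application of admissibility. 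Your observation that admissibility is a non-crossing condition on the intervals $(\eta_j,j)$ is correct --- it is exactly property 2) of Definition \ref{defadmissible} --- but it does not by itself pin down a position for $\tau$ that avoids both $\eta_j=\tau$ and $\eta_j=\tau-1$. Finally, $\tau=k$ always satisfies the base requirement (for $j<k-1$ one has $\eta_j+1\leq j<k$) and then \eqref{j_02r} is automatic since $\eta_k<k$, so the part of (ii) you flag as subtlest actually has a one-line fallback; the substantive content of (ii) is only that when $\eta_k\leq k-3$ the $\tau$ produced by (i) with $s=k$ yields \eqref{j_01r}.
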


\begin{proof}
	To prove (i), let
	\begin{equation}\label{maxtau}
		\tau=\max\{j\in\{1,\cdots,s-1\};~\eta_j=j-1~\text{or}~\eta_j=\eta_{j-1}\},
	\end{equation}
	which is well defined because $\eta_1=0$. If $\tau<s-1$, from the admissibility of $\{(\eta_j,j);\,j\in\mathbb{K}\}$ and that $\tau$ being the greatest, we must have
	\begin{equation}\label{tau<s-1}
		\eta_{\tau+1}<\eta_\tau<\tau<\tau+1,
	\end{equation}
	thus for $j=\tau+1,\cdots,k$, it follows from the admissibility again that either $\eta_j\geq\tau+1$ or $\eta_j\leq\eta_{\tau+1}$ holds, and consequently
	\begin{equation}\label{irrelevant}
		\eta_j\notin\{\tau-1,\tau\},\quad j=\tau+1,\cdots,k.
	\end{equation}
	If $\tau=s-1$, then \eqref{irrelevant} is also true, because the admissibility implies either $\eta_j\geq s$ or $\eta_j\leq\eta_s$, and recall that we have assumed $\eta_s\leq s-3=\tau-2$. It is now clear that
	\begin{equation}\label{2leqtauleqk-1}
		\eta_s<\tau< s,
	\end{equation}
	and $E_{\eta_j,j}$ depends on $x_\tau$ only when $j\in\{\tau-1,\tau\}$.	
	
	To prove (ii), if $\eta_k\geq k-2$, we can just choose $\tau=k$ because $\eta_j< k-2$ for each $j<k-2$. If $\eta_k\leq k-3$, we may use the first assertion with $s=k$ to choose $\tau$, and then \eqref{tau<s-1} implies either \eqref{j_01r} or \eqref{j_02r}.
\end{proof}

We now turn back to the estimates of \eqref{Ix_0x_k} in different regimes of assumption on indices.

\begin{lemma}\label{propIkest1}
	Suppose $n\geq4m+1$, $k\geq2$, $\{(\eta_j,j);\,j\in\mathbb{K}\}$ is admissible in \eqref{Ix_0x_k}, and
	\begin{equation}\label{indexcond1}
		\begin{cases}
			n-2m\leq a_i\leq n-2\,(1\leq i\leq k-1),\,\,0\leq a_0,a_k\leq n-2,\,\,a_0+a_k\geq\mbox{$\frac{n-1}{2}$},\\
			\beta_i\geq2m\,(i\in\mathbb{K}\setminus\{k\}),\,\,\beta_k\geq\mbox{$\frac{n+1}{2}$},\\
			q_i=n+1-4m\,(1\leq i\leq k-1),\,\,0\leq q_k\leq\min\{a_0+a_k,n+1-4m\}.
		\end{cases}
	\end{equation}
	Then
	\begin{equation*}
		I_k(x_0,x_{k+1};\beta_1,\cdots,\beta_k,a_0,\cdots,a_k,q_1,\cdots,q_k)\lesssim1,\quad|x_0-x_{k+1}|\gtrsim1.
	\end{equation*}
\end{lemma}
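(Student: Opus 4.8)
The plan is to prove Lemma \ref{propIkest1} by induction on $k$, peeling off one variable at a time so that at each step only two line singularities are relevant and Proposition \ref{twoline} applies. The engine is Proposition \ref{proad}: since $\{(\eta_j,j);\,j\in\mathbb{K}\}$ is admissible, we can select a distinguished index $\tau\in\mathbb{K}$ such that $x_\tau$ is independent of every $E_{\eta_j,j}$ except $j\in\{\tau-1,\tau\}$, and moreover $\tau$ can be taken to satisfy either \eqref{j_01r} or \eqref{j_02r}. We first integrate out $x_\tau$. In the integral over $x_\tau$ we see the point singularities $|x_{\tau-1}-x_\tau|^{-a_{\tau-1}}$, $|x_\tau-x_{\tau+1}|^{-a_\tau}$, the weight $\langle x_\tau\rangle^{-\beta_\tau-}$, the line singularity $|E_{\eta_\tau,\tau}|^{-q_\tau}=|E_{x_{\eta_\tau}x_{\eta_\tau+1}x_\tau x_{\tau+1}}|^{-q_\tau}$ (this is an $E_{ww'xy}$-type singularity in the notation of \eqref{linesing}, since $x_{\eta_\tau},x_{\eta_\tau+1}$ do not involve $x_\tau$), and the line singularity $|E_{\eta_{\tau-1},\tau-1}|^{-q_{\tau-1}}$ (this is an $E_{xyyz}$-type singularity when $\eta_{\tau-1}=\tau-2$, i.e.\ $E_{x_{\tau-2}x_{\tau-1}x_{\tau-1}x_\tau}$, which is exactly the case in \eqref{j_01r}/\eqref{maxtau} where $\eta_\tau=\eta_{\tau-1}$ forces a compatible structure; when $\tau$ has $\eta_\tau=\tau-1$ the roles swap). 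Apply Proposition \ref{twoline} with $k_1=a_{\tau-1}$ or $0$, $l_1=a_\tau$ or $0$, $k_2,l_2$ the ``bracket'' parts, $\beta=\beta_\tau$, $p=q_{\tau-1}$, $q=q_\tau$; one checks the hypotheses $k_1,l_1\in[0,n)$, $p,q\in[0,n-1)$ hold because $a_i\leq n-2$ and $q_i=n+1-4m\leq n-1-(4m-2)<n-1$ (using $m\geq1$), and $k_2+l_2+\beta\geq n$ holds because $\beta_\tau\geq2m$ is comfortably large after the point-singularity splitting.

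The output of this first application is a bound by $|E_{ww'xz}|^{-q}$ times a point singularity $\langle x_{\tau-1}-x_{\tau+1}\rangle^{-(\cdots)}\langle|x_{\tau-1}-x_{\tau+1}|^{-(\cdots)}\rangle$, where the surviving line singularity $|E_{ww'xz}|^{-q_\tau}=|E_{x_{\eta_\tau}x_{\eta_\tau+1}x_{\tau-1}x_{\tau+1}}|^{-q_\tau}$ now attaches to the pair $(x_{\tau-1},x_{\tau+1})$, i.e.\ to the merged edge $r_{\tau-1}$ of the reduced configuration on variables $\{x_0,\dots,x_{\tau-1},x_{\tau+1},\dots,x_{k+1}\}$. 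The key bookkeeping point is that this reduced integral is again of the form \eqref{Ix_0x_k} with $k$ replaced by $k-1$: the merged edge picks up index $\tilde a = \min\{a_{\tau-1},a_\tau,\dots\}$ still in the range $[n-2m,n-2]$ (or an endpoint index at the extremes $\tau=1$ or $\tau=k$, where \eqref{j_02r} and the conditions $0\le a_0,a_k\le n-2$, $a_0+a_k\ge\frac{n-1}{2}$, $q_k\le a_0+a_k$ are designed to be preserved), the weights satisfy $\beta_i\geq2m$ for interior indices and $\beta_k\geq\frac{n+1}{2}$ is untouched if $\tau\neq k$, and the remaining line singularities $\{(\eta_j,j)\}$ with the relabeled indices form an admissible set — this last fact is exactly why we chose $\tau$ via Proposition \ref{proad}(ii): the choice \eqref{j_01r} guarantees $\eta_k\le\eta_{\tau+1}$ so nothing below $\tau$ is the target of a higher line singularity, and the remaining configuration after deleting $(\eta_\tau,\tau)$ and $(\eta_{\tau-1},\tau-1)$ and inserting the merged singularity on $r_{\tau-1}$ still satisfies Definition \ref{defadmissible}. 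One then iterates until $k=1$, where the base case is a direct application of Lemma \ref{lmEd} (or Proposition \ref{twoline} with one of $p,q$ equal to $0$), using $a_0+a_k\geq\frac{n-1}{2}$, $q_1\le a_0+a_k$, $\beta_1\geq\frac{n+1}{2}$ to close the single integral with a bound uniform in $|x_0-x_2|\gtrsim1$; the endpoint $\langle x_0-x_2\rangle$-decay absorbs the $\langle|x_0-x_2|^{-(\cdots)}\rangle$-type local singularity because $|x_0-x_{k+1}|\gtrsim1$ throughout.

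I expect the main obstacle to be the \emph{index accounting across the recursion}: verifying that after each application of Proposition \ref{twoline} the new merged edge index, the new $\beta$'s, and especially the new line-singularity exponents $\min\{a_0+a_k,\dots\}$ all remain within the hypothesis box \eqref{indexcond1} of the lemma for the reduced value of $k$, and that the residual decay in $\langle x_{\tau-1}-x_{\tau+1}\rangle$ produced by Proposition \ref{twoline} — which is $\langle x-z\rangle^{-\min\{k_2,l_2,k_2+l_2+\beta-n,k_2+l_2-\max\{p,q\}\}}$ — is nonnegative and large enough to keep feeding the next step (in particular the term $k_2+l_2-\max\{p,q\}$ with $\max\{p,q\}=n+1-4m$ must stay $\geq0$, which is where $a_i\geq n-2m$ is used: $k_2+l_2$ after splitting is at least of order $\beta_i\geq 2m$ plus leftover point-singularity mass). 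A careful case split on $\tau=1$, $\tau=k$, and $1<\tau<k$ — combined with the two alternatives \eqref{j_01r}/\eqref{j_02r} — handles every configuration, but the endpoint cases $\tau\in\{1,k\}$ are the delicate ones since there the merged edge becomes $r_0$ or $r_k$ and the asymmetric conditions on $a_0,a_k,q_k$ must be checked to survive the merge; this is precisely why the hypotheses single out $a_0,a_k$ and $q_k$ for special, weaker treatment.
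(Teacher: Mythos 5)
Your proposal identifies the right overall mechanism (peel one variable at a time via Proposition \ref{proad}, bound the resulting single-variable integral by Proposition \ref{twoline}, iterate), but it omits a step that is actually indispensable, and the place where you think the step is unnecessary rests on a misidentification of the singularity types.

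Proposition \ref{twoline} bounds an integral over a single variable $y$ with \emph{exactly one} $|E_{xyyz}|^{-p}$ factor (where $y$ is the \emph{middle} vertex and appears in both unit vectors) and \emph{exactly one} $|E_{ww'xy}|^{-q}$ factor (where $y$ appears once). Over the integration variable $x_\tau$, the factor
$E_{\eta_{\tau-1},\tau-1}=\tfrac{x_{\eta_{\tau-1}}-x_{\eta_{\tau-1}+1}}{|x_{\eta_{\tau-1}}-x_{\eta_{\tau-1}+1}|}-\tfrac{x_{\tau-1}-x_\tau}{|x_{\tau-1}-x_\tau|}$
always has $x_\tau$ in exactly one of the two unit vectors; it is therefore of $E_{ww'xy}$ type with respect to $x_\tau$ \emph{regardless} of whether $\eta_{\tau-1}=\tau-2$. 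Your claim that $E_{x_{\tau-2}x_{\tau-1}x_{\tau-1}x_\tau}$ ``is an $E_{xyyz}$-type singularity'' in the $x_\tau$-integral is wrong: its repeated (middle) vertex is $x_{\tau-1}$, not $x_\tau$. Consequently, in the alternative $\eta_\tau=\eta_{\tau-1}<\tau-1$ coming from the choice \eqref{maxtau}, \emph{both} $E_{\eta_{\tau-1},\tau-1}$ and $E_{\eta_\tau,\tau}$ are of $E_{ww'xy}$ type in $x_\tau$, there is no $E_{xyyz}$-type factor, and Proposition \ref{twoline} does not apply. This is not an edge case; it is half of Proposition \ref{proad}'s output.

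The paper closes this with a triangle-inequality pre-processing step (equation \eqref{Etautriangle}): since $E_{\eta_\tau,\tau}=E_{\eta_{\tau-1},\tau-1}+E_{\tau-1,\tau}$ when $\eta_\tau=\eta_{\tau-1}$, one has
$|E_{\eta_{\tau-1},\tau-1}|^{-q}|E_{\eta_\tau,\tau}|^{-q}\lesssim|E_{\eta_\tau,\tau}|^{-q}|E_{\tau-1,\tau}|^{-q}+|E_{\eta_{\tau-1},\tau-1}|^{-q}|E_{\tau-1,\tau}|^{-q}$,
which manufactures the genuine $E_{xyyz}$ factor $E_{\tau-1,\tau}$ before Proposition \ref{twoline} is invoked. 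A cousin of this split is also needed in the configurations your recursion doesn't address (the paper's Cases~2 and~3, i.e.\ $\eta_s\geq s-2$ for all $s\geq3$): when $\eta_3=1$ the three factors $E_{0,1},E_{1,2},E_{1,3}$ all depend on $x_1$, exceeding what Proposition \ref{twoline} handles, and the decomposition $E_{1,3}=E_{1,2}+E_{2,3}$ must be applied first, followed by a further triangle split (\eqref{triangle}) in the $x_2$-integral. Your proposal also asserts but does not establish that the relabeled index set $\{(\tilde\eta_j,j)\}$ remains admissible — the paper needs a four-case verification — and places the base case at $k=1$ rather than $k=2$ as the lemma's hypothesis requires.
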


\begin{proof}
	If $k=2$, then $\{E_{\eta_1,1},E_{\eta_2},2\}=\{E_{0,1},E_{\eta_2,2}\}$, and one checks with Proposition \ref{twoline} in both situations that
	\begin{equation*}\label{I_2}
		\begin{split}
			&\int_{\mathbb{R}^{n}}\langle x_1\rangle^{-(\beta_1+\epsilon)}|r_0|^{-a_0}|r_1|^{-a_1}|E_{0,1}|^{-q_1}|E_{\eta_2,2}|^{-q_2}\d x_1\\
			=&\int_{\mathbb{R}^{n}}\frac{\langle x_1\rangle^{-\beta_1-}\d x_1}{|x_0-x_1|^{a_0}|x_1-x_2|^{a_1}|E_{x_0x_1x_1x_2}|^{n+1-4m}|E_{x_{\eta_2}x_{\eta_2+1}x_2x_3}|^{q_2}},\\
			\lesssim&|E_{x_0x_2x_2x_3}|^{-q_2}|x_0-x_2|^{-\min\{a_0,a_1\}},\quad|x_0-x_2|>0,
		\end{split}
	\end{equation*}
	and therefore
	\begin{equation*}
		I_2\lesssim\int_{\mathbb{R}^n}\frac{\langle x_2\rangle^{-\beta_2-}\d x_2}{|x_0-x_2|^{\min\{a_0,a_1\}}|x_2-x_3|^{a_2}|E_{x_0x_2x_2x_3}|^{q_2}}\lesssim1,\quad|x_0-x_3|\gtrsim1,
	\end{equation*}
	where we have used the facts that $\beta_2\geq\frac{n+1}{2}$,
	\begin{equation*}
		\min\{a_0,a_1\}+a_2+\beta_2-n\geq\min\{a_0+a_2,n-2m\}-\mbox{$\frac{n-1}{2}$}\geq0,
	\end{equation*}
	and
	\begin{equation*}
		\min\{a_0,a_1\}+a_2-q_2\geq\min\{a_0+a_2,n-2m\}-\min\{a_0+a_2,n+1-4m\}\geq0.
	\end{equation*}
	
	If $k\geq3$, we split the proof into three cases.
	
	\noindent\emph{Case 1: There exists $s\in\{3,\cdots,k\}$ with $\eta_s\leq s-3$.}

	By the first assertion of Proposition \ref{proad}, we can find $\tau\in\mathbb{K}$ with $\eta_s<\tau< s$, so that $E_{\eta_j,j}$ depends on $x_\tau$ only when $j\in\{\tau-1,\tau\}$, and then the integral with respect to $x_\tau$ reads
	\begin{equation}\label{intxtau}
		\int_{\mathbb{R}^n}\frac{\langle x_\tau\rangle^{-\beta_\tau-}\d x_\tau}{|x_{\tau-1}-x_\tau|^{a_{\tau-1}}|x_\tau-x_{\tau+1}|^{a_\tau}|E_{\eta_{\tau-1},\tau-1}|^{n+1-4m}|E_{\eta_\tau,\tau}|^{n+1-4m}}.
	\end{equation}
	If $\eta_\tau=\eta_{\tau-1}$, the triangle inequality implies
	\begin{equation}\label{Etautriangle}
		\begin{split}
			&|E_{\eta_{\tau-1},\tau-1}|^{-(n+1-4m)}|E_{\eta_\tau,\tau}|^{-(n+1-4m)}\\
			\lesssim&|E_{\eta_\tau,\tau}|^{-(n+1-4m)}|E_{\tau-1,\tau}|^{-(n+1-4m)}+|E_{\eta_{\tau-1},\tau-1}|^{-(n+1-4m)}|E_{\tau-1,\tau}|^{-(n+1-4m)},
		\end{split}
	\end{equation}
	we may thus just consider $\eta_\tau=\eta_{\tau-1}$ in the view of \eqref{maxtau}, and apply Proposition \ref{twoline} to \eqref{intxtau} and \eqref{Etautriangle}, to conclude that integral \eqref{intxtau} is bounded by
	\begin{equation*}\label{inttaubd}
		|E_{x_{\eta_{\tau-1}}x_{\eta_{\tau-1}+1}x_{\tau-1}x_{\tau+1}}|^{-(n+1-4m)}|x_{\tau-1}-x_{\tau+1}|^{-\min\{a_{\tau-1},a_\tau\}},
	\end{equation*}
	where we have used $\beta_\tau\geq2m$ which always holds. Consequently,
	\begin{equation}\label{Ik1st}
		\begin{split}
			I_k\lesssim&\int_{\mathbb{R}^{(k-1)n}}\frac{\langle x_1\rangle^{-\beta_1-}\cdots\langle x_{\tau-1}\rangle^{-\beta_{\tau-1}-}\langle x_{\tau+1}\rangle^{-\beta_{\tau+1}-}\cdots\langle x_k\rangle^{-\beta_k-}}{|r_0|^{a_0}\cdots|r_{\tau-2}|^{a_{\tau-2}}|x_{\tau-1}-x_{\tau+1}|^{\min\{a_{\tau-1},a_\tau\}}|r_{\tau+1}|^{a_{\tau+1}}\cdots|r_k|^{a_k}}\\
			&\quad\quad\times\frac{\d x_1\cdots \d x_{\tau-1}\d x_\tau\cdots \d x_k}{|E_{x_{\eta_{\tau-1}}x_{\eta_{\tau-1}+1}x_{\tau-1}x_{\tau+1}}|^{n+1-4m}\prod\limits_{j\in\mathbb{K}\setminus\{\tau-1,\tau\}}|E_{x_{\eta_j}x_{\eta_j+1}x_jx_{j+1}}|^{q_j}}.
		\end{split}
	\end{equation}
	Now we relabel
	\begin{equation*}\label{relabel1}
		y_j=\begin{cases}
			x_j,\quad&0\leq j\leq\tau-1,\\
			x_{j+1}&\tau\leq j\leq k,
		\end{cases}
		\quad\tilde{\beta}_j=\begin{cases}
			\beta_j,\quad&1\leq j\leq\tau-1,\\
			\beta_{j+1}&\tau\leq j\leq k-1,
		\end{cases}
	\end{equation*}
	and
	\begin{equation*}\label{relabel2}
		\tilde{a}_j=\begin{cases}
			a_j,\quad&0\leq j\leq\tau-2,\\
			\min(a_{\tau-1},a_\tau),&j=\tau-1,\\
			a_{j+1}&\tau\leq j\leq k-1,
		\end{cases}
		\quad\tilde{q}_j=\begin{cases}
			q_j,\quad&\text{if}~\tau\geq3,~1\leq j\leq\tau-2,\\
			n+1-4m,&j=\tau-1,\\
			q_{j+1},&\tau\leq j\leq k-1.
		\end{cases}
	\end{equation*}
	Note that $\eta_{\tau-1}<\tau-1$ implies $E_{x_{\eta_{\tau-1}}x_{\eta_{\tau-1}+1}x_{\tau-1}x_{\tau+1}}=E_{y_{\eta_{\tau-1}}y_{\eta_{\tau-1}+1}y_{\tau-1}y_\tau}$. If $1\leq j\leq\tau-2$, it is also obvious from the admissibility that $E_{x_{\eta_j}x_{\eta_j+1}x_jx_{j+1}}=E_{y_{\eta_j}y_{\eta_j+1}y_jy_{j+1}}$. If $\tau+1\leq j\leq k$, the argument from \eqref{tau<s-1} to \eqref{2leqtauleqk-1} also shows that
	\begin{equation*}
		E_{x_{\eta_j}x_{\eta_j+1}x_jx_{j+1}}=\begin{cases}E_{y_{\eta_j}y_{\eta_j+1}y_{j-1}y_j},\quad&\text{if}~\eta_j\leq\tau-2,\\
			E_{y_{\eta_j-1}y_{\eta_j}y_{j-1}y_j},&\text{if}~\eta_j\geq\tau+1.
		\end{cases}
	\end{equation*}
	In other words, if we denote
	\begin{equation*}\label{deftildeeta}
		\tilde{\eta}_j=\begin{cases}
			\eta_j,\quad&1\leq j\leq\tau-1,\\
			\eta_{j+1},&\text{if}~\tau\leq j\leq k-1~\text{and}~\eta_{j+1}\leq\tau-2,\\
			\eta_{j+1}-1,&\text{if}~\tau\leq j\leq k-1~\text{and}~\eta_{j+1}\geq\tau+1,
		\end{cases}
	\end{equation*}
	then \eqref{Ik1st} says
	\begin{equation}\label{I_ktoI_k-1}
		I_k\lesssim\int_{\mathbb{R}^{(k-1)n}}\frac{\langle y_1\rangle^{-\tilde{\beta}_1-}\cdots\langle y_{k-1}\rangle^{-\tilde{\beta}_{k-1}-}}{|y_0-y_1|^{\tilde{a}_0}\cdots|y_{k-1}-y_k|^{\tilde{a}_{k-1}}}|E_{\tilde{\eta}_1,1}|^{-\tilde{q}_1}\cdots|E_{\tilde{\eta}_{k-1},k-1}|^{-\tilde{q}_{k-1}}\d y_1\cdots \d y_{k-1}.
	\end{equation}
	
	We claim that $\{(\tilde{\eta}_j,j);~j\in\mathbb{K}\setminus\{k\}\}$ is admissible. In fact, that $\tilde{\eta}_j<j$ is obvious by definition, and to check condition 2) in Definition \ref{defadmissible}, it is only necessary to discuss when $\tau\leq j\leq k-1$ and $\tilde{\eta}_j<i<j$, where there are four possibilities:
	\begin{itemize}
		\item[1)] If $i\leq\tau-1$, we must know $\tilde{\eta}_j=\eta_{j+1}$, so $\tilde{\eta}_j\leq\eta_i=\tilde{\eta}_i$ is obvious.
		
		\item[2)] If $i\geq\tau$ and $\tilde{\eta}_i=\eta_{i+1}$, it is easy to see from $\tilde{\eta}_j<i<j$ that $\eta_{j+1}<i+1<j+1$, and therefore $\tilde{\eta}_j\leq\eta_{j+1}\leq\eta_{i+1}=\tilde{\eta}_i$.
		
		\item[3)] If $i\geq\tau$, $\tilde{\eta}_i=\eta_{i+1}-1$ and $\tilde{\eta}_j=\eta_{j+1}-1$, then $\eta_{j+1}<i+1<j+1$ and thus $\tilde{\eta}_j=\eta_{j+1}-1\leq\eta_{i+1}-1=\tilde{\eta}_i$.
		
		\item[4)] If $i\geq\tau$, $\tilde{\eta}_i=\eta_{i+1}-1$ and $\tilde{\eta}_j=\eta_{j+1}$, then $\eta_{j+1}\leq\tau-2$, $\eta_{i+1}\geq\tau+1$, and thus $\tilde{\eta}_j\leq\tilde{\eta}_i-2$.
	\end{itemize}
	
	Now it is routine to check the relevant conditions in \eqref{indexcond1} with respect to $\tilde{a}_i$, $\tilde{\beta}_i$ and $\tilde{q}_i$ for the RHS of \eqref{I_ktoI_k-1}. Thus the estimate of $I_k$ is reduced to that of $I_{k-1}$.
	
	\noindent\emph{Case 2: $\eta_s\geq s-2$ for all $s\in\{3,\cdots,k\}$, and $\eta_3=2$.}
	
	We first know $\eta_j\geq2$ for $j\geq3$ so that $E_{\eta_j,j}$ is independent of $x_1$. Since $\beta_1\geq2m$ always holds, we can apply Proposition \ref{twoline} to the integral with respect to $x_1$ whenever $\eta_2=0$ or $\eta_2=1$ to have
	\begin{equation}\label{I_kx_1}
		\begin{split}
			&\int_{\mathbb{R}^{n}}\langle x_1\rangle^{-\beta_1-}|r_0|^{-a_0}|r_1|^{-a_1}|E_{0,1}|^{-q_1}|E_{\eta_2,2}|^{-q_2}\d x_1\\
			=&\int_{\mathbb{R}^{n}}\frac{\langle x_1\rangle^{-\beta_1-}\d x_1}{|x_0-x_1|^{a_0}|x_1-x_2|^{a_1}|E_{x_0x_1x_1x_2}|^{n+1-4m}|E_{x_{\eta_2}x_{\eta_2+1}x_2x_3}|^{(n+1-4m)}},\\
			\lesssim&|E_{x_0x_2x_2x_3}|^{-(n+1-4m)}|x_0-x_2|^{-\min\{a_0,a_1\}},
		\end{split}
	\end{equation}
	and consequently
	\begin{equation*}\label{I_kbound}
		\begin{split}
			I_k\lesssim\int_{\mathbb{R}^{(k-1)n}}&\frac{\langle x_2\rangle^{-\beta_2-}\cdots\langle x_k\rangle^{-\beta_k-}}{|x_0-x_2|^{\min\{a_0,a_1\}}|x_2-x_3|^{a_2}\cdots|x_k-x_{k+1}|^{a_k}}\\
			&\times\frac{\d x_2\cdots \d x_k}{|E_{x_0x_2x_2x_3}|^{n+1-4m}|E_{\eta_3,3}|^{q_3}\cdots|E_{\eta_k,k}|^{q_k}},
		\end{split}
	\end{equation*}
	so the estimate is reduced to that of $I_{k-1}$ with the obvious relabeling
	\begin{equation*}
		\begin{gathered}
			y_j=\begin{cases}
				x_0,\quad&j=0,\\
				x_{j+1},&1\leq j\leq k,
			\end{cases}\quad\tilde{a}_j=\begin{cases}
				\min\{a_0,a_1\},\quad&j=0,\\
				a_{j+1},&1\leq j\leq k-1,
			\end{cases}\\
			\tilde{\eta}_j=\begin{cases}
				0,\quad&j=1,\\
				\eta_{j+1},&2\leq j\leq k-1,
			\end{cases}	\quad(\tilde{\beta}_j,\tilde{q}_j)=(\beta_{j+1},q_{j+1}),\quad 1\leq j\leq k-1,
		\end{gathered}
	\end{equation*}
	and an easy check of relevant conditions in \eqref{indexcond1} where we remark that
	\begin{equation*}
		\tilde{a}_0+\tilde{a}_{k-1}=\min\{a_0+a_k,a_1+a_k\}\geq\min\{a_0+a_k,n-2m\}\geq\max\{\mbox{$\frac{n-1}{2}$},q_k\}=\max\{\mbox{$\frac{n-1}{2}$},\tilde{q}_{k-1}\}.
	\end{equation*}
	
	\noindent\emph{Case 3: $\eta_s\geq s-2$ for all $s\in\{3,\cdots,k\}$, and $\eta_3=1$.}
	
	We must have $\{E_{\eta_1,1},E_{\eta_2,2},E_{\eta_3,3}\}=\{E_{0,1},E_{1,2},E_{1,3}\}$. Since $E_{1,3}=E_{1,2}+E_{2,3}$, it follows from homogeneity argument that
	\begin{equation*}
		|E_{1,2}|^{-(n+1-4m)}|E_{1,3}|^{-q_3}\lesssim|E_{1,2}|^{-(n+1-4m)}|E_{2,3}|^{-q_3}+|E_{2,3}|^{-(n+1-4m)}|E_{1,3}|^{-q_3},
	\end{equation*}
	and then
	\begin{equation*}
		\begin{split}
			I_k\lesssim&\int_{\mathbb{R}^{kn}}\frac{\langle x_1\rangle^{-\beta_1-}\cdots\langle x_k\rangle^{-\beta_k-}\d x_1\cdots \d x_k}{|r_0|^{a_0}\cdots|r_k|^{a_k}|E_{0,1}|^{n+1-4m}|E_{1,2}|^{n+1-4m}|E_{2,3}|^{q_3}\cdots|E_{\eta_k,k}|^{q_k}}\\
			&+\int_{\mathbb{R}^{kn}}\frac{\langle x_1\rangle^{-\beta_1-}\cdots\langle x_k\rangle^{-\beta_k-}\d x_1\cdots \d x_k}{|r_0|^{a_0}\cdots|r_k|^{a_k}|E_{0,1}|^{n+1-4m}|E_{2,3}|^{n+1-4m}|E_{1,3}|^{q_3}\cdots|E_{\eta_k,k}|^{q_k}}\\
			:=&I_k^1+I_k^2.
		\end{split}
	\end{equation*}
	Bounding $I_k^1$ has essentially been discussed, because the integral with respect to $x_1$ is exactly \eqref{I_kx_1} with $\eta_2=1$, and all consequences follow with no change.
	
	Therefore, we are left to bound $I_k^2$. We first apply Proposition \ref{twoline} to the integral with respect to $x_1$ to get
	\begin{equation*}
		\int_{\mathbb{R}^n}\frac{\langle x_1\rangle^{-\beta_1-}\d x_1}{|r_0|^{a_0}|r_1|^{a_1}|E_{0,1}|^{n+1-4m}|E_{1,3}|^{q_3}}\lesssim|E_{x_0x_2x_3x_4}|^{-q_3}|x_0-x_2|^{-\min\{a_0,a_1\}},
	\end{equation*}
	where $\beta_1\geq2m$ is used. If $k=3$, then
	\begin{equation*}
		\begin{split}
			I_k^2=I_3^2\lesssim&\int_{\mathbb{R}^{2n}}\frac{\langle x_2\rangle^{-\beta_2-}\langle x_3\rangle^{-\beta_3-}\d x_2\d x_3}{|x_0-x_2|^{\min\{a_0,a_1\}}|x_2-x_3|^{a_2}|x_3-x_4|^{a_3}|E_{x_0x_2x_3x_4}|^{q_3}|E_{x_2x_3x_3x_4}|^{n+1-4m}}\\
			\lesssim&\int_{\mathbb{R}}\frac{\langle x_2\rangle^{-\beta_2-}\d x_2}{|x_0-x_2|^{\min\{a_0,a_1\}}|x_2-x_4|^{\min\{a_2,a_3\}}|E_{x_0x_2x_2x_4}|^{q_3}}\\
			\lesssim&1,\quad|x_0-x_4|=|x_0-x_{k+1}|\geq1,
		\end{split}
	\end{equation*}
	where in the last step we have used the fact that
	\begin{equation*}
		\min\{a_0,a_1\}+\min\{a_2,a_3\}\geq\min\{a_0+a_3,n-2m\}\geq\max\{\mbox{$\frac{n-1}{2}$},q_3\}.
	\end{equation*}
	If $k\geq4$, we have $q_3=n+1-4m$ and
	\begin{equation}\label{Ikfinal}
		\begin{split}
			I_k^2\lesssim\int_{\mathbb{R}^{(k-1)n}}&\frac{\langle x_2\rangle^{-\beta_2-}\cdots\langle x_k\rangle^{-\beta_k-}}{|x_0-x_2|^{\min\{a_0,a_1\}}|r_2|^{a_2}\cdots|r_k|^{a_k}}\\
			&\times\frac{\d x_2\cdots \d x_k}{|E_{x_0x_2x_3x_4}|^{n+1-4m}|E_{x_2x_3x_3x_4}|^{n+1-4m}|E_{\eta_4,4}|^{q_4}\cdots|E_{\eta_k,k}|^{q_k}}
		\end{split}
	\end{equation}
	The triangle inequality implies
	\begin{equation}\label{triangle}
		\begin{split}
			&\frac{1}{|E_{x_0x_2x_3x_4}|^{n+1-4m}|E_{x_2x_3x_3x_4}|^{n+1-4m}}\\
			\lesssim&\frac{1}{|E_{x_0x_2x_2x_3}|^{n+1-4m}|E_{x_2x_3x_3x_4}|^{n+1-4m}}+\frac{1}{|E_{x_0x_2x_2x_3}|^{n+1-4m}|E_{x_0x_2x_3x_4}|^{n+1-4m}},
		\end{split}
	\end{equation}
	and we also note that $\eta_4=3$ must hold, because otherwise $2=\eta_4\leq\eta_3<3<4$ yields the contradiction $\eta_3=2$. So we may apply Proposition \ref{twoline} and \eqref{triangle} to the integral with respect to $x_2$ on the RHS of \eqref{Ikfinal} where $E_{\eta_4,4},\cdots,E_{\eta_k,k}$ are irrelevant, to get
	\begin{equation*}
		I_k^2\lesssim\int_{\mathbb{R}^{(k-2)n}}\frac{\langle x_3\rangle^{-\beta_3-}\cdots\langle x_k\rangle^{-\beta_k-}\d x_3\cdots \d x_k}{|x_0-x_3|^{\min\{a_0,a_1,a_2\}}|r_3|^{a_3}\cdots|r_k|^{a_k}|E_{x_0x_3x_3x_4}|^{n+1-4m}|E_{\eta_4,4}|^{q_4}\cdots|E_{\eta_k,k}|^{q_k}},
	\end{equation*}
	then the estimate is reduced to that of $I_{k-2}$ with the relabeling
	\begin{equation*}
		\begin{gathered}
			y_j=\begin{cases}
				x_0,\quad&j=0,\\
				x_{j+2},&1\leq j\leq k-1,
			\end{cases}\quad\tilde{a}_j=\begin{cases}
				\min\{a_0,a_1,a_2\},\quad&j=0,\\
				a_{j+2},&1\leq j\leq k-2,
			\end{cases}\\
			\tilde{\eta}_j=\begin{cases}
				0,\quad&j=1,\\
				\eta_{j+2}&2\leq j\leq k-2,
			\end{cases}\quad(\tilde{\beta}_j,\tilde{q}_j)=(\beta_{j+2},q_{j+2}),\quad 1\leq j\leq k-2,
		\end{gathered}
	\end{equation*}
	and the proof is complete with an easy check of relevant conditions in \eqref{indexcond1}.
\end{proof}

\begin{lemma}\label{propIkest2}
	Suppose $n\geq4m+1$, $k\geq2$, $\{(\eta_j,j);\,j\in\mathbb{K}\}$ is admissible in \eqref{Ix_0x_k}, $j_0\in\mathbb{K}\setminus\{k\}$ be fixed with $j_0\leq\eta_j\,(j_0+1\leq j\leq k)$, $q_i=n+1-4m$ for $i\in\mathbb{K}\setminus\{j_0\}$, $q_{j_0}=\frac{n+1}{2}-2m$, $n-2m\leq a_i\leq n-2\,(i\notin\{0,k\})$, $0\leq a_0,a_k\leq n-2$ and either
	\begin{equation}\label{indexcond2}
		a_0,a_k\geq\mbox{$\frac{n-1}{2}$},\,\,\beta_i\geq2m\,(i\in\mathbb{K}),
	\end{equation}
	or
	\begin{equation}\label{indexcond2'}
		a_0+a_k\geq\mbox{$\frac{n-1}{2}$},\,\,\beta_i\geq\mbox{$\frac{n+1}{2}$}\,(i\in\mathbb{K}).
	\end{equation}
	Then
	\begin{equation*}
		I_k(x_0,x_{k+1};\beta_1,\cdots,\beta_k,a_0,\cdots,a_k,q_1,\cdots,q_k)\lesssim1,\quad|x_0-x_{k+1}|\gtrsim1.
	\end{equation*}
\end{lemma}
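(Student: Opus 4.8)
The plan is to prove the bound by induction on $k$, following closely the scheme of the proof of Lemma \ref{propIkest1}. At each step I single out one integration variable $x_\tau$ with the property that only the two line singularities $|E_{\eta_{\tau-1},\tau-1}|^{-q_{\tau-1}}$ and $|E_{\eta_\tau,\tau}|^{-q_\tau}$ depend on it, carry out the $x_\tau$-integration by Proposition \ref{twoline} — after a triangle-inequality splitting of the two singularities when $\eta_{\tau-1}=\eta_\tau$, exactly as in \eqref{Etautriangle} — and then relabel the remaining variables and indices to obtain an integral of the same shape \eqref{Ix_0x_k} with $k$ lowered by $1$ (or by $2$ in the degenerate configurations treated as Cases 2 and 3 of Lemma \ref{propIkest1}). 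The choice of $\tau$ is dictated by Proposition \ref{proad}: its assertions (i)--(ii) produce a $\tau$ with the required independence property and with $\tau$ obeying \eqref{j_01r} or \eqref{j_02r}, which is exactly what makes the relabelled index set $\{(\tilde{\eta}_j,j)\}$ admissible again, the verification being line for line the one carried out in Lemma \ref{propIkest1} between \eqref{tau<s-1} and the subsequent admissibility check. The base case $k=2$ forces $j_0=1$ and, by $j_0\le\eta_2\le 1$, the configuration $\{E_{0,1},E_{1,2}\}$ with $q_1=\frac{n+1}{2}-2m$ and $q_2=n+1-4m$, both in $[0,n-1)$ since $n>4m$; integrating in $x_1$ by Proposition \ref{twoline} and then in $x_2$ by the $q=0$ case of Proposition \ref{twoline} closes the estimate, the required inequalities $a_0+a_1+\beta_1\ge n$, $\min\{a_0,a_1\}+a_2+\beta_2\ge n$ and $\min\{a_0,a_1\}+a_2\ge\max\{\frac{n-1}{2},q_2\}$ all following from the hypotheses and $n-2m\ge\frac{n-1}{2}$.

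The feature distinguishing this from Lemma \ref{propIkest1} is the bookkeeping of the single distinguished index $j_0$, at which the exponent is the strictly smaller $q_{j_0}=\frac{n+1}{2}-2m<n+1-4m$, together with the nesting constraint $j_0\le\eta_j$ for $j>j_0$. I would split the inductive step according to whether $j_0\in\{\tau-1,\tau\}$. If $j_0\notin\{\tau-1,\tau\}$, the distinguished singularity is untouched by the $x_\tau$-integration, and the nesting constraint is transported verbatim through the order-preserving relabelling — this is where the hypothesis $j_0\le\eta_j$ is used, namely to ensure $\tau$ cannot split the $j_0$-anchored block — so one stays inside the induction with a lower-$k$ instance of the present lemma. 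If $j_0\in\{\tau-1,\tau\}$, one of the two active exponents equals $\frac{n+1}{2}-2m$ and the other $n+1-4m$, so $\max\{p,q\}=n+1-4m$ in Proposition \ref{twoline}; this is harmless because $a_{\tau-1}+a_\tau-(n+1-4m)\ge 2(n-2m)-(n+1-4m)=n-1>0$, hence the point-singularity exponent produced is unaffected, and the relabelled integral again carries at most one weak line singularity, so it is either of the present type or of the type already settled in Lemma \ref{propIkest1}. In all cases the index conditions are inherited in the appropriate regime: under \eqref{indexcond2} the reduced end exponents $\min\{a_0,a_1\}$ and $\min\{a_{k-1},a_k\}$ remain $\ge\frac{n-1}{2}$ because every interior $a_i\ge n-2m\ge\frac{n-1}{2}$, while under \eqref{indexcond2'} one uses $\min\{a_0,a_1\}+a_k=\min\{a_0+a_k,a_1+a_k\}\ge\min\{\frac{n-1}{2},n-2m\}=\frac{n-1}{2}$ and the uniform bounds $\beta_i\ge\frac{n+1}{2}$ pass down trivially; the interior-$a$ and $q$-conditions are checked as in Lemma \ref{propIkest1}.

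The main obstacle will be this combinatorial bookkeeping carried through the successive relabellings. Concretely, one must verify in each sub-case — the two $\beta$-regimes, times the position of $\tau$ relative to $j_0$, times the small-$k$ degenerate configurations analogous to Cases 2--3 of Lemma \ref{propIkest1} — that the relabelled index set is admissible, that the relabelled distinguished pair $(j_0',\eta_{j_0'})$ together with $j_0'\le\tilde{\eta}_j$ for $j>j_0'$ is correctly inherited (in particular that integrating out a variable never destroys the nesting needed at the next step), and that the resulting integral lands in one of the two regimes with the right constants; the only arithmetic inputs needed are $n-2m\ge\frac{n-1}{2}$, $\frac{n+1}{2}-2m<n+1-4m$ and $a_i\ge n-2m$ for interior indices, all guaranteed by $n>4m$. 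This is the same sort of lengthy but essentially mechanical case analysis that occupies the bulk of the proof of Lemma \ref{propIkest1}, the additional burden here being to keep the lone weak singularity and the nesting condition synchronised with the relabelling, and to run both index regimes in parallel.
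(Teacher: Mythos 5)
Your inductive scheme is sound in outline, but your base case ($k=2$) has a gap under hypothesis \eqref{indexcond2'}, and the source of the gap is the choice to integrate out $x_1$ first. In the $x_1$-integral, the weak singularity $E_{0,1}=E_{x_0x_1x_1x_2}$ (exponent $q_1=\frac{n+1}{2}-2m$) is the one that depends on $x_1$ through all four slots, hence plays the role of $E_{xyyz}$ in Proposition~\ref{twoline}, while $E_{1,2}=E_{x_1x_2x_2x_3}$ (exponent $q_2=n+1-4m$) plays the role of the $E_{ww'xy}$-type factor. The conclusion of Proposition~\ref{twoline} retains the exponent $q$ of the \emph{latter}, so what survives the $x_1$-integration is a line singularity raised to the power $-(n+1-4m)$, the strong one. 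The one-variable integral in $x_2$ that remains then requires $\min\{a_0,a_1\}+a_2\geq n+1-4m$, which is exactly the third inequality you list (since $\max\{\frac{n-1}{2},q_2\}=n+1-4m$ as $n>4m-1$). But under \eqref{indexcond2'} only $a_0+a_2\geq\frac{n-1}{2}$ is assumed, and this inequality fails: with $m=1$, $n=7$, $a_0=0$, $a_2=\frac{n-1}{2}=3$, $a_1\geq n-2m=5$, one has $\min\{a_0,a_1\}+a_2=3<4=n+1-4m$, so the exponent $k_2+l_2-p$ in Lemma~\ref{lmEd}/Proposition~\ref{twoline} (with $q=0$) is negative and the resulting bound $\langle x_0-x_3\rangle^{-\min\{\cdots\}}$ grows as $|x_0-x_3|\to\infty$ rather than staying $\lesssim 1$.

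The paper avoids this by integrating out $x_2$ first: then $E_{1,2}$ is the $E_{xyyz}$-type singularity (it depends on $x_2$ through all slots) and $E_{0,1}$ with the weak exponent $\frac{n+1}{2}-2m$ is the $E_{ww'xy}$-type one, so the surviving singularity carries the \emph{small} exponent $\frac{n+1}{2}-2m$. The subsequent $x_1$-integration then only needs $\min\{a_1,a_2\}+a_0\geq\frac{n+1}{2}-2m$, and since $a_1\geq n-2m$ one gets $\min\{a_1,a_2\}+a_0\geq\min\{a_1+a_0,a_2+a_0\}\geq\min\{n-2m,\frac{n-1}{2}\}=\frac{n-1}{2}\geq\frac{n+1}{2}-2m$, valid in both regimes. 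This orientation issue also undermines your inductive step: the statement ``the relabelled integral again carries at most one weak line singularity'' conceals that one must check \emph{which} exponent survives each integration. When $j_0\in\{\tau-1,\tau\}$ you must arrange for the weak singularity $E_{\eta_{j_0},j_0}$ to be the $E_{ww'xy}$-type factor (so that the surviving singularity keeps the exponent $\frac{n+1}{2}-2m$); this is precisely why the paper, in its Case~1 ($j_0=k-1$), integrates $x_k$ rather than $x_{k-1}$. Note also that in Lemma~\ref{propIkest1} all exponents are equal to $n+1-4m$, so there this orientation does not matter, which is why the mechanism is invisible until one proves the present lemma.
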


\begin{proof}
	We only show the proof when \eqref{indexcond2} holds, for the other case when \eqref{indexcond2'} holds can be shown in a parallel way.
	
	If $k=2$, then $j_0=\eta_2=1$ and $\{E_{\eta_1,1},E_{\eta_2,2}\}=\{E_{0,1},E_{1,2}\}$. One checks with Proposition \ref{twoline} to deduce
	\begin{equation*}
		\begin{split}
			&\int_{\mathbb{R}^{n}}\langle x_2\rangle^{-\beta_2-}|r_1|^{-a_1}|r_2|^{-a_2}|E_{0,1}|^{-q_1}|E_{1,2}|^{-q_2}\d x_2\\
			=&\int_{\mathbb{R}^{n}}\frac{\langle x_2\rangle^{-\beta_2-}\d x_2}{|x_1-x_2|^{a_1}|x_2-x_3|^{a_2}|E_{x_0x_1x_1x_2}|^{\frac{n+1}{2}-2m}|E_{x_1x_2x_2x_3}|^{n+1-4m}},\\
			\lesssim&|E_{x_0x_1x_1x_3}|^{-(\frac{n+1}{2}-2m)}|x_1-x_3|^{-\min\{a_1,a_2\}},\quad|x_1-x_3|>0,
		\end{split}
	\end{equation*}
	and then
	\begin{equation*}
		I_2\lesssim\int_{\mathbb{R}^n}\frac{\langle x_1\rangle^{-\beta_1-}\d x_1}{|x_0-x_1|^{a_0}|x_1-x_3|^{\min\{a_1,a_2\}}|E_{x_0x_1x_1x_3}|^{\frac{n+1}{2}-2m}}\lesssim1,\quad|x_0-x_3|\gtrsim1,
	\end{equation*}
	where we have used $a_1\geq n-2m$ to make sure $\min\{a_1,a_2\}+a_0\geq\frac{n-1}{2}$.
	
	If $k\geq3$, by the second assertion of Proposition \ref{proad}, we can find $\tau\in\mathbb{K}$ such that $x_{\tau}$ is independent of $E_{\eta_j,j}$ unless $j\in\{\tau-1,\tau\}$, satisfying either
	\begin{equation*}\label{j_01}
		j_0\leq\eta_k\leq\eta_{\tau+1}<\eta_\tau<\tau<\tau+1\leq k,
	\end{equation*}
	or
	\begin{equation*}\label{j_02}
		j_0\leq\eta_k<\tau=k.
	\end{equation*}
	We split the argument into two cases.

	\noindent\emph{Case 1: $j_0=\tau-1$.}
	
	We must have $j_0=\eta_k=k-1$ in this case. We apply Proposition \ref{twoline} to the integral with respect to $x_k$ and get
	\begin{equation*}\label{intxk}
		\begin{split}
			&\int_{\mathbb{R}^n}\frac{\langle x_k\rangle^{-\beta_k-}\d x_k}{|x_{k-1}-x_k|^{a_{k-1}}|x_k-x_{k+1}|^{a_k}|E_{x_{\eta_{k-1}}x_{\eta_{k-1}+1}x_{k-1}x_k}|^{\frac{n+1}{2}-2m}|E_{x_{k-1}x_kx_kx_{k+1}}|^{n+1-4m}}\\
			&\lesssim|E_{x_{\eta_{k-1}}x_{\eta_{k-1}+1}x_{k-1}x_{k+1}}|^{-(\frac{n+1}{2}-2m)}|x_{k-1}-x_{k+1}|^{-\min\{a_{k-1},a_k\}},\quad|x_{k-1}-x_{k+1}|>0,
		\end{split}
	\end{equation*}
	where we have used
	\begin{equation*}
		a_{k-1}+a_k+\beta_k-n\geq a_{k},\quad a_{k-1}+a_k-(n+1-4m)\geq a_k.
	\end{equation*}
	Consequently
	\begin{equation*}
		\begin{split}
			I_k\lesssim\int_{\mathbb{R}^{(k-1)n}}&\frac{\langle x_1\rangle^{-\beta_1-}\cdots\langle x_{k-1}\rangle^{-\beta_{k-1}-}}{|x_0-x_1|^{a_0}\cdots|x_{k-2}-x_{k-1}|^{a_{k-2}}|x_{k-1}-x_{k+1}|^{\min\{a_{k-1},a_k\}}}\\
			&\times\frac{\d x_1\cdots \d x_{k-1}}{|E_{\eta_1,1}|^{n+1-4m}\cdots|E_{\eta_{k-2},k-2}|^{n+1-4m}|E_{x_{\eta_{k-1}}x_{\eta_{k-1}+1}x_{k-1}x_{k+1}}|^{\frac{n+1}{2}-2m}}.
		\end{split}
	\end{equation*}
	The desired estimate is then implied by Proposition \ref{propIkest1}, where in the relevant conditions \eqref{indexcond1} one mainly checks by $a_{k-1}\geq n-2m$ that
	\begin{equation*}
		\min\{a_0+\min\{a_{k-1},a_k\},n+1-4m\}\geq\min\{\mbox{$\frac{n-1}{2}$},n+1-4m\}\geq\mbox{$\frac{n+1}{2}$}-2m.
	\end{equation*}
	
	\noindent\emph{Case 2: $j_0<\tau-1$.}
	
	Since $j_0<j_0+1<\tau$ in this case, we have $q_{j_0+1}=\cdots=q_\tau=\cdots=q_k=n+1-4m$. The same type of argument from \eqref{tau<s-1} to \eqref{2leqtauleqk-1} shows that $E_{\eta_j,j}$ is irrelevant in the integral with respect to $x_\tau$ if $j\notin\{\tau-1,\tau\}$, which is exactly \eqref{intxtau}, and therefore the application of Proposition \ref{twoline} gives
	\begin{equation*}\label{I_kcase2est}
		\begin{split}
			I_k\lesssim&\int_{\mathbb{R}^{(k-1)n}}\frac{\prod\limits_{j\in\mathbb{K}\setminus\{\tau\}}\langle x_j\rangle^{-\beta_\tau-}}{|x_{\tau-1}-x_{\tau+1}|^{\min\{a_{\tau-1},a_\tau\}}|E_{x_{\eta_{\tau-1}}x_{\eta_{\tau-1}+1}x_{\tau-1}x_{\tau+1}}|^{n+1-4m}}\\
			&\quad\quad\times\frac{\prod\limits_{j\in\mathbb{K}\setminus\{\tau\}}\d x_j}{\prod\limits_{j\in\mathbb{K}_0\setminus\{\tau-1,\tau\}}|r_j|^{a_j}\prod\limits_{j\in\mathbb{K}\setminus\{\tau-1,\tau\}}|E_{x_{\eta_j}x_{\eta_j+1}x_jx_{j+1}}|^{q_j}}.
		\end{split}
	\end{equation*}
	If $\tau=k$, then $\min\{a_{\tau-1},a_\tau\}\geq\frac{n-1}{2}$. If $\tau<k$, one then has $1<\tau<k$ and $\min\{a_{\tau-1},a_\tau\}\geq n+1-4m$ by the assumptions in this lemma. Therefore the estimate can be immediately reduced to that of $I_{k-2}$, and the proof is complete.
\end{proof}

\begin{lemma}\label{propIkest3}
	Suppose $n\geq4m+1$, $k\geq2$, $\{(\eta_j,j);\,j\in\mathbb{K}\}$ is admissible in \eqref{Ix_0x_k}, and
	\begin{equation*}\label{indexcond3}
		\begin{cases}
			n-2m\leq a_i\leq n-2\,(a\leq i\leq k-1),\,\,0\leq a_0,a_k\leq n-2,\\
			\beta_i\geq2m\,(i\in\mathbb{K}),\\
			q_i=n+1-4m\,\,(i\in\mathbb{K}).
		\end{cases}
	\end{equation*}
	\begin{itemize}
		\item [1)] If $n-2\geq a_0+a_k\geq\frac{n-1}{2}$, then
		\begin{equation*}
			I_k(x_0,x_{k+1};\beta_1,\cdots,\beta_k,a_0,\cdots,a_k,q_1,\cdots,q_k)\lesssim1,\quad0<|x_0-x_{k+1}|\lesssim1.
		\end{equation*}
		
		\item [2)] If $i_0\in\mathbb{K}\setminus\{k\}$ and $a_0,a_k\geq\frac{n-1}{2}$, then
		\begin{equation*}
			\int_{\mathbb{R}^{kn}}\frac{\langle x_1\rangle^{-\beta_1-}\cdots\langle x_k\rangle^{-\beta_k-}\d x_1\cdots \d x_k}{\langle x_{i_0}-x_{i_0+1}\rangle^\frac{n-1}{2}\prod_{i\in\mathbb{K}_0\setminus\{i_0\}}|x_i-x_{i+1}|^{a_i}\prod_{i\in\mathbb{K}}|E_{\eta_i,i}|^{n+1-4m}}\lesssim1,\quad0<|x_0-x_{k+1}|\lesssim1.
		\end{equation*}
	\end{itemize}
\end{lemma}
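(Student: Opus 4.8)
The plan is to prove both parts by induction on $k$, running the same reduction scheme already used for Lemmas \ref{propIkest1} and \ref{propIkest2}. Given the admissibility of $\{(\eta_j,j);\,j\in\mathbb{K}\}$, Proposition \ref{proad} lets me select a variable $x_\tau$ that occurs in at most two of the line singularities $|E_{\eta_j,j}|$ (namely for $j\in\{\tau-1,\tau\}$); after, if necessary, a triangle-inequality splitting of a cluster of line singularities into two genuinely different types (exactly as in Case 3 of the proof of Lemma \ref{propIkest1}), I integrate $x_\tau$ out using Proposition \ref{twoline}. Relabelling the remaining $k-1$ (or $k-2$) variables produces an integral of the same shape $I_{k-1}$ (or $I_{k-2}$), and the core of the argument is to check that the new index family $\{(\tilde\eta_j,j)\}$ is again admissible and that the new exponents $\tilde a_i,\tilde\beta_i,\tilde q_i$ still satisfy the hypotheses of the relevant part; these verifications are of the same combinatorial nature as in Lemmas \ref{propIkest1}, \ref{propIkest2}. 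The induction bottoms out at $k=1,2$, where the integral is handled directly by Proposition \ref{twoline}, Lemma \ref{lmEd} and Lemma \ref{lem3.10}.

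The feature that distinguishes this lemma from Lemmas \ref{propIkest1} and \ref{propIkest2} is the short-range regime $0<|x_0-x_{k+1}|\lesssim1$. Each time a variable is integrated out, Proposition \ref{twoline} produces a two-sided spatial factor $\langle|x-z|^{-\max\{0,k_1+l_1-n\}}\rangle\langle x-z\rangle^{-\min\{\cdots\}}$; in the long-range setting one discards the first factor and uses that the second is $\lesssim1$, whereas here one must instead ensure the first, potentially singular, factor stays bounded, i.e. that no point singularity is created at the coalescing pair. For an interior merge this is automatic: $\max\{a_{\tau-1},a_\tau\}\le n-2<n$ forces $\max\{0,a_{\tau-1}+a_\tau-n\}\le\min\{a_{\tau-1},a_\tau\}$, so the newly created point singularity is absorbed into the surviving factor $\langle\cdot\rangle^{-\min\{a_{\tau-1},a_\tau\}}$, and the new interior exponent $\min\{a_{\tau-1},a_\tau\}$ is again in $[n-2m,n-2]$. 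For a merge involving an endpoint, or for the final integration, one needs the ``endpoint budget'' to stay $\le n-2$; this is where the hypothesis $a_0+a_k\le n-2$ of part 1) is used, and it is preserved under the reduction because replacing $a_0$ by $\min\{a_0,a_1\}$ (and symmetrically for $a_k$) can only decrease $a_0+a_k$, while $a_0+a_k\ge\frac{n-1}{2}$ is likewise preserved using $a_i\ge n-2m>\frac{n-1}{2}$ at interior slots.

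For part 2) the factor $\langle x_{i_0}-x_{i_0+1}\rangle^{-\frac{n-1}{2}}$ is a weight sitting at an interior slot rather than a point singularity, and the two endpoints are strong, $a_0,a_k\ge\frac{n-1}{2}$. The same reduction applies, with the $i_0$-slot processed throughout as a $\langle\cdot\rangle$-weight: when $x_{i_0}$ or $x_{i_0+1}$ is integrated out one applies Proposition \ref{twoline} with the corresponding singular exponent ($k_1$ or $l_1$) equal to $0$ and the decay exponent ($k_2$ or $l_2$) equal to $\frac{n-1}{2}$, which supplies decay at infinity while creating no diagonal singularity; after the merge the weight either disappears or is inherited by an adjacent slot, and one keeps track of which endpoint condition it must be matched against. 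The base case is $k=2$ with $i_0=1$, treated directly, and an alternative route is to integrate out first all $x_j$ with $j\notin\{i_0,i_0+1\}$ (using the strong endpoints) and then finish with the two remaining variables.

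I expect the main obstacle to be precisely the bookkeeping in the reduction step: as in the proofs of Lemmas \ref{propIkest1} and \ref{propIkest2}, one must split into cases according to the position of $\eta_\tau$ relative to $\tau$ and to the neighbours of the integrated variable, describe exactly how $\{(\eta_j,j)\}$ transforms, and re-verify admissibility (Definition \ref{defadmissible}) together with all the inequalities among the $\tilde a_i,\tilde\beta_i,\tilde q_i$ — now while simultaneously keeping the short-range-compatible invariants ($a_0+a_k\le n-2$ in part 1), resp. the $\langle\cdot\rangle^{\frac{n-1}{2}}$-weighted endpoint structure in part 2) intact at every step. A secondary technical point is the base-case estimate near the diagonal $x_0\approx x_{k+1}$: there one separates $|x_1-x_{k+1}|\lesssim1$ from $|x_1-x_{k+1}|\gtrsim1$ and uses, in the first region, local integrability of the point and line singularities (via $n+1-4m<n-1$, $a_i<n$, and $a_0+a_k\le n-2$ to control the diagonal blow-up), and in the second region the decay supplied by the weights $\langle x_i\rangle^{-\beta_i-}$.
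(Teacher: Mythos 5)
Your proposal is correct and takes essentially the same route as the paper: use the same $\tau$-selection (Proposition \ref{proad}) and Proposition \ref{twoline} merge already run in Lemmas \ref{propIkest1}--\ref{propIkest2} to reduce $I_k$ to $I_2$, and then treat the $k=2$ base case directly. Your observation that the only genuinely new point in the short-range regime is keeping the local factor $\langle|x-z|^{-\max\{0,k_1+l_1-n\}}\rangle$ bounded — absorbed into the decay factor when $\max\{a_{\tau-1},a_\tau\}\le n-2$, with the endpoint budget $a_0+a_k\le n-2$ monotone under merges — is exactly what the paper's terse ``same type of argument'' leaves to the reader.
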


\begin{proof}
	The same type of argument from \eqref{maxtau} to \eqref{I_ktoI_k-1} with the help of the second assertion of Proposition \ref{proad} shall finally reduce the estimate to the case of $k=2$. Without repeating the discussion, we only prove when $k=2$ in the following.
	
	To show 1), note that $\eta_2=0$ or $\eta_2=1$ holds, and
	\begin{equation*}
		\begin{split}
			&\int_{\mathbb{R}^{n}}\frac{\langle x_2\rangle^{-\beta_2-}\d x_2}{|x_1-x_2|^{a_1}|x_2-x_3|^{a_2}|E_{x_0x_1x_1x_2}|^{n+1-4m}|E_{x_{\eta_2}x_{\eta_2+1}x_2x_3}|^{n+1-4m}}\\
			\lesssim&|E_{x_0x_1x_1x_3}|^{-(n+1-4m)}|x_1-x_3|^{-\min\{a_1,a_2\}},\quad|x_1-x_3|>0,
		\end{split}
	\end{equation*}
	where in the case of $\eta_2=0$ we have used
	\begin{equation*}
		\begin{split}
			&\frac{1}{|E_{x_0x_1x_1x_2}|^{n+1-4m}|E_{x_0x_1x_2x_3}|^{n+1-4m}}\\
			\lesssim&\frac{1}{|E_{x_0x_1x_1x_2}|^{n+1-4m}|E_{x_1x_2x_2x_3}|^{n+1-4m}}+\frac{1}{|E_{x_0x_1x_2x_3}|^{n+1-4m}|E_{x_1x_2x_2x_3}|^{n+1-4m}}.
		\end{split}
	\end{equation*}
	Therefore Proposition \ref{twoline} implies
	\begin{equation*}
		I_2\lesssim\int_{\mathbb{R}^n}\frac{\langle x_1\rangle^{-\beta_1-}\d x_1}{|x_0-x_1|^{a_0}|x_1-x_3|^{\min\{a_1,a_2\}}|E_{x_0x_1x_1x_3}|^{n+1-4m}}\lesssim1,\quad0<|x_0-x_3|\lesssim1.
	\end{equation*}
	
	We next show 2). If $k=2$, then $i_0=1$, and one checks with Proposition \ref{twoline} using $a_0\geq\frac{n-1}{2}$ that
	\begin{equation*}
		\begin{split}
			&\int_{\mathbb{R}^n}\frac{\langle x_1\rangle^{-\beta_1-} \mathrm{d}x_1}{|x_0-x_1|^{a_0}\langle x_1-x_2\rangle^\frac{n-1}{2}|E_{x_0x_1x_1x_2}|^{n+1-4m}|E_{x_{\eta_2}x_{\eta_2+1}x_2x_3}|^{n+1-4m}}\\
			\lesssim&|E_{x_0x_2x_2x_3}|^{-(n+1-4m)},\quad|x_0-x_2|>0,
		\end{split}
	\end{equation*}
	and therefore
	\begin{equation*}
		\begin{split}
			I_2\lesssim\int_{\mathbb{R}^n}\frac{\langle x_2\rangle^{-\beta_2-}\mathrm{d}x_2}{|x_2-x_3|^{a_3}|E_{x_0x_2x_2x_3}|^{n+1-4m}}\lesssim1,\quad0<|x_0-x_3|\lesssim1.
		\end{split}
	\end{equation*}

\end{proof}

The following is our second main technical result in Section \ref{section5}.

\begin{proposition}\label{intmainest}
	We have
	\begin{equation*}
		I^{(1)}:=\int_{\mathbb{R}^{kn}}\frac{X^\frac{n-1}{2}|V^{(\alpha_1)}(x_1)|\cdots|V^{(\alpha_k)}(x_k)|\d x_1\cdots \d x_k}{|r_0|^{n-2-l_0+d_0}\cdots|r_k|^{n-2-l_k+d_k}\|F_1\|^{p_1}\cdots\|F_s\|^{p_s}}\lesssim1,\quad|x_0-x_{k+1}|\gtrsim1,
	\end{equation*}
	and
	\begin{equation*}
		I^{(2)}:=\int_{\mathbb{R}^{kn}}\frac{X^{n-L_k-2m}|V^{(\alpha_1)}(x_1)|\cdots|V^{(\alpha_k)}(x_k)|\d x_1\cdots \d x_k}{|r_0|^{n-2-l_0+d_0}\cdots|r_k|^{n-2-l_k+d_k}\|F_1\|^{p_1}\cdots\|F_s\|^{p_s}}\lesssim1,\quad|x_0-x_{k+1}|\lesssim1,
	\end{equation*}
	where $V$, $F_i$ and all the indices are the same ones in \eqref{stepmu}.
\end{proposition}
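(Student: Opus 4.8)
The plan is to reduce both $I^{(1)}$ and $I^{(2)}$ to the canonical integrals $I_k(x_0,x_{k+1};\vec{\beta},\vec{a},\vec{q})$ of \eqref{Ix_0x_k} with an \emph{admissible} index set, and then invoke Lemmas \ref{propIkest1}, \ref{propIkest2} and \ref{propIkest3}. Throughout, the pointwise bounds $|V^{(\alpha_i)}(x_i)|\lesssim\langle x_i\rangle^{-\beta_i-}$ furnished by \eqref{derest} give $\beta_i=\frac{3n+1}{2}-2m$ when $\alpha_i\neq0$ and $\beta_i=n+2$ when $\alpha_i=0$; since $n>4m$, both exceed the thresholds $2m$ and $\frac{n+1}{2}$ demanded of the weight exponents in the three lemmas.

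\emph{Step 1 (collapsing line singularities).} Apply Proposition \ref{lmlinereduction} to bound the cluster $\|F_1\|^{-p_1}\cdots\|F_s\|^{-p_s}$ by a product $\prod_{j\in\mathbb{K}}|E_{\iota_j,j}|^{-q_j}$ over the single line singularities $E_{\iota_j,j}$ attached to the vertices of \eqref{defh_j}. In either branch of that proposition one has $q_j=n+1-4m$ for every $j$ except one distinguished index (namely $j^{(i_0)}$ in branch 1), or $k$ in branch 2)), where the exponent equals $\frac{n+1}{2}-2m$ or $\min\{n+1-4m,\frac{n+1}{2}-2m+d_0+d_k\}$; moreover in branch 1) one gets the extra inclusion $j^{(i_0)}\leq\iota_j$ for $j>j^{(i_0)}$. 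By Proposition \ref{lmadmissible} the set $\{(\iota_j,j);\,j\in\mathbb{K}\}$ is admissible in the sense of Definition \ref{defadmissible}, which is exactly the standing hypothesis of Lemmas \ref{propIkest1}--\ref{propIkest3}. Finally, since every $|E_{i,j}|\le2$, one may freely enlarge any $q_j$ up to $n+1-4m$ at the cost of a harmless constant; this is convenient for $I^{(2)}$, where we normalise all of the $q_j$ to $n+1-4m$.

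\emph{Step 2 (absorbing the growth of $X$).} Because $n$ is odd and $l_i,m\in\mathbb{Z}$, both $\frac{n-1}{2}$ and $n-L_k-2m$ are nonnegative integers, so $X^{\bullet}=(|r_0|+\cdots+|r_k|)^{\bullet}$ splits by the multinomial theorem into a finite sum of monomials $\prod_{i\in\mathbb{K}_0}|r_i|^{c_i}$ with $\sum_ic_i$ equal to the exponent in question. In each monomial the two boundary factors $|r_0|^{c_0}$, $|r_k|^{c_k}$ are absorbed into the point singularities $|r_0|^{-(n-2-l_0+d_0)}$, $|r_k|^{-(n-2-l_k+d_k)}$, yielding modified exponents $\tilde{a}_0,\tilde{a}_k\in[0,n-2]$; since $c_0+c_k\le\sum_ic_i$ and $l_0,l_k\le\frac{n-3}{2}$, the elementary computation gives $\tilde{a}_0+\tilde{a}_k\ge\frac{n-1}{2}$, and in the cases where Lemma \ref{propIkest2}'s stronger condition $\tilde{a}_0,\tilde{a}_k\ge\frac{n-1}{2}$ is needed one falls back on its weaker alternative \eqref{indexcond2'}. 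Every interior factor $|r_i|^{c_i}$ ($1\le i\le k-1$) is instead absorbed into the potential weights via $|r_i|\le\langle x_i\rangle\langle x_{i+1}\rangle$; since $c_{i-1}+c_i\le\sum_jc_j$, each weight loses at most $\frac{n-1}{2}$ (resp.\ $n-L_k-2m$) units, so $\tilde{\beta}_j\ge\beta_j-\frac{n-1}{2}\ge\frac{n+1}{2}$ (resp.\ $\tilde{\beta}_j\ge\beta_j-(n-L_k-2m)\ge 2m$), keeping the weight hypotheses intact.

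\emph{Step 3 (invoking the three lemmas).} After Steps 1--2 each of $I^{(1)}$, $I^{(2)}$ is a finite sum of integrals $I_k(x_0,x_{k+1};\tilde{\beta}_1,\ldots,\tilde{\beta}_k,\tilde{a}_0,\ldots,\tilde{a}_k,q_1,\ldots,q_k)$ with an admissible index set. For $I^{(1)}$ (where $|x_0-x_{k+1}|\gtrsim1$) one uses Lemma \ref{propIkest1} when the distinguished exponent is $n+1-4m$ and Lemma \ref{propIkest2} when it equals $\frac{n+1}{2}-2m$ --- the inclusion $j^{(i_0)}\leq\iota_j$ from branch 1) of Proposition \ref{lmlinereduction} being precisely the ``$j_0\le\eta_j$'' hypothesis of Lemma \ref{propIkest2}. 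For $I^{(2)}$ (where $|x_0-x_{k+1}|\lesssim1$) one applies Lemma \ref{propIkest3}(1), resorting to Lemma \ref{propIkest3}(2) in the degenerate sub-case where, after the boundary absorption, some interior link only carries a $\langle\cdot\rangle^{-\frac{n-1}{2}}$ weight. Summing the finitely many monomial contributions gives $I^{(1)},I^{(2)}\lesssim1$.

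\emph{Main obstacle.} The real work is the bookkeeping in Steps 2--3: one must verify, monomial by monomial and case by case (according to which branch of Proposition \ref{lmlinereduction} occurred, whether $\sum_j|\alpha_j|$ vanishes, and the size of $k_0$), that the perturbed parameters $\tilde{a}_i$, $\tilde{\beta}_j$, $q_j$ land \emph{exactly} in the admissible windows of Lemmas \ref{propIkest1}--\ref{propIkest3}. The subtlest point is that the interior point-exponents $n-2-l_i+d_i$, which under the constraint $J=\sum_{i\in\mathbb{K}}|\alpha_i|+\sum_{i\in\mathbb{K}_0}d_i=L_{k_0}+\cdots+L_{k-1}$ of Proposition \ref{stepmuprop}(1) are only guaranteed to be $\ge\frac{n-1}{2}$, may fall below $n-2m$; such ``underused'' indices have to be integrated out first, in the order dictated by Proposition \ref{proad}, so that they are always paired with a line singularity $|E_{\iota_i,i}|^{-(n+1-4m)}$ and handled by Proposition \ref{twoline} rather than the point-singularity Lemma \ref{lmEd}, and so that the exponent and the residual line singularity propagated by Proposition \ref{twoline} stay within the ranges of \eqref{indexcond1}--\eqref{indexcond2'}. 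Coordinating this collapse order with the admissibility of $\{(\iota_j,j)\}$ and with the allocation of the $X$-growth between boundary exponents and potential weights is the crux; everything else is a direct application of the already-established estimates.
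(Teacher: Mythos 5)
Your high-level skeleton (Step 1: collapse $\|F_1\|^{-p_1}\cdots\|F_s\|^{-p_s}$ via Proposition \ref{lmlinereduction} into single line singularities $\prod_j|E_{\iota_j,j}|^{-q_j}$; note admissibility via Proposition \ref{lmadmissible}; Step 3: invoke Lemmas \ref{propIkest1}--\ref{propIkest3}) matches the paper, but your Step 2 both deviates from the paper's argument and leaves a genuine gap.

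The paper does not perform a multinomial expansion of $X^\bullet$. It instead decomposes the domain into the regions $D_j=\{X\sim|r_j|\}$, so that on each piece $X^{\frac{n-1}{2}}\sim|r_j|^{\frac{n-1}{2}}$. Crucially, the paper then \emph{boosts} every interior point exponent up from $n-2-l_i+d_i$ (which, since $d_i$ is only bounded above by $\max\{0,l_i+2-2m\}$, may be as small as $\frac{n-1}{2}$ and is in general strictly less than $n-2m$) to $\max\{n-2-l_i+d_i,\,n-2m\}$, paying for both the $|r_j|^{\frac{n-1}{2}}$ factor and this boost with the surplus decay of the weights via $\langle x_i\rangle^{-1}\langle x_{i+1}\rangle^{-1}\lesssim|r_i|^{-1}$. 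The bookkeeping $n-2m-\min\{n-2-l_i+d_i,n-2m\}\le\frac{n+1}{2}-2m$ and $\beta_i-2m=\frac{3n+1}{2}-4m=\frac{n-1}{2}+2\bigl(\frac{n+1}{2}-2m\bigr)$ is exactly tight: each weight must both contribute $\frac{n-1}{2}$ to absorbing the $X$-power and $\frac{n+1}{2}-2m$ to each of its two neighbouring exponent boosts. Only after this boost do the interior $a_i$ land in the window $[n-2m,\,n-2]$ demanded by Lemmas \ref{propIkest1}--\ref{propIkest3}.

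Your Step 2 leaves the interior exponents at $n-2-l_i+d_i$ and only absorbs the interior monomial factors $|r_i|^{c_i}$ into the weights. You acknowledge in your ``Main obstacle'' paragraph that the resulting interior exponents may lie below $n-2m$, but the remedy you sketch --- reordering the integration so that ``underused'' indices are paired with line singularities and handled by Proposition \ref{twoline} directly --- does not fit the inductive structure of Lemmas \ref{propIkest1}--\ref{propIkest3}: those lemmas' proofs repeatedly use the floor $a_i\ge n-2m$ (for instance, $\min\{a_{\tau-1},a_\tau\}\ge n+1-4m$ when $1<\tau<k$, and $\min\{a_0+\min\{a_{k-1},a_k\},n+1-4m\}\ge\frac{n+1}{2}-2m$ in Case 1 of Lemma \ref{propIkest2}), and Proposition \ref{twoline} itself returns a bound $\langle x-z\rangle^{-\min\{k_2,l_2,\dots,k_2+l_2-\max\{p,q\}\}}$ that degenerates when the point exponents fed into it are too small relative to $p,q$. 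Without the explicit exponent boost, the hypotheses of the three lemmas simply are not met, and the proposed ad hoc reordering would require re-deriving those lemmas in a strictly more general form. In short: your multinomial decomposition could in principle be repaired by adding the same boosting step the paper uses, but as written your Step 2 leaves this essential ingredient out.
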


\begin{proof}
	To estimate $I^{(1)}$, Proposition \ref{lmlinereduction} first implies either
	\begin{equation}\label{I1decom1}
		\begin{split}
			I^{(1)}\lesssim\int_{\mathbb{R}^{kn}}&X^\frac{n-1}{2}\prod_{i\in\mathbb{K}}|V^{(\alpha_i)}(x_i)|\prod_{i\in\mathbb{K}_0}|r_i|^{-(n-2-l_i+d_i)}\\
			&\quad\quad\times|E_{\iota_{j^{(i_0)}},j^{(i_0)}}|^{-(\frac{n+1}{2}-2m)}\prod_{j\in\mathbb{K}\setminus\{j^{(i_0)}\}}|E_{\iota_j,j}|^{-(n+1-4m)}\d x_1\cdots \d x_k,
		\end{split}
	\end{equation}
	or
	\begin{equation}\label{I1decom2}
		\begin{split}
			I^{(1)}\lesssim\int_{\mathbb{R}^{kn}}&X^\frac{n-1}{2}\prod_{i\in\mathbb{K}}|V^{(\alpha_i)}(x_i)|\prod_{i\in\mathbb{K}_0}|r_i|^{-(n-2-l_i+d_i)}\\
			&\times|E_{\iota_k,k}|^{-\min\{n+1-4m,\frac{n+1}{2}-2m+d_0+d_k\}}\prod_{j\in\mathbb{K}\setminus\{k\}}|E_{\iota_j,j}|^{-(n+1-4m)}\d x_1\cdots \d x_k,
		\end{split}
	\end{equation}
	where in the first estimate, $j^{(i_0)}\leq\iota_j$ holds for all $j\in\{j^{(i_0)}+1,\cdots,k\}$ if $j^{(i_0)}<k$. Recall that $\{(\iota_j,j;);j\in\mathbb{K}\}$ is admissible by Proposition \ref{lmadmissible}.
	
	We now decompose the RHS of either of these estimates into $I^{(1)}_j$ ($j\in\mathbb{K}_0$) according to the regions $D_j=\{X\sim|r_j|\}$.
	
	If $j\notin\{0,k\}$, then in $D_j$
	\begin{equation*}
		\begin{split}
			&X^\frac{n-1}{2}\prod_{i\in\mathbb{K}}|V^{(\alpha_i)}(x_i)|\prod_{i\in\mathbb{K}_0}|r_i|^{-(n-2-l_i+d_i)}\\
			\lesssim&\langle x_1\rangle^{-\frac{n+1}{2}-}\langle x_k\rangle^{-\frac{n+1}{2}-}\prod_{i\in\mathbb{K}\setminus\{1,k\}}\langle x_i\rangle^{-2m-}|r_0|^{-(n-2-l_0+d_0)}|r_k|^{-(n-2-l_k+d_k)}\prod_{i\in\mathbb{K}_0}|r_i|^{-\max\{n-2-l_i+d_i,n-2m\}},
		\end{split}
	\end{equation*}
	where we have used the decay $|V^{(\alpha_i)}(x_i)|\lesssim\langle x_i\rangle^{-\frac{3n+1}{2}-2m-}$,
	\begin{equation}\label{fangsuo}
		\langle x_i\rangle^{-1}\langle x_{i+1}\rangle^{-1}\lesssim\langle x_1-x_{i+1}\rangle^{-1}\leq|x_1-x_{i+1}|^{-1},	
	\end{equation}
	$n-2-l_i+d_i\geq\frac{n-1}{2}$ and $n-2m-\min\{n-2-l_i+d_i,n-2m\}\leq\frac{n+1}{2}-2m$. Now the RHS of both \eqref{I1decom1} and \eqref{I1decom2} when restricted in $D_j$ can be estimated as
	\begin{equation*}
		\begin{split}
			I^{(1)}_j\lesssim\int_{\mathbb{R}^{kn}}&\langle x_1\rangle^{-\frac{n+1}{2}-}\langle x_k\rangle^{-\frac{n+1}{2}-}\prod_{i\in\mathbb{K}\setminus\{1,k\}}\langle x_i\rangle^{-2m-}|r_0|^{-(n-2-l_0+d_0)}\\
			&\times|r_k|^{-(n-2-l_k+d_k)}\prod_{i\in\mathbb{K}_0\setminus\{0,k\}}|r_i|^{-\max\{n-2-l_i+d_i,n-2m\}}\prod_{j\in\mathbb{K}}|E_{\iota_j,j}|^{-(n+1-4m)}\d x_1\cdots\d x_k,
		\end{split}
	\end{equation*}
	and the desired bound is checked by Lemma \ref{propIkest1}.
	
	Similarly in $D_0$ or $D_k$, it follows that
	\begin{equation*}
		\begin{split}
			X^\frac{n-1}{2}\prod_{i\in\mathbb{K}}|V^{(\alpha_i)}(x_i)|\prod_{i\in\mathbb{K}_0}|r_i|^{-(n-2-l_i+d_i)}\lesssim\prod_{i\in\mathbb{K}}\langle x_i\rangle^{-\frac{n+1}{2}-}|r_0|^{-a_0}|r_k|^{-a_k}\prod_{i\in\mathbb{K}_0}|r_i|^{-\max\{n-2-l_i+d_i,n-2m\}},
		\end{split}
	\end{equation*}
	where either
	\begin{equation*}
		a_0+a_k\geq n-2-l_0-d_0+d_k\geq\mbox{$\frac{n-1}{2}$}+d_0+d_k,
	\end{equation*}
	or
	\begin{equation*}
		a_0+a_k\geq n-2-l_k+d_0+d_k\geq\mbox{$\frac{n-1}{2}$}+d_0+d_k,
	\end{equation*}
	and we have used $n-2-l_i-\frac{n-1}{2}\geq0$. So for $I^{(1)}_k$, we have
	\begin{equation*}
		\begin{split}
			I^{(1)}_k\lesssim\int_{\mathbb{R}^{kn}}&\prod_{i\in\mathbb{K}}\langle x_i\rangle^{-\frac{n+1}{2}-}|r_0|^{-a_0}|r_k|^{-a_k}\prod_{i\in\mathbb{K}_0\setminus\{0,k\}}|r_i|^{-\max\{n-2-l_i+d_i,n-2m\}}\\
			&\times|E_{\iota_{j^{(i_0)}},j^{(i_0)}}|^{-(\frac{n+1}{2}-2m)}\prod_{j\in\mathbb{K}\setminus\{j^{(i_0)}\}}|E_{\iota_j,j}|^{-(n+1-4m)}\d x_1\cdots\d x_k,
		\end{split}
	\end{equation*}
	or
	\begin{equation*}
		\begin{split}
			I^{(1)}_k\lesssim\int_{\mathbb{R}^{kn}}&\prod_{i\in\mathbb{K}}\langle x_i\rangle^{-\frac{n+1}{2}-}|r_0|^{-a_0}|r_k|^{-a_k}\prod_{i\in\mathbb{K}_0\setminus\{0,k\}}|r_i|^{-\max\{n-2-l_i+d_i,n-2m\}}\\
			&\times|E_{\iota_k,k}|^{-\min\{n+1-4m,\frac{n+1}{2}-2m+d_0+d_k\}}\prod_{j\in\mathbb{K}\setminus\{k\}}|E_{\iota_j,j}|^{-(n+1-4m)}\d x_1\cdots\d x_k,
		\end{split}
	\end{equation*}
	and the desired bound for $I^{(1)}_k$ can be checked by Lemma \ref{propIkest1} and Lemma \ref{propIkest2}, while the bound for $I^{(1)}_0$ follows in the same way.

	To estimate $I^{(2)}$, let $L_k=l_{i_1}+2-2m$ for some $i_1\in\mathbb{K}_0$. First note that in $D_j$ we have
	\begin{equation*}
		X^{n-L_k-2m}|r_j|^{-(n-2-l_j+d_j)}\lesssim|r_j|^{-(l_{i_1}-l_j+d_j)}.
	\end{equation*}
	
	If $l_j+2-2m\leq0$, then $j\notin I_{0,2}^*$, $d_j=0$ and
	\begin{equation*}
		\begin{split}
			\sum_{i\in I_{0,2}^*}(n-2-l_i+d_i)+l_{i_1}-l_j\leq&\sum_{i\in I_{0,2}^*\setminus\{i_1\}}(n-2-l_i)+\sum_{i=k_0}^{k-1}L_i+(n-2-l_j)\\
			\leq&(n-2m)(k-k_0)+n-2-l_j\leq(n-2)(k-k_0+1).
		\end{split}
	\end{equation*}
	Since $\# I_{0,2}^*=k-k_0+1$ and $\frac{n-1}{2}\leq n-2-l_i+d_i\leq n-2m$ holds for $i\in I_{0,2}^*$, we have the existence of $\delta_i\geq0$ for $i\in I_{0,2}^*$ such that $\frac{n-1}{2}\leq n-2-l_i+d_i+\delta_i\leq n-2$ holds, and
	\begin{equation*}
		\sum_{i\in I_{0,2}^*}(n-2-l_i+d_i+\delta_i)=\sum_{i\in I_{0,2}^*}(n-2-l_i+d_i)+l_{i_1}-l_j.
	\end{equation*}
	This, together with \eqref{fangsuo} and the fact that $|r_i|\lesssim|r_j|$ for $i\in\mathbb{K}_0$, implies the following bounds. We decompose $I^{(2)}$ into $I^{(2)}_j$ ($j=0,\cdots,k$) by restricting the integral in $D_j$. If $j\notin\{0,k\}$, then $I^{(2)}_j$ is bounded by
	\begin{equation*}
		\begin{split}
			\int_{\mathbb{R}^{kn}}&\langle x_{j-1}-x_j\rangle^{-\frac{n-1}{2}}\prod_{i\in\mathbb{K}}\langle x_i\rangle^{-2m-}\prod_{i\in I_{0,2}^*}|r_i|^{-\max\{n-2-l_i+d_i+\delta_i,n-2m\}}\prod_{i\in\{0,k\}}|r_i|^{-(n-2-l_i+d_i)}\\
			&\times\prod_{i\in\mathbb{K}_0\setminus(I_{0,2}^*\cup\{j\}\cup\{0,k\})}|r_i|^{-\max\{n-2-l_i+d_i,n-2m\}}\prod_{i\in\mathbb{K}}|E_{\iota_i,i}|^{-(n+1-4m)}\d x_1\cdots\d x_k.
		\end{split}
	\end{equation*}
	If $j\in\{0,k\}$, then $I^{(2)}_j$ is bounded by
	\begin{equation*}
		\begin{split}
			\int_{\mathbb{R}^{kn}}&\prod_{i\in\mathbb{K}}\langle x_i\rangle^{-2m-}\prod_{i\in I_{0,2}^*}|r_i|^{-\max\{n-2-l_i+d_i+\delta_i,n-2m\}}\prod_{i\in\{0,k\}\setminus\{j\}}|r_i|^{-(n-2-l_i+d_i)}\\
			&\times\prod_{i\in\mathbb{K}_0\setminus(I_{0,2}^*\cup\{j\}\cup\{0,k\})}|r_i|^{-\max\{n-2-l_i+d_i,n-2m\}}\prod_{i\in\mathbb{K}}|E_{\iota_i,i}|^{-(n+1-4m)}\d x_1\cdots\d x_k.
		\end{split}
	\end{equation*}
	These bounds can be estimated by Lemma \ref{propIkest3}, and we remark that the conditions for $a_0$ and $a_k$ in the two cases of Lemma \ref{propIkest3} follow from the fact that $n-2-l_i+d_i\geq \frac{n-1}{2}$ when $i=0,k$, and the other conditions are easy to check.
	
	If $l_j+2-2m>0$, then $j\in I_{0,2}^*$ and
	\begin{equation*}
		\sum_{i\in I_{0,2}^*\setminus\{j\}}(n-2-l_i+d_i)+l_{i_1}-l_j+d_{j}\leq(n-2m)(k-k_0),
	\end{equation*}
	so the estimate can also be similarly reduced to the application of Lemma \ref{propIkest3}, and we save the parallel details here.
\end{proof}

\subsection{Completing the estimate for $\Omega_k^{high,2}(t,x,y)$}\label{section5.5}\

Recall that we are now left to obtain estimate \eqref{lgood} for $I^{\vec{l}}(t;x,y)$ defined by \eqref{Ivecl} under the assumption \eqref{lbad}, which completes the estimate \eqref{Ogema2bound} for $\Omega_k^{high,2}(t,x,y)$.

Apply Proposition \ref{stepmuprop} with $\mu=k-k_0$ (and $T$ replaced by $\delta T$ there) to the spatial integrals in \eqref{Ivecl}, and note that $J=L_{k_0}+\cdots+K_{k-1}$ must hold by \eqref{stepmuconstraint1}, we have
\begin{equation}\label{related}
	\begin{split}
		&I^{\vec{l}}(t;x,y)\\
		=&\int_0^{+\infty}e^{\mathrm{i}t\lambda^{2m}}\tilde{\chi}(\lambda^{2m})\lambda^{2m-1-\sum_{i\in\mathbb{K}_0}(2m-2-l_i)-L_{k_0}-\cdots-L_{k-1}}\\
		&\times\left(\int_{\mathbb{R}^{kn}}e^{\pm\mathrm{i}\lambda X}\frac{\left(\prod_{i\in\mathbb{K}}V^{(\alpha_i)}(x_i)\right)g(\lambda,r_0,\cdots,r_k,F_1,\cdots,F_s)\psi(\mbox{$\frac{X}{\delta T}$})}{\prod_{i\in\mathbb{K}_0}|r_i|^{n-2-l_i+d_i}\prod_{i=1}^{s}\|F_i\|^{p_i}}\d x_1\cdots \d x_k\right)\d\lambda\\
		=&\int_{\mathbb{R}^{kn}}\frac{\prod_{i\in\mathbb{K}}V^{(\alpha_i)}(x_i)}{\prod_{i\in\mathbb{K}_0}|r_i|^{n-2-l_i+d_i}\prod_{i=1}^{s}\|F_i\|^{p_i}}\psi(\mbox{$\frac{X}{\delta T}$})\d x_1\cdots \d x_k\\
		&\times\int_0^{+\infty}e^{\mathrm{i}t\lambda^{2m}\pm\mathrm{i}\lambda X}\tilde{\chi}(\lambda^{2m})\lambda^{2m-1-\sum_{i\in\mathbb{K}_0}(2m-2-l_i)-L_{k_0}-\cdots-L_{k-1}}g(\lambda,r_0,\cdots,r_k,F_1,\cdots,F_s)\d\lambda,
	\end{split}
\end{equation}
with all properties illustrated in Proposition \ref{stepmuprop}, where we note that $X\geq\frac12\delta T\sim T$ holds in $\mathrm{supp}\,\psi(\frac{X}{\delta T})$. By definition of $L_i$ in \eqref{defL_i}, we have
\begin{equation*}
	\sum_{i\in\mathbb{K}_0}(2m-2-l_i)+L_{k_0}+\cdots+L_{k-1}\geq-L_k=2m-2-l_{i_0},
\end{equation*}
where $i_0=\sigma(k)$ (see \eqref{defL_i}), and consequently
\begin{equation*}
		\tilde{\chi}(\lambda^{2m})\lambda^{2m-1-\sum_{i\in\mathbb{K}_0}(2m-2-l_i)-L_{k_0}-\cdots-L_{k-1}}g(\lambda,r_0,\cdots,r_k,F_1,\cdots,F_s)\in S^{1+l_{i_0}}\left(\mbox{$(\frac{\lambda_0}{2},+\infty)$}\right).
\end{equation*}
Thus Lemma \ref{lemmaA.222} gives
\begin{equation*}
	\begin{split}
		&\left|\int_0^{+\infty}e^{\mathrm{i}t\lambda^{2m}\pm\mathrm{i}\lambda X}\tilde{\chi}(\lambda^{2m})\lambda^{2m-1-\sum_{i\in\mathbb{K}_0}(2m-2-l_i)-L_{k_0}-\cdots-L_{k-1}}g(\lambda,r_0,\cdots,r_k,F_1,\cdots,F_s)\d\lambda\right|\\
		\lesssim&|t|^{-\frac12+\mu_{1+l_{i_0}}}X^{-\mu_{1+l_{i_0}}},\quad \mbox{$X\geq\frac{\delta T}{2}\gtrsim T$},
	\end{split}
\end{equation*}
and we recall that $\mu_{1+l_{i_0}}$ is defined by \eqref{mu}. Similar to the calculation in \eqref{cal1} and \eqref{cal2} (recall $l_{i_0}=\max_{j\in\mathbb{K}_0}l_j$ by definition of $L_k$ in \eqref{defL_i}), it follows when $X\gtrsim T$ that
\begin{equation*}
	|t|^{-\frac12+\mu_{1+l_{i_0}}}X^{-\mu_{1+l_{i_0}}-\frac{n-1}{2}}\text{~and~}|t|^{-\frac12+\mu_{1+l_{i_0}}}X^{-\mu_{1+l_{i_0}}-(n-L_k-2m)},
\end{equation*}
are both bounded by the function
\begin{equation}\label{est}
	\begin{cases}
		|t|^{-\frac n2}(1+|t|^{-1}|x-y|)^{-\frac{n(m-1)}{2m-1}},\quad&|t|\gtrsim1,\\
		|t|^{-\frac{n}{2m}}(1+|t|^{-\frac{1}{2m}}|x-y|)^{-\frac{n(m-1)}{2m-1}},&0<|t|\lesssim1.
	\end{cases}
\end{equation}
If $|x-y|\gtrsim1$, we conclude
\begin{equation*}
	|I^{\vec{l}}(t,x,y)|\lesssim\eqref{est}\times\int_{\mathbb{R}^{kn}}\frac{X^\frac{n-1}{2}|V^{(\alpha_1)}(x_1)|\cdots|V^{(\alpha_k)}(x_k)|\d x_1\cdots \d x_k}{|r_0|^{n-2-l_0+d_0}\cdots|r_k|^{n-2-l_k+d_k}\|F_1\|^{p_1}\cdots\|F_s\|^{p_s}},
\end{equation*}
and if $|x-y|\lesssim1$, we conclude
\begin{equation*}
	|I^{\vec{l}}(t,x,y)|\lesssim\eqref{est}\times\int_{\mathbb{R}^{kn}}\frac{X^{n-L_k-2m}|V^{(\alpha_1)}(x_1)|\cdots|V^{(\alpha_k)}(x_k)|\d x_1\cdots \d x_k}{|r_0|^{n-2-l_0+d_0}\cdots|r_k|^{n-2-l_k+d_k}\|F_1\|^{p_1}\cdots\|F_s\|^{p_s}},
\end{equation*}
so the proof is completed by Proposition \ref{intmainest}.

\begin{appendix}
	\renewcommand{\appendixname}{Appendix\,\,}
	\section{The proof of Proposition \ref{twoline}}\label{app-001}
	
	Recall the quantities $E_{xyyz}$ and $E_{ww'xy}$ defined in \eqref{linesing}, and we introduce two coordinates for $y\in\mathbb{R}^n$:
	\begin{equation}\label{l1}
		y=\begin{cases}
			\mbox{$\frac{x+z}{2}+s_y\frac{(x-z)}{|x-z|}+h_y$},\quad &s_y\in\mathbb{R},\,h_y\in(x-z)^\perp\cong\mathbb{R}^{n-1},\\
			\mbox{$x+\tilde{s}_y\frac{(w'-w)}{|w'-w|}+\tilde{h}_y$},&\tilde{s}_y\in\mathbb{R},\,\tilde{h}_y\in(w'-w)^\perp\cong\mathbb{R}^{n-1}.
		\end{cases}
	\end{equation}
	Set $\Gamma=\{y\in\mathbb{R}^n;\,0<|h_y|<\mbox{$\frac{|x-z|}{2}-|s_y|$}\}$, $\Gamma_+=\{y\in\Gamma;\,s_y\geq0\}$, $\Gamma_-=\{y\in\Gamma;\,s_y<0\}$ and $\tilde{\Gamma}=\{y\in\mathbb{R}^n;\,0<|\tilde{h}_y|<\tilde{s}_y\}$. It is elementary to show that
	\begin{equation}\label{l3}
		|E_{xyyz}|\sim
		\begin{cases}
			\frac{|h_y|}{\min\{|x-y|,|y-z|\}},\quad &y\in\Gamma,\\
			1,&y\notin\overline{\Gamma},
		\end{cases}
	\end{equation}
	and that
	\begin{equation}\label{l4}
		|E_{ww'xy}|\sim\mbox{$\frac{|\tilde{h}_y|}{|\tilde{s}_y|+|\tilde{h}_y|}$}\sim\mbox{$\frac{|\tilde{h}_y|}{|x-y|}$}\sim
		\begin{cases}
			\mbox{$\frac{|\tilde{h}_y|}{\tilde{s}_y}$},\quad&y\in\tilde{\Gamma},\\
			1,&y\notin\overline{\tilde{\Gamma}}.
		\end{cases}
	\end{equation}
	
	We first prove the special case $q=0$ of Proposition \ref{twoline}, that is the case where only one line singularity shows up.
	
	\begin{lemma}\label{oneline}
		Suppose $n\geq2$, $k_1,l_1\in[0,n)$, $k_2,l_2\in[0,+\infty)$, $\beta\in(0,+\infty)$, $k_2+l_2+\beta\geq n$, and $p\in[0,n-1)$. It follows uniformly in $y_0\in\mathbb{R}^n$ that
		\begin{equation}\label{l5}
			\begin{split}
				&\int_{\mathbb{R}^n}\frac{\langle|x-y|^{-k_1}\rangle\langle|y-z|^{-l_1}\rangle\langle y-y_0\rangle^{-\beta-}}{\langle x-y\rangle^{k_2}\langle y-z\rangle^{l_2}|E_{xyyz}|^p}\d y\\
				\lesssim&\begin{cases}
					\langle|x-z|^{-\max\{0,k_1+l_1-n\}}\rangle\langle x-z\rangle^{-\min\{k_2,l_2,k_2+l_2+\beta-n,k_2+l_2-p\}},\quad&k_1+l_1\neq n,\\
					\langle|x-z|^{0-}\rangle\langle x-z\rangle^{-\min\{k_2,l_2,k_2+l_2+\beta-n,k_2+l_2-p\}},&k_1+l_1=n.
				\end{cases}
			\end{split}
		\end{equation}
	\end{lemma}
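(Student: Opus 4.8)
## Proof plan for Lemma \ref{oneline}

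The plan is to dispose of the $k_1,l_1$-singular factors $\langle|x-y|^{-k_1}\rangle$ and $\langle|y-z|^{-l_1}\rangle$ first by a routine localization, and then to handle the single line singularity $|E_{xyyz}|^{-p}$ via the coordinate description \eqref{l1}--\eqref{l3}. Concretely, since $|E_{xyyz}|\sim 1$ away from the tube $\overline\Gamma$ around the segment $[x,z]$, and since the point singularities $|x-y|^{-k_1}$, $|y-z|^{-l_1}$ are only active near $y=x$ and $y=z$ respectively, I would split $\mathbb{R}^n$ into: (a) a neighbourhood of $x$, where $|E_{xyyz}|\sim 1$ and $\langle y-z\rangle\sim\langle x-z\rangle$, so Lemma \ref{lem3.10} applied in $y$ near $x$ gives a factor $\langle x-z\rangle^{-k_2}\langle|x-z|^{-\max\{0,k_1-n\}}\rangle$ type bound (and one uses $k_1<n$, so actually $\langle|x-y|^{-k_1}\rangle$ is locally integrable); (b) the symmetric neighbourhood of $z$; (c) the remaining region where both $\langle|x-y|^{-k_1}\rangle\sim 1$ and $\langle|y-z|^{-l_1}\rangle\sim 1$, so only $|E_{xyyz}|^{-p}$ and the weights $\langle x-y\rangle^{-k_2}\langle y-z\rangle^{-l_2}\langle y-y_0\rangle^{-\beta-}$ survive. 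The delicate constant $\langle|x-z|^{-\max\{0,k_1+l_1-n\}}\rangle$ (with the $0-$ endpoint when $k_1+l_1=n$) arises only when $x$ and $z$ are close and the two point singularities collide; in that regime $|x-z|\lesssim 1$, the line tube degenerates, and one estimates $\int \langle|x-y|^{-k_1}\rangle\langle|y-z|^{-l_1}\rangle\langle y-y_0\rangle^{-\beta-}\d y$ by Lemma \ref{lmEd} (its $p=0$ case), which is exactly where the $k_1+l_1$ vs.\ $n$ trichotomy is produced.

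For region (c), where the line singularity is the only issue, I would use the coordinates $y=\frac{x+z}{2}+s_y\frac{x-z}{|x-z|}+h_y$. By \eqref{l3}, on $\Gamma$ we have $|E_{xyyz}|^{-p}\sim (\min\{|x-y|,|y-z|\})^p|h_y|^{-p}$, and on $\Gamma_+$ (say) $\min\{|x-y|,|y-z|\}=|y-z|$, with $|y-z|\sim\langle y-z\rangle$ or small. The integral over $\Gamma_+$ then becomes, after integrating the $(n-1)$-dimensional transverse variable $h_y$ (which converges since $p<n-1$) and then the longitudinal variable $s_y$, a one-dimensional integral that reproduces precisely the exponents $\min\{k_2,l_2,k_2+l_2+\beta-n,k_2+l_2-p\}$: the $k_2$ and $l_2$ come from the endpoints $y\approx x$ and $y\approx z$, the $k_2+l_2+\beta-n$ from the decay forcing via $\langle y-y_0\rangle^{-\beta-}$ when $y$ is far, and the new term $k_2+l_2-p$ from the worst case where the transverse integration of $|h_y|^{-p}$ costs $p$ powers of the local scale $\min\{|x-y|,|y-z|\}$. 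Off the tube, i.e.\ on $\mathbb{R}^n\setminus\overline\Gamma$, $|E_{xyyz}|\sim1$ and the claim reduces directly to Lemma \ref{lmEd}. The case $k_1+l_1=n$ only changes the logarithmic $\langle|x-z|^{0-}\rangle$ endpoint and is handled identically with an $\epsilon$ loss.

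I would organize the write-up as: (i) reduce to $y_0$ arbitrary but note everything is uniform; (ii) dyadically decompose in $|x-z|$, treating $|x-z|\lesssim1$ and $|x-z|\gtrsim1$ separately, and within each, decompose $y$-space into the $x$-neighbourhood, the $z$-neighbourhood, the tube $\Gamma$, and the complement; (iii) in the $x$- and $z$-neighbourhoods invoke Lemma \ref{lem3.10}; in the complement of $\overline\Gamma$ invoke Lemma \ref{lmEd}; (iv) in the tube, change to the $(s_y,h_y)$ coordinates, use \eqref{l3}, integrate out $h_y$ using $p<n-1$, and finish with a one-dimensional computation; (v) combine, taking the minimum of the exponents. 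The main obstacle I anticipate is the bookkeeping in step (iv): one must carefully track, inside the tube, which of $|x-y|$ and $|y-z|$ is the smaller (splitting $\Gamma=\Gamma_+\cup\Gamma_-$), how $|h_y|^{-p}$ interacts with the decaying weights $\langle x-y\rangle^{-k_2}$, $\langle y-z\rangle^{-l_2}$, $\langle y-y_0\rangle^{-\beta-}$ after projecting $y_0$ onto the line, and to verify that the four-way minimum $\min\{k_2,l_2,k_2+l_2+\beta-n,k_2+l_2-p\}$ is genuinely sharp — the term $k_2+l_2-p$ is the only genuinely new contribution relative to Lemma \ref{lmEd} and must be shown to be the binding constraint precisely when $p$ is large. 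Since this is the base case $q=0$ on which the full Proposition \ref{twoline} (and hence much of Section \ref{section5.4}) rests, I would be careful to keep all constants uniform in $y_0$, $x$, $z$ throughout.
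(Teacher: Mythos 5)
Your overall plan — isolating the tube $\Gamma$ around the segment $[x,z]$, using the $(s_y,h_y)$ coordinates and \eqref{l3} there, integrating out $h_y$ using $p<n-1$, and invoking Lemma \ref{lmEd} off the tube — matches the route the paper takes. However, there is a genuine gap in the localization you describe in steps (a), (b), and the paragraph about $|x-z|\lesssim1$.

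The claim that in a neighbourhood of $x$ (respectively $z$) one has $|E_{xyyz}|\sim 1$ is false: by \eqref{l3}, $|E_{xyyz}|\sim |h_y|/|x-y|$ in the half-tube near $x$, and this is arbitrarily small as $y\to x$ along or near the segment. The line singularity is active all the way to both endpoints of the segment, so it cannot be separated from the point singularities $|x-y|^{-k_1}$, $|y-z|^{-l_1}$ by a simple cover of $x$-neighbourhood, $z$-neighbourhood, and tube. Concretely, the region $\{|x-y|\lesssim1\}\cap\Gamma$ is where \emph{both} $\langle|x-y|^{-k_1}\rangle$ and $|E_{xyyz}|^{-p}$ blow up simultaneously, and neither your step (iii) (which only quotes Lemma \ref{lem3.10}, ignoring $E_{xyyz}$) nor your step (iv) (which you set up assuming $\langle|x-y|^{-k_1}\rangle\sim1$) handles it. The same issue recurs in your treatment of $|x-z|\lesssim1$: the tube does not ``degenerate'' — $|E_{xyyz}|^{-p}$ is still unbounded along the short segment — so estimating $\int\langle|x-y|^{-k_1}\rangle\langle|y-z|^{-l_1}\rangle\langle y-y_0\rangle^{-\beta-}\mathrm{d}y$ by Lemma \ref{lmEd} without the $|E_{xyyz}|^{-p}$ factor is not justified.

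What is actually needed, and what the paper does, is to estimate the tube \emph{including} both endpoints, with the point singularity and the line singularity together: near $x$ on $\Gamma_+$ one has $|x-y|^{-k_1}|E_{xyyz}|^{-p}\sim|x-y|^{p-k_1}|h_y|^{-p}$, the transverse integral over $\{|h_y|\lesssim|x-y|\}$ returns a factor $|x-y|^{n-1-p}$, and the net $s_y$-integrand is $|x-y|^{n-1-k_1}$, which is integrable using only $k_1<n$ (note $p$ cancels). This is the content of the paper's \eqref{l7} for the short-segment case and the first line of \eqref{l8} for the long-segment case, and it is precisely the step missing from your plan. (A minor further slip: on $\Gamma_+=\{s_y\geq0\}$ with the paper's orientation, $\min\{|x-y|,|y-z|\}=|x-y|$, not $|y-z|$; you write it the other way, though you flag it with ``(say)''.) If you replace the naive separation in (a)/(b) by a direct estimate of the tube all the way to the vertices as above, the rest of your outline — the off-tube reduction to Lemma \ref{lmEd}, the splitting of $\langle y-y_0\rangle^{-\beta-}$ into longitudinal and transverse parts, and the resulting four-way minimum — goes through and agrees with the paper.
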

	
	\begin{proof}
		By \eqref{l3} and Lemma \ref{lmEd}, the estimate for the integral over $\mathbb{R}^n\setminus\Gamma$ immediately follows. Now consider the integral over $\Gamma_+$. If $y\in\Gamma_+$, we have
		\begin{equation}\label{esim}
			\begin{cases}
				|x-y|\sim\frac{|x-z|}{2}-s_y\lesssim|x-z|,\\
				|y-z|\sim|x-z|,\\
				|E_{xyyz}|\sim\frac{|h_y|}{|x-y|},\\
				\langle y-y_0\rangle\sim\langle s_y-s_{y_0}\rangle+\langle h_y-h_{y_0}\rangle.
			\end{cases}
		\end{equation}
		
		When $|x-z|\leq2$, since $\langle x-y\rangle\sim\langle y-z\rangle\sim1$, $p<n-1$ and $k_1<n$, we have
		\begin{equation}\label{l7}
			\begin{split}
				\int_{\Gamma_+}\frac{\langle y-y_0\rangle^{-\beta-}\d y}{|x-y|^{k_1}|y-z|^{l_1}|E_{xyyz}|^p}\lesssim&|x-z|^{-l_1}\int_0^\frac{|x-z|}{2}\left(\mbox{$\frac{|x-z|}{2}-s_y$}\right)^{-(k_1-p)}\left(\int_{|h_y|<\frac{|x-z|}{2}-s_y}|h_y|^{-p}\d h_y\right)\d s_y\\
				\lesssim&|x-z|^{-(k_1+l_1-n)}.
			\end{split}
		\end{equation}
		
		When $|x-z|>2$ and $y\in\Gamma_+$, $\langle |y-z|^{-1}\rangle\sim1$ and $\langle y-z\rangle\sim|y-z|$ follow. Set $\theta=\min\{k_2,l_2,k_2+l_2+\beta-n\}\geq0$, we have
		\begin{equation*}
			\frac{1}{|x-y|^{k_2-p}|y-z|^{l_2}}\lesssim\frac{1}{|x-z|^\theta|x-y|^{k_2+l_2-p-\theta}},
		\end{equation*}
		and consequently we derive from \eqref{esim} that
		\begin{equation}\label{l8}
			\begin{split}
				&\int_{\Gamma_+}\frac{\langle|x-y|^{-k_1}\rangle\langle|y-z|^{-l_1}\rangle\langle y-y_0\rangle^{-\beta-}}{\langle x-y\rangle^{k_2}\langle y-z\rangle^{l_2}|E_{xyyz}|^p}\d y\\
				\lesssim&|x-z|^{-\theta}\int_0^{\frac{|x-z|}{2}-1}\left(\mbox{$\frac{|x-z|}{2}-s_y$}\right)^{-(k_2+l_2-p-\theta)}\left(\int_{|h_y|<\frac{|x-z|}{2}-s_y}\frac{\langle y-y_0\rangle^{-\beta-}\d h_y}{|h_y|^p}\right)\d s_y\\
				&+|x-z|^{-l_2}\int_{\frac{|x-z|}{2}-1}^{\frac{|x-z|}{2}}\left(\mbox{$\frac{|x-z|}{2}-s_y$}\right)^{-(k_1-p)}\left(\int_{|h_y|<\frac{|x-z|}{2}-s_y}\frac{\langle y-y_0\rangle^{-\beta-}\d h_y}{|h_y|^p}\right)\d s_y,
			\end{split}
		\end{equation}
		where the second term on the RHS is bounded by $|x-z|^{-l_2}$ if we neglect $\langle y-y_0\rangle^{-\beta-}$, so we are left to show
		\begin{equation*}
			\int_0^{\frac{|x-z|}{2}-1}\left(\mbox{$\frac{|x-z|}{2}-s_y$}\right)^{-(k_2+l_2-p-\theta)}\left(\int_{|h_y|<\frac{|x-z|}{2}-s_y}\frac{\langle y-y_0\rangle^{-\beta-}\d h_y}{|h_y|^p}\right)\d s_y\lesssim|x-z|^{-\min\{\theta,k_2+l_2-p\}}.
		\end{equation*}
		If $k_2+l_2-p\leq\theta$ which implies $\beta\geq n-p$, since
		\begin{equation*}
			\langle y-y_0\rangle^{-\beta-}\lesssim\langle s_y-s_{y_0}\rangle^{-(\beta-(n-p)+1)-}\langle h_y-h_{y_0}\rangle^{-(n-1-p)-},
		\end{equation*}
		by \eqref{esim} we have
		\begin{equation*}\label{l11}
			\begin{split}
				&\int_0^{\frac{|x-z|}{2}-1}\left(\mbox{$\frac{|x-z|}{2}-s_y$}\right)^{-(k_2+l_2-p-\theta)}\left(\int_{|h_y|<\frac{|x-z|}{2}-s_y}\frac{\langle y-y_0\rangle^{-\beta-}\d h_y}{|h_y|^p}\right)\d s_y\\
				\lesssim&|x-z|^{\theta-(k_2+l_2-p)}\int_\mathbb{R}\langle s_y-s_{y_0}\rangle^{-(1+\beta-(n-p))-}\d s_y\int_{\mathbb{R}^{n-1}}\frac{\langle h_y-h_{y_0}\rangle^{-(n-1-p)-}}{|h_y|^p}\d h_y\\
				\lesssim&|x-z|^{\theta-(k_2+l_2-p)}.
			\end{split}
		\end{equation*}
		If $k_2+l_2-p>\theta$, we use
		\begin{equation*}
			\begin{split}
				&\left(\mbox{$\frac{|x-z|}{2}-s_y$}\right)^{-(k_2+l_2-p-\theta)}\langle y-y_0\rangle^{-\beta-}\\
				\lesssim&\left(\mbox{$\frac{|x-z|}{2}-s_y$}\right)^{-(k_2+l_2+\beta-\theta-p)-}+\langle s_y-s_{y_0}\rangle^{-(k_2+l_2+\beta-n-\theta+1)-}\langle h_y-h_{y_0}\rangle^{-(n-1-p)-},
			\end{split}
		\end{equation*}
		to get
		\begin{equation*}\label{l13}
			\int_0^{\frac{|x-z|}{2}-1}\left(\mbox{$\frac{|x-z|}{2}-s_y$}\right)^{-(k_2+l_2-p-\theta)}\left(\int_{|h_y|<\frac{|x-z|}{2}-s_y}\frac{\langle y-y_0\rangle^{-\beta-}\d h_y}{|h_y|^p}\right)\d s_y\lesssim1.
		\end{equation*}
		Now the estimate for the integral over $\Gamma_+$ is shown, and the part over $\Gamma_-$ follows in a parallel way.
	\end{proof}

	Now we turn to prove Proposition \ref{twoline}.
	
	\begin{proof}[Proof of Proposition \ref{twoline}]
		We break the integral into three parts.
		
		\noindent\emph{Part 1: Integral over $\mathbb{R}^n\setminus\tilde{\Gamma}$}
		
		$|E_{ww'xy}|\sim1$ when $y\in\mathbb{R}^n\setminus\tilde{\Gamma}$, so \eqref{l5} implies the desired bound for the integral over $\mathbb{R}^n\setminus\tilde{\Gamma}$.

		\noindent\emph{Part 2: Integral over $\tilde{\Gamma}\setminus\Gamma$}
		
		If $y\in\tilde{\Gamma}\setminus\Gamma$, then $|E_{xyyz}|\sim1$, $|x-y|\sim\tilde{s}_y$, $|E_{ww'xy}|\sim\frac{|\tilde{h}_y|}{\tilde{s}_y}$, and
		\begin{equation}\label{l17}
			\begin{split}
				&\int_{\tilde{\Gamma}\setminus\Gamma}\frac{\langle|x-y|^{-k_1}\rangle\langle|y-z|^{-l_1}\rangle\langle y-y_0\rangle^{-\beta-}}{\langle x-y\rangle^{k_2}\langle y-z\rangle^{l_2}|E_{xyyz}|^p|E_{ww'xy}|^q}\d y\\
				\lesssim&\int_{\tilde{\Gamma}}\frac{\langle y-y_0\rangle^{-\beta-}\langle\tilde{s}_y^{-k_1}\rangle\langle\tilde{s}_y\rangle^{-k_2}\tilde{s}_y^q\langle|y-z|^{-l_1}\rangle\langle y-z\rangle^{-l_2}}{|\tilde{h}_y|^q}\d y.
			\end{split}
		\end{equation}
		We split the RHS of \eqref{l17} into 4 parts corresponded to the integration over
		\begin{equation*}
			\begin{split}
				&A=\left\{y\in\tilde{\Gamma};\,\tilde{s}_y\geq2|x-z|\right\},\\
				&B=\left\{y\in\tilde{\Gamma};\,0<\tilde{s}_y<2|x-z|,\,\mbox{$|y-z|\geq\frac12|x-z|$}\right\},\\
				&C=\left\{y\in\tilde{\Gamma};\,0<\tilde{s}_y<2|x-z|,\,\mbox{$|y-z|<\frac12|x-z|$},\,\mbox{$|\tilde{h}_y|\geq\frac12|\tilde{h}_z|$}\right\},\\
				&D=\left\{y\in\tilde{\Gamma};\,0<\tilde{s}_y<2|x-z|,\,\mbox{$|y-z|<\frac12|x-z|$},\,\mbox{$|\tilde{h}_y|<\frac12|\tilde{h}_z|$}\right\}.
			\end{split}
		\end{equation*}
		
		If $y\in A$, then $|y-z|\geq|\tilde{s}_y-\tilde{s}_z|\geq\tilde{s}_y-|x-z|\geq\frac{\tilde{s}_y}{2}$, and thus
		\begin{equation*}
			\begin{split}
				I_A&\triangleq\int_A\frac{\langle y-y_0\rangle^{-\beta-}\langle\tilde{s}_y^{-k_1}\rangle\langle\tilde{s}_y\rangle^{-k_2}\tilde{s}_y^q\langle|y-z|^{-l_1}\rangle\langle y-z\rangle^{-l_2}}{|\tilde{h}_y|^q}\d y\\
				&\lesssim\int_{2|x-z|}^{+\infty}\langle\tilde{s}_y-\tilde{s}_{y_0}\rangle^{-\beta-}\langle\tilde{s}_y^{-(k_1+l_1)}\rangle\langle\tilde{s}_y\rangle^{-(k_2+l_2)}\tilde{s}_y^q\left(\int_{|\tilde{h}_y|<\tilde{s}_y}\frac{\d\tilde{h}_y}{|\tilde{h}_y|^q}\right)\d\tilde{s}_y\\
				&\sim\int_{2|x-z|}^{+\infty}\langle\tilde{s}_y-\tilde{s}_{y_0}\rangle^{-\beta-}\langle\tilde{s}_y^{-(k_1+l_1)}\rangle\langle\tilde{s}_y\rangle^{-(k_2+l_2)}\tilde{s}_y^{n-1}\d\tilde{s}_y.
			\end{split}
		\end{equation*}
		It is quite elementary to get a sufficient bound
		\begin{equation*}
			I_A\lesssim\begin{cases}
				\langle|x-z|^{-\max\{0,k_1+l_1-n\}}\rangle\langle x-z\rangle^{-(k_2+l_2+\beta-n)},\quad&k_1+l_1\neq n,\\
				\langle|x-z|^{0-}\rangle\langle x-z\rangle^{-(k_2+l_2+\beta-n)},\quad&k_1+l_1=n.
			\end{cases}
		\end{equation*}
		
		Consider the integration over $B$. Set
		\begin{equation*}
			I_B\triangleq\int_B\frac{\langle y-y_0\rangle^{-\beta-}\langle\tilde{s}_y^{-k_1}\rangle\langle\tilde{s}_y\rangle^{-k_2}\tilde{s}_y^q\langle|y-z|^{-l_1}\rangle\langle y-z\rangle^{-l_2}}{|\tilde{h}_y|^q}\d y.
		\end{equation*}
		Note that when $y\in B$, we have $|y-z|\gtrsim|x-z|$, $\tilde{s}_y\sim|x-y|$ and $|\tilde{h}_y|<\tilde{s}_y<2|x-z|$, as how we treat \eqref{l7} and \eqref{l8}, it follows that
		\begin{equation*}\label{l22}
			I_B\lesssim\langle|x-z|^{-(k_1+l_1-n)}\rangle\langle x-z\rangle^{-\min\{k_2,l_2,k_2+l_2+\beta-n,k_2+l_2-q\}}.
		\end{equation*}
		
		Consider the integration over $C$. If $y\in C$, we have $\tilde{s}_y\sim|x-y|\sim|x-z|$, and it follows from \eqref{l4} that $|E_{ww'xy}|\sim\frac{|\tilde{h}_y|}{\tilde{s}_y}\gtrsim\frac{|\tilde{h}_z|}{|x-z|}\sim|E_{ww'xz}|$. Consequently,
		\begin{equation}\label{l23}
			\begin{split}
				I_C&\triangleq\int_C\frac{\langle y-y_0\rangle^{-\beta-}\langle\tilde{s}_y^{-k_1}\rangle\langle\tilde{s}_y\rangle^{-k_2}\tilde{s}_y^q\langle|y-z|^{-l_1}\rangle\langle y-z\rangle^{-l_2}}{|\tilde{h}_y|^q}\d y\\
				&\sim\int_C\frac{\langle|x-y|^{-k_1}\rangle\langle|y-z|^{-l_1}\rangle\langle y-y_0\rangle^{-\beta-}}{\langle x-y\rangle^{k_2}\langle y-z\rangle^{l_2}|E_{ww'xy}|^q}\d y\\
				&\lesssim|E_{ww'xz}|^{-q}\int_{\mathbb{R}^n}\frac{\langle|x-y|^{-k_1}\rangle\langle|y-z|^{-l_1}\rangle\langle y-y_0\rangle^{-\beta-}}{\langle x-y\rangle^{k_2}\langle y-z\rangle^{l_2}}\d y
			\end{split}
		\end{equation}
		and the estimate for $I_C$ immediately follows from Lemma \ref{lmEd}.
		
		Consider the integration over $D$. If $y\in D$, we have $\tilde{s}_y\sim|x-y|\sim|x-z|$ and $|y-z|\geq|\tilde{h}_y-\tilde{h}_z|\sim|\tilde{h}_z|$. Set
		\begin{equation}\label{ll24}
			I_D\triangleq\int_D\frac{\langle y-y_0\rangle^{-\beta-}\langle\tilde{s}_y^{-k_1}\rangle\langle\tilde{s}_y\rangle^{-k_2}\tilde{s}_y^q\langle|y-z|^{-l_1}\rangle\langle y-z\rangle^{-l_2}}{|\tilde{h}_y|^q}\d y.
		\end{equation}
		When $|x-z|\lesssim1$, first observe that $|\tilde{h}_z|\sim|E_{ww'xz}||x-z|\lesssim1$ and $|y-z|<\frac12|x-z|\lesssim1$ hold for $y\in D$. If $0\leq l_1\leq n-1$, we deduce
		\begin{equation*}\label{l24}
			\begin{split}
				I_D&\lesssim\int_{\tilde{s}_y\sim|x-z|}\frac{1}{\tilde{s}_y^{k_1-q}|\tilde{h}_z|^{l_1}}\left(\int_{|\tilde{h}_y|<\frac12|\tilde{h}_z|}\frac{\d\tilde{h}_y}{|\tilde{h}_y|^q}\right)\d\tilde{s}_y\\
				&\sim|x-z|^{-(k_1-q-1)}|\tilde{h}_z|^{-(l_1+q+1-n)}\\
				&\sim|x-z|^{-(k_1+l_1-n)}|E_{ww'xz}|^{-q}|E_{ww'xz}|^{n-1-l_1}\\
				&\leq|E_{ww'xz}|^{-q}|x-z|^{-(k_1+l_1-n)},
			\end{split}
		\end{equation*}
		and if $n-1<l_1<n$, we use $1\gtrsim|x-z|^{l_1}\gtrsim|y-z|^{l_1}\gtrsim|\tilde{s}_y-\tilde{s}_z|^{l_1-(n-1)}|\tilde{h}_z|^{n-1}$ when $y\in D$ to deduce
		\begin{equation*}\label{l25}
			\begin{split}
				I_D&\lesssim|x-z|^{-(k_1-q)}|\tilde{h}_z|^{-(n-1)}\int_{\tilde{s}_y\sim|x-z|}\frac{1}{|\tilde{s}_y-\tilde{s}_z|^{l_1-(n-1)}}\left(\int_{|\tilde{h}_y|<\frac12|\tilde{h}_z|}\frac{\d\tilde{h}_y}{|\tilde{h}_y|^q}\right)\d\tilde{s}_y\\
				&\lesssim|x-z|^{-(k_1+l_1-n-q)}|\tilde{h}_z|^{-q}\\
				&\sim|E_{ww'xz}|^{-q}|x-z|^{-(k_1+l_1-n)}.
			\end{split}
		\end{equation*}
		When $|x-z|\gtrsim1$, we decompose $I_D$ into $I_{D_1}$ and $I_{D_2}$ with respect to the region of integration, where $D_1=D\cap\{y\in\mathbb{R}^n;\,|y-z|<1\}$ and $D_2=D\cap\{y\in\mathbb{R}^n;\,|y-z|\geq1\}$. Recall $|E_{ww'xz}|\sim\frac{|\tilde{h}_z|}{|\tilde{s}_y|}$ holds when $y\in D$, we first see that $I_{D_1}$ has the bound
		\begin{equation*}
			\begin{split}
				I_{D_1}\lesssim&\int_{D\cap\{|y-z|<1\}}\frac{\d y}{\tilde{s}_y^{k_2-q}|\tilde{h}_y|^q|y-z|^{l_1}}\\
				\lesssim&|E_{ww'xz}|^{-q}|x-z|^{-k_2}\int_{D\cap\{|y-z|<1\}}\frac{|\tilde{h}_z|^q\d y}{|\tilde{h}_y|^q|y-z|^{l_1}},
			\end{split}
		\end{equation*}
		so if $0\leq l_1\leq n-1$, we use the facts that $|y-z|\gtrsim|\tilde{h}_z|$ and $|\tilde{h}_z|\sim|\tilde{h}_y-\tilde{h}_z|\leq|y-z|<1$ when $y\in D_1$ to get
		\begin{equation*}
			\begin{split}
				I_{D_1}\lesssim&|E_{ww'xz}|^{-q}|x-z|^{-k_2}\int_{D\cap\{|y-z|<1\}}\frac{|\tilde{h}_z|^{n-1-l_1}|\tilde{h}_z|^{-(n-1-q)}}{|\tilde{h}_y|^q}\d y\\
				\lesssim&|E_{ww'xz}|^{-q}|x-z|^{-k_2}\int_{|\tilde{s}_y-\tilde{s}_z|<1}|\tilde{h}_z|^{-(n-1-q)}\left(\int_{|\tilde{h}_y|<\frac12|\tilde{h}_z|}\frac{\d\tilde{h}_y}{|\tilde{h}_z|^q}\right)\d\tilde{s}_y\\
				\lesssim&|E_{ww'xz}|^{-q}|x-z|^{-k_2},
			\end{split}
		\end{equation*}
		while in the case of $n-1<l_1<n$, we use $|y-z|^l\gtrsim|\tilde{s}_y-\tilde{s}_z|^{l_1-(n-1)}|\tilde{h}_z|^{n-1}$ to get
		\begin{equation*}
			\begin{split}
				I_{D_1}\lesssim&|E_{ww'xz}|^{-q}|x-z|^{-k_2}\int_{|\tilde{s}_y-\tilde{s}_z|<1}|\tilde{s}_y-\tilde{s}_z|^{n-1-l_1}|\tilde{h}_z|^{-(n-1-q)}\left(\int_{|\tilde{h}_y|<\frac12|\tilde{h}_z|}\frac{\d\tilde{h}_y}{|\tilde{h}_z|^q}\right)\d\tilde{s}_y\\
				\lesssim&|E_{ww'xz}|^{-q}|x-z|^{-k_2}.
			\end{split}
		\end{equation*}
		For $I_{D_2}$, recall $|\tilde{h}_z|\leq|x-z|\sim\tilde{s}_y$ holds when $y\in D$, we have
		\begin{equation}\label{ID2}
			\begin{split}
				I_{D_2}\lesssim&|E_{ww'xz}|^{-q}|x-z|^{-(k_2+l_2+\beta-n)}\int_{D\cap\{|y-z|\geq1\}}\frac{\langle\tilde{s}_y-\tilde{s}_{y_0}\rangle^{-\beta-}\langle\tilde{s}_y-\tilde{s}_z\rangle^{-l_2}|\tilde{h}_z|^{n-1}}{\tilde{s}_y^{n-l_2-\beta}|\tilde{h}_z|^{n-1-q}|\tilde{h}_y|^q}\d y\\
				\lesssim&|E_{ww'xz}|^{-q}|x-z|^{-(k_2+l_2+\beta-n)}\int_{\tilde{s}_y\sim|x-z|}\frac{\langle\tilde{s}_y-\tilde{s}_{y_0}\rangle^{-\beta-}\langle\tilde{s}_y-\tilde{s}_z\rangle^{-l_2}}{\tilde{s}_y^{1-l_2-\beta}|\tilde{h}_z|^{n-1-q}}\left(\int_{|\tilde{h}_y|<\frac12|\tilde{h}_z|}\frac{\d\tilde{h}_y}{|\tilde{h}_y|^q}\right)\d\tilde{s}_y\\
				\lesssim&|E_{ww'xz}|^{-q}|x-z|^{-(k_2+l_2+\beta-n)}\int_{\mathbb{R}}\langle\tilde{s}_y-\tilde{s}_{y_0}\rangle^{-\beta-}\langle\tilde{s}_y-\tilde{s}_z\rangle^{-l_2}\langle\tilde{s}_y\rangle^{-(1-l_2-\beta)}\d\tilde{s}_y\\
				\lesssim&|E_{ww'xz}|^{-q}|x-z|^{-(k_2+l_2+\beta-n)}.
			\end{split}
		\end{equation}
		Now it is clear that
		\begin{equation*}\label{l29}
			I_D\lesssim|E_{ww'xz}|^{-q}\langle|x-z|^{-(k_1+l_1-n)}\rangle\langle x-z\rangle^{-\min\{k_2,k_2+l_2+\beta-n\}}.
		\end{equation*}
		and the bound for \eqref{l17} has been completely shown.
		
		\noindent\emph{Part 3. Integral over $\tilde{\Gamma}\cap\Gamma$}

		A routine calculation combining \eqref{l1} and $z=x+\tilde{s}_z\frac{(w'-w)}{|w'-w|}+\tilde{h}_z$ shows that
		\begin{equation*}
			\tilde{h}_y-h_y=\left(\left(\mbox{$\frac{|x-z|}{2}-s_y$}\right)\mbox{$\frac{\tilde{s}_z}{|x-z|}$}-\tilde{s}_y\right)\mbox{$\frac{(w'-w)}{|w'-w|}$}+\left(\mbox{$\frac{|x-z|}{2}-s_y$}\right)\mbox{$\frac{\tilde{h}_z}{|x-z|}$}.
		\end{equation*}
		If $y\in\tilde{\Gamma}\cap\Gamma$, we have $|\tilde{h}_y-h_y|\geq(\frac{|x-z|}{2}-s_y)\frac{|\tilde{h}_z|}{|x-z|}\sim|E_{ww'xz}||x-y|$
		and therefore
		\begin{equation}\label{l32}
			\begin{split}
				\frac{1}{|h_y|^p|\tilde{h}_y|^q}&\lesssim|\tilde{h}_y-h_y|^{-\min\{p,q\}}\left(|h_y|^{-\max\{p,q\}}+|\tilde{h}_y|^{-\max\{p,q\}}\right)\\
				&\lesssim|E_{ww'xz}|^{-\min\{p,q\}}|x-y|^{-\min\{p,q\}}\left(|h_y|^{-\max\{p,q\}}+|\tilde{h}_y|^{-\max\{p,q\}}\right).
			\end{split}
		\end{equation}
		
		We first consider the integral over $\tilde{\Gamma}\cap\Gamma_+$, where $\min\{|x-y|,|y-z|\}=|x-y|$ and $|y-z|\sim|x-z|$. Then
		\begin{equation*}
			\begin{split}
				&\int_{\tilde{\Gamma}\cap\Gamma_+}\frac{\langle|x-y|^{-k_1}\rangle\langle|y-z|^{-l_1}\rangle\langle y-y_0\rangle^{-\beta-}}{\langle x-y\rangle^{k_2}\langle y-z\rangle^{l_2}|E_{xyyz}|^p|E_{ww'xy}|^q}\d y\\
				\lesssim&\int_{\tilde{\Gamma}\cap\Gamma_+}\frac{\langle y-y_0\rangle^{-\beta-}\langle|x-y|^{-k_1}\rangle\langle|y-z|^{-l_1}\rangle}{|x-y|^{-(p+q)}\langle x-y\rangle^{k_2}\langle y-z\rangle^{l_2}|h_y|^p|\tilde{h}_y|^q}\d y\\
				\lesssim&|E_{ww'xz}|^{-\min\{p,q\}}\int_{\tilde{\Gamma}\cap\Gamma_+}\frac{\langle y-y_0\rangle^{-\beta-}\langle|x-y|^{-k_1}\rangle\langle|y-z|^{-l_1}\rangle\left(|h_y|^{-\max\{p,q\}}+|\tilde{h}_y|^{-\max\{p,q\}}\right)}{|x-y|^{-\max\{p,q\}}\langle x-y\rangle^{k_2}\langle y-z\rangle^{l_2}}\d y.
			\end{split}
		\end{equation*}
		Since when $y\in\tilde{\Gamma}\cap\Gamma_+$, we have $|h_y|<\frac{|x-z|}{2}-s_y<\frac{|x-z|}{2}$, and $|\tilde{h}_y|<\tilde{s}_y<|x-z|$, as how we treat \eqref{l7} and \eqref{l8}, it follows that
		\begin{equation*}\label{l35}
			\begin{split}
				&\int_{\tilde{\Gamma}\cap\Gamma_+}\frac{\langle|x-y|^{-k_1}\rangle\langle|y-z|^{-l_1}\rangle\langle y-y_0\rangle^{-\beta-}}{\langle x-y\rangle^{k_2}\langle y-z\rangle^{l_2}|E_{xyyz}|^p|E_{ww'xy}|^q}\d y\\
				\lesssim&|E_{ww'xz}|^{-\min\{p,q\}}\langle|x-z|^{-(k_1+l_1-n)}\rangle\langle x-z\rangle^{-\min\{k_2,l_2,k_2+l_2+\beta-n,k_2+l_2-\max\{p,q\}\}}.
			\end{split}
		\end{equation*}
		
		Finally consider the integral over $\tilde{\Gamma}\cap\Gamma_-$, where $\min\{|x-y|,|y-z|\}=|y-z|\sim\frac{|x-z|}{2}+s_y$, $|x-y|\sim\tilde{s}_y\sim|x-z|$. We split $\tilde{\Gamma}\cap\Gamma_-$ into
		\begin{equation*}
			\begin{split}
				E=\left\{y\in\tilde{\Gamma}\cap\Gamma_-;~|\tilde{h}_y|\geq\mbox{$\frac12$}|\tilde{h}_z|\right\},\quad F=\left\{y\in\tilde{\Gamma}\cap\Gamma_-;~|\tilde{h}_y|<\mbox{$\frac12$}|\tilde{h}_z|\right\},
			\end{split}
		\end{equation*}
		Similar to \eqref{l23}, we immediately get
		\begin{equation*}\label{l36}
			\begin{split}
				&\int_E\frac{\langle|x-y|^{-k_1}\rangle\langle|y-z|^{-l_1}\rangle\langle y-y_0\rangle^{-\beta-}}{\langle x-y\rangle^{k_2}\langle y-z\rangle^{l_2}|E_{xyyz}|^p|E_{ww'xy}|^q}\d y\\
				\lesssim&|E_{ww'xz}|^{-q}\int_E\frac{\langle|x-y|^{-k_1}\rangle\langle|y-z|^{-l_1}\rangle\langle y-y_0\rangle^{-\beta-}}{\langle x-y\rangle^{k_2}\langle y-z\rangle^{l_2}|E_{xyyz}|^p}\d y\\
				\lesssim&\begin{cases}
					|E_{ww'xz}|^{-q}\langle|x-z|^{-\max\{0,k_1+l_1-n\}}\rangle\langle x-z\rangle^{-\min\{k_2,l_2,k_2+l_2+\beta-n,k_2+l_2-p\}},&k_1+l_1\neq n,\\
					|E_{ww'xz}|^{-q}\langle|x-z|^{0-}\rangle\langle x-z\rangle^{-\min\{k_2,l_2,k_2+l_2+\beta-n,k_2+l_2-p\}},&k_1+l_1=n.
				\end{cases}
			\end{split}
		\end{equation*}
		On the other hand, when $y\in F$, note that $|E_{ww'xz}|\sim\frac{|\tilde{h}_z|}{|x-z|}\sim\frac{|\tilde{h}_z|}{|x-y|}$, \eqref{l32} also says
		\begin{equation*}
			\frac{1}{|h_y|^p|\tilde{h}_y|^q}\lesssim|E_{ww'xz}|^{-q}\frac{|\tilde{h}_z|^{q-\min\{p,q\}}}{|x-y|^q}\left(|h_y|^{-\max\{p,q\}}+|\tilde{h}_y|^{-\max\{p,q\}}\right),
		\end{equation*}
		and thus
		\begin{equation}\label{l38}
			\begin{split}
				&\int_F\frac{\langle|x-y|^{-k_1}\rangle\langle|y-z|^{-l_1}\rangle\langle y-y_0\rangle^{-\beta-}}{\langle x-y\rangle^{k_2}\langle y-z\rangle^{l_2}|E_{xyyz}|^p|E_{ww'xy}|^q}\d y\\
				\lesssim&\int_F\frac{\langle|x-y|^{-k_1}\rangle\langle|y-z|^{-l_1}\rangle\langle y-y_0\rangle^{-\beta-}}{|x-y|^{-q}|y-z|^{-p}\langle x-y\rangle^{k_2}\langle y-z\rangle^{l_2}|h_y|^p|\tilde{h}_y|^q}\d y\\
				\lesssim&|E_{ww'xz}|^{-q}\int_F\frac{|\tilde{h}_z|^{q-\min\{p,q\}}\left(|h_y|^{-\max\{p,q\}}+|\tilde{h}_y|^{-\max\{p,q\}}\right)\langle|x-y|^{-k_1}\rangle\langle|y-z|^{-l_1}\rangle\langle y-y_0\rangle^{-\beta-}}{\langle x-y\rangle^{k_2}\langle y-z\rangle^{l_2}|y-z|^{-p}}\d y\\
				\lesssim&|E_{ww'xz}|^{-q}\int_F\frac{\langle|x-y|^{-k_1}\rangle\langle|y-z|^{-l_1}\rangle\langle y-y_0\rangle^{-\beta-}}{\langle x-y\rangle^{k_2}\langle y-z\rangle^{l_2}|y-z|^{-\max\{p,q\}}|h_y|^{\max\{p,q\}}}\d y\\
				&+|E_{ww'xz}|^{-q}\int_F\frac{|\tilde{h}_z|^{q-\min\{p,q\}}\langle|x-y|^{-k_1}\rangle\langle|y-z|^{-l_1}\rangle\langle y-y_0\rangle^{-\beta-}}{\langle x-y\rangle^{k_2}\langle y-z\rangle^{l_2}|y-z|^{-p}|\tilde{h}_y|^{\max\{p,q\}}}\d y,
			\end{split}
		\end{equation}
		where we have used $|\tilde{h}_z|\sim|\tilde{h}_y-\tilde{h}_z|\leq|y-z|$ for the first term on the RHS of \eqref{l38}. Since $|h_y|<\frac{|x-z|}{2}+s_y<\frac{|x-z|}{2}$ for $y\in F$, as how we treat \eqref{l7} and \eqref{l8}, it again follows that
		\begin{equation*}\label{l39}
			\begin{split}
				&|E_{ww'xz}|^{-q}\int_F\frac{\langle|x-y|^{-k_1}\rangle\langle|y-z|^{-l_1}\rangle\langle y-y_0\rangle^{-\beta-}}{\langle x-y\rangle^{k_2}\langle y-z\rangle^{l_2}|E_{xyyz}|^p|E_{ww'xy}|^q}\d y\\
				\lesssim&|E_{ww'xz}|^{-q}\langle|x-z|^{-(k_1+l_1-n)}\rangle\langle x-z\rangle^{-\min\{k_2,l_2,k_2+l_2+\beta-n,k_2+l_2-\max\{p,q\}\}}.
			\end{split}
		\end{equation*}
		
		Estimating the last term on the RHS of \eqref{l38} is not parallel, but more like mimicking the treatment for \eqref{ll24}. When $|x-z|\lesssim1$, which implies $|x-y|\sim\tilde{s}_y\lesssim1$, $|y-z|\leq\frac{|x-z|}{2}\lesssim1$, $\langle|x-y|^{-1}\rangle\sim|x-z|^{-1}$ and $\langle x-y\rangle\sim\langle y-z\rangle\sim1$ when $y\in F$, we first have
		\begin{equation}\label{l40}
			\begin{split}
				&|E_{ww'xz}|^{-q}\int_F\frac{|\tilde{h}_z|^{q-\min\{p,q\}}\langle|x-y|^{-k_1}\rangle\langle|y-z|^{-l_1}\rangle\langle y-y_0\rangle^{-\beta-}}{\langle x-y\rangle^{k_2}\langle y-z\rangle^{l_2}|y-z|^{-p}|\tilde{h}_y|^{\max\{p,q\}}}\d y\\
				\lesssim&|E_{ww'xz}|^{-q}|x-z|^{-k_1}\int_F\frac{\langle y-y_0\rangle^{-\beta-}|\tilde{h}_z|^{q-\min\{p,q\}}}{|y-z|^{l_1-p}|\tilde{h}_y|^{\max\{p,q\}}}\d y,
			\end{split}
		\end{equation}
		and it is easy to check when $y\in F$ that
		\begin{equation}\label{l41}
			\frac{1}{|y-z|^{l_1-p}}\lesssim\begin{cases}
				|x-z|^{p-l_1},\quad&0\leq l_1\leq p,\\
				|\tilde{h}_z|^{p-l_1},&p<l_1\leq n-1,\\
				\frac{1}{|\tilde{s}_y-\tilde{s}_z|^{l_1-(n-1)}|\tilde{h}_z|^{n-1-p}},&p<n-1<l_1<n,
			\end{cases}
		\end{equation}
		so putting \eqref{l41} into \eqref{l40} and using $|\tilde{h}_z|\leq|x-z|$, one easily obtains
		\begin{equation*}\label{l42}
			\begin{split}
				&|E_{ww'xz}|^{-q}\int_F\frac{|\tilde{h}_z|^{q-\min\{p,q\}}\langle|x-y|^{-k_1}\rangle\langle|y-z|^{-l_1}\rangle\langle y-y_0\rangle^{-\beta-}}{\langle x-y\rangle^{k_2}\langle y-z\rangle^{l_2}|y-z|^{-p}|\tilde{h}_y|^{\max\{p,q\}}}\d y\\
				\lesssim&|E_{ww'xz}|^{-q}|x-z|^{-k_1}\int_{\{\tilde{s}_y\sim|x-z|\}\cap\{|\tilde{s}_y-\tilde{s}_z|\lesssim1\}}\left(\int_{|\tilde{h}_y|<\frac12|\tilde{h}_z|}\frac{|\tilde{h}_z|^{q-\min\{p,q\}}\d\tilde{h}_y}{|y-z|^{l_1-p}|\tilde{h}_y|^{\max\{p,q\}}}\right)\d\tilde{s}_y\\
				\lesssim&\begin{cases}
					|E_{ww'xz}|^{-q}|x-z|^{-(k_1-\max\{0,p-l_1\}-1)}|\tilde{h}_z|^{n-1-\max\{p,l_1\}},\quad&0\leq l_1\leq n-1,\\
					|E_{ww'xz}|^{-q}|x-z|^{-(k_1+l_1-n)},&n-1<l_1<n,
				\end{cases}\\
				\lesssim&|E_{ww'xz}|^{-q}|x-z|^{-(k_1+l_1-n)}.
			\end{split}
		\end{equation*}
		
		When $|x-z|\gtrsim1$, similar to the treatment for $I_D$ above by considering $I_{D_1}$ and $I_{D_2}$, we first observe that if $y\in F\cap\{y\in\mathbb{R}^n;\,|y-z|<1\}$, it follows that $\langle|x-y|^{-1}\rangle\sim\langle y-z\rangle\sim1$, $\langle x-y\rangle\sim|x-z|$, $|\tilde{h}_z|\lesssim|\tilde{h}_y-\tilde{h}_z|<1$, and
		\begin{equation*}
			\frac{1}{|y-z|^{l_1-p}}\lesssim\begin{cases}
				|\tilde{h}_z|^{\min\{0,p-l_1\}},&0\leq l_1\leq n-1,\\
				\frac{1}{|\tilde{s}_y-\tilde{s}_z|^{l_1-(n-1)}|\tilde{h}_z|^{n-1-p}},&n-1<l_1<n,
			\end{cases}
		\end{equation*}
		so we may first deduce
		\begin{equation*}
			\begin{split}
				&|E_{ww'xz}|^{-q}\int_{F\cap\{|y-z|<1\}}\frac{|\tilde{h}_z|^{q-\min\{p,q\}}\langle|x-y|^{-k_1}\rangle\langle|y-z|^{-l_1}\rangle\langle y-y_0\rangle^{-\beta-}}{\langle x-y\rangle^{k_2}\langle y-z\rangle^{l_2}|y-z|^{-p}|\tilde{h}_y|^{\max\{p,q\}}}\d y\\
				\lesssim&|E_{ww'xz}|^{-q}|x-z|^{-k_2}\int_{|\tilde{s}_y-\tilde{s}_z|<1}\mathbbm{1}_{\{|\tilde{h}_z|\lesssim1\}}\left(\int_{|\tilde{h}_y|<\frac12|\tilde{h}_z|}\frac{|\tilde{h}_z|^{q-\min\{p,q\}}\d\tilde{h}_y}{|y-z|^{l_1-p}|\tilde{h}_y|^{\max\{p,q\}}}\right)\d\tilde{s}_y\\
				\lesssim&|E_{ww'xz}|^{-q}|x-z|^{-k_2}.
			\end{split}
		\end{equation*}
		Now we are only left to consider the integration over $F\cap\{y\in\mathbb{R}^n;\,|y-z|<1\}$ when $|x-z|\gtrsim1$, where $\langle|x-y|^{-1}\rangle\sim\langle|y-z|^{-1}\rangle\sim1$. Almost parallel to the discussion for $I_{D_2}$ in \eqref{ID2}, we have
		\begin{equation*}
			\begin{split}
				&|E_{ww'xz}|^{-q}\int_{F\cap\{|y-z|\geq1\}}\frac{|\tilde{h}_z|^{q-\min\{p,q\}}\langle|x-y|^{-k_1}\rangle\langle|y-z|^{-l_1}\rangle\langle y-y_0\rangle^{-\beta-}}{\langle x-y\rangle^{k_2}\langle y-z\rangle^{l_2}|y-z|^{-p}|\tilde{h}_y|^{\max\{p,q\}}}\d y\\
				\lesssim&|E_{ww'xz}|^{-q}|x-z|^{-(k_2+l_2+\beta-n)}\int_{F\cap\{|y-z|\geq1\}}\frac{\langle\tilde{s}_y-\tilde{s}_{y_0}\rangle^{-\beta-}|x-z|^{l_2+\beta-n}|\tilde{h}_z|^{n-1-p}}{|y-z|^{l_2-p}|\tilde{h}_z|^{n-1-\max\{p,q\}}|\tilde{h}_y|^{\max\{p,q\}}}\d y\\
				\lesssim&|E_{ww'xz}|^{-q}|x-z|^{-(k_2+l_2+\beta-n)},
			\end{split}
		\end{equation*}
		where the minor difference here is that we have used $|\tilde{h}_z|\lesssim|\tilde{h}_y-\tilde{h}_z|\lesssim|y-z|\lesssim|x-z|\sim\tilde{s}_y$ and $p<n-1$ to deduce for $y\in F\cap\{|y-z|\geq1\}$ that
		\begin{equation*}
			\frac{|x-z|^{l_2+\beta-n}|\tilde{h}_z|^{n-1-p}}{|y-z|^{l_2-p}}\lesssim\frac{|x-z|^{l_2+\beta-n}|y-z|^{n-1}}{|y-z|^{l_2}}\lesssim\frac{|x-z|^{l_2+\beta-1}}{|y-z|^{l_2}}\lesssim\langle\tilde{s}_y-\tilde{s}_z\rangle^{-l_2}\langle\tilde{s}_y\rangle^{-(1-l_2-\beta)}.
		\end{equation*}
		
		Now the estimate for the integral over $\tilde{\Gamma}\cap\Gamma$ has been shown, and the proof is complete.
	\end{proof}
	
	\section{The proofs of Lemma \ref{lemma4.6} and Lemma \ref{lemma4.9} }\label{app-002}
	
	\subsection{The proof of Lemma \ref{lemma4.6}}\
	
	By \eqref{equ4.17}, we have
	$$|\partial_{\lambda}^{s_{i}}R_{0}^{\pm}(\lambda^{2m})(x)|\lesssim |x|^{2m-n}+|x|^{-\frac{n-1}{2}+s_i},\qquad 0\le s_i\le \mbox{$\frac{n+1}{2}$}.$$
	%where $0\le s_i\le \frac{n+1}{2}$.
	%Then it follows that
	%$$\langle x\rangle^{-1}\langle y\rangle^{-1}\left|\partial_{\lambda}^{(s_{i})}R_{0}^{\pm}(\lambda^{2m})(x-y)\right|
	% \lesssim |x-y|^{2m-n}+1,$$
	%where we use the fact that $\langle x\rangle^{-1}\langle y\rangle^{-1}\leq|x-y|^{-1}$ if $|x-y|\geq 1$. 
	On the other hand, a direct computation yields 
	\begin{equation*}\label{equ4.1.2}
		\begin{array}{ll}
			\left|\partial_{\lambda}^{l}\left( e^{ \i s \lambda_{k}|x-y|\mp  \i s \lambda|x|}\right)\right|
			=\left|\partial_{\lambda}^{l}\left( e^{ \i s \lambda_{k}(|x-y|-|x|)} e^{ \i s(\lambda_k\mp\lambda)|x|}\right)\right| \lesssim_{l}\lambda^{-l}\langle y\rangle^{l}.
		\end{array}
	\end{equation*}
	If $l>[\frac{n}{2m}]+2$, then it follows that
	\begin{equation}\label{eq4.5.1.1}
		\begin{split}
			&\left|v(y) \left(\prod_{j=0}^{l-1}(R_{0}^{\pm,(s_{j})}(\lambda^{2m})V)\partial_\lambda^{s_l}\left(|x-\cdot|^{-\tau}e^{\mp \i\lambda s|x|}e^{\i\lambda_{k}s|x-y|}  \right)\right)(y)\right| \\
			\lesssim&
			\lambda^{-s_l}\left| \int_{\R^{nl}}\langle y\rangle^{-\frac{\beta}{2}}\prod_{i=0}^{l-1}\left| \partial_{\lambda}^{s_{i}}R_{0}^{\pm}(\lambda^{2m})( z_{i}-z_{i+1})V(z_{i+1})\right|
			\langle z_l\rangle^{s_l}|z_l-x|^{-\tau}dz_{1}\cdots dz_{l}\right| \\
			\lesssim&
			\lambda^{-s_l}\int_{\R^{nl}}\langle y\rangle^{-\frac{\beta}{2}-s_0}\prod_{i=0}^{l-1}\left( (|z_{i}-z_{i+1}|^{-\frac{n-1}{2}}+|z_{i}-z_{i+1}|^{2m-n})\langle z_{i+1} \rangle^{-\beta+s_{i}+s_{i+1}}\right)
			|z_l-x|^{-\tau}\d z_{1}\cdots \d z_{l}\\
			\lesssim&
			\lambda^{-s_l}\langle y\rangle^{-\frac{\beta}{2}+s_0}\int_{\R^{n}}|y-z|^{-\frac{n-1}{2}}
			\langle z\rangle^{-\beta+s_{l-1}+s_l}|z-x|^{-\tau}\d z,
		\end{split}
	\end{equation}
	where $z_{0}=y$, $s_0+\cdots+s_l\le \frac{n+1}{2}$ and  the last inequality follows by repeatedly using the following estimate
	\begin{equation*}
		\begin{aligned}
			\int_{\R^{n}}\big(|x-z|^{-\max\{n-2mj,\, \frac{n-1}{2}\}}&+|x-z|^{-\frac{n-1}{2}}\big)\langle z\rangle^{-\frac{n+3}{2}}\big(|x-z|^{-\frac{n-1}{2}}+|z-y|^{2m-n}\big)dz\\
			\lesssim &|x-z|^{-\max\{n-2m(j+1),\, \frac{n-1}{2}\}}+|x-z|^{-\frac{n-1}{2}}, \quad \mbox{if}\,\,\, 2mj<n,
		\end{aligned}
	\end{equation*}
	which, in turn, follows from Lemma \ref{lmEd}, the fact that  $\beta- s_i-s_{i+1}> \frac{n+3}{2}$, and the inequality
	$$ \langle z_i\rangle^{-s_i} \langle z_{i+1}\rangle^{-s_{i}}\left(|z_{i}-z_{i+1}|^{-\frac{n-1}{2}+s_i}+|z_{i}-z_{i+1}|^{2m-n}\right)\leq |z_i-z_{i+1}|^{-\frac{n-1}{2}}+|z_i-z_{i+1}|^{2m-n}.$$
	Therefore, the Minkowski's inequality implies that the $L^2$ norm of the right hand side of \eqref{eq4.5.1.1} is controlled by 
	\begin{equation*}
		\begin{aligned}
			&\int_{\R^{n}}\left\|\langle y\rangle^{-\frac{\beta}{2}+s_0}|y-z|^{-\frac{n-1}{2}} \right\|_{L^2_y} \langle z\rangle^{-\beta+s_{l-1}+s_l}|z-x|^{-\tau} \d z\\
			\lesssim&\int_{\R^{n}}\langle z\rangle^{-\beta+s_{l-1}+s_l+\max\{-\frac{\beta}{2}+s_0, -\frac{n-1}{2}\}}|z-x|^{-\tau} \d z\lesssim \langle x\rangle^{-\tau},
		\end{aligned}
	\end{equation*}
	where we have used Lemma \ref{lem3.10}, as well as the facts $\frac{\beta}{2}-s_0>\frac12$ and $-\beta+s_{l-1}+s_l+\max\{\frac{1-\beta}{2}+s_0, -\frac{n-1}{2}\}<-n$ when $\beta>n+2$.
	Therefore \eqref{eq4.53.1} follows.
	
	In order to prove \eqref{eq4.53.2}, first note that \eqref{eq2.10} implies
	\begin{equation*}
		\begin{aligned}
			\left|\partial_{\lambda}^{s_{j}}\left(R_{0}^{+}(\lambda^{2m})(x-y)-R_{0}^{-}(\lambda^{2m})(x-y)\right)\right| \lesssim\sum_{s_{j,1}+s_{j,2}=s_j}\lambda^{n-2m-s_{j,1}}|x-y|^{s_{j,2}}
			\lesssim\lambda^{n-2m-s_{j}},
		\end{aligned}
	\end{equation*}
	provided  $\lambda|x-y|\le 1$.
	On the other hand, by  \eqref{eq2.8}, we derive  when $\lambda|x-y|\ge 1$ and $0<\lambda<1$ that
	\begin{equation*}
		\begin{aligned}
			\left| \langle y\rangle^{-1}\langle x\rangle^{-1}\partial_{\lambda}^{s_{j}}R_{0}^{\pm}(\lambda^{2m})(x-y)\right| &\lesssim\sum_{0\le j\le{\frac{n-3}{2}}}\sum_{s_{j,1}+s_{j,2}=s_j}{\lambda^{j+2-2m-s_{j,1}}|x-y|^{-(n-2-j)+s_{j,2}} \langle y\rangle^{-1}\langle x\rangle^{-1}}\\
			&\lesssim\lambda^{n-2m-s_{j}}.
		\end{aligned}
	\end{equation*}
	Combining the above two inequalities, we obtain
	\begin{equation*}
		\left| \langle y\rangle^{-1}\langle x\rangle^{-1}\partial_{\lambda}^{s_{j}}\left(R_{0}^{+}(\lambda^{2m})(x-y)-R_{0}^{-}(\lambda^{2m})(x-y)\right)\right|\lesssim\lambda^{n-2m-s_{j}},\qquad 0<\lambda<1.
	\end{equation*}
	Thus the LHS of \eqref{eq4.53.2} is bounded by
	\begin{equation*}
		\begin{aligned}
			&\lambda^{n-2m-s_{j}}\left\|\langle z_0\rangle^{-\frac{\beta}{2}-s_0}\int_{\mathbb{R}^{nl}} \langle z_{j}\rangle^{-s_{j}+1} \langle z_{j+1}\rangle^{-\beta+s_{j}+1}\right. \\
			&\times\left. \prod_{i\ne j, 0\le i\le l-1}\left((|z_{i}-z_{i+1}|^{-\frac{n-1}{2}}+|z_{i}-z_{i+1}|^{2m-n})\langle z_{i+1} \rangle^{-\beta+s_{i}+s_{i+1}} \right)|z_l-x|^{-\tau} \d z_1\cdots \d z_l \right\|_{L_{z_0}^2}\\
			\lesssim&\lambda^{n-2m-s_{j}},
		\end{aligned}
	\end{equation*}
	where we have used $\beta-s_{j-1}-1>\frac{n+1}{2}$ and $\beta-s_i-s_{i+1}> \frac{n+1}{2}$, and \eqref{eq4.53.2} is now proved.	
	%\end{proof}
	
	\subsection{The proof of Lemma \ref{lemma4.9}}\
	
	Recall that by \eqref{eq2.10} and  \eqref{equ4.17}, one has
	% $C_{l,2m-n}$, $D_{j}$ are given in  , from which we have
	\begin{align}\label{equ.4.2.r0}
		R_{0}^{+}(\lambda^{2m})(x-y)-R_{0}^{-}(\lambda^{2m})(x-y)=\int_0^1{\tilde{r}^{+}_0(\lambda, s, x-y)-\tilde{r}^{-}_0(\lambda, s, x-y)\d s},
	\end{align}
	and
	\begin{equation}\label{equ.4.2.r1}
		R_{0}^{\pm}(\lambda^{2m})(x-y)=\int_0^1{ \tilde{r}^{\pm}_{2m-n, 1}(\lambda, s, x-y)\d s}+ r^{\pm}_{2m-n, 0}(\lambda,  x-y),
	\end{equation}
	where  
	\begin{equation*}
		\tilde{r}^{\pm}_0(\lambda, s, x-y)=\sum_{k\in I^{\pm}} \sum_{j=0}^{\frac{n-3}{2}} C_{j,0} \lambda_{k}^{n-2 m} e^{\i s \lambda_{k}|x-y|}(1-s)^{n-j-3},
	\end{equation*}
	\begin{equation*}
		\tilde{r}^{\pm}_{2m-n, 1}(\lambda, s, x-y)=\sum_{k\in I^{\pm}}|x-y|^{2m-n} \left(\sum_{l=0}^{2m-3} C_{l,2m-n} e^{\i s \lambda_{k}|x-y|}(1-s)^{2m-3-l}\right),
	\end{equation*}
	and
	\begin{equation*}
		r^{\pm}_{2m-n, 0}(\lambda,  x-y)=\sum\limits_{k\in I^{\pm}}\sum\limits_{j=2m-2}^{\frac{n-3}{2}}D_{j}\lambda_k^{j+2-2m}|x-y|^{j+2-n}e^{\i\lambda_{k}|x-y|}.
	\end{equation*}
	
	We first prove \eqref{eq4.62} and \eqref{eq4.62.1}. Set
	\begin{equation*}\label{equ4.8.2}
		\begin{cases}
			\omega_{1,1}^{\pm}(\lambda,s,y,x)
			=e^{\mp \i\lambda s|x|}v(y)\left((R_{0}^{\pm}(\lambda^{2m})V)^{l}\tilde{r}^{\pm}_{2m-n, 1}(\lambda, s, x-\cdot)\right)(y),  \\[0.25cm]
			\omega_{1,0}^{\pm}(\lambda,s,y,x)=e^{\mp \i\lambda |x|}v(y)\left((R_{0}^{\pm}(\lambda^{2m})V)^{l}r^{\pm}_{2m-n, 0}(\lambda, x-\cdot)\right)(y).
		\end{cases}
	\end{equation*}
	\eqref{eq4.62} follows from \eqref{equ.4.2.r1} and the fact that $\omega_{1,0}^{\pm}(\lambda,s,y,x)$ is actually independent of $s$. To obtain \eqref{eq4.62.1}, a direct computation yields that for each $0\le \gamma\le\frac{n+1}{2}$,
		\begin{equation*}\small
			\begin{aligned}
				&\left\| \partial_{\lambda}^{\gamma}\omega_{1,1}^{\pm}(\lambda,s,\cdot,x) \right\|_{L^2}\\
				 \lesssim&\sum_{s_0+\cdots+s_l=\gamma}\sum_{k\in I^{\pm}}\sum_{j=0}^{2m-3}
				\left\|v(y) \left(\prod_{j=0}^{l-1}(R_{0}^{\pm,(s_{j})}(\lambda^{2m})V)\partial_\lambda^{s_l}\left(|x-\cdot|^{2m-n}e^{\mp \i\lambda s|x|}e^{\i\lambda_{k}s|x-\cdot|}  \right)\right)(y)\right\|_{L^2_y}\\
				%v(z)\langle z\rangle\int_{{\R}^{n}}\left|\langle z\rangle^{-1}\langle y\rangle^{-1}  \partial_{\lambda}^r\left( (R_{0}^{\pm}(\lambda^{2m})V)^{l}R_{0}^{\pm}(\lambda^{2m})(y-\cdot)\right)(z)\right| \\
				%&\times \left|\langle y\rangle V(y)\partial_{\lambda}^{(\gamma-r)}\left(|x-y|^{2m-n}e^{\mp i\lambda s|x|}e^{i\lambda_{k}s|x-y|}  \right)\right|\d y  \\
				%&\lesssim \sum_{r=0}^{\gamma}{\lambda^{-\gamma+r}\langle z\rangle^{-\beta/2+1}\int_{{\R}^{n}}\langle y\rangle^{-\beta+1+s-r}|x-y|^{2m-n}\d y }\\
				\lesssim&\lambda^{-\gamma}\langle x\rangle^{-\frac{n-1}{2}},
			\end{aligned}
		\end{equation*}
	where we have used  \eqref{eq4.53.1} and the fact $\frac{n-1}{2}\le n-2m$ in  the last inequality.
	Similarly,
		\begin{equation*}\small
			\begin{aligned}
				&\left\| \partial_{\lambda}^{\gamma}\omega_{1,0}^{\pm}(\lambda,s,\cdot,x) \right\|_{L^2} \\
				\lesssim&\sum_{s_0+\cdots+s_l=\gamma}\sum_{k\in I^{\pm}}\sum_{j=2m-2}^{\frac{n-3}{2}}
				\left\|v(y) \left(\prod_{j=0}^{l-1}(R_{0}^{\pm,(s_{j})}(\lambda^{2m})V)\partial_\lambda^{s_l}\left(\lambda_k^{j+2-2m}|x-\cdot|^{j+2-n}e^{\mp \i\lambda s|x|}e^{\i\lambda_{k}s|x-\cdot|}  \right)\right)(y)\right\|_{L^2_y}\\
				\lesssim&\lambda^{-\gamma}\langle x\rangle^{-\frac{n-1}{2}}.
			\end{aligned}
		\end{equation*}
	These two estimates give \eqref{eq4.62.1} immediately.
	
	We next prove \eqref{eq4.63}-\eqref{eq4.63.1}. Note that
	\begin{equation}\label{equ4.65}
		\begin{aligned}
			&v(y)\left((R_{0}^{+}(\lambda^{2m})V)^{l}R_{0}^{+}(\lambda^{2m})(x-y)-(R_{0}^{-}(\lambda^{2m})V)^{l}R_{0}^{-}(\lambda^{2m})(x-y)\right) \\
			=&v(y)\sum_{j=0}^{l-1}(R_{0}^{-}(\lambda^{2m})V)^{j}(R_{0}^{+}(\lambda^{2m})-R_{0}^{-}(\lambda^{2m}))V(R_{0}^{+}(\lambda^{2m})V)^{l-j-1}R_{0}^{+}(\lambda^{2m})(x-y)\\
			&+v(y)R_{0}^{-}(\lambda^{2m})V)^{l}\left(R_{0}^{+}(\lambda^{2m})(x-y)-R_{0}^{-}(\lambda^{2m})(x-y)\right).
		\end{aligned}
	\end{equation}
	We take $ \phi\in C_c^\infty(\mathbb{R})$ with $ \phi(t)=1$ ($|t|\leq1$) and $ \phi(t)=0$ ($|t|\geq2$), to define
	%{\small
		\begin{align*}\small
				\omega_{2,1}^{-}(\lambda,s,y,x)=&\phi(\lambda\langle x\rangle)e^{\i\lambda s|x|}v(y)\left((R_{0}^{-}(\lambda^{2m})V)^{l}
			\tilde{r}^{-}_0(\lambda, s, x-\cdot)\right)(y)\\
			&+(1-\phi(\lambda\langle x\rangle))e^{\i\lambda s|x|}v(y)\left((R_{0}^{-}(\lambda^{2m})V)^{l}\tilde{r}^{-}_{2m-n, 1}(\lambda, s, x-\cdot)\right)(y),
		\end{align*}
		\begin{align*}\scriptsize
			&\omega_{2,1}^{+}(\lambda,s,y,x)\\
			=&\phi(\lambda\langle x\rangle)e^{-\i\lambda s|x|}v(y)\left((R_{0}^{-}(\lambda^{2m})V)^{l} \tilde{r}^{+}_0(\lambda, s, x-\cdot)\right)(y)\\
			&+(1-\phi(\lambda\langle x\rangle))e^{-\i\lambda s|x|}v(y)\left((R_{0}^{-}(\lambda^{2m})V)^{l}\tilde{r}^{+}_{2m-n, 1}(\lambda, s, x-\cdot)\right)(y)+e^{-\i\lambda s|x|}v(y)\\
			&\times\sum_{r=0}^{l-1}\left((R_{0}^{-}(\lambda^{2m})V)^{r}(R_{0}^{+}(\lambda^{2m})-R_{0}^{-}(\lambda^{2m}))V
			(R_{0}^{+}(\lambda^{2m})V)^{l-r-1}\tilde{r}^{+}_{2m-n, 1}(\lambda, s, x-\cdot)\right)(y),
		\end{align*}
		\begin{align*}\small
			\omega_{2,0}^{-}(\lambda,s,y,x)=(1-\phi(\lambda\langle x\rangle))e^{\i\lambda |x|}v(y)\left((R_{0}^{-}(\lambda^{2m})V)^{l}
			r^{-}_{2m-n, 0}(\lambda, x-\cdot)\right)(y),
		\end{align*}
		and
		\begin{align*}\scriptsize
			&\omega_{2,0}^{+}(\lambda,s,y,x)\\
			=&(1-\phi(\lambda\langle x\rangle))e^{-\i\lambda |x|}v(y)\left((R_{0}^{-}(\lambda^{2m})V)^{l}
			r^{+}_{2m-n, 0}(\lambda, x-\cdot)\right)(y)+e^{-\i\lambda |x|}v(y)\\
			&\times\sum_{j=0}^{l-1}\left((R_{0}^{-}(\lambda^{2m})V)^{j}(R_{0}^{+}(\lambda^{2m})-R_{0}^{-}(\lambda^{2m}))V
			(R_{0}^{+}(\lambda^{2m})V)^{l-j-1}r^{+}_{2m-n, 0}(\lambda, x-\cdot)\right)(y).
		\end{align*}
		Now \eqref{eq4.63} follows if we combine \eqref{equ.4.2.r0},  \eqref{equ.4.2.r1} and \eqref{equ4.65}, while similar to the proof of \eqref{eq4.62.1}, one checks \eqref{eq4.63.2} and \eqref{eq4.63.1} by using \eqref{eq4.53.1} and \eqref{eq4.53.2}. Therefore the proof is complete.

\end{appendix}

\section*{Acknowledgements}
%T. Huang  was supported by National Key R\&D Program of China under the grant 2023YFA1010300 and  the National Natural Science Foundation of China under grant 12101621.
%S. Huang was supported by the National Natural Science Foundation of China under grants 12171178
%and 12171442. Q. Zheng was supported by the National Natural Science Foundation of China under grants 12171178.
T. Huang was supported by National Key R\&D Program of China under the grant 2023YFA1010300 and the National Natural Science Foundation of China under the grants 12101621 and 12371244.
S. Huang was supported by the National Natural Science Foundation of China under the grants 12171178 and 12171442.
Q. Zheng was supported by the National Natural Science Foundation of China under the grant 12171178.

%\textbf{Data availability} This article has no associated data.
%
%
%\textbf{Conflict of interest} The authors have no conflict of interest to declare that are relevant to the content of this article. The authors declare that the data supporting the findings of this study are available within the paper.

%%%%%%%%%%%%%%%%%%%%%%%%%%%%%%%%%%%%%%%%%%%%%%%%%

\end{document}